\DeclareMathAlphabet{\mathpzc}{OT1}{pzc}{m}{it}
\newcommand{\dd}{\mathrm{d}}
\newtheorem{theorem}{Theorem}[section]
\newtheorem{fact}[theorem]{Fact}
\newtheorem{dfn}[theorem]{Definition}
\newcommand{\BMS}{\operatorname{BMS}}
\newcommand{\Lip}{\operatorname{Lip}}
\newenvironment{definition}[1][Definition]{\begin{trivlist}
\item[\hskip \labelsep {\bfseries #1}]}{\end{trivlist}}
\newenvironment{remark}[1][Remark]{\begin{trivlist}
\item[\hskip \labelsep {\bfseries #1}]}{\end{trivlist}}
\DeclareMathAlphabet{\mathpzc}{OT1}{pzc}{m}{it}
\newtheorem{thm}[theorem]{Theorem}\newtheorem{Thm}[theorem]{Theorem}
\newtheorem{rmk}[theorem]{Remark}
\newtheorem{Cor}[theorem]{Corollary}
\newtheorem{prop}[theorem]{Proposition}
\newtheorem{Nota}[theorem]{Notation}
\newtheorem{cor}[theorem]{Corollary}
\newtheorem{lem}[theorem]{Lemma}\newtheorem{Lem}[theorem]{Lemma}
\numberwithin{equation}{section}
\numberwithin{equation}{section}
\newcommand{\be}{begin{equation}}
\newcommand{\bH}{\mathbb H}
\newcommand{\LG}{\Lambda(\Gamma)}
\newcommand{\tS}{\tilde S}
\newcommand{\Euc}{\op{Euc}}
\newcommand{\bfv}{\mathbf{v}}
\newcommand{\q}{\mathbb{Q}}
\newcommand{\e}{{\epsilon}}
\newcommand{\z}{\mathbb{Z}}
\renewcommand{\q}{\mathbb{Q}}
\newcommand{\N}{\mathbb{N}}
\renewcommand{\c}{\mathbb{C}}
\newcommand{\br}{\mathbb{R}}
\newcommand{\bbr}{\mathbb{R}}
\newcommand{\R}{\mathbb{R}}
\newcommand{\bz}{\mathbb{Z}}
\newcommand{\gfrak}{\mathfrak{g}}
\newcommand{\tH}{\tilde{H}}
\newcommand{\gt}{{\Cal G}^t}
\newcommand{\gr}{{\Cal G}^r}
\newcommand{{\grinv}}{{\Cal G}^{-r}}
\newcommand{\qp}{\mathbb{Q}_p}
\newcommand{\bqp}{\mathbf G (\mathbb{Q}_p)}
\newcommand{\bzp}{\mathbf G (\mathbb{Z}_p)}
\newcommand{\zp}{\mathbb{Z}_p}
\newcommand{\A}{\mathbb{A}}\newcommand{\p}{\mathbf{p}}
\newcommand{\ba}{\backslash}\newcommand{\bs}{\backslash}
\newcommand{\G}{\Gamma}
\newcommand{\g}{\gamma}\renewcommand{\t}{\tilde}
\newcommand{\Zcl}{\operatorname{Zcl}}\newcommand{\Haar}{\operatorname{Haar}}
\newcommand{\Cal}{\mathcal}
\renewcommand{\P}{\mathcal P}
\newcommand{\bG}{\mathbf G}
\newcommand{\la}{\langle}
\newcommand{\ra}{\rangle}
\newcommand{\pr}{\rm{pr}}
\newcommand{\bM}{\mathbf M}
\newcommand{\SL}{\operatorname{SL}}
\newcommand{\GL}{\operatorname{GL}}
\newcommand{\bp}{\begin{pmatrix}}
\newcommand{\ep}{\end{pmatrix}}
\newcommand{\Int}{\operatorname{Int}}
\renewcommand{\bp}{{\rm bp}}
\newcommand{\inv}{^{-1}}
\newcommand{\cC}{{\mathcal{C}}}\newcommand{\cH}{{\mathcal{H}}}
\newcommand{\Spin}{\operatorname{Spin}}
\newcommand{\PGL}{\operatorname{PGL}}
\renewcommand{\O}{\operatorname{O}}
\newcommand{\SO}{\operatorname{SO}}
\newcommand{\SU}{\operatorname{SU}}
\newcommand{\Sp}{\operatorname{Sp}}
\renewcommand{\H}{\mathcal{H}}
\newcommand{\I}{\operatorname{I}}
\newcommand{\T}{\operatorname{T}}
\newcommand{\Th}{\operatorname{T}^1(\bH^n)}
\newcommand{\bh}{\partial(\mathbb{H}^n)}
\newcommand{\vol}{\operatorname{vol}}
\newcommand{\s}{\sigma}
\newcommand{\lm}{\lambda_m}
\newcommand{\bc}{\mathbb C}
\newcommand{\bL}{\mathbf L}
\newcommand{\sk}{\operatorname{sk}}
\newcommand{\bT}{\mathbf T}
\newcommand{\bhi}{G_{w_f}}
\renewcommand{\L}{\Cal L}
\newcommand{\Vol}{\op{Vol}}
\newcommand{\PSL}{\op{PSL}}
\newcommand{\ok}{\mathcal O_k}
\newcommand{\tp}{\hat\psi}
\newcommand{\om}{\Cal O_{\A}^m}
\newcommand{\omf}{\Cal O_{\A_f}^m}
\newcommand{\cB}{{\mathcal B}}
\newcommand{\cD}{{\mathcal D}}
\newcommand{\cS}{{\mathcal S}}
\newcommand{\tO}{{\tilde\Omega}}
\newcommand{\cF}{{\mathcal F}}
\newcommand{\ncal}{\mathcal{N}}
\newcommand{\mcal}{\mathcal{M}}
\newcommand{\vcal}{\mathcal{V}}
\newcommand{\ecal}{\mathcal{E}}
\newcommand{\bcal}{\mathcal{B}}
\newcommand{\apz}{\mathpzc{a}}
\newcommand{\bpz}{\mathpzc{b}}
\newcommand{\cpz}{\mathpzc{c}}
\newcommand{\dpz}{\mathpzc{d}}
\newcommand{\hpz}{\mathpzc{h}}
\newcommand{\npz}{\mathpzc{n}}
\newcommand{\mpz}{\mathpzc{m}}
\newcommand{\ppz}{\mathpzc{p}}
\newcommand{\qpz}{\mathpzc{q}}
\newcommand{\PS}{\rm{PS}}
\newcommand{\muPS}{\mu^{\PS}}
\newcommand{\norm}[1]{\lVert #1 \rVert}
\newcommand{\abs}[1]{\lvert #1 \rvert}
\newcommand{\op}{\operatorname}\newcommand{\supp}{\operatorname{supp}}
\newcommand{\ii}{\item}
\renewcommand{\lll}{L^2(\bG(\q)\backslash \bG(\A))}
\renewcommand{\deg}{\text{DEP}}
\newcommand{\vs}{\vskip 5pt}
\newcommand{\bga}{\bG(\A)}
\newcommand{\hz}{\Cal Z_\iota(s)}
\newcommand{\He}{\operatorname{H}}
\newcommand{\tmg}{{\tilde m_\gamma}}
\newcommand{\tkg}{{\tilde k_\gamma}}
\newcommand{\tilag}{{\tilde a_\gamma}}\newcommand{\tbg}{{\tilde b_\gamma}}
\newcommand{\codim}{\operatorname{codim}}
\newcommand{\BR}{\operatorname{BR}}
\newcommand{\Leb}{\operatorname{Leb}}
\newcommand{\Hn}{{\mathbb H}^n}
\newcommand{\rank}{\operatorname{rank}}
\newcommand{\corank}{\operatorname{corank}}
\newcommand{\parcorank}{\operatorname{pb-corank}}
\newcommand{\diam}{\operatorname{diam}}
\newcommand{\Isom}{\operatorname{Isom}}
\newcommand{\bN}{\mathbf N}
\newcommand{\tE}{\tilde E}
\newcommand{\cl}[1]{\overline{#1}}
\renewcommand{\muPS}{\mu^{\PS}}
\renewcommand{\setminus}{-}
\newcommand{\Lie}{{\rm Lie}}
\newcommand{\Span}{\operatorname{span}}
\renewcommand{\be}{\begin{equation}}
\newcommand{\ee}{\end{equation}}
\newcommand{\Ad}{\operatorname{Ad}}
\newcommand{\Mob}{\operatorname{Mob}(\hat \c)}
\newcommand{\fG}{\mathfrak B}
\newcommand{\gri}{{\mathcal G}^{r_i}}
\newcommand{\gro}{{\mathcal G}^{r_0}}
\newcommand{\bg}{{\bm{\g}}}
\newcommand{\bk}{{\bm{k}}}
\newcommand{\bss}{{\bm{s}}}
\newcommand{\bw}{{\bm{w}}}
\newcommand{\Z}{\z}
\newcommand{\B}{B}
\newcommand{\cE}{{\mathcal E}}
\newcommand{\Res}{\op{Res}}
\newcommand{\psig}{\psi_\gamma}
\newcommand{\univ}{\mathbb{E}}
\newcommand{\tuniv}{\mathcal{T}\univ}
\newcommand{\funiv}{\mathcal{F}\univ}
\newcommand{\dist}{{\rm d}}
\newcommand{\simep}{\sim_{O(\e)}}
\newcommand{\Ccal}{\mathcal{C}}
\newcommand{\ccal}{\mathcal{C}}
\newcommand{\pcone}{{\Ccal_{T,\beta}^{++}}}
\newcommand{\ncone}{{\Ccal_{T,\beta}^{--}}}
\newcommand{\conv}{{\rm Conv}}
\newcommand{\tconv}{\mathcal{T}\conv}
\newcommand{\tpi}{\tilde \pi}
\newcommand{\nlength}{\bar{\mathcal{L}}_\gamma}
\newcommand{\length}{{\mathcal{L}}_\g}
\newcommand{\tgamma}{\tilde{\g}}
\newcommand{\muone}{{\mu_{T,\eta}}^{(1)}}
\newcommand{\mutwo}{{\mu_{T,\eta}}^{(2)}}
\newcommand{\tmu}{\tilde\mu}
\newcommand{\conn}{{\rm Conn}_{\g}}
\newcommand{\taxis}{\tilde{R}_{\g}}
\newcommand{\Ghyp}{{\G_{{\rm hyp}}}}
\newcommand{\cusp}{{\rm cusp}}
\newcommand{\hcal}{\mathcal{H}}\newcommand{\et}{\mathcal{E}(T)}
\newcommand{\M}{\mathcal M}
\newcommand{\coc}{\mathsf{c}}
\newcommand{\cyl}{\mathsf{C}}
\newcommand{\mPS}[1]{\mu_{#1}^{\operatorname{PS}}}
\newcommand{\Pp}{\hat \sigma}
\title[Uniform exponential mixing]{Uniform exponential mixing and Resonance free regions
  for convex cocompact congruence subgroups of $\SL_2(\z)$}
\author{Hee Oh  and Dale Winter}
\thanks{Oh was supported in part by NSF Grant \#1361673.}
\begin{document}

\maketitle

\let\thefootnote\relax\footnote{2010 MSC. Primary:  37D35; 22E40; 37A25; 37D40; 11F72. Secondary: 37F30; 11N45.}
\small{\it Dedicated to Peter Sarnak on the occasion of his sixty-first birthday}

\begin{abstract}   Let $\G<\SL_2(\z)$ be
a non-elementary finitely generated subgroup and let
$\G(q)$ be its congruence subgroup of level $q$ for each $q\in \mathbb N$. 
We obtain an asymptotic formula for the matrix coefficients of $L^2(\Gamma (q) \ba \SL_2(\br))$
with a {\it uniform} exponential error term for all square-free
$q$ with no small prime divisors. As an application we establish a uniform resonance-free half plane for the resolvent
of the Laplacian on $\G(q)\ba \bH^2$ over $q$ as above.   
 Our approach is to extend Dolgopyat's dynamical proof of exponential mixing of the geodesic flow uniformly over congruence covers, by establishing
  uniform spectral bounds for congruence transfer operators associated to the geodesic flow.  One of the key ingredients is the expander theory
    due to Bourgain-Gamburd-Sarnak.
 \end{abstract}


\section{Introduction}
\subsection{Uniform exponential mixing}
Let $G=\SL_2(\br)$ and $\G$ be a  non-elementary finitely generated subgroup of $\SL_2(\z)$. We will assume that
$\G$ contains the negative identity $-e$ but no other torsion elements. In other words, $\G$ is the pre-image of a torsion-free subgroup
 of $\PSL_2(\z)$ under the canonical projection $\SL_2(\z)\to \PSL_2(\z)$.
 For each $q\ge 1$, consider the congruence subgroup of $\G$ of level $q$:
$$\G(q):=\{\gamma\in \G: \gamma \equiv e\text{ mod $q$}\} .$$



For $t\in \br$, let $$a_t=\begin{pmatrix} e^{t/2} & 0\\ 0 & e^{-t/2}\end{pmatrix}.$$
As is well known, the right translation action of $a_t$ on $\G\ba G$ corresponds to the geodesic flow 
when we identify $\G\ba G$ with the unit tangent bundle of a hyperbolic surface $\G \ba \bH^2$.  We fix a Haar measure $dg$ on $G$. 
By abuse of notation, we denote by $dg$ the induced $G$-invariant measure on $\G(q)\ba G$.
For real-valued functions $\psi_1, \psi_2 \in L^2(\G(q)\ba G)$,
we consider the matrix coefficient
$$\la a_t \psi_1, \psi_2\ra_{\G(q)\ba G} :=\int_{\G(q)\ba G} \psi_1 (ga_t) \psi_2 (g) dg.$$
 
 The main aim of this paper is to prove 
an asymptotic formula (as $t\to \infty$) with exponential error term for
 the  matrix coefficients
$\la a_t \psi_1, \psi_2\ra_{\G(q)\ba G} $  where the error term is {\it uniform} for all square free $q$ without small prime divisors.

\medskip

Denote by $\Lambda(\G)$ the limit set of $\G$, that is, the set of all accumulation points of
  $\G$-orbits in the boundary $\partial(\bH^2)$ and by
  $0<\delta = \delta_\G  \le 1 $ the Hausdorff dimension of $\Lambda(\G)$.
  
   The notation $C_c^k$ denotes the space
of $C^k$-functions with compact supports: 
\begin{thm} \label{main2} Let $\G<\SL_2(\z)$ be a convex cocompact subgroup, i.e., $\G$ has no parabolic elements.
 Then there exist $\eta>0, C\ge 3$ and $ q_0 > 1$ such that for any square free $q$ with $(q, q_0) = 1$ and any $\psi_1,  \psi_2\in C_c^1(\G(q)\ba G)$, we have

\begin{multline} \label{em4} e^{(1-\delta)t} \la a_t \psi_1, \psi_2\ra_{\G(q)\ba G}=\\
 \frac{1 }{m_q^{\BMS}(\G(q)\ba G)} m_q^{\BR}(\psi_1) m_q^{\BR_*}(\psi_2) +O(||\psi_1||_{C^1} ||\psi_2||_{C^1}\cdot q^C
 \cdot e^{-\eta t})\end{multline}
as $t\to +\infty$; here $m_q^{\BMS}, m_q^{\BR}$, and  $m_q^{\BR_*}$ denote respectively
 the Bowen-Margulis-Sullivan measure, the unstable Burger-Roblin measure, and  the stable Burger-Roblin measure on $\G(q)\ba G$ which are chosen compatibly with the choice of $dg$ (see Section 6 for precise definitions). 

The implied constant can be chosen uniformly for all $C^1$-functions $\psi_1, \psi_2$ whose supports project to a fixed compact subset of $\G \ba G$. 
\end{thm}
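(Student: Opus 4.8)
The plan is to recast exponential mixing of the geodesic flow in the language of thermodynamic formalism and to prove \emph{uniform} (in $q$) spectral bounds for the associated congruence transfer operators, thereby running Dolgopyat's scheme simultaneously over the whole tower of congruence covers, with the expander property of Bourgain--Gamburd--Sarnak providing the input that controls the new spectrum created by the covers. Since $\G$ is convex cocompact (a Schottky group up to the central $-e$), one first codes the geodesic flow on the non-wandering set: it is conjugate to the suspension of a topologically mixing subshift of finite type $(\Sigma,\sigma)$ by a positive Hölder roof function $\tau$, and one also records the Hölder $\G$-valued cocycle $c\colon\Sigma\to\G$ giving the boundary group element, so that passage to $\G(q)$ corresponds to twisting by the permutation representation $\rho_q$ of $\G$ on $\mathbb{C}[\G(q)\backslash\G]=\mathbb{C}[\pi_q(\G)]$, where $\pi_q\colon\SL_2(\z)\to\SL_2(\z/q\z)$. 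For $s\in\mathbb{C}$ introduce the transfer operators $\mathcal{L}_s f(x)=\sum_{\sigma y=x}e^{-s\tau(y)}f(y)$ on Hölder functions on $\Sigma$, and the \emph{congruence transfer operators} $\mathcal{M}_{s,q}$ acting on Hölder maps $\Sigma\to\mathbb{C}[\pi_q(\G)]$ by additionally applying $\rho_q(c(y))$. By Ruelle--Perron--Frobenius theory $\mathcal{L}_\delta$ has a simple leading eigenvalue $1$ with a spectral gap, and $\mathbb{C}[\pi_q(\G)]=\mathbb{C}\mathbf{1}\oplus V_q$ splits $\mathcal{M}_{s,q}$ invariantly, with $\mathcal{M}_{s,q}$ restricting on $\mathbb{C}\mathbf 1$ to the scalar operator $\mathcal{L}_s$.

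Second, reduce the theorem to a uniform spectral estimate. Unfolding $\langle a_t\psi_1,\psi_2\rangle_{\G(q)\ba G}$ over $\G(q)$ and using a flow-box partition of unity adapted to the coding, the correlation function is expressed through the resolvent $(I-\mathcal{M}_{s,q})^{-1}$ paired against symbolic lifts of $\psi_1,\psi_2$ (whose Hölder data are controlled by $\|\psi_i\|_{C^1}$ and by the fixed compact support); a Paley--Wiener / contour-shift argument, moving $\mathrm{Re}(s)$ from $\delta$ to $\delta-\eta$ and picking up the residue at $s=\delta$ of the $\mathbb{C}\mathbf1$-part, reduces \eqref{em4} to the following: there exist $\eta>0$, $C\ge3$, $q_0>1$ such that for every square-free $q$ with $(q,q_0)=1$, every $s=\delta-a+ib$ with $0\le a\le\eta$ and every $n\ge1$, $\|\mathcal{M}_{s,q}^n|_{V_q}\|\le C\,q^C(1+|b|)^C e^{-\eta n}$; the $\mathbb{C}\mathbf1$-component then supplies exactly the main term, its leading eigenvalue at $s=\delta$ being $1$ and the residue assembling, via the product structure of the coding, into $m_q^{\BR}(\psi_1)m_q^{\BR_*}(\psi_2)/m_q^{\BMS}(\G(q)\ba G)$.

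Third --- the heart of the argument --- prove this estimate in three ranges of the frequency $b$. (i) For $|b|\le b_0$ with $b_0$ depending only on $\G$: the Bourgain--Gamburd--Sarnak expansion of the Cayley graphs of $\pi_q(\G)=\SL_2(\z/q\z)$ (valid for square-free $q$ coprime to a fixed $q_0$) forces $\mathcal{M}_{\delta,q}|_{V_q}$ to have spectral radius $\le 1-\epsilon_0$ uniformly in $q$ --- there is no new eigenvalue near $1$ --- and since $\|\mathcal{M}_{\delta-a+ib,q}-\mathcal{M}_{\delta,q}\|=O(a+|b|)$ uniformly in $q$, a perturbation keeps the spectral radius $\le1-\epsilon_0/2$ for $|s-\delta|\le b_0$. (ii) For $|b|\ge b_1$ large: run Dolgopyat's oscillatory-cancellation argument; the $q$-twist is by permutation matrices of norm $1$, so in the norm $\sup_{x}\|\,\cdot(x)\,\|_{\ell^2(\pi_q(\G))}$ the cancellation is driven entirely by the scalar phase $e^{-ib\tau}$ together with the uniform non-local-constancy of the temporal distance function of the \emph{fixed} base flow, yielding $\|\mathcal{M}_{s,q}^n\|\le C(1+|b|)^Ce^{-\eta n}$ with all constants independent of $q$. (iii) For $b_0\le|b|\le b_1$: here neither perturbation nor pure oscillation suffices and one must marry a Dolgopyat-type construction with the expansion --- a failure of contraction for $\mathcal{M}_{s,q}^n|_{V_q}$ would, by an approximate-invariant-vector argument, produce a vector in $V_q$ whose mass is too evenly distributed over the cosets, contradicting the $\ell^2$-flattening / spectral-gap estimate for the corresponding convolution operator on $\SL_2(\z/q\z)$ furnished by Bourgain--Gamburd--Sarnak; this is where the genuine polynomial loss $q^C$ (and hence the $q^C$ in \eqref{em4}) enters. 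Combining (i)--(iii) gives the uniform estimate, and with it the theorem.

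The principal obstacle is step (iii): fusing Dolgopyat's \emph{local} cancellation mechanism with the \emph{global} Bourgain--Gamburd--Sarnak expansion in a way that is uniform over $q$, and, throughout, guaranteeing that every constant --- the Dolgopyat contraction rate and the sizes of its ``dense'' and ``bad'' sets, the perturbation radii $b_0,b_1$, the gap $\epsilon_0$, the exponent $C$ and the rate $\eta$ --- is extracted from the \emph{fixed} base group $\G$ alone, with all $q$-dependence confined to the uniformly expanding permutation representations $\rho_q$; this dictates that the symbolic model and every estimate built on it be set up ``below'' the congruence tower from the outset.
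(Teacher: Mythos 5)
The spectral half of your plan is essentially the paper's: symbolic coding of the flow, congruence transfer operators twisted by the right regular action of $\G(q)\ba\G$, Dolgopyat's oscillation mechanism for $|b|$ large (which is uniform in $q$ precisely because the cocycle is locally constant and acts by isometries of $\ell^2$), and the Bourgain--Gamburd--Sarnak expansion together with the $\ell^2$-flattening lemma for $|b|$ bounded. Two remarks on details: your intermediate range (iii) is not actually needed --- in the paper the expansion argument handles the whole range $|b|\le b_0$ uniformly, since $b$ enters only through the factors $e^{ib\tau}$ whose Lipschitz constants are bounded for bounded $b$, and $b_0$ can be taken to be the (absolute) threshold coming out of the Dolgopyat construction; and your step (i) asserts as an input ("BGS forces $\mathcal{M}_{\delta,q}|_{V_q}$ to have spectral radius $\le 1-\epsilon_0$") what is in fact the substantial work of the small-$b$ regime (new-vector decomposition over divisors $q'\mid q$, a priori measure estimates on cylinders via Sullivan's shadow lemma, and the flattening lemma applied to the cocycle-generated convolutions), though the plan there is the right one.

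The genuine gap is in your second step. The Markov coding models the system $(\G(q)\ba G,\,m^{\BMS}_q,\,a_t)$: the transfer-operator/resolvent pairing against symbolic lifts of $\psi_1,\psi_2$ computes correlation functions for the Gibbs measure of $-\delta\tau$, i.e.\ for the BMS measure, not for Haar. In infinite covolume the Haar measure is not a Gibbs state of the coding --- its restriction to the non-wandering set $\Omega$ is zero when $\delta<1$ --- and the asymptotic \eqref{em4}, with its normalization $e^{(1-\delta)t}$ and the Burger--Roblin measures in the main term, cannot be extracted as a residue of the symbolic resolvent ``via the product structure of the coding''; that assertion is exactly the point that is not derived. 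What your contour-shift argument genuinely yields is uniform exponential mixing of $m^{\BMS}_q$ (the paper's Theorem \ref{BMS2}). To pass from that to \eqref{em4} one needs a separate comparison of Haar and BMS correlations: an effectivization of Roblin's transverse-intersection argument using the quasi-product structure of $\tilde m^{\Haar}$ and $\tilde m^{\BMS}$ and the Patterson--Sullivan measures on pieces of unstable horospheres (the paper's Propositions \ref{imp1} and \ref{imp2} and Theorem \ref{de}). This bridge is where the hypothesis that the supports project into a fixed compact subset of $\G\ba G$ enters, and it degrades the exponent (from $\eta$ to roughly $\eta/(8+2\eta)$). As written, your proposal omits this step, so the main term of \eqref{em4} is asserted rather than proved.
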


 If  $\G<\SL_2(\z)$ is finitely generated with $\delta > \frac{1}{2}$, then
  a version of Theorem \ref{main2}  is known  by  \cite{BGS} and \cite{BKS} with a different interpretation of the main term (also see 
  \cite{LO}, \cite{V}, \cite{MO}).
  Therefore the main contribution of Theorem \ref{main2} lies in the groups $\G$ with $\delta \leq \frac{1}{2}$; such groups are known to be convex cocompact.

\begin{rmk}\begin{enumerate} \rm
\item Selberg's celebrated $\frac{3}{16}$ theorem corresponds exactly to this result in the case $\G = \SL_2(\mathbb{Z})$ with the explicit constants $C = 3$ and $\eta(\G) = \frac{1}{4} - \epsilon$. One can therefore regard Theorem \ref{main2} as yet another generalization of Selberg's theorem to subgroups of infinite covolume. 
\item The optimal $q^C$ would be $q^3$, which is the growth rate of $[\G:\G(q)]$ in the above error term expressed in $C^1$-norms. 
 
  \item One would expect the results described in this paper to hold without the assumption that $q$ is
 square free; the missing piece is the $\ell^2$ flattening lemma (Lemma \ref{bgsflatteninglemma}), which is available in the literature
 only in the case of square free $q$. 
\item Theorem \ref{main2} has an immediate application to counting, equidistirbution and affine sieve; for instance, Theorems 1.7, 1.12, 1.14, 1.16 and 1.17 in \cite{MO} are now valid for
$\G<\SL_2(\z)$ with $\delta \le \frac 12$, with the $L^2$-sobolev norms of functions replaced by $C^1$-norms; the proofs are verbatim repetition
since Theorem \ref{main2} was  the only missing piece in the approach of that paper.
\end{enumerate}
\end{rmk}

 The main term in \eqref{em4} can be related to a Laplace eigenfunction on $\G(q)\ba \bH^2$. Denote by $\Delta$ the negative of the Laplacian on $\bH^2$
 and $\{\nu_x: x\in \bH^2\}$ the Patterson density for $\G$.
Then $\phi_o(x):=|\nu_x|$ is an eigenfunction of $\Delta$ in $C^\infty(\Gamma(q) \ba \bH^2)$
 with eigenvalue $\delta(1-\delta)$ \cite{Pa}, and $\phi_o\in L^2(\G(q) \ba \bH^2)$ if and only if $\delta_\G >1/2$.
If we identity $\bH^2=\SL_2(\br)/\SO(2)$, and
$\psi\in C_c(\G(q)\ba G)$ is $\SO(2)$-invariant,
then $$m_q^{\BR}(\psi)=\int_{\G(q)\ba G} \psi(x) \phi_o(x) dx =m_q^{\BR_*}(\psi).$$

\subsection{Uniform resonance free region}
When $\delta>\frac{1}2$,  Bourgain, Gamburd  and Sarnak  \cite{BGS} established a  uniform spectral gap for the smallest two Laplace eigenvalues
on $L^2(\G(q)\ba \bH^2)$ for all 
  square-free $q\in \N$ with no small prime divisors; for some $\e>0$,  there are no eigenvalues between
   $\delta(1-\delta)$, which is known to be the smallest one, and $\delta (1-\delta) +\e$. 

When $\delta \le \frac{1}{2}$, the $L^2$-spectrum of $\Delta$ 
is known to be purely continuous \cite{LP}, and
 the relevant spectral quantities are the resonances. 
The resolvent of the Laplacian $$R_{\G(q)}(s):=(\Delta - s(1-s))^{-1}
:C_c^\infty(\G(q)\ba \bH^2)\to C^\infty(\G(q)\ba \bH^2)$$ is holomorphic in the half plane $\Re(s)>\frac 12$ and has meromorphic continuation to the complex plane $\c$
with poles of finite rank  \cite{CO} (see also \cite{MM}). These poles are called  {\it resonances}.
Patterson showed that $s=\delta$ is a resonance of rank $1$ and that no other resonances occur in the half-plane $\Re s\ge \delta$
\cite{Pa1}. Naud proved that for some $\e(q)>0$, the half-plane $\Re s>\delta -\e(q)$ is a resonance-fee region except at $s=\delta$ \cite{Naud}.
 Bourgain, Gamburd and Sarnak  showed that for some $\e>0$,
$\{\Re s> \delta -\e \cdot  \min\{ 1, 1/(\log (1+|\Im s|)\}\}$ is a resonance free region except for $s=\delta$, for all square-free $q$ with no small prime divisors
\cite{BGS}.
  We will deduce a  uniform resonance-free half plane from Theorem \ref{main2} (see Section \ref{resolvent_c}):
 
 \begin{Thm}\label{resolvent} Suppose that $\delta \le \frac{1}{2}$. There exist $\e>0$ and $q_0>1$ such that for all square free $q\in \N$ with $(q, q_0)=1$, 
 $$\{\Re s>\delta -\e\} $$
 is a resonance free region  for the resolvent $R_{\G(q)}$ except for a simple pole at $s=\delta$.
 \end{Thm}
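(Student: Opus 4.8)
The plan is to deduce Theorem \ref{resolvent} from the uniform exponential mixing statement of Theorem \ref{main2} by way of the standard dictionary between the resolvent $R_{\G(q)}(s)$ of the Laplacian on $\G(q)\ba\bH^2$ and the Laplace transform in $t$ of the geodesic-flow matrix coefficients $\la a_t\psi_1,\psi_2\ra_{\G(q)\ba G}$. First I would record the elementary but crucial point that the \emph{constants} $\eta>0$, $C\ge 3$ and $q_0$ produced in Theorem \ref{main2} do not depend on $q$; this is precisely what converts a pointwise-in-$q$ resonance-free region into a uniform one. Fix test functions $\psi_1,\psi_2\in C_c^\infty(\G(q)\ba\bH^2)$ that are $\SO(2)$-invariant, so that (as noted after Theorem \ref{main2}) the Burger-Roblin functionals $m_q^{\BR}(\psi_i)$, $m_q^{\BR_*}(\psi_i)$ are integrals against the base eigenfunction $\phi_o$ of eigenvalue $\delta(1-\delta)$, and recall that when $\delta\le\frac12$ this $\phi_o$ is \emph{not} in $L^2$, so it contributes a genuine pole rather than a point spectrum.

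Next I would write the spectral-theoretic identity expressing $\la R_{\G(q)}(s)\psi_1,\psi_2\ra$ in terms of $\int_0^\infty e^{-(s-1/2)t}\la a_t\psi_1,\psi_2\ra\,dt$ (the precise normalization of the exponent depends on the conventions for $a_t$ and for $s(1-s)$; one gets a Laplace transform that converges absolutely for $\Re s$ large and continues the resolvent). Substituting the asymptotic expansion \eqref{em4} into this transform, the main term $\tfrac{1}{m_q^{\BMS}(\G(q)\ba G)}m_q^{\BR}(\psi_1)m_q^{\BR_*}(\psi_2)\,e^{-(1-\delta)t}$ integrates to something with a simple pole exactly at the value of $s$ corresponding to the exponent $1-\delta$, i.e.\ at $s=\delta$, with residue a nonzero multiple of $\big(\int\psi_1\phi_o\big)\big(\int\psi_2\phi_o\big)$ — this identifies both the location and the simplicity (rank one) of the pole and matches Patterson's theorem. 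The error term $O(\|\psi_1\|_{C^1}\|\psi_2\|_{C^1}\,q^C e^{-\eta t})$ has a Laplace transform that is holomorphic in the strip $\Re s>\delta-\eta'$ for a fixed $\eta'>0$ (one can take $\eta'$ a fixed fraction of $\eta$ after accounting for the shift by $1-\delta$), with a bound that grows only polynomially in $q$ — in particular it is \emph{finite} for every fixed $q$ with $(q,q_0)=1$. Hence $\la R_{\G(q)}(s)\psi_1,\psi_2\ra$ extends holomorphically to $\{\Re s>\delta-\e\}\setminus\{\delta\}$ for $\e=\eta'$ independent of $q$. Running this over a dense family of $\psi_1,\psi_2$ (e.g.\ smooth compactly supported, $\SO(2)$-invariant, which suffices to detect poles of the resolvent as an operator-valued meromorphic function by \cite{CO}) upgrades the scalar statement to the operator statement and yields the claimed uniform resonance-free half plane with only the simple pole at $s=\delta$.

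The main obstacle, and the place where care is genuinely needed rather than routine, is justifying the exchange of the Laplace transform with the asymptotic expansion uniformly down to $\Re s=\delta-\e$: Theorem \ref{main2} is an asymptotic as $t\to+\infty$, so one must control $\la a_t\psi_1,\psi_2\ra$ for small and moderate $t$ as well, and one must ensure the error integral $\int_0^\infty e^{-(\Re s-1/2)t}O(q^C e^{-\eta t})\,dt$ really does converge for $\Re s>\delta-\e$ with an implied constant that is still only polynomial in $q$ — this is where the independence of $\eta$ from $q$ is used in an essential way, and where one must check that no hidden $q$-dependence creeps into the radius $\e$ of the strip. A secondary technical point is that the resolvent identity is naturally stated on $\G(q)\ba\bH^2$ whereas Theorem \ref{main2} lives on $\G(q)\ba G$; passing between them requires the standard observation that $\SO(2)$-invariant functions on $\G(q)\ba G$ are functions on $\G(q)\ba\bH^2$ and that the Casimir/Laplacian acts compatibly, together with the identification of $m_q^{\BR}$ with integration against $\phi_o$ recorded in the introduction. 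Once these points are in place the deduction is a formal manipulation of the kind carried out in \cite{MO} and the predecessors cited there, now made uniform in $q$ by the uniformity in Theorem \ref{main2}.
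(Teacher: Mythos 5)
Your plan is essentially sound, but it is a genuinely different route from the one the paper takes. The paper does not Laplace-transform the matrix coefficients directly: it first invokes Theorem \ref{Clo}, i.e.\ the uniform orbital counting asymptotic $N_q(T;z,w)=C_q(z,w)e^{\delta T}+O(q^{C'}e^{(\delta-\e_0)T})$, which is deduced from Theorem \ref{main2} in \cite{MMO,MO}, and then uses the decomposition $R_q(s)=f(s)P_q(s)+K_q(s)$ of \cite{GN}, where $P_q(s,z,w)=\sum_{\g\in\G(q)}e^{-sd(z,\g w)}=s\int_0^\infty e^{-st}N_q(t;z,w)\,dt$, $K_q(s)$ is holomorphic on $\Re s>\delta-1$ and $f$ is a ratio of Gamma functions; the counting estimate then continues the right-hand side to $\Re s>\delta-\e_0$ with only the simple pole at $s=\delta$. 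Your version works at the level of fixed $\SO(2)$-invariant test functions and transforms $\la a_t\psi_1,\psi_2\ra$ directly, which buys you two things: the polynomial $q$-dependence and the $q$-independence of the strip are visible immediately from \eqref{em4}, and you avoid importing the shrinking-test-function argument that produces pointwise counting in \cite{MO}. What it costs you is that your ``spectral-theoretic identity'' is not literally a Laplace transform with kernel $e^{-(s-1/2)t}$: the correct statement is $\la R_{\G(q)}(s)\psi_1,\psi_2\ra=\int_0^\infty g_s(t)\,2\pi\sinh(t)\,\la a_t\psi_1,\psi_2\ra\,dt$ for $\Re s>\delta$, where $g_s$ is the radial Green's function of $\bH^2$, and you must use its expansion $g_s(t)=c(s)e^{-st}\bigl(1+O_s(e^{-t})\bigr)$ with $c(s)$ holomorphic and nonvanishing on $\Re s>0$, the corrections contributing terms holomorphic on $\Re s>\delta-1$; this is exactly the role played by $f(s)$ and $K_q(s)$ in the paper's citation of \cite{GN}, so the fix is available but must be stated rather than hedged. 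The remaining ingredients you list are the right ones and match the paper's logic: small and moderate $t$ are harmless because the matrix coefficient is bounded and $e^{(\delta-s)t}$ is bounded on compact $t$-ranges, the main term integrates to a simple pole at $s=\delta$ with residue proportional to $m_q^{\BR}(\psi_1)m_q^{\BR_*}(\psi_2)$, and the passage from holomorphy of all matrix elements against $C_c^\infty(\G(q)\ba\bH^2)$ to absence of resonances uses the a priori meromorphic continuation with finite-rank poles from \cite{CO}, \cite{MM}, which you correctly invoke.
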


Let  $\mathcal P_q$ denote the set of all  primitive closed geodesics in 
$\T^1(\G(q)\ba \bH^2)$ and  let $\ell(C)$ denote  the length of $C\in \mathcal P_q$. 
The Selberg zeta function given by
 $$Z_q(s):=\prod_{k=0}^{\infty}\prod_{C\in \mathcal P_q} (1- e^{-(s+k) \ell (C)})$$
  is known to be an entire function when $\G(q)$ is convex cocompact by \cite{GZ}.

Since the resonances of the resolvent of
 the Laplacian give non-trivial zeros of 
 $Z_q(s)$ by \cite{PP}, 
 Theorem \ref{resolvent} follows from the following:
 \begin{Thm}\label{SZ}
There exist $\e>0$ and $q_0>1$ such that for all square free $q\in \N$ with $(q, q_0)=1$,
the Selberg zeta function $Z_q(s)$ is non-vanishing on the set
 $\{\Re(s)>\delta -\e\}$ except for a simple zero at $s=\delta$. 
\end{Thm}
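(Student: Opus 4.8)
The plan is to realise $Z_q(s)$ as a Fredholm determinant of a family of congruence transfer operators --- the same operators that drive the proof of Theorem~\ref{main2} --- and then to locate, uniformly in $q$, the region in which $1$ fails to be an eigenvalue.

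Since $\G$ is convex cocompact, $\Lambda(\G)$ carries a Bowen--Series type coding: a finitely branched, uniformly expanding, real-analytic Markov map $T$ of a complex neighbourhood of $\Lambda(\G)$, with branches in $\G$, whose periodic orbits are in length-preserving bijection with the closed geodesics of $\G\ba\bH^2$. Let $\mathcal L_s h(x)=\sum_{Ty=x}|T'(y)|^{-s}h(y)$ be the associated nuclear Ruelle transfer operator acting on a space of holomorphic functions on that neighbourhood, and let $\mathcal L_{s,q}$ be its twist by the right regular representation of $\bar\G_q:=\G/\G(q)$, acting on $\mathbb C[\bar\G_q]$-valued holomorphic functions. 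The standard computation of $\operatorname{Tr}\mathcal L_{s,q}^{\,n}$ over the periodic points of $T^n$ --- the product over $k$ in the definition of $Z_q$ being produced by the Taylor expansion at those points --- yields $Z_q(s)=\det(1-\mathcal L_{s,q})$ for all $s\in\mathbb C$ (classical when $q=1$; in general decompose into isotypes of $\bar\G_q$ and record the holonomy). Hence $Z_q(s_0)=0$ precisely when $1\in\operatorname{spec}(\mathcal L_{s_0,q})$, with the order of the zero equal to the algebraic multiplicity of the eigenvalue, and the theorem is equivalent to: $1\notin\operatorname{spec}(\mathcal L_{s,q})$ on $\{\Re s>\delta-\e\}\setminus\{\delta\}$ uniformly in $q$, and $1$ is a simple eigenvalue of $\mathcal L_{\delta,q}$.

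For $\Re s>\delta$ this is immediate: the Poincar\'e series converges and, the twist being unitary, $\|\mathcal L_{s,q}^{\,n}\|\le\|\mathcal L_{\Re s}^{\,n}\|\to0$ geometrically, so the spectral radius of $\mathcal L_{s,q}$ is $<1$, uniformly in $q$. On the trivial isotype $\mathcal L_{s,q}$ restricts to the untwisted $\mathcal L_s$, whose Ruelle--Perron--Frobenius leading eigenvalue $\lambda(s)$ is simple, isolated, real-analytic near $s=\delta$ with $\lambda(\delta)=1$ and $\lambda'(\delta)<0$ --- so $\lambda$ is locally injective there and $\lambda(s)=1$ only at $s=\delta$ --- and for which topological mixing of the geodesic flow forces the spectral radius at $\delta+ib$ to be $<1$ for every real $b\ne0$. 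Thus $Z_q$ has a \emph{simple} zero at $s=\delta$, and it remains to control the strip $\{\delta-\e<\Re s\le\delta\}\setminus\{\delta\}$. Here the bound on the spectral radius of $\mathcal L_{s,q}$ is exactly what the proof of Theorem~\ref{main2} provides: for $|\Im s|\le\omega_0$ one combines the untwisted facts above with the Bourgain--Gamburd--Sarnak uniform spectral gap, namely that the nontrivial isotypes of $\mathcal L_{s,q}$ have spectral radius $\le1-c_0$ with $c_0>0$ independent of $q$ throughout a $q$-independent box about $s=\delta$; and for $|\Im s|\ge\omega_0$ one invokes the Dolgopyat-type cancellation estimate $\|\mathcal L_{s,q}^{\,n}\|\le C(1+|\Im s|)^{A}(1-c)^{n}$ with $c>0$ and all constants independent of $q$. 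In both regimes the spectral radius is $<1$, so $1\notin\operatorname{spec}(\mathcal L_{s,q})$; patching the ranges $\Re s>\delta$, $|\Im s|\le\omega_0$, $|\Im s|\ge\omega_0$ yields the claimed zero-free region with a simple zero at $s=\delta$, uniformly over square free $q$ with $(q,q_0)=1$.

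The perturbative and complex-analytic bookkeeping, the determinant identity, and the Ruelle--Perron--Frobenius input are all routine; the genuine obstacle --- and the source of the hypotheses ``$q$ square free, $(q,q_0)=1$'' --- is the $q$-uniformity of the two transfer-operator facts above, i.e. the uniform spectral gap of $\mathcal L_{s,q}$ off the trivial isotype near $s=\delta$ and the uniform Dolgopyat contraction for $|\Im s|$ bounded below, both of which are established via the $\ell^2$-flattening lemma and the expander machinery of Bourgain--Gamburd--Sarnak \cite{BGS} already in the proof of Theorem~\ref{main2}. (Alternatively one can bypass the transfer operators: the meromorphically continued resolvent $R_{\G(q)}(s)$ has matrix coefficients expressible through Laplace transforms $\int_0^\infty e^{(1-s)t}\langle a_t\psi_1,\psi_2\rangle\,dt$ of the correlation functions, so \eqref{em4} directly gives their holomorphic continuation past $\Re s=\delta-\e$ with only a simple pole at $s=\delta$, and \cite{PP} converts this into the statement about $Z_q$; but the transfer-operator route tracks the $q$-dependence and the simplicity of the zero more transparently.) Granting these inputs, Theorem~\ref{SZ}, and with it Theorem~\ref{resolvent}, follows as sketched.
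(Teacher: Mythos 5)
Your overall strategy (realize $Z_q$ as a Fredholm determinant of congruence transfer operators and rule out the eigenvalue $1$ uniformly in $q$) is a legitimate route to Theorem \ref{SZ}, but it is not the paper's route, and as written it has a genuine gap at its central step. The paper obtains Theorem \ref{SZ} from Theorem \ref{main2} through the counting asymptotics of Theorem \ref{Clo}, quoted from \cite{MMO,MO}: the uniform prime geodesic theorem $\mathcal P_q(T)=\operatorname{li}(e^{\delta T})+O(q^{C'}e^{(\delta-\e_0)T})$ is fed into the Euler product, giving holomorphic continuation of $Z_q'/Z_q$ (equivalently, of the Poincar\'e series entering the resolvent, as in the explicit proof of Theorem \ref{resolvent}) to $\Re s>\delta-\e$ with a single simple pole, of residue $1$, at $s=\delta$; since $Z_q$ is entire by \cite{GZ}, this yields the zero-free region and the simplicity of the zero. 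No dynamical determinant identity is used anywhere in the paper.

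The gap in your argument is the assertion that the operators $\mathcal L_{s,q}$ in the identity $Z_q(s)=\det(1-\mathcal L_{s,q})$ are ``the same operators that drive the proof of Theorem \ref{main2}.'' They are not. The determinant identity requires nuclear transfer operators for a holomorphic (Bowen--Series/Schottky) expanding map acting on spaces of holomorphic functions, whereas the operators $\hat{\mathcal M}_{ab,q}$ of Theorem \ref{spectralbound} act on Lipschitz functions over a Markov section for the geodesic flow, with roof function $\tau$; they are not compact, and the bounds proved for them are mixed $L^2(\nu)$ versus $\|\cdot\|_{1,b}$ estimates with a $q^C$ loss. To run your scheme one must either re-establish the uniform Dolgopyat and Bourgain--Gamburd--Sarnak estimates for the twisted holomorphic operators --- which is essentially the content of \cite{MOW} and \cite{BKM}, a substantial separate argument rather than ``routine bookkeeping'' --- or supply a genuine comparison between the two codings and weights ($\tau$ versus $\log|T'|$, which agree only up to coboundary and a change of symbolic model) strong enough to transport the spectral bounds to the nuclear operators; neither step is provided. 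The remaining ingredients you list (the region $\Re s>\delta$, the trivial isotype via RPF plus classical Dolgopyat, simplicity at $s=\delta$ via $\lambda'(\delta)<0$) are fine, and your parenthetical alternative via Laplace transforms of correlation functions is close in spirit to what the paper actually does; note, however, that to pass from a resonance-free region back to non-vanishing of $Z_q$ one needs the full divisor correspondence (zeros of $Z_q$ equal resonances together with topological zeros), not merely the direction ``resonances give zeros'' invoked in the paper.
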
 

 \subsection{On the proof of Main theorems}
 Theorem \ref{main2} is deduced from the following uniform exponential mixing of the Bowen-Margulis-Sullivan measure $m^{\BMS}_q$:
\begin{thm}\label{BMS2} \label{t5.1} There exist $\eta>0, C\ge 3,$ and $ q_0 > 0$ such that, for all square free $q\in \N$ coprime to $q_0$,
and for any $\psi_1,  \psi_2\in C_c^1(\G(q)\ba G)$, we have
\begin{multline} \label{bms2}  \int_{\G(q)\ba G} \psi_1 (ga_t) \psi_2 (g)\; dm^{\BMS}_{q}(g) =
\\ \frac{1}{m_q^{\BMS}(\G(q)\ba G)} m^{\BMS}_{q}(\psi_1) \cdot m^{\BMS}_{ q}(\psi_2) +O(||\psi_1 ||_{C^1} ||\psi_2||_{C^1}\cdot q^C
 \cdot e^{-\eta t})\end{multline}
as $t\to +\infty$, with the implied constant depending only on $\Gamma$.
\end{thm}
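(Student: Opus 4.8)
The plan is to establish Theorem~\ref{BMS2} by extending Dolgopyat's dynamical approach to exponential mixing of the geodesic flow, carried out uniformly over the congruence tower. The first step is to pass from the homogeneous dynamics on $\G(q)\ba G$ to a symbolic model: one fixes a Markov section for the geodesic flow on the non-wandering set of $\G\ba G$ (equivalently, codes the limit set $\Lambda(\G)$ by a finite-state subshift of finite type, using that $\G$ is convex cocompact), and thereby reduces the study of the $\BMS$-correlations to the spectral analysis of a family of transfer operators $\L_{s}$ acting on H\"older functions on the symbol space, with the potential given by $-\delta$ times the geometric (Busemann) cocycle. The congruence cover $\G(q)\ba G \to \G\ba G$ is finite, so it corresponds to twisting the transfer operator by the permutation representation of $\G$ on $\G(q)\ba\G$; this yields \emph{congruence transfer operators} $\L_{s,q}$ acting on vector-valued H\"older functions, and the matrix coefficient in \eqref{bms2} is controlled by iterated norms of $\L_{s,q}$ with $s = (1-\delta) + ib$, $b\in\R$.

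The core of the argument is then to prove uniform (in $q$) spectral bounds for $\L_{s,q}$: there exist constants $\rho<1$, $C\ge 3$, $A\ge 1$ and $q_0$ such that for square-free $q$ coprime to $q_0$, all $b$ with $|b|$ sufficiently large, and all $n\ge C\log q$, one has $\|\L_{(1-\delta)+ib,q}^n f\| \le \rho^{n}\,q^{A}\,\|f\|$ in an appropriate H\"older norm. The plan is to split the frequency range. For $|b|$ in a bounded range (including $b=0$), the needed bound is exactly the content of the Bourgain--Gamburd--Sarnak expander machinery: the $\ell^2$-flattening lemma (Lemma~\ref{bgsflatteninglemma}) together with the spectral gap for the congruence quotients produces decay of the averaging operators on the congruence fibers after $\sim\log q$ steps, and feeding this into the transfer operator iteration gives the uniform contraction; this is where the restriction to square-free $q$ with no small prime divisors enters. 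For large $|b|$, one runs Dolgopyat's $L^2$ argument: construct Dolgopyat denseness cones and a family of cancellation-producing operators, exploiting the local non-integrability of the stable/unstable foliations and the non-constancy of the roof function, to get contraction on the scale $n\sim\log|b|$; the key point is that all the geometric quantities (distortion bounds, the non-concentration/non-local-integrability constants, the size of the Markov section) depend only on $\G$ and not on $q$, so the large-$b$ estimate is automatically uniform. One then interpolates/combines the two regimes, and patches together the $\log q$-scale (expander) and the $\log|b|$-scale (Dolgopyat) estimates, which is what forces the polynomial loss $q^{C}$ and the final exponential rate $\eta>0$.

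With the uniform transfer operator bounds in hand, Theorem~\ref{BMS2} follows by the standard dictionary: one writes the correlation function $t\mapsto \int \psi_1(ga_t)\psi_2(g)\,dm^{\BMS}_q(g)$ via the symbolic coding, expands in terms of $\L_{(1-\delta)+ib,q}$ through a Laplace/Fourier transform in $t$ (or directly via the Ruelle--Perron--Frobenius decomposition, isolating the leading eigenvalue $e^{0}$ coming from $s=1-\delta$ whose eigenprojection reproduces the product of $\BMS$-measures normalized by $m^{\BMS}_q(\G(q)\ba G)$), and bounds the remaining contour/remainder using the uniform spectral gap; the $q^{C}$ and $e^{-\eta t}$ in \eqref{bms2} are inherited directly from the transfer operator estimate, and the passage from H\"older norms on symbol space to $C^1$-norms on $\G(q)\ba G$ is routine smoothing, uniform in $q$ because it is performed downstairs on $\G\ba G$ and lifted.

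The main obstacle is the large-$|b|$ regime done \emph{uniformly in $q$}: one must run Dolgopyat's oscillatory-cancellation scheme on the vector-valued (congruence) transfer operators without any constant degenerating as $q\to\infty$. The delicate points are (i) the Dolgopyat denseness-cone construction and the non-local-integrability (NLI) estimate must be shown to be insensitive to the congruence fiber — this works because the fiber only permutes coordinates and the underlying base dynamics is fixed, but it requires carefully setting up vector-valued analogues of the scalar lemmas; and (ii) the two scales $n\sim\log q$ (needed for the expander input to beat the $q^{C}$ multiplicity) and $n\sim\log|b|$ (needed for Dolgopyat's cancellation) must be reconciled, i.e., one needs the Dolgopyat contraction to persist over the longer $\max(\log q,\log|b|)$ window, which is the technical heart and the source of the explicit exponents $C$ and $\eta$.
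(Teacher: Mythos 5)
Your proposal follows essentially the same route as the paper: Markov-section symbolic coding, congruence (vector-valued) transfer operators twisted by the locally constant cocycle, Dolgopyat-type cancellation for large $|b|$ (uniform in $q$ because the fiber action only permutes coordinates and is unitary), the Bourgain--Gamburd--Sarnak $\ell^2$-flattening input on the $\log q$ scale for bounded $|b|$, and a Laplace-transform/spectral argument converting the uniform operator bounds into exponential decay of the $\BMS$ correlations, with the main term isolated via the leading eigenprojection (equivalently, the decomposition into the $\G$-invariant part and the part summing to zero over the fibers). The only quibble is notational: for the $\BMS$ correlation the relevant potential is $-(\delta+a-ib)\tau$ with $a$ near $0$ (leading eigenvalue $1$ at $a=0$), not $s=(1-\delta)+ib$; the $e^{(1-\delta)t}$ normalization enters only in the Haar-measure statement.
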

Theorem \ref{BMS2} also holds when $\G$ has a parabolic element by \cite{MO}.
For a fixed $q$, Theorem \ref{BMS2} was obtained by Stoyanov \cite{St}.

We begin by discussing the proof of Theorem \ref{BMS2}.
The first step  is to use Markov sections constructed by Ratner \cite{Ra} and Bowen \cite{Bow} to build a symbolic model for the $a_t$-action on the space $\Gamma \ba G$. The Markov section gives a subshift $(\Sigma, \sigma)$ of finite type in an alphabet $\lbrace i_1,  \ldots ,  i_k\rbrace$, together with the associated space $(\Sigma^+, \sigma)$ of one sided sequences. 
Denote by $\tau : \Sigma \rightarrow \mathbb{R}$ the first return time for the flow $a_t$.  The corresponding suspension $\Sigma^\tau$ has a natural flow $\mathcal{G}_t$, a finite measure $\mu$ and an embedding
\[  \zeta : (\Sigma^\tau, \mu, \mathcal{G}_t) \rightarrow  (\Gamma \backslash G, m^{\BMS}, a_t)\]
which is an isomorphism of measure theoretic dynamical systems: this is our symbolic model. This framework will be the topic of Section 2. 

From the one sided shift we construct, for each $a, b\in \br$,  the transfer operator
$\mathcal{L}_{ab}   : C(\Sigma^+) \rightarrow C(\Sigma^+)$ by
\[ (\mathcal{L}_{ab}h)(x)  = \sum_{\sigma (y) = x} e^{-(\delta + a -ib)\tau(y)} h(y) .\]
Pollicott's observation, later used and refined by many other authors (\cite{Do}, \cite{St}, \cite{Av}), was that
the Laplace transform of the correlation function for the system $(\Sigma^\tau, \mu, \mathcal{G}_t) $ can be expressed in terms of
transfer operators using the Ruelle-Perron-Frobenius theorem, and that
the exponential mixing of $(\Sigma^\tau, \mu, \mathcal{G}_t) $ and hence that of $(\Gamma \backslash G, m^{\BMS}, a_t)$ follows 
if we prove a uniform spectral bound on $\mathcal{L}_{ab}$ for H\"older observables to be valid on $|a| \leq a_0$ for some $a_0>0$.

We write $\SL_2(q)$ for the finite group $\SL_2(\z/q\z)$. Following this approach, 
we define congruence transfer operators
$\mathcal{M}_{ab, q} $ on the space $C(\Sigma^+, \c^{\SL_2(q)})$
of vector-valued functions for each $q$ satisfying $\SL_2(q)=\G(q)\ba \G$ (which is the case whenever $q$ does not have small prime divisors): for $x\in \Sigma^+$ and $\gamma \in \SL_2(q)$,
\[ (\mathcal{M}_{ab, q}H)(x, \gamma)  = \sum_{\sigma (y) = x} e^{-(\delta + a -ib)\tau(y)} H(y, \gamma \coc^{-1}(y))) \]
where $\coc: \Sigma^+ \rightarrow \Gamma$ is a cocyle which records the way the $a_t$-flow moves elements from one fundamental domain to another.
The natural extension of Pollicott's idea tells us that {\it uniform} exponential mixing of $ (\Gamma(q) \backslash G, m_q^{\BMS}, a_t)$ will follow if we can establish certain spectral bounds for $\mathcal{M}_{ab, q}$ 
 uniformly for all $|a| \leq a_0$, $b\in \br$ and all $q$ large. This reduction will be carried out in Section 5.

The proof of spectral bounds for transfer operators traditionally falls into two parts. In Section 3 we shall consider the case where $|b|$ is large. The key ideas here are due to Dolgopyat, who gave an ingenious, albeit highly involved, proof of the relevant bounds for $\mathcal{L}_{ab}$ under additional assumptions. 
We will follow a treatment due to Stoyanov, who carries out the bounds on $\mathcal{L}_{ab}$ for axiom A flows. 
The bounds follow from an iterative scheme involving Dolgopyat operators, whose construction relies on the highly oscillatory nature of the functions $e^{ib\tau}$ when $|b|$ is large. This oscillation is also sufficient to establish bounds on the congruence transfer operators
$\mathcal{M}_{ab, q}$; see Theorem \ref{Dargtheorem}. Because the oscillation relies only on local non-integrability properties, the bounds we obtain are uniform in $q$. It is crucial for this argument that the cocycle $\coc$ is locally constant on an appropriate length scale, so that it doesn't interfere with the oscillatory argument. 

We are left, in Section 4, with the proof of the bounds on $\mathcal{M}_{ab, q}$ for $|a| \leq a_0$ and $|b|$ small. The bounds for $\mathcal{L}_{ab}$ in this region follow immediately from the complex Ruelle-Perron-Frobenius theorem and a compactness argument. Since we require bounds on $\mathcal{M}_{ab, q}$ uniformly in $q$, however, this compactness argument is not available to us; instead we follow the approach and use the expansion machinery of Bourgain-Gamburd-Sarnak
\cite{BGS}.

The expansion approach relies on the idea that $\G(q) \backslash G \sim \SL_2(q)  \times \G \ba G$. Very roughly, the hyperbolic nature of geodesic flow allow us to separate variables and to consider functions that are "independent" of the $\G \ba G$ component. We are left considering functions on $\SL_2(q)$; for such functions, the right action of the cocycle $\coc$, together with the expansion machinery and the $\ell^2$-flattening lemma produce the required decay. 
One essential estimate in this argument is proved by means of Sullivan's shadow lemma and the description of the relevant measures
in terms of the Patterson-Sullivan density. The memoryless nature of the Markov model for our flow is crucial here, as it allows us to relate these estimates to certain convolutions.

Theorem \ref{main2} is deduced from Theorem \ref{BMS2} by comparing the transverse intersections for the expansion of
a horocyclic piece, based on the quasi-product structures of the Haar and the BMS measures. 
 
  \medskip
In joint work with Magee \cite{MOW}, we extend a main theorem of \cite{Naud} uniformly over $q$ as well, which has an application to sieve for orbits of a semigroup as used in the work of Bourgain and Kontorovich on Zaremba's conjecture \cite{BK}.
We expect that our methods in this paper generalize 
to convex cocompact thin subgroups $\G$ of $\SO(n,1)$ and moreover to a general rank one group, which we hope to address in a subsequent paper.

\subsection*{Remark:} After submission of this paper new arguments have been developed that allow Theorem \ref{main2}  (and hence Theorems \ref{resolvent} and \ref{SZ}) to be proved without the assumption that $q$ be square free. The key point is to replace the $\ell^2$-flattening lemma with the expansion results of Bourgain and Varju \cite{BV} in the proofs of Propositions \ref{arb1} and \ref{arb2}. The new arguments are described in a recent preprint \cite{BKM} for the setting of \cite{MOW}, and should require only minor modification to apply in our setting.



\subsection*{Acknowledgements:} We are grateful to the
 referee for helpful remarks on the paper, especially for 
 providing  an alternative succinct argument in the deduction of Theorem \ref{resolvent} from Theorem \ref{main2}.

\section{Congruence transfer operators}
In the whole paper, let $G = \SL_2(\R)$ and let $\Gamma <G$  be a non-elementary, convex cocompact subgroup containing the negative  identity. 
We assume that $-e$ is the only torsion element of $\G$. 
If $p:\SL_2(\br)\to \PSL_2(\br)$ is the canonical projection, then $p(\G)$ is a convex cocompact torsion-free subgroup of $\PSL_2(\br)$ and
we have $\G\ba \SL_2(\br)=p(\G)\ba \PSL_2(\br)$. Since our results concern the quotient space $\G\ba G$,
we will henceforth abuse notation so that sometimes $G=\PSL_2(\br)$
and our $\G$ is considered as a torsion-free subgroup of $\PSL_2(\br)$.

We recall that the limit set $\Lambda(\G)$ is a minimal non-empty closed $\G$-invariant subset of the boundary $\partial \bH^2$, and its
Hausdorff dimension $\delta = \delta_\Gamma$  is equal to the critical exponent of $\G$ (see \cite{Pa}).

We denote by $\{\mu_x=\mu_x^{\PS}: x\in \bH^2\}$ the Patterson-Sullivan density for $\G$; that is, each $\mu_x$ is a finite measure
on $\Lambda(\G)$ satisfying
\begin{enumerate}
\item $\gamma_* \mu_x=\mu_{\gamma x}$ for all $\gamma\in \G$;
\item $\frac{d\mu_x}{d\mu_y}(\xi)=e^{\delta \beta_{\xi}(y, x)}$ for all $x,y\in \bH^2$ and $\xi\in \partial(\bH^2)$.
\end{enumerate}
Here $\beta_{\xi} (y,x)$ denotes the Busemann function:
$\beta_{\xi}(y,x)=\lim_{t\to\infty} d(\xi_t, y) -d(\xi_t, x)$ where
$\xi_t$ is a geodesic ray tending to $\xi$ as $t\to \infty$.
Since $\G$ is convex cocompact, $\mu_x$ is simply the $\delta$-dimensional Hausdorff measure on $\Lambda(\G)$ with respect to a spherical metric
viewed from $x$ (up to a scaling). See \cite{Pa} and \cite{Sullivan1979} for references.

Fixing $o\in \bH^2$, the map
$u\mapsto (u^+, u^-, s=\beta_{u^-}(o, u)) $
is a homeomorphism between $\T^1(\bH^2)$ and
$ (\partial(\bH^2)\times \partial(\bH^2) - \{(\xi,\xi):\xi\in \partial(\bH^2)\})  \times \br $.
Using this homeomorphism, and the identification of $\PSL_2(\br)$ with $\T^1(\bH^2)$,
the Bowen-Margulis-Sullivan measure
$\tilde m^{\BMS}=\tilde m^{\BMS}_\G$ on $\PSL_2(\br)$  is defined as follows:
\begin{align*}
d \tilde m^{\BMS}(u)&= e^{\delta \beta_{u^+}(o, u)}\;
 e^{\delta \beta_{u^-}(o, u) }\; d\mPS{o}(u^+) d\mPS{o}(u^-) ds . \end{align*}
 This definition is independent of the choice of $o\in \bH^2$, but does depend on $\G$.

We denote by $m^{\BMS}$ the measure 
on $\G\ba G$ induced by $\tilde m^{\BMS}$; it is called the Bowen-Margulis-Sullivan measure on $\G\ba G$, or the BMS measure for short.

Let $A= \lbrace a_t=\text{diag}(e^{t/2}, e^{-t/2}):t\in \br \rbrace$. 
The right translation action of $A$ on $\G \backslash G$ corresponds to the geodesic flow on $\mathrm{T}^1(\G \backslash \mathbb{H}^2)$. It is easy to check that
the BMS measure is $A$-invariant.  We choose the left $G$- and right $\operatorname{SO}_2(\mathbb{R})$-invariant metric $d$ on $G$ such that $d(e, a_t) = t$. 

Let $N^+$ and $N^-$ be the expanding and contracting horocyclic subgroups for $a_t$:
  \be \label{definenpm} N^+=\{n^+_s:=\begin{pmatrix} 1 & 0 \\ s & 1\end{pmatrix}: s\in \br\}\quad \text{and}\quad N^-=
  \{n^-_s:=\begin{pmatrix} 1 & s \\ 0 & 1\end{pmatrix}: s\in \br\} .\ee
  For $\epsilon > 0$, we will denote by $N^\pm_\epsilon$ the intersection of the $\epsilon$ ball around the identity, $B_\epsilon(e)$, with $N^\pm$. 
 
 We fix a  base point $o \in \mathbb{H}^2$ in the convex hull of the limit set $\Lambda(\Gamma)$, and write $\Omega$ for the support of the BMS measure. The geodesic flow $a_t:\Omega \to \Omega$ is known to be mixing for the BMS measure $m^{\BMS}$ by Rudolph \cite{Ru} (see also \cite{Ba}). Since $\Gamma$ is convex cocompact, $\Omega$ is compact and there is a uniform positive lower
bound for the injectivity radii for points on $\Gamma\backslash G$, which we will simply call the injectivity radius of $\Gamma$.

\subsection{Markov sections} We refer to \cite{handbook} for basic facts about Markov sections. Let $\alpha>0$ be a small number.
 Consider a finite set $z_1, \ldots , z_k$ in $\Omega$ and choose small compact neighborhoods $U_i$ and $S_i$ of $z_i$ in $z_iN_\alpha^+ \cap \Omega$ and  $ z_i N_\alpha^-\cap \Omega$ respectively of diameter at most $\alpha/2$. We write $\mbox{int}^u(U_i)$ for the interior of $U_i$ in the set $z_iN^+_\alpha \cap \Omega$ and define $\mbox{int}^s(S_i)$ similarly.  We will assume that $U_i$ (respectively $S_i$) are proper, that is to say, that $U_i = \overline{\mbox{int}^u(U_i)}$ (respectively $S_i = \overline{\mbox{int}^s(S_i)}$). 
 For $x \in U_i$ and $y \in S_i$, we write $[x, y]$ for the unique local intersection of $xN^-$ and $yN^+A$. We write the rectangles as 
\[ R_i = [U_i, S_i] := \{ [x, y]: x\in U_i, y \in S_i \}\]
and denote their interiors by
\[ \mbox{int}(R_i)= [\mbox{int}^u(U_i), \mbox{int}^s(S_i)] .\]
Note that $U_i = [U_i, z_i] \subset R_i$. The family $\mathcal{R} = \lbrace R_1 , \ldots R_k \rbrace$ is called a complete family of size $\alpha >0$ if
\begin{enumerate}
\item $ \Omega = \cup_1^k R_i a_{[0, \alpha]}$
\item the diameter of each $R_i$ is at most $\alpha$, and 
\item for any $i \neq j$, at least one of the sets $R_i \cap R_ja_{[0, \alpha]}$ or $R_j \cap R_ia_{[0, \alpha]}$ is empty.
\end{enumerate}

Set $ R = \coprod_i R_i$.
Let $\tau: R \rightarrow \mathbb{R}$ denote the first return time and $\mathcal{P} : R \mapsto R$ the first return map:
 $$\tau(x) := \inf \lbrace t> 0 : xa_t \in R\rbrace \quad \text {and} \quad \mathcal P(x):= xa_{\tau(x)}.$$

\begin{dfn}[Markov section]  \rm A complete family
$\mathcal{R} := \lbrace  R_1 \ldots R_k\}$ of size $\alpha$ is called  a Markov section for the flow $a_t$
if the following the Markov property is satisfied:
\[ \mathcal{P}([\operatorname{int}^uU_i, x])  \supset [\operatorname{Int}^uU_j, \mathcal P (x)]
   \mbox{ and }  \mathcal{P}([x, \operatorname{Int}^sS_i] )) \subset [\mathcal P (x),\operatorname{Int}^sS_j]  \]
whenever $x\in \op{int} (R_i)\cap {\mathcal P}^{-1}(\op{int}(R_j))$. 
\end{dfn}  We consider the $k\times k$ matrix
\[ \operatorname{Tr}_{lm} = \left\lbrace  \begin{array}{ll} 1 \mbox{ if }\mbox{int} (R_l) \cap \mathcal{P}^{-1} \mbox{int} (R_m)  \neq \emptyset \\  0 \mbox{ otherwise, }  \end{array} \right.\]
which we will refer to as the transition matrix.
The transition matrix $\mathrm{Tr}$ is called topologically mixing if
there exists a positive integer $N$ such that all the entries of $\mathrm{Tr}^N$ are positive.
Ratner \cite{Ra} and Bowen \cite{Bo} established the existence of Markov sections of arbitrarily small size; using an argument of Bowen and Ruelle \cite{BRu} we may further assume that the associtaed tranition matrix is topologically mixing. We now fix such an $\mathcal{R} = \lbrace R_1=[U_1, S_1], \ldots , R_k= [U_k, S_k]\rbrace$ of size $\alpha$,
where $\alpha>0$ satisfies
\[ \alpha <\tfrac{1}{1000} \cdot \mbox{Injectivity radius of $\Gamma\ba G$} \]
and for all $|s| < 4\alpha$,
\be d(e, n^+_s) \leq |s| \leq 2d(e, n^+_s) \label{boundaalpha} . \ee 
Note that $k \geq 2$ as a consequence of the non-elementary property of $\G$. 

Write
 $$U := \coprod_i U_i \quad \mbox{ and } \quad  \mbox{int}(R) = \coprod_i \mbox{int}(R_i).$$

 The projection map along stable leaves
\[ \pi_S: R \rightarrow U, \mbox{ taking } [x, y] \mapsto x\]
will be very important for us at several stages of the argument. We will write $\Pp$ for the map
$$ \Pp:= \pi_S \circ \mathcal{P}:  U \rightarrow  U. $$

 \begin{dfn}\rm We define the cores of $R$ and $U$ by  
 $$\hat R = \lbrace x \in R: \mathcal{P}^{m} x \in \mbox{int} (R ) \mbox{ for all } m \in \mathbb{Z} \rbrace \mbox{, and}$$
$$ \hat U = \lbrace u \in U : \Pp^{m} u \in \mbox{int}^u(U) \mbox{ for all } m \in  \mathbb{Z}_{\geq 0} \rbrace. $$
\end{dfn}

Note that $\hat {R}$ is $\mathcal{P}$-invariant, and that $\hat U$ is $\Pp$-invariant. The cores are residual sets (that is, their complements are countable unions of nowhere dense closed sets).


\subsection{Symbolic dynamics} \label{ss2.2} We choose $\Sigma$ to be the space of bi-infinite sequences $x \in \lbrace 1, \ldots, k \rbrace^\mathbb{Z}$ such that $\operatorname{Tr}_{x_l x_{l+1}} = 1$ for all $l$. Such sequences will be said to be admissible. We denote by $\Sigma^+$ the space of one sided admissible sequences 
$$\Sigma^+= \lbrace (x_i)_{ i\ge 0 } : \operatorname{Tr}_{x_i, x_{i + 1}} = 1 \mbox{ for all } i \geq 0 \rbrace.$$
We will write $\sigma: \Sigma \rightarrow \Sigma$ for the shift map $(\sigma x)_i = x_{i+1}$.
By abuse of notation we will also allow the shift map to act on $\Sigma^+$. 

\begin{dfn}\rm\rm
For $\theta \in (0, 1)$, we can give a metric $d_\theta$ on $\Sigma$ (resp. on $\Sigma^+$) by choosing
\[ d_\theta(x, x') = \theta^{\inf \lbrace |j|: x_j \neq x'_j \rbrace}.\]
\end{dfn}

For a finite admissible sequence $i = (i_0, \ldots , i_m)$, we obtain a cylinder  of length $m$:
\be\label{defcy}\mathsf{C}[i] := \lbrace u\in\hat U_{i_0}: \Pp^j(u) \in \mbox{int}(U_{i_j}) \mbox{ for all } 0\leq j \leq m \rbrace. \ee
Note that cylinders of length $0$ are precisely $U_i$'s and that cylinders are open subsets of $\hat U$. By a closed cylinder, we mean the closure of some (open) cylinder. We also take this opportunity to introduce embeddings of the symbolic space into the analytic space.

\begin{dfn} [The map $\zeta:\Sigma \rightarrow \hat R$]   \rm
For $ x \in \hat R$, we obtain a sequence $\omega = \omega(x) \in \Sigma$ by requiring  $\mathcal{P}^k x\in R_{\omega_k}$  for all $k \in \mathbb{Z}$. The set 
$ \hat \Sigma := \lbrace \omega(x) : x \in \hat R \rbrace $
is a residual set in $\Sigma$. Using the fact that any distinct pair of geodesics in $\mathbb{H}^2$ diverge from one another (either in positive time or negative time), one can show that the map $x \mapsto \omega(x)$ is injective. We now define a continuous function $\zeta:\Sigma \rightarrow \hat R$ by choosing $\zeta(\omega(x)) = x$ on $\hat \Sigma$ and extending continuously to all of $\Sigma$. 
\end{dfn}

The restriction $\zeta : \hat \Sigma \rightarrow \hat R$ is known to be bijective and to intertwine $\sigma$ and $ \mathcal{P}$. 
 \begin{dfn} [The map $\zeta^+ : \hat \Sigma ^+ \rightarrow \hat U$] \rm
 For $u \in \hat U$, we obtain a sequence $\omega'(u)\in \Sigma^+$ by requiring $\mathcal{P}^k x\in R_{\omega'_k}$  for all $k \in \mathbb{Z}_{\geq 0}$. We obtain an embedding $\zeta^+: \Sigma^+ \rightarrow U$ by sending $\omega'(u) \mapsto u'$ where possible and extending continuously.
We write $\hat \Sigma^+:= (\zeta^+)^{-1}(\hat U)$. The restriction $\zeta^+ : \hat \Sigma ^+ \rightarrow \hat U$ is known to be
 bijective and to intertwine $\sigma$ and $ \Pp $. 
\end{dfn}

For $\theta $ sufficiently close to $1$, the embeddings $\zeta, \zeta^+$ are Lipschitz. We fix such a $\theta$ once and for all. The space $C_\theta(\Sigma)$
(resp. $C_\theta(\Sigma^+)$)
of $d_\theta$-Lipschitz functions on $\Sigma$ (resp. on $\Sigma^+$) is a Banach space with the usual Lipschitz norm
\[ ||f||_{d_\theta} =  \sup |f|+ \sup_{x \neq y} \frac{|f(x) - f(y)|}{d_\theta(x, y)}.\]

Writing $\tilde \tau := \tau \circ \zeta \in C_\theta(\Sigma)$, we form the suspension
\[ \Sigma^\tau :=\Sigma \times \mathbb{R} / (x, t + \tilde \tau x)\sim (\sigma x, t) .\]
We write $\hat \Sigma^\tau$ for the set $(\hat \Sigma \times \mathbb{R} / \sim) \subset \Sigma^\tau$. The suspension embeds into the group quotient via the map
\[ \zeta^\tau : \hat \Sigma^\tau \rightarrow \Gamma \backslash G, \quad (x, s) \rightarrow \zeta(x) a_s\]
and has an obvious flow $\mathcal{G}_t: (x, s) \mapsto(x, t+s)$. The restriction $\zeta^\tau : \hat \Sigma^\tau \rightarrow \G\backslash G$ intertwines $\mathcal{G}_t$ and $a_t$.

\subsection{Pressure and Gibbs measures.}
\begin{dfn}\rm\rm
For a real valued function $f \in C_\theta (\Sigma)$, called the potential function, we define the pressure to be the supremum
\[ Pr_\sigma(f) := \sup_\mu \left( \int_{\Sigma} f d\mu + \mbox{entropy}_\mu(\sigma) \right) \]
over all $\sigma$-invariant Borel probability measures $\mu$ on $\Sigma$; here entropy$_\mu(\sigma)$ denotes the measure theoretic entropy of $\sigma$ with respect to $\mu$. 
\end{dfn}
For a given real valued function $f \in C_\theta (\Sigma)$, there is a unique $\sigma$-invariant probability measure on $\Sigma$ that achieves the supremum above, called
the equilibrium state for $f$.  We will denote it $\nu_f$.  It satisfies $\nu_f(\hat \Sigma) = 1$.

To any $\sigma$-invariant measure $\mu$ on $\Sigma$, we can associate a $\mathcal{G}_t$-invariant measure $\mu^\tau$ on $\Sigma^\tau$; simply take the local product of $\mu$ and the Lebesgue measure on $\mathbb{R}$. Our interest in these equilibrium states is justified in light of the following fact.
\begin{Nota} \rm We will write $\nu$ for the $-\delta ( \tau\circ \zeta) $-equilibrium state on $\Sigma$. We remark that the pressure $Pr_\sigma(-\delta (\tau\circ\zeta))$ is known to be zero.  \end{Nota}
\begin{thm} \label{BMSisGIbbs} Up to a normalization,  the measure $m^{\BMS}$ on
 $\Gamma \backslash G$ coincides with the pushforward $\zeta^\tau_* \nu^{ \tau}$.  
\end{thm}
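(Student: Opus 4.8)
The plan is to identify the pushforward $\zeta^\tau_* \nu^\tau$ with $m^{\BMS}$ by checking that it is the unique probability measure maximizing a variational principle that also characterizes the normalized BMS measure. First I would recall the general theory of suspension flows: if $\nu = \nu_{-\delta \tilde\tau}$ is the equilibrium state on $\Sigma$ for the potential $-\delta \tilde\tau$ (with $Pr_\sigma(-\delta\tilde\tau) = 0$), then the suspended measure $\nu^\tau$, after normalizing by $\nu(\tilde\tau) = \int \tilde\tau\, d\nu$, is the equilibrium state for the zero potential on the suspension $\Sigma^\tau$ — equivalently, it is the measure of maximal entropy for the flow $\mathcal{G}_t$, with $h_{\nu^\tau}(\mathcal{G}_1) = \delta$ (Abramov's formula gives $h_{\nu^\tau}(\mathcal{G}_1) = h_\nu(\sigma)/\nu(\tilde\tau)$, and the condition $Pr_\sigma(-\delta\tilde\tau)=0$ forces $h_\nu(\sigma) = \delta\,\nu(\tilde\tau)$).

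Next I would transport this through the measure-isomorphism $\zeta^\tau$. Since $\zeta^\tau: \hat\Sigma^\tau \to \Gamma\backslash G$ intertwines $\mathcal{G}_t$ with $a_t$ and is a bijection on the cores (which are residual, hence carry full measure for any invariant measure), the pushforward $\zeta^\tau_* \nu^\tau$ is an $a_t$-invariant probability measure on $\Omega = \supp(m^{\BMS})$ whose entropy under $a_1$ equals $\delta$. But by a theorem of Sullivan (and Otal--Peigné in the variational form), the measure of maximal entropy for the geodesic flow on a convex cocompact hyperbolic surface is unique, equals the normalized BMS measure, and has entropy exactly $\delta = \delta_\Gamma$. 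Therefore $\zeta^\tau_* \nu^\tau$ must coincide with the normalized $m^{\BMS}$.

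Alternatively — and this is the route I would actually write out if I wanted to avoid quoting the variational characterization of maximal entropy measures — I would verify directly that $\zeta^\tau_*\nu^\tau$ has the correct local product structure. One checks that the Gibbs property of $\nu$ for the potential $-\delta\tilde\tau$ translates, under $\zeta$, into the statement that the conditional measures of $\zeta^\tau_*\nu^\tau$ along unstable horocyclic leaves are, up to the Busemann cocycle factor $e^{\delta\beta_{u^+}(o,u)}$, the Patterson--Sullivan density $\mu_o^{\PS}$ on $\Lambda(\Gamma)$ — this uses that the Patterson--Sullivan density is the $\delta$-dimensional Hausdorff measure in the convex cocompact case, together with the bounded-distortion/Gibbs estimates for $\Pp$-cylinders from Section 2. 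Combined with the analogous statement along stable leaves (using the time-reversal symmetry of the Markov coding) and the fact that the flow direction contributes Lebesgue measure $ds$, one recovers exactly the defining formula $d\tilde m^{\BMS}(u) = e^{\delta\beta_{u^+}(o,u)}e^{\delta\beta_{u^-}(o,u)}\,d\mu_o^{\PS}(u^+)\,d\mu_o^{\PS}(u^-)\,ds$ on $\Omega$, up to the normalization constant.

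The main obstacle is the bookkeeping in the second approach: matching the Gibbs cocycle $\sum_{j=0}^{n-1}(-\delta\tilde\tau)(\sigma^j x)$ with the Busemann-function weight $e^{\delta\beta}$ requires carefully tracking how the first-return time $\tau$ relates to the Busemann cocycle along the $a_t$-orbit as it passes through successive rectangles, and controlling the discrepancy between a cylinder $\mathsf{C}[i]$ and its image under $\zeta^+$ inside $\Lambda(\Gamma)$ — this is where the Lipschitz property of $\zeta, \zeta^+$ and the choice of $\theta$ close to $1$ enter. The first approach is cleaner but offloads the work onto the (standard, but nontrivial) uniqueness theorem for the measure of maximal entropy; given that the paper cites \cite{Ru}, \cite{Ba} for mixing and treats the BMS measure as known, I expect the author's proof to follow the first route, essentially citing that $\nu^\tau$ (suitably normalized) is the measure of maximal entropy and invoking its identification with $m^{\BMS}$.
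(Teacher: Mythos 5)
Your first route is exactly the paper's proof: the authors simply note that $\zeta^\tau_*\nu^\tau$ is a measure of maximal entropy for $(\G\backslash G, a_t)$ by the standard suspension-flow theory (citing the handbook, which subsumes your Abramov-formula computation) and invoke Sullivan's theorem that $m^{\BMS}$ is the unique measure of maximal entropy. So your proposal is correct and takes essentially the same approach; the alternative local-product-structure verification you sketch is not needed.
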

\begin{proof} Sullivan \cite{Sullivan1984} proved that $m^{\BMS}$ is the unique measure of maximal entropy for the $a_t$ action on $\G \backslash G$. On the other hand $\zeta^\tau_*\nu^{\tau}$ is also a measure of maximal entropy on $(\G \backslash G, a_t)$ by \cite{handbook}. The result follows. 
\end{proof}
In particular, this theorem implies that $(\Sigma^\tau, \mathcal{G}_t, \nu^\tau)$ and $(\G\backslash G, a_t, m^{\BMS})$ are measurably isomorphic as dynamical systems via $\zeta^\tau$. One simple consequence is the following corollary.
\begin{Cor} \label{c2.4}  The measures $(\pi \circ \textup{vis}^{-1}) _* \mu^{\PS}_o$ and $(\pi_S\circ \zeta)_* \nu$ are mutually absolutely continuous on each $U_i$ with bounded Radon-Nikodym derivative. 
Here $\textup{vis}$ denotes the visual map from a lift $\tilde{U}_i$  to $\partial(\mathbb{H}^2)$, and $\pi$ is the projection $G \rightarrow \G \backslash G$.
\end{Cor}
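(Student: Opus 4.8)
The plan is to deduce this from Theorem~\ref{BMSisGIbbs} together with the explicit quasi-product description of $\tilde m^{\BMS}$ in terms of the Patterson--Sullivan density, using that on rectangles of size $\alpha$ the Busemann cocycle factors are bounded above and below.

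First I would fix $i$ and pass to a lift $\tilde R_i = [\tilde U_i, \tilde S_i]$ of $R_i$, with $\tilde U_i \subset \tilde z_i N^+_\alpha$. Under the homeomorphism $u \mapsto (u^+, u^-, s)$, the slice $\tilde U_i$ is carried to $\textup{vis}(\tilde U_i) \times \{\xi_i^-\} \times \{0\}$ (all of $\tilde U_i$ shares one backward endpoint, as $\tilde U_i$ lies in a single $N^+$-orbit), and a flowed neighbourhood $\tilde R_i a_{[0,\epsilon_i)}$ is carried to $\textup{vis}(\tilde U_i) \times \textup{vis}^s(\tilde S_i) \times [0,\epsilon_i)$. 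Since every such $u$ lies in a set of diameter $O(\alpha)$, both Busemann terms $\beta_{u^{\pm}}(o,u)$ stay in a bounded interval, so by the formula $d\tilde m^{\BMS}(u) = e^{\delta\beta_{u^+}(o,u)}e^{\delta\beta_{u^-}(o,u)}\,d\mu^{\PS}_o(u^+)\,d\mu^{\PS}_o(u^-)\,ds$ the measure $\tilde m^{\BMS}$ on $\tilde R_i a_{[0,\epsilon_i)}$ equals, up to a density bounded away from $0$ and $\infty$, the product $\mu^{\PS}_o|_{\textup{vis}(\tilde U_i)}\otimes \mu^{\PS}_o|_{\textup{vis}^s(\tilde S_i)}\otimes \Leb|_{[0,\epsilon_i)}$ transported via $(u^+,u^-,s)\mapsto u$. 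Note $\textup{vis}(\tilde U_i)$ has positive $\mu^{\PS}_o$-mass because $U_i\subset\Omega=\supp m^{\BMS}$.

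Next I would bring in the symbolic side. By Theorem~\ref{BMSisGIbbs}, $\zeta^\tau_*\nu^\tau = c\,m^{\BMS}$ for some $c>0$, where $\nu^\tau$ is the local product of $\nu$ with Lebesgue measure. Restricting to the neighbourhood above and slicing transversally to the flow near the base, this says that $(\zeta|_{\hat R_i})_*$ of the restriction of $\nu$ to the cylinder $\{\omega_0 = i\}\cap\hat\Sigma$ is, up to the constant $c$ (and the roof-function normalization $1/\int\tilde\tau\,d\nu$) and a bounded density coming from $\tilde\tau$ being bounded on $R_i$, the time-zero slice of $m^{\BMS}$ on $R_i$, which by the previous paragraph is $\mu^{\PS}_o|_{\textup{vis}(\tilde U_i)}\otimes \mu^{\PS}_o|_{\textup{vis}^s(\tilde S_i)}$ transported to $R_i$ via $[\,\cdot\,,\cdot\,]$, up to bounded density. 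Now push forward under $\pi_S:R_i\to U_i$. Since $\pi_S$ forgets the $S_i$-coordinate, this integrates out the finite nonzero factor $\mu^{\PS}_o|_{\textup{vis}^s(\tilde S_i)}$ and leaves $\mu^{\PS}_o|_{\textup{vis}(\tilde U_i)}$ transported to $U_i$; because moving along stable ($N^-$) leaves preserves the forward endpoint, this transported measure is precisely $(\pi\circ\textup{vis}^{-1})_*\mu^{\PS}_o$ restricted to $U_i$, up to bounded density. On the other hand $(\pi_S\circ\zeta)_*\nu$ restricted to $U_i$ is by construction $(\pi_S)_*$ of $(\zeta|_{\hat R_i})_*\nu$. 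Comparing the two descriptions yields the mutual absolute continuity with bounded Radon--Nikodym derivative.

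The main obstacle is bookkeeping rather than conceptual: one must verify that $\zeta$, $\pi_S$ and $\textup{vis}$ restrict to the appropriate residual sets so that all the pushforwards are defined and nonzero, and — more delicately — that the ``time-zero slice'' of $\nu^\tau$ under $\zeta^\tau$ genuinely recovers $\nu$ up to the normalizing factor and a bounded density. Once the slicing is justified, uniformity of every comparison above is exactly the statement that the Busemann weights $e^{\delta\beta_{u^{\pm}}(o,u)}$ and the return time $\tilde\tau$ are bounded on rectangles of size $\alpha$, which is immediate from $\alpha$ being small and $\Omega$ compact.
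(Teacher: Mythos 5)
Your argument is correct and is exactly the route the paper intends: the paper states Corollary \ref{c2.4} without proof as a ``simple consequence'' of Theorem \ref{BMSisGIbbs}, and your fleshed-out version (flow-box slicing of $\zeta^\tau_*\nu^\tau=c\,m^{\BMS}$, the quasi-product form of $\tilde m^{\BMS}$, boundedness of the Busemann factors and of $\tau$ on a size-$\alpha$ rectangle, then integrating out the $S_i$-factor under $\pi_S$ using that stable holonomy preserves $u^+$) is the standard justification. The only nitpicks are cosmetic: the time coordinate on $\tilde R_i$ is a bounded graph rather than the constant slice $\{0\}$, and one should note, as you do implicitly for $U_i$, that $\mu^{\PS}_o(\textup{vis}^s(\tilde S_i))>0$; neither affects the bounded Radon--Nikodym comparison.
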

By abuse of notation, we use the notation $\nu$ for the measure $(\pi_S\circ \zeta)_* \nu$ on $U$.

\subsection{Transfer operators.} The identification of $\Sigma^\tau$ and $\G \backslash G$ above allows the use of symbolic dynamics in the study of the BMS measure. In particular we will use the theory of transfer operators. 
\begin{dfn}\rm
For $f \in C_\theta(\Sigma^+)$, we obtain a transfer operator $\mathcal{L}_f: C(\Sigma^+) \rightarrow C(\Sigma^+)$
by taking
\[ \mathcal{L}_f(h)(u) := \sum_{ \sigma(u') = u} e^{f(u')} h(u'). \]
\end{dfn}
A straightforward calculation shows that $\mathcal{L}_f$ preserves $C_\theta(\Sigma)$. 
The following is a consequence of the Ruelle-Perron-Frobenius theorem together with the well-known theory of
 Gibbs measures (see \cite{PP}, \cite{St2}):
\begin{thm} \label{RPF} For each real valued function $f\in C_\theta(\Sigma^+)$, there exist
a positive function $\hat h \in C_\theta (\Sigma^+)$, a probability measure $\hat \nu$ on $\Sigma^+$, and $\epsilon>0, c>0$ such that
\begin{itemize}
\item $ \mathcal{L}_f (\hat h) = e^{Pr_\sigma(f)} \hat h; $
\item the dual operator satisfies $\mathcal{L}^*_f \hat \nu = e^{Pr_\sigma(f)} \hat \nu; $
\item for all $n\in \N$,
 $$| e^{-nPr_\sigma(f)} \mathcal{L}_f^n( \psi)(x) - \hat \nu (\psi) \hat h (x)| \leq c(1-\epsilon)^n ||\psi ||_{\Lip (d_\theta)} ;$$
 with $\hat h$ normalized so that $\hat \nu (\hat h)=1$;

  \item the measure $\hat h\hat \nu$ is $\sigma$-invariant and is the projection of the $f$-equilibrium state  to $\Sigma^+$. 
\end{itemize}
The constants $c, \epsilon$ and the Lipschitz norm of $\hat h$ can be bounded in terms of the Lipschitz norm of $f$; see \cite{St2}
\end{thm}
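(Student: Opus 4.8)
The plan is to prove Theorem~\ref{RPF} by the classical thermodynamic-formalism machinery, carried out so that every constant is visibly controlled by $\|f\|_{d_\theta}$ and by the fixed combinatorial data of the subshift $(\Sigma^+,\sigma)$ (the alphabet size $k$, the transition matrix $\mathrm{Tr}$, and $\theta$).

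\emph{Step 1: Eigendata.} First I would produce the eigenmeasure $\hat\nu$: the map $\mu\mapsto \mathcal{L}_f^*\mu/(\mathcal{L}_f^*\mu)(1)$ is a continuous self-map of the weak-$*$ compact convex set of Borel probability measures on $\Sigma^+$, so the Schauder--Tychonoff theorem yields a fixed point $\hat\nu$ with $\mathcal{L}_f^*\hat\nu=\lambda\hat\nu$ for some $\lambda>0$; one then checks $\log\lambda=Pr_\sigma(f)$ via the variational principle (or simply adopts this as the definition and verifies consistency). Next, the H\"older regularity of $f$ gives bounded distortion: the Birkhoff sums $S_nf$ have oscillation at most $\|f\|_{d_\theta}/(1-\theta)$ along any two orbit branches of length $n$ that are $\theta^n$-close. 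Hence the functions $h_n:=\lambda^{-n}\mathcal{L}_f^n 1$ are uniformly bounded above and below and uniformly $d_\theta$-Lipschitz, so a Ces\`aro average of them converges, along a subsequence, to a function $\hat h\in C_\theta(\Sigma^+)$ with $\mathcal{L}_f\hat h=\lambda\hat h$; topological mixing of $\mathrm{Tr}$ forces $\hat h>0$ everywhere. Normalize so that $\hat\nu(\hat h)=1$.

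\emph{Step 2: Normalization and spectral gap.} Pass to the normalized operator $\mathcal{M}h:=(\lambda\hat h)^{-1}\mathcal{L}_f(\hat h\cdot h)$, which preserves $C_\theta(\Sigma^+)$ (as $\hat h,1/\hat h\in C_\theta$) and satisfies $\mathcal{M}1=1$, $\mathcal{M}^*(\hat h\hat\nu)=\hat h\hat\nu$; its defining weights differ from those of $\mathcal{L}_f$ by a Lipschitz coboundary, so they are still controlled by $\|f\|_{d_\theta}$. The heart of the proof is a spectral gap for $\mathcal{M}$ on $C_\theta(\Sigma^+)$, which I would obtain by a Birkhoff-cone argument: for $A$ large enough in terms of $\|f\|_{d_\theta}$, $\theta$ and the distortion constant, the cone $\mathcal{C}_A=\{h\in C_\theta(\Sigma^+): h>0,\ h(u)\le e^{A\,d_\theta(u,v)}h(v)\text{ whenever }u_0=v_0\}$ satisfies $\mathcal{M}\mathcal{C}_A\subset\mathcal{C}_{\theta A+B}$, hence is $\mathcal{M}$-invariant once $A\ge B/(1-\theta)$; and after $N$ iterations, with $N$ the topological-mixing exponent of $\mathrm{Tr}$, the image $\mathcal{M}^N\mathcal{C}_A$ has finite diameter in the Hilbert projective metric of $\mathcal{C}_A$, with an explicit bound depending only on $N$, $A$ and the weights. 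Birkhoff's theorem then gives that $\mathcal{M}^N$ contracts that metric by a factor $\rho<1$, and a routine comparison of the Hilbert metric with $\|\cdot\|_{d_\theta}$ converts this into $\|\mathcal{M}^n h-m(h)\cdot 1\|_{d_\theta}\le c(1-\epsilon)^n\|h\|_{d_\theta}$, where $m:=\hat h\hat\nu$. (Equivalently one could prove a Doeblin--Fortet inequality $\|\mathcal{M}^n h\|_{d_\theta}\le C\theta^n\|h\|_{d_\theta}+C\|h\|_\infty$, deduce quasi-compactness from Arzel\`a--Ascoli, and then pin the peripheral spectrum down to the simple eigenvalue $1$ using the mixing of $\mathrm{Tr}$ and a maximum-principle argument.)

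\emph{Step 3: Translation back and uniformity.} Since $\mathcal{L}_f^n\psi=\lambda^n\hat h\cdot\mathcal{M}^n(\psi/\hat h)$ and $m(\psi/\hat h)=\hat\nu(\psi)$, the contraction estimate for $\mathcal{M}$ becomes exactly $|e^{-nPr_\sigma(f)}\mathcal{L}_f^n\psi(x)-\hat\nu(\psi)\hat h(x)|\le c(1-\epsilon)^n\|\psi\|_{d_\theta}$. That $m=\hat h\hat\nu$ is $\sigma$-invariant follows from $\mathcal{M}^*m=m$ together with $\mathcal{M}\bigl((g\circ\sigma)\cdot h\bigr)=g\cdot\mathcal{M}h$; its identification with the projection to $\Sigma^+$ of the $f$-equilibrium state is the usual argument that $\hat h\hat\nu$ has the Gibbs property and that the equilibrium state for a H\"older potential is unique. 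Finally, chasing through Steps 1--2 one sees that $A$, $N$, the Hilbert-diameter bound, $\rho$, and therefore $c$, $\epsilon$ and $\|\hat h\|_{d_\theta}$, all depend only on $k$, $\mathrm{Tr}$, $\theta$ and $\|f\|_{d_\theta}$, which is the asserted uniformity. \textbf{Main obstacle.} The genuinely delicate step is the spectral gap with constants uniform in $\|f\|_{d_\theta}$ --- concretely, bounding the Hilbert-metric diameter of $\mathcal{M}^N\mathcal{C}_A$ (equivalently, proving the Doeblin--Fortet inequality and controlling the peripheral spectrum) in terms of $\|f\|_{d_\theta}$ and the mixing exponent $N$. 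Everything else is soft (fixed point, Arzel\`a--Ascoli, coboundary bookkeeping); this is where the work lies.
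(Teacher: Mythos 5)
The paper does not actually prove this statement: it is quoted as a standing consequence of the Ruelle--Perron--Frobenius theorem and the theory of Gibbs measures, with the proof (including the final assertion that $c$, $\epsilon$ and $\|\hat h\|_{\Lip(d_\theta)}$ are controlled by $\|f\|_{\Lip(d_\theta)}$) delegated to \cite{PP} and \cite{St2}. Your sketch reconstructs exactly that classical argument --- Schauder--Tychonoff for the eigenmeasure, bounded distortion plus a Ces\`aro/Arzel\`a--Ascoli argument for the positive eigenfunction, normalization, and a Birkhoff-cone (equivalently Doeblin--Fortet) spectral gap with constants tracked through $\theta$, the mixing exponent of $\mathrm{Tr}$ and $\|f\|_{\Lip(d_\theta)}$ --- so it is correct in outline and is essentially the same route as the references the paper relies on.
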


\begin{remark} Using the identification of $\Sigma^+$ and $\hat U$ by $\zeta^+$, we can regard the transfer operators defined above as operators on $C(\hat U)$. We can also regard the metric $d_\theta$ as a metric on $\hat U$. We will do both of these freely without further comment. 
\end{remark}

We also define the normalized transfer operators. For $a \in \mathbb{R}$ with $|a|$ sufficiently small, 
consider the transfer operator $ \mathcal{L}_{-(\delta + a)\tau}$ on the space $C_{d_{\theta}} (U)$.
 Let $\lambda_a:=e^{Pr_\sigma (-(\delta +a)\tau)}$ be the largest eigenvalue, $\hat \nu_a$ the probability measure such that
 $\mathcal{L}^*_{-(\delta +a)\tau} \hat \nu_a = \lambda_a \hat \nu_a $
and let $h_a$ be the associated positive eigenfunction, normalized so that
$\int h_a \hat d\nu_a=1$.
 It is known that $\lambda_0 = 1$, and that $\lambda_a$ and $h_a$ are
 Lipschitz in $a$ for $|a|$ small. It is also known that
 for $|a|$ small, each $h_a$ is Lipschitz in the $d$-metric \cite{St2}.

\begin{Nota}\rm  For functions $f: \hat U \rightarrow \mathbb{R}$ and $h:  \Sigma^+\rightarrow \mathbb{R}$,
 we will 
 write $$f_n(u) := \sum_{i = 0}^{n-1} f(\Pp^i u)\quad\text{ and} \quad h_n (\omega) = \sum_{i=0}^{n-1} h(\sigma^i\omega) .$$   \end{Nota}

It follows from the fourth part of Theorem \ref{RPF} that there exist $c_1, c_2>0$ such that for all  $x\in \Sigma^+$ and for all $n\in \N$,
\be\label{ggibbs} c_1 e^{-(\delta +a)\tau_n(x)} \lambda_a^{-n}\le \hat \nu_a( {\mathsf C}[x_0, \cdots, x_n] )\le c_2 e^{-(\delta +a)\tau_n(x)} \lambda_a^{-n};\ee
moreover $c_1, c_2$ can be taken uniformly uniformly for $|a|<a_0$ for a fixed $a_0>0$. In particular, $\hat \nu_a$ is a Gibbs measure for the potential function $-(\delta+a)\tau$.

\medskip

 We consider
\begin{equation} f^{(a)} := - (\delta + a)\tau + \log h_0 - \log h_0\circ\sigma - \log \lambda_a,\label{normalized} \end{equation}
and let $ \hat{\mathcal{L}}_{ab} := \mathcal{L}_{f^{(a)} + ib\tau}$, i.e.,
\[ \hat{\mathcal{L}}_{ab}(h)(u) := \frac{1}{\lambda_a h_0(u)} \sum_{ \sigma(u') = u} e^{(-\delta +a-ib) \tau (u')} ({h_0\cdot h})(u') \]
be the associated transfer operator. Note that $ \hat{\mathcal{L}}_{ab}$ preserves the spaces $C_{d}(\hat U)$. 
We remark that the pressure $Pr_\sigma(f^{(a)})$ is zero; so the leading
eigenvalue of  $ \hat{\mathcal{L}}_{a0}$ is $1$, with an eigenfunction $h_a/h_0$.
 Since $f^{(0)}$ is cohomologous to $-\delta \tau$, the corresponding equilibrium states coincide.

\subsection{Congruence transfer operators and the cocycle $\coc$}
Let $\mathcal{D}$ be the intersection of the Dirichlet domain for $(\Gamma, o)$ in $\mathbb{H}^2$
and the convex hull of $\Lambda(\G)$. For each $R_j\subset \G\ba G$,
we choose a lift $\tilde{R}_j = [\tilde{U}_j, \tilde{S}_j]$ to $G$ so that the projection of $\tilde R_j$ to $\mathbb{H}^2$ intersects $\overline{\mathcal{D}}$ non-trivially. We write $\tilde R := \cup \tilde R_i$.

\begin{dfn}[Definition of the cocycle $\mathsf{c}: R \to \Gamma$]  \rm
For $x\in R$ with (unique) lift $\tilde x \in \tilde R$, we define the cocycle $\coc$ by requiring that
 \be \tilde xa_{\tau(x)} \in \mathsf{c}(x) \tilde R .  \label{defcoc} \ee  
 For $n \in \N$ and $x \in U \subset R$, we write $$\coc_n(x):= \coc(x) \coc(\Pp(x)) \ldots \coc(\Pp^{n - 1}x).$$\end{dfn}

\begin{lem} \label{cocycleexists} \begin{enumerate} \item If $x, x' \in R_j \cap \mathcal{P}^{-1}R_l$, then $\coc(x ) = \coc(x')$.
\item If $x, x'$ are both contained in  some cylinder of length $n\ge 1$, then $\coc_n(x) = \coc_n(x')$.
\end{enumerate}
\end{lem}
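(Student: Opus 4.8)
The plan is to prove both parts from the geometric definition of the cocycle in \eqref{defcoc} together with the Markov property of the section $\mathcal{R}$.

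\textbf{Part (1).} First I would observe that for $x \in R_j \cap \mathcal{P}^{-1}R_l$, the lift $\tilde x$ lies in $\tilde R_j$ and $\tilde x a_{\tau(x)}$ lies in $\coc(x)\tilde R$; since $\mathcal{P}(x) \in R_l$, the element $\coc(x)$ is the unique $\gamma \in \Gamma$ with $\tilde x a_{\tau(x)} \in \gamma \tilde R_l$. The key point is that $\gamma$ cannot depend continuously on $x$: the map $x \mapsto \tilde x a_{\tau(x)}$ is continuous on the (relatively open, connected-enough) piece $R_j \cap \mathcal{P}^{-1}(\operatorname{int} R_l)$, the sets $\{\gamma \tilde R_l : \gamma \in \Gamma\}$ are pairwise disjoint (because $\Gamma$ acts properly discontinuously and the lifts $\tilde R_l$ are chosen inside lifts of a fundamental domain, so distinct translates are separated), and $\Gamma$ is discrete. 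Hence the function $x \mapsto \coc(x)$ is locally constant on $R_j \cap \mathcal{P}^{-1}R_l$, and then one must check it is actually globally constant there. For this I would use that $R_j \cap \mathcal{P}^{-1}R_l$, while possibly not connected as a subset of $\Omega$, has all its pieces ``linked'' through the stable/unstable product structure: the Markov property guarantees $\mathcal{P}([\operatorname{int}^u U_j, x]) \supset [\operatorname{int}^u U_l, \mathcal{P}(x)]$, so for two points $x, x'$ in the intersection one can connect them by first sliding along a stable leaf inside $R_j$ (which does not change the forward geodesic ray, hence does not change which translate $\tilde x a_{\tau(x)}$ lands in, since $\coc$ depends only on coarse position) and then along an unstable leaf; along each such slide $\tau$ varies continuously and the translate is locally constant, so it is constant. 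This gives $\coc(x) = \coc(x')$.

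\textbf{Part (2).} This follows from Part (1) by induction on $n$ using the cocycle relation $\coc_n(x) = \coc_{n-1}(x)\,\coc(\Pp^{n-1}x)$ — wait, more precisely $\coc_n(x) = \coc(x)\coc_{n-1}(\Pp x)$. If $x, x'$ lie in a common cylinder $\mathsf{C}[i_0,\dots,i_n]$ of length $n$, then in particular $x, x' \in U_{i_0} \cap \Pp^{-1}(U_{i_1})$, so by Part (1) (applied with the rectangles $R_{i_0}, R_{i_1}$, noting $U_{i_0} \subset R_{i_0}$ and $\pi_S$ does not affect the value of $\coc$ since $\coc$ is constant on stable leaves within a rectangle) we get $\coc(x) = \coc(x')$. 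Moreover $\Pp x$ and $\Pp x'$ lie in the cylinder $\mathsf{C}[i_1,\dots,i_n]$ of length $n-1$, so by the inductive hypothesis $\coc_{n-1}(\Pp x) = \coc_{n-1}(\Pp x')$, and multiplying gives $\coc_n(x) = \coc_n(x')$. The base case $n = 1$ is exactly Part (1).

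\textbf{Main obstacle.} The subtle point is Part (1): turning ``locally constant'' into ``globally constant'' on $R_j \cap \mathcal{P}^{-1}R_l$. The set $R_j \cap \mathcal{P}^{-1}R_l$ need not be connected as a topological subspace of $\Omega$, so one genuinely needs the product structure $R_j = [U_j, S_j]$ and the Markov property to argue that the value of the cocycle is the same on all ``sheets'': concretely, that it depends only on the $U_j$-coordinate (the unstable direction), since $\coc$ is visibly constant along full stable leaves $[x, S_j]$ of $R_j$ (the forward orbit and its return are unchanged up to a bounded stable displacement that stays within the same $\Gamma$-translate of $\tilde R$), and the Markov property forces the $U_j$-dependence to be locally constant hence constant on each cylinder. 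I would want to be careful that the lifts $\tilde R_j$ were chosen (via the Dirichlet domain $\mathcal{D}$, as in the definition preceding the lemma) so that the diameter of $\tilde R_j$ is far smaller than the injectivity radius, guaranteeing that $\tilde x a_{\tau(x)}$ determines $\coc(x)$ unambiguously and that nearby points give the same translate.
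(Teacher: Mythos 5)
Your Part (2) is fine and is exactly the paper's route (induction from Part (1), noting that $\Pp^j u,\Pp^j u'\in R_{i_j}\cap\mathcal{P}^{-1}R_{i_{j+1}}$ for each $j$), but your Part (1) has a genuine gap. The set $R_j\cap\mathcal{P}^{-1}R_l$ sits inside $\Omega$, whose intersection with an unstable leaf is essentially a piece of the limit set; for the groups of interest here ($\delta\le 1$, convex cocompact, non-cocompact) this is a Cantor set, so the sections $U_i$, $R_i$ are totally disconnected. Consequently ``locally constant, slide along stable/unstable leaves, hence constant'' does not close: a locally constant function on a totally disconnected set need not be constant, the Markov inclusions supply no path-connectivity, and the two assertions you lean on --- that sliding along a stable leaf keeps $\tilde x a_{\tau(x)}$ in the same $\Gamma$-translate of $\tilde R$, and that the unstable dependence is ``locally constant hence constant'' --- are precisely restatements of what is to be proved, not consequences of continuity.

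The missing ingredient is a uniform metric estimate that makes connectivity irrelevant, and it is what the paper does: for \emph{any} two points $x,x'\in R_j\cap\mathcal{P}^{-1}R_l$ with lifts $\tilde x,\tilde x'\in\tilde R_j$, write $\tilde y=\coc(x)^{-1}\tilde x a_{\tau(x)}$ and $\tilde y'=\coc(x')^{-1}\tilde x' a_{\tau(x')}$, both in $\tilde R_l$. Using $d(e,a_t)=t$, the return-time bound $\tau\le\alpha$ coming from the completeness condition $\Omega=\cup_i R_i a_{[0,\alpha]}$, and $\operatorname{diam}\tilde R_j,\operatorname{diam}\tilde R_l\le\alpha$, one gets
$d(\coc(x)\tilde y,\coc(x')\tilde y)\le \tau(x)+d(\tilde x,\tilde x')+\tau(x')+d(\tilde y',\tilde y)\le 4\alpha$,
which is below the injectivity radius of $\Gamma\backslash G$ (recall $\alpha$ was chosen smaller than $\tfrac1{1000}$ of it). Two $\Gamma$-translates of the single point $\tilde y$ at such a distance must coincide, so $\coc(x)=\coc(x')$. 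You gesture at the injectivity radius only to get well-definedness and local constancy at the end of your write-up; it must instead be deployed globally, as above, and once it is, the leaf-sliding and Markov-property discussion can be dropped entirely.
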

\begin{proof} Let $x_1, x_2 \in R_j \cap \mathcal{P}^{-1}R_l$.
 If $\tilde x_1, \tilde x_2\ \in \tilde{R}_j$ with $x_j = \G\ba \G \tilde x_j$, then for  $\tilde y_i  := \coc(x_i)^{-1} \tilde x_ia_{\tau(x_i)} \in \tilde{R}_j$, we have
\begin{eqnarray*} d(\coc(x_1) \tilde y_1, \coc(x_2)\tilde y_1) &\leq& d(\tilde x_1, \coc(x_2)\tilde y_1) +\alpha \\
&\leq& d(\tilde x_2, \coc(x_2)\tilde y_1) + 2\alpha \\
&\leq& d(\coc(x_2) \tilde y_2, \coc(x_2)\tilde y_1) + 3\alpha \\ 
&\leq& 4\alpha,  \end{eqnarray*}
which is less than the injectivity radius of $\Gamma$. Thus $\coc(x_1) = \coc(x_2)$ as desired.
The second statement is now straightforward from the definition of $\coc_n$. 
\end{proof}

Let $\Gamma (q) $ be a normal subgroup of $\Gamma$ of finite index and denote by $F_q$ the finite group $\Gamma(q)\backslash \Gamma$. 
We would like a compatible family of Markov sections for the dynamical systems $(\Gamma (q) \backslash G, a_t, m^{\operatorname{BMS}})$.
 The lifts $\tilde R_l$ give a natural choice; for $l \in \lbrace 1, \ldots k\rbrace$ and $\gamma \in F_q$, we take
\[ R^q _{l, \gamma} =\G(q) \gamma \tilde{R}_l \subset \Gamma(q) \backslash G . \]
The collection
\[\mathcal{R}^q := \lbrace R^q_{l, \gamma} : l\in \lbrace 1 \ldots k \rbrace \mbox{ and } \gamma \in F_q \rbrace\]
is a Markov section of size $\alpha$ for $(\Gamma (q) \backslash G, a_t)$ as expected. The first return time $\tau_q$ and first return map $\mathcal{P}_q$ associated to $\mathcal{R}^q$ are given rather simply in terms of the cocycle $c$ and the corresponding data for $\mathcal{R}$. 

\begin{remark} Let $\pi_q : \G(q) \backslash G \rightarrow \G \backslash G$ be the natural covering map. If $\tilde x \in R^q_{l, \gamma}$ and $\pi_q(\tilde x) \in R_l \cap \mathcal{P}^{-1} R_m $, then
\[ \tau_q(\tilde x) = \tau(\pi_q(\tilde x))\]
and $\mathcal{P}_q(x)$ is the lift of $\mathcal{P}(\pi(x))$ to $R^q_{m, \gamma \coc(\tilde x)}$.
\end{remark}
Embedded inside each partition element $R^q_{l, \gamma}$, we have a piece of an unstable leaf. Let $\tilde U_l$ be the lift of $U_l$ contained in $ \tilde R_l$. Then
the subsets
\[ U^q_{l, \gamma} := \Gamma(q) \gamma  \tilde U_l  \subset \Gamma (q) \backslash G
\text{ and }  \hat U^q_{l, \gamma}  :=  U^q_{l, \gamma}   \cap \pi_q^{-1} (\hat U) \]
are contained in $ R^q_{l, \gamma}$. We then write $\hat{U}^q := \coprod \hat U^q_{l, \gamma}$  for the union
and $ \Pp_q: \hat{U}^q \rightarrow \hat{U}^q$ for the natural extension of $\Pp$. 
Just as the partition $\mathcal{R}$ gives rise to a symbolic model of the geodesic flow on $\Gamma \backslash G$, so $\mathcal{R}^q$ provides a model for $\Gamma(q) \backslash G$. In particular we can identify $\hat U^q$ with $\hat U\times F_q$ in a natural way; simply send $(u, \gamma)$ to the image $\gamma \tilde u$ where $\tilde u$ is the lift of $u$ to $\tilde R$. Note then that $\Pp_q$ acts as the map
\[ \Pp_q(u, \gamma) = (\Pp u, \gamma \coc(u)). \]

For $f_q\in C(\hat U^q)$, we may consider the following transfer operators $ \mathcal{L}_{  f_q, q}: C(\hat U^q) \rightarrow  C(\hat U^q)$ given by
\begin{eqnarray*}(\mathcal{L}_{ f_q, q}h)(u, \gamma) &:=& \sum_{\sigma_q(u', \gamma')= (u, \gamma)}e^{f_q(u', \gamma')}h(u', \gamma')\\
&=& \sum_{\sigma(u') =  u}e^{f_q(u', \gamma \coc(u')^{-1})}h(u', \gamma(\coc(u'))^{-1}). \end{eqnarray*}
It will very often be helpful to think of a function $h \in C(\hat {U}^q)$ as a vector valued function $\hat U \rightarrow \mathbb{C}^{F_q}$.  In the case where $f_q(\omega, \gamma) = f(\omega)$ doesn't depend on the group element, we can then recover the congruence transfer operator 
$\mathcal{M}_{ f, q} : C(\hat U, \mathbb{C}^{F_q}) \rightarrow  C(\hat U, \mathbb{C}^{F_q}) $ given by
\[ (\mathcal{M}_{ f, q} H)(u)  = \sum_{\Pp(u') = u}e^{f(u')}H(u')\coc(u') ;\]
where $\coc(u')$ acts on $H(u') \in \mathbb{C}^{F_q}$ by the right regular action. We will often write ${(\mathcal{M}_{ f, q} H)(u, \g)}$ to mean the $\g$ component of $(\mathcal{M}_{ f, q} H)(u)$ . Most of this paper is going to be devoted to a study of these congruence transfer operators. The key example for us will be the normalized congruence transfer operator
$ \hat{\mathcal{M}}_{ab, q}:= \mathcal{M}_{f^{(a)} + ib\tau, q}: C(\hat U, \mathbb{C}^{F_q})\rightarrow C(\hat U, \mathbb{C}^{F_q})$:
\[ (\hat{\mathcal{M}}_{ ab, q} H)(u)  =\frac{1}{\lambda_a h_0( u)} \sum_{\Pp(u') = u}e^{-(\delta +a-ib)\tau(u)} {(h_0 H)} (u')\coc(u') .\]

We then have that for any $n\in \N$,
$$ \hat{\mathcal{M}}_{ab, q}^n H(u, \gamma):=\sum_{\hat \sigma^n(u')=u} e^{(f_n^{(a)} +ib\tau_n)(u')} H(u', \g \coc_n^{-1}(u')).$$
The key point will be to establish spectral properties of these congruence transfer operators. To do this we must first establish norms and banach spaces appropriate to the task. We will write $|\cdot|$ for the usual Hermitian
 norm on $\mathbb{C}^{F_q}$. For Lipschitz functions $H: \hat U  \rightarrow \mathbb{C}^{F_q}$, we define the norms
\begin{equation} ||H||_{1, b} :=   \sup_{u \in \hat U} |H(u)|+   \frac{1}{\max(1, |b|)} \sup_{u \neq u'} \frac{ |H(u) - H(u')|}{d(u, u') } \mbox{ and} \label{deflipnorm}\end{equation}
\begin{equation} ||H||_{2} :=\left( \int |H(u)|^2 d\nu(u) \right)^{1/2}. \label{definel2norm}\end{equation}
We will sometimes also write $||\cdot||_{\Lip(d)} := ||\cdot||_{1, 1}$ for the Lipschitz norm and denote by $C_{\Lip (d)}(\hat U,\mathbb{C}^{F_q} )$ the space of Lipschitz functions for the  norm $||\cdot||_{\Lip(d)}$. 

Consider the space of functions
 \begin{equation} \mathcal{W}(\hat U, \mathbb{C}^{F_q} ) = \lbrace H \in C_{\Lip(d)}(\hat U, \mathbb{C}^{F_q}) : 
  \sum_{\gamma \in F_q }H(u, \gamma) = 0 \mbox{ for all } u \in\hat U \rbrace.  \label{defineW}\end{equation}
 We will write $L_0^2(F_q)$ for the space of complex valued functions on $F_q$ that are orthogonal to constants. We can then think of $ \mathcal{W}(\hat U, \mathbb{C}^{F_q})$ as the space of Lipschitz functions from $\hat U$ to $L_0^2(F_q) $.

We're now in a position to state the main technical result of our argument.  Suppose that $\G $ is a (non-elementary) convex cocompact subgroup of $\SL_2(\z)$.
We recall the congruence subgroups $\G(q)$ of $\G$. 
Since $\G$ is Zariski dense in $\SL_2$, it follows from the strong approximation theorem that there exists $q_0\ge 1$ such that for all $q\in \N$ with $(q, q_0)=1$,  we have 
\be \label{strongapproximation} \G(q)\backslash \G =G(\z/ q\z)=\op{SL}_2(q).\ee

\begin{thm} \label{spectralbound} 
There exist $\epsilon>0, a_0>0, C>0, q'_0>1$ such that for all $|a| < a_0$, $b \in \mathbb{R}$, and for all  square free $q\in \N$ with  $(q, q_0q_0')=1$,
 we have
\[  ||\hat{\mathcal{M}}^{m}_{ab, q} H ||_2 \leq C(1 - \epsilon)^mq^C ||H||_{1, b}\]
 for all $m\in \mathbb{N}$ and all $H \in  \mathcal{W}(\hat U, \mathbb{C}^{F_q} ) $. 
\end{thm}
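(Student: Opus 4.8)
The plan is to split the estimate into two regimes according to the size of $|b|$, mirroring the structure of the scalar theory but carrying the $\SL_2(q)$-valued cocycle along.

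\textbf{Large $|b|$.} First I would treat the range $|b| \ge b_0$ for a suitable threshold $b_0 > 0$. Here the decay is supposed to come from the oscillation of $e^{ib\tau}$ on the scale of the Markov partition, which is a purely local (non-integrability) phenomenon and so is insensitive to the passage to the congruence cover, provided the cocycle $\coc$ does not spoil the cancellation. The key input (promised as Theorem~\ref{Dargtheorem} in Section~3) is the construction of Dolgopyat operators and an iteration scheme adapted to the vector-valued operators $\hat{\mathcal M}_{ab,q}$; since $\coc$ is locally constant on cylinders of appropriate length by Lemma~\ref{cocycleexists}, the argument of Dolgopyat--Stoyanov applies $F_q$-component-by-$F_q$-component after fixing the group coordinate along a long enough cylinder, and yields $\|\hat{\mathcal M}_{ab,q}^m H\|_2 \le \|\hat{\mathcal M}_{ab,q}^m H\|_\infty \lesssim C(1-\epsilon)^m \|H\|_{1,b}$ with constants independent of $q$ (so in fact $q^C$ with $C=0$ in this regime). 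The main subtlety to check is that the normalizing factor $1/(\lambda_a h_0)$ and the eigenfunction renormalization behave uniformly for $|a|<a_0$, which is quoted from \cite{St2}.

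\textbf{Bounded $|b|$.} The harder regime is $|b| \le b_0$, where no oscillation is available and, in the scalar setting, one would simply invoke the complex Ruelle--Perron--Frobenius theorem plus compactness. That compactness fails uniformly in $q$, so instead I would run the Bourgain--Gamburd--Sarnak expansion argument, which is the content of Section~4 that Theorem~\ref{spectralbound} is meant to package. Concretely: iterate $\hat{\mathcal M}_{ab,q}$ a large number $n \asymp \log q$ of times so that cylinders of length $n$ have measure $\approx q^{-\kappa}$; expand $\|\hat{\mathcal M}_{ab,q}^{n} H\|_2^2$ as a sum over pairs of preimage branches, and use that $H \in \mathcal W$, i.e. $H(u,\cdot) \in L_0^2(F_q)$, so that the diagonal (group-trivial) term is absent. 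The off-diagonal terms are controlled by viewing the sum over branches as a convolution on $\SL_2(q)$ of a measure built from the $\muPS$-weights (this is where Sullivan's shadow lemma and Corollary~\ref{c2.4} enter, to bound the measures from above and below, and where the memoryless/Markov structure is used to turn the branch sum into an honest convolution). Then the $\ell^2$-flattening lemma (Lemma~\ref{bgsflatteninglemma}, valid for square-free $q$, which is where that hypothesis is used) together with the expander estimate of \cite{BGS} gives that a bounded number of such convolutions flattens the measure, producing a gain of a fixed power, say $\|\hat{\mathcal M}_{ab,q}^{Kn} H\|_2 \le q^{-\kappa'} \|H\|_{2}$ — more precisely a bound of the form $C(1-\epsilon_0)^{Kn} q^{C}\|H\|_{1,b}$ after converting the $L^2\to L^\infty$ and $L^\infty\to L^2$ losses using \eqref{ggibbs}. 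One must be careful that the frequency $b$ enters only through a Lipschitz factor $\max(1,|b|)$, which is exactly why the norm $\|\cdot\|_{1,b}$ is defined with that weight; since $|b|\le b_0$ is bounded this costs only a constant.

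\textbf{Combining.} Finally I would interpolate between the two regimes and promote the estimate for the special iterate (either $m$ large from Dolgopyat, or $m$ a multiple of $Kn\asymp \log q$ from expansion) to all $m \in \mathbb N$ by the submultiplicativity $\|\hat{\mathcal M}_{ab,q}^{m_1+m_2} H\|_2 \le \|\hat{\mathcal M}_{ab,q}\|_{op}^{m_1}\,\|\hat{\mathcal M}_{ab,q}^{m_2} H\|_2$ together with a crude a priori operator bound $\|\hat{\mathcal M}_{ab,q}\|_{2\to 2} \le e^{O(a_0)}$ valid uniformly in $q$ and $b$, absorbing the finitely many leftover steps into the constant $C$. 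The only real obstacle — and the heart of the paper — is the bounded-$|b|$ case: making the reduction to a convolution on $\SL_2(q)$ genuinely rigorous (uniform shadow-lemma comparisons for $\nu$ versus $\muPS$ over all branches, uniformity in $|a|<a_0$ and $|b|\le b_0$) and then extracting the uniform-in-$q$ gain from the $\ell^2$-flattening plus Bourgain--Gamburd--Sarnak expansion. Everything else is a bookkeeping exercise around results already cited.
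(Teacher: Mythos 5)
Your plan is essentially the paper's own proof: Theorem \ref{spectralbound} is obtained there by combining the Dolgopyat--Stoyanov bound for $|b|>b_0$ (Theorem \ref{Dargtheorem}, uniform in $q$ precisely because the cocycle is locally constant and acts unitarily on $\mathbb{C}^{F_q}$) with the Bourgain--Gamburd--Sarnak expansion/flattening bound for $|b|\le b_0$ (Theorem \ref{BGSbound}), using the equivalence of the $\|\cdot\|_{1,b}$ and $\Lip(d)$ norms for bounded $|b|$ and an a priori operator bound to handle general $m$. The only caveats are points of detail inside the quoted sections rather than a different route: the large-$|b|$ argument does not proceed component-by-component but by dominating the pointwise $\mathbb{C}^{F_q}$-norm of $H$ by a scalar function on which the Dolgopyat operators act, and the bounded-$|b|$ case needs the further reduction to new vectors in $E^q_q$ (via the projections $e_{q,q'}$ and $\mbox{proj}_{q,q'}$) before the $\ell^2$-flattening lemma can be applied.
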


The next two sections will be focused on the proof of this Theorem. In Section 3 we prefer to work with the analytic space $\hat U$ and the associated function spaces $C(\hat U)$, while in Section 4 the symbolic space $\hat \Sigma ^+$ is preferred. For the most part we can unify these viewpoints through the identification $\zeta : \hat \Sigma^+ \rightarrow \hat U$; in particular for those parts of the argument where we consider the transfer operators acting on the $L^2 (\nu)$ spaces there is no problem, as the measure theory does not see the precise geometry of the spaces  $\hat \Sigma$ and $ \hat U$. The one potential difficulty is where we want to use the $d$-Lipschitz properties of $h_a$ and $f^{(a)}$, which a priori do not follow from the usual statement of the RPF theorem \ref{RPF}. This is clarified by \cite{PS}, which ensures we can proceed as required.

\section{Dolgopyat operators and vector valued functions} \label{bunbounded}
 In this section we aim to prove that Theorem \ref{spectralbound} holds whenever $|b|$ is sufficiently large:

\begin{thm} \label{Dargtheorem} There exist $\epsilon>0, a_0>0,b_0>0, C>0$ such that for all $|a| < a_0$,  $|b| > b_0$,
and for  any normal subgroup $\Gamma (q) $ of $\Gamma$ of finite index,
 we have
\[ ||  \hat{\mathcal{M}}^{m}_{ab, q} H ||_2< C(1 - \epsilon)^m ||H||_{1, b}\]
 for all $m\in \mathbb{N}$ and all $H \in  C_{\Lip (d)}(\hat U, \mathbb{C}^{F_q})$ for $F_q=\G(q) \backslash \G$.
\end{thm}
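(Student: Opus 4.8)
The plan is to reduce the vector-valued bound for $\hat{\mathcal{M}}_{ab,q}$ in the large-$|b|$ regime to the scalar Dolgopyat-type bounds for $\hat{\mathcal{L}}_{ab}$, exploiting the fact that the cocycle $\coc$ is locally constant on the scale of the Markov partition (Lemma \ref{cocycleexists}) so that it does not interfere with the oscillatory cancellation. First I would recall the construction of Dolgopyat operators following Stoyanov: for $|b|>b_0$ one builds, on each cylinder, a finite family of operators $\mathcal{N}_J$ obtained from $\hat{\mathcal{L}}_{ab}$ by damping the values on a carefully chosen subset where the phases $e^{ib\tau_n}$ are "in phase," and one shows via the local non-integrability condition (LNIC) and the regular distortion of $\tau_n$ that iterating these operators contracts a suitable norm, giving $\|\hat{\mathcal{L}}^m_{ab} h\|_{L^2(\nu)} \le C(1-\epsilon)^m \|h\|_{1,b}$ for all $h\in C_{\Lip(d)}(\hat U)$.

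Next I would set up the vector-valued version. Writing $H:\hat U\to\mathbb{C}^{F_q}$ and using the Hermitian norm $|\cdot|$ on $\mathbb{C}^{F_q}$, the key observation is that $|\hat{\mathcal{M}}^m_{ab,q}H(u)| \le \hat{\mathcal{L}}^m_{|a|\,'b'}(|H|)(u)$-type domination fails to be sharp enough by itself (it loses the oscillation), so instead one runs the Dolgopyat iteration directly at the level of $\mathbb{C}^{F_q}$-valued functions. Concretely, for $H\in C_{\Lip(d)}(\hat U,\mathbb{C}^{F_q})$ one defines $|H|(u) := |H(u)|_{\mathbb{C}^{F_q}}$ and checks that, because $\coc_n$ is constant on each length-$n$ cylinder, the summands $e^{(f^{(a)}_n + ib\tau_n)(u')}H(u',\gamma\coc_n^{-1}(u'))$ over the finitely many preimages $u'$ in distinct cylinders carry exactly the same phases $e^{ib\tau_n(u')}$ as in the scalar problem; the group elements $\coc_n^{-1}(u')$ merely permute the $F_q$-coordinates of $H$ in a cylinder-constant fashion, so each fixed output coordinate $(\hat{\mathcal{M}}^m_{ab,q}H)(u,\gamma)$ is a scalar transfer-operator sum applied to the scalar functions $u'\mapsto H(u',\gamma\coc_m^{-1}(u'))$. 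One then verifies that the Dolgopyat dense-set construction and the resulting damping can be carried out simultaneously for all coordinates, because the choice of which cylinders to damp depends only on $\tau_n$ and the geometry, not on $H$. This yields the contraction $\|\hat{\mathcal{M}}^m_{ab,q}H\|_2 \le C(1-\epsilon)^m\|H\|_{1,b}$ with $C,\epsilon,a_0,b_0$ independent of $q$ (indeed of the normal subgroup $\Gamma(q)$), since all constants come from the scalar estimates on $\hat U$.

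The main obstacle I anticipate is bookkeeping the Lipschitz norm through the iteration: the operator $\hat{\mathcal{M}}_{ab,q}$ can amplify the Lipschitz seminorm by a factor $\sim|b|$, which is exactly why the norm $\|\cdot\|_{1,b}$ rescales the seminorm by $1/\max(1,|b|)$. One must track that the Dolgopyat operators preserve an invariant cone of functions whose Lipschitz-to-sup ratio stays bounded by $\sim|b|$ — the standard "$\mathcal{K}_{E}$-cone" condition in Stoyanov's formalism — and check that the $\mathbb{C}^{F_q}$-valued setting does not enlarge this ratio: since $|H(u)-H(u')| \le \sum_\gamma$ contributions but the right regular representation is an isometry on $\mathbb{C}^{F_q}$, the cocycle (being locally constant) contributes nothing to the seminorm within a cylinder, so the cone estimates transfer verbatim. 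A secondary subtlety is that the damping must keep $H$ in $C_{\Lip(d)}(\hat U,\mathbb{C}^{F_q})$ (not require it to lie in $\mathcal{W}$), which is automatic here because Theorem \ref{Dargtheorem} is stated for all Lipschitz $H$ — the projection onto $L^2_0(F_q)$ plays no role in the large-$|b|$ argument and is only needed in Section 4. I would close by noting that combining the cone-invariance, the $L^\infty$-contraction on the dense-set complement, and the $L^2(\nu)$ integration yields the stated bound.
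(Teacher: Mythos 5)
Your overall framework is the paper's: run the Dolgopyat--Stoyanov iteration directly on $\mathbb{C}^{F_q}$-valued functions, using a scalar dominating function $h$ in the cone $K_{E|b|}(\hat U)$, the local constancy of $\coc$ on cylinders (Lemma \ref{cocycleexists}), and the fact that the right regular action is unitary on $\mathbb{C}^{F_q}$. However, there is a genuine gap at the crucial step. You claim that ``the choice of which cylinders to damp depends only on $\tau_n$ and the geometry, not on $H$,'' and you use this to argue that the coordinatewise scalar problems $u'\mapsto H(u',\gamma\coc_m^{-1}(u'))$ can all be handled simultaneously. This is false, and it is precisely the point where the vector-valued extension requires work. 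In the Dolgopyat scheme the cancellation coming from non-integrability is conditional: on a given cylinder $C_m$ the oscillation of $b\,(\tau_N\circ v_2-\tau_N\circ v_1)$ only guarantees that the two summands $e^{ib\tau_N(v_i(u))}H(v_i(u))\coc_i^{(m)}$ fail to be nearly aligned on \emph{at least one} of two subcylinders $D_{j'},D_{j''}$ (and for one of the two branches), and \emph{which} one depends on the relative phases of $H$ at the two preimages. That is why the construction produces a whole family $\mathcal{J}(b)$ of dense damping sets and why, in the paper, the set $J$ is chosen \emph{after} seeing $H$ and $h$, via the functions $\chi^{(1)}[H,h]$, $\chi^{(2)}[H,h]$ (Lemma \ref{alternatives}, Lemma \ref{isdense}, Proposition \ref{l3.9}). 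If the damping set could be fixed from $\tau$ alone, the entire dense-subset apparatus would be unnecessary even in the scalar case.

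Consequently your coordinatewise reduction does not close the argument: different coordinates $\gamma$ would in general demand different damping sets $J_\gamma$, and there is then no single Dolgopyat operator $\mathcal{N}_{J,a}$ with the pointwise domination $|\hat{\mathcal{M}}^N_{ab,q}H|\leq \mathcal{N}_{J,a}h$ needed to iterate against one scalar majorant $h$. The paper's resolution is to run the cancellation argument directly in the Hilbert space $\mathbb{C}^{F_q}$: because $\coc_1^{(m)},\coc_2^{(m)}$ act unitarily and are constant on the relevant cylinders (Corollary \ref{previous}), the angle between the two vector-valued summands is well defined, the Lipschitz control $|H(u)-H(u')|\le E|b|h(u)D(u,u')$ together with the lower bound of Lemma \ref{largeangle} forces this angle to be at least $c_2\epsilon_1/4$ on one of the two subcylinders for one branch, and the elementary estimate $|v+w|\le(1-\mu)|v|+|w|$ for vectors at such an angle gives $\chi^{(i)}[H,h]\le 1$ there; the corresponding pair $(i,j)$ is then put into $J$, producing a single dense $J$ adapted to $H$. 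You would need to replace your ``$J$ independent of $H$'' claim with this dichotomy-and-selection argument (or an equivalent) for the proof to be correct; the remaining ingredients you list (cone preservation, $L^2(\nu)$ contraction of $\mathcal{N}_{J,a}$, the $\|\cdot\|_{1,b}$ bookkeeping, and the irrelevance of the $\mathcal{W}$-projection in this regime) do match the paper.
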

The strategy here is due to Dolgopyat \cite{Do}, and uses the construction of so-called Dolgopyat operators. This construction was generalized to axiom A flows by Stoyanov \cite{St}, and we will follow his argument. The remaining task is to relate these operators to our vector valued functions. The main reasons we succeed are that
 (1) the cocycle $\coc: \hat U\to \Gamma$ is locally constant (Lemma \ref{cocycleexists}) and (2) its action on $L^2(F_q)$ is unitary.
Both properties are elementary but they are the critical reasons why our approach works.

Following Stoyanov, we begin by defining a new metric on $\hat U$: for $u, u'\in \hat U$, set 
\be \label{defineD} D(u, u') = \inf \lbrace \mbox{diam}(\cyl): \mbox{ $\cyl$ is a cylinder containing $u$ and $u'$} \rbrace \ee
where $\op{diam}(\cyl)$ means the diameter of $\cyl$ in the metric $d$.
Note that for all $u, u' \in \hat U$,
\[ d(u, u') \leq D(u, u') .\]
\begin{dfn}\rm For $E>0$, we write $K_E(\hat U) $ for the set of all positive functions $h \in C(\hat U)$ satisfying 
\[ |h(u) - h(u')| \leq Eh(u') D(u, u') \]
for all $u, u'\in\hat U$ both contained in $ \hat U_i$ for some $i$. 
\end{dfn}
 Theorem \ref{Dargtheorem} follows from the following technical result as in the works of Dolgopyat and Stoyanov.

\begin{thm} \label{dolgopyatoperatorsexist}
There exist positive constants $N \in \mathbb{N}, E > 1, \epsilon, a_0, b_0 $ such that for all $a, b$ with $|a| < a_0$, $|b| > b_0$ there exist a finite set  $\mathcal{J}(b)$ and a family of operators 
$$ \mathcal{N}_{J, a}: C(\hat U) \rightarrow C(\hat U) \mbox{ for } J \in \mathcal{J}(b)$$
 with the properties that:
\begin{enumerate}
\item the operators $\mathcal{N}_{J, a}$ preserve $K_{E|b|}(\hat U)$;
\item we have $\int_{\hat U} |\mathcal{N}_{J, a}h|^2 d\nu \leq (1 - \epsilon) \int_{\hat U} |h|^2 d\nu $ for all $h \in K_{E|b|}(\hat U)$;
\item if $h\in K_{E|b|}(\hat U)$ and $H \in C(\hat U, \mathbb{C}^{F_q})$ satisfy 
$$  |H(u)| \leq h(u)  \text{ and } |H(u) - H(u')| \leq E|b| h(u)D(u, u') $$
for all $u, u' \in \hat U$, then there exists $J \in \mathcal{J}(b)$ such that 
\begin{itemize} \item $ |\hat{\mathcal{M}}^N_{ab, q} H| \leq\mathcal{N}_{J, a} h;$ 
\item for all $u, u' \in \hat U$,
\[  |\hat{\mathcal{M}}^N_{ab, q} H(u) - \hat{\mathcal{M}}^N_{ab, q} H(u')| \leq E|b| (\mathcal{N}_{J, a}h)(u)D(u, u') . \]\end{itemize}
\end{enumerate}
\end{thm}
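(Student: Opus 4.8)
The plan is to lean on the scalar Dolgopyat--Stoyanov construction and to verify that the single place in it where the test function enters through its scalar rather than vector-valued nature survives the passage to $\mathbb C^{F_q}$, thanks to the two elementary properties of the cocycle stressed above: its local constancy (Lemma~\ref{cocycleexists}) and the unitarity of the induced action on $\mathbb C^{F_q}$. Concretely, Dolgopyat's construction \cite{Do}, in the form worked out by Stoyanov \cite{St} for axiom A flows, produces constants $N,E,\epsilon,a_0,b_0$ and, for $|a|<a_0$ and $|b|>b_0$, a finite family $\mathcal J(b)$ together with scalar operators $\mathcal N_{J,a}\colon C(\hat U)\to C(\hat U)$ built purely from the base data $(\hat U,d,D,\tau,f^{(a)})$ and the local non-integrability of $\tau$. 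These operators satisfy (1) and (2) verbatim --- both are statements about the quotient base system, with no reference to $q$ --- together with the scalar form of (3): if $v\in C(\hat U)$ satisfies $|v|\le h$ and $|v(u)-v(u')|\le E|b|\,h(u)\,D(u,u')$ for some $h\in K_{E|b|}(\hat U)$, then some $J\in\mathcal J(b)$ gives $|\hat{\mathcal L}^N_{ab}v|\le\mathcal N_{J,a}h$ together with the companion $D$-Lipschitz bound. All that remains is to upgrade this scalar (3) to arbitrary $H\in C(\hat U,\mathbb C^{F_q})$.

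For that, recall that
\[(\hat{\mathcal M}^N_{ab,q}H)(u)=\sum_{\hat\sigma^N(v)=u}e^{(f_N^{(a)}+ib\tau_N)(v)}\,H(v)\,\coc_N(v),\]
where $\coc_N(v)$ acts on $\mathbb C^{F_q}$ by the right regular representation, which is unitary for the Hermitian norm; hence $|H(v)\coc_N(v)|=|H(v)|$ pointwise, which already reproduces the trivial bound $|\hat{\mathcal M}^N_{ab,q}H|\le\hat{\mathcal L}^N_{a0}h$ when $|H|\le h$. Moreover $\coc_N$ is constant on every cylinder of length $N$ --- indeed of any length --- by Lemma~\ref{cocycleexists}(2), so the $F_q$-twist is frozen on the scale on which Stoyanov's cancellation operates (a refinement of the length-$N$ cylinders down to scale $\sim 1/|b|$, i.e.\ length $\sim N+\log|b|$, on which $h$ and $H$ are constant up to their Lipschitz bounds, and to which Lemma~\ref{cocycleexists}(2) still applies). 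Revisiting Stoyanov's proof of the scalar (3): the improvement over the trivial bound is obtained, cylinder by cylinder, from a dichotomy applied to a pair of inverse branches $v_1,v_2$ over a common cylinder $Z$ of that scale. Either $H$ has modulus bounded away from $h$ somewhere on one branch image --- then, by the Lipschitz hypothesis on $H$ (at the working scale), $|H|\le(1-c_0)h$ throughout it and that branch's term can be damped; this step uses only $|H|\le h$ and Lipschitz continuity of $|H|$, so is unchanged for $\mathbb C^{F_q}$-valued $H$. Or $H$ has near-maximal modulus on both branch images, in which case one expands the Hermitian norm of
$e^{ib\tau_N(v_1u)}H(v_1u)\coc_N(v_1u)+e^{ib\tau_N(v_2u)}H(v_2u)\coc_N(v_2u)$:
the cross term equals $2\,|\langle H(v_1u)g_1,H(v_2u)g_2\rangle|\,\cos\!\big(b(\tau_N(v_1u)-\tau_N(v_2u))+\theta_0(u)\big)$, where $g_i=\coc_N(v_iu)$ are constant on $Z$, the phase $\theta_0(u)=\arg\langle H(v_1u)g_1,H(v_2u)g_2\rangle$ varies by $O(\epsilon)$ on the working scale (because $g_1,g_2$ are exactly constant and $H$ is nearly constant there), and $b(\tau_N(v_1\cdot)-\tau_N(v_2\cdot))$ sweeps an interval of length $\ge c$ by local non-integrability of $\tau$ --- this is where $|b|>b_0$ is used, to keep the $\log|b|$-refinement fine enough. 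Hence on a sub-cylinder of definite proportion the cosine is $\le 1-c'$ and the bound improves by a definite factor: exactly the scalar mechanism, with Hermitian inner products in place of conjugate products and the cocycle absorbed into the near-constant phase $\theta_0$. (If that inner product vanishes the cross term is identically zero and the gain is automatic from $|H(v_1u)|^2+|H(v_2u)|^2<(|H(v_1u)|+|H(v_2u)|)^2$.) Packaging these gains over all relevant pairs exactly as Stoyanov does --- the choice of which sub-cylinder to damp in each pair being what is encoded in $J$ --- yields $|\hat{\mathcal M}^N_{ab,q}H|\le\mathcal N_{J,a}h$; applying the same estimates to $\hat{\mathcal M}^N_{ab,q}H(u)-\hat{\mathcal M}^N_{ab,q}H(u')$, again using unitarity of the cocycle action, yields the companion $D$-Lipschitz bound.

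The main obstacle --- really the only point that needs genuine care rather than transcription --- is the potential interference between the cocycle and the oscillatory estimate: a priori the argument of $\langle H(v_1u)g_1,H(v_2u)g_2\rangle$ could vary with $u$ in just the way that cancels the oscillation of $e^{ib(\tau_N(v_1u)-\tau_N(v_2u))}$ and destroys the gain. Local constancy of $\coc_N$ is exactly what rules this out, and the thing to check carefully is that the length scale at which Stoyanov's construction demands near-constancy --- which grows like $\log|b|$ --- is still a scale on which Lemma~\ref{cocycleexists}(2) delivers constancy of the cocycle; since that lemma holds for cylinders of every length, there is no conflict. The remaining points are routine: the local non-integrability of $\tau$ is a property of the base system $(\hat U,\hat\sigma,\tau)$ and is untouched by passing to the finite cover $\Gamma(q)\ba G$, and the constants $N,E,\epsilon,a_0,b_0$ may be taken as in the scalar construction since none of the estimates see $q$.
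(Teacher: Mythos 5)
Your proposal is correct and follows essentially the same route as the paper: the Dolgopyat operators are taken verbatim from Stoyanov's scalar construction (so parts (1), (2) and the Lipschitz half of (3) carry over via Lemma \ref{l3.7} and Lemma \ref{sl5.6}), and the only new content is the pointwise bound in part (3) for vector-valued $H$, which you obtain exactly as in Lemma \ref{alternatives}, Lemma \ref{largeangle}, Lemma \ref{isdense} and Proposition \ref{l3.9}: the cocycle is constant on each $C_m$ (Corollary \ref{previous}) and acts unitarily, so in the non-damped alternative the angle between the two twisted branch vectors is governed by $b\bigl(\tau_N\circ v_2-\tau_N\circ v_1\bigr)$ up to a nearly constant phase, and the temporal-distance oscillation forces a definite angular gain absorbed by the factor $1-\mu$. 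No gap; your cosine formulation of the cross term is just a repackaging of the paper's trigonometric inequality.
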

The operators $\mathcal{N}_{J, a}$ are called Dolgopyat operators. Before moving on we indicate how to deduce Theorem \ref{Dargtheorem} from Theorem \ref{dolgopyatoperatorsexist}.
\begin{proof}[Proof that Theorem \ref{dolgopyatoperatorsexist} implies Theorem \ref{Dargtheorem}] Choose $N \in \mathbb{N}$, $\epsilon, |a|< a_0,|b| > b_0, E,$ and $H$ as in Theorem \ref{dolgopyatoperatorsexist} and set $h_0$ to be the constant function $||H||_{1, b}$. Theorem \ref{dolgopyatoperatorsexist} allows us to inductively construct sequences $J_l \in \mathcal{J}(b)$, and $h_l\in K_{E|b|}(\hat U)$ such that
\begin{enumerate}
\item $h_{l+1} =\mathcal{N}_{J_l, a} h_l $,
\item $|\hat{\mathcal{M}}^{lN}_{ab, q} H(u)| \leq h_l(u) $ pointwise, and
\item $||\hat{\mathcal{M}}^{lN}_{ab, q}H||_2  \leq||h_l||_2 \leq (1 - \epsilon)^l ||H||_{1, b}.$
\end{enumerate}
Now choose $\e' > 0$ such that $(1 - \epsilon')^N = (1 - \epsilon)$. There is a uniform upper bound, say $R_0 > 1$, on the $L^2(\nu)$ operator norm of $\hat{\mathcal{M}}_{ab, q}$, valid for all $b$ and all $|a| < a_0$. For any $m = lN + r$, with $r < N$, we have 
\begin{eqnarray*} ||\hat{\mathcal{M}}^m_{ab, q} H||_2  &=& \left( \int_{\hat U} |\hat{\mathcal{M}}^{r}_{ab, q} \hat{\mathcal{M}}^{lN}_{ab, q} H(u)|^2d\nu \right)^{1/2}\\
&\leq& R_0^r \left( \int_{\hat U}| \hat{\mathcal{M}}^{lN}_{ab, q}H|^2 d\nu\right)^{1/2}\\
&\leq& R_0^r (1 - \epsilon)^l ||H||_{1, b} \\
&\leq&R_0^r(1 - \epsilon')^{lN} ||H||_{1, b}\\
&\leq& {R_0^N}(1 - \epsilon')^{m-N} ||H||_{1, b}.\end{eqnarray*} This proves the claim.
\end{proof}

\subsection{Notation and constants} \label{ss3.1}
We fix notations and constants that will be needed later on. From hyperbolicity properties of the map $\Pp$, we obtain constants $c_0 \in (0, 1), \kappa_1 >  \kappa>1 $, such that for all $n \in \mathbb{N}$,
\be \label{definegamma}  c_0\kappa^nd(u, u')  \leq d(\Pp^n u, \Pp^n u') \leq c_0^{-1} { \kappa_1^n } d(u, u')\ee
for all $u, u' \in \hat U_i$ both contained in some cylinder of length $n$. Note that this implies a similar estimate for $D$:
\be \label{definegammaD}  c_0\kappa^nD(u, u')  \leq D(\Pp^n u, \Pp^n u') \leq c_0^{-1} { \kappa_1^n } D(u, u')\ee
for all $u, u' \in \hat U_i$ both contained in some cylinder of length $n$. Fix $0<a_0' <0.1$. The functions $\tau$ and $h_0$, and hence $ f^{(a)}$, are not $d$-Lipschitz globally, but they are \textit{essentially} $d$-Lipschitz in the following sense; there exists $0<T_0<\infty$ such that 
 \be \label{defineT} T_0\ge  \max_{|a| \leq a_0'} \left\lbrace ||f^{(a)}||_{\infty} \right\rbrace   + ||\tau||_{\infty},\ee
 and
 \be T_0 \geq \frac{|f^{(a)} (u) - f^{(a)}(u')| + |\tau(u) - \tau(u')|}{d(u, u')}  \ee
 for all $|a| < a_0'$ and all $u, u'$ both contained in the same cylinder of length $1$. The following lemma follows from the Markov property. 
 \begin{lem} Suppose that $ \cyl[i_0 ,\ldots ,  i_N]$ is a non-empty cylinder. The map $\hat \sigma^n :\cyl[i_0,  \ldots ,  i_N] \rightarrow \cyl[i_n, \ldots ,i_N]$ is a bi-Lipschitz homeomorphsim. Moreover any section $v$ of $\hat \sigma ^n$ whose image contains $ \cyl[i_0 ,\ldots , i_N]$ restricts to a bi-Lipschitz homeomorphsm $\cyl[i_n, \ldots ,i_N] \rightarrow \cyl[i_0, \ldots ,i_N]$.   \end{lem}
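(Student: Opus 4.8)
The plan is to prove the lemma purely from the Markov property of the section $\mathcal R$ together with the hyperbolicity estimates \eqref{definegamma}. First I would recall what the shift $\hat\sigma$ does on cylinders: by the Markov property of $\mathcal P$ (equivalently $\Pp$), if $\cyl[i_0,\ldots,i_N]$ is non-empty then $\Pp$ maps it \emph{onto} $\cyl[i_1,\ldots,i_N]$, because the Markov inclusions $\mathcal P([\operatorname{int}^uU_{i_0},x])\supset[\operatorname{int}^uU_{i_1},\mathcal P(x)]$ guarantee that the image of the cylinder is not merely contained in but fills out $\cyl[i_1,\ldots,i_N]$. Iterating $n$ times gives a continuous surjection $\hat\sigma^n:\cyl[i_0,\ldots,i_N]\to\cyl[i_n,\ldots,i_N]$, and this map is a bijection because on the core $\hat U$ a point of $\cyl[i_n,\ldots,i_N]$ has a unique preimage with prescribed symbols $i_0,\ldots,i_{n-1}$ (this is exactly injectivity of the coding $\zeta^+$, which was already established, since the forward itinerary of $u\in\hat U$ is determined by the sequence and the first $n$ symbols pin down which branch of $\Pp^{-n}$ we are on). So $\hat\sigma^n$ restricted to the cylinder is a continuous bijection between compact metrizable spaces, hence a homeomorphism.

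Next, for the bi-Lipschitz claim, I would invoke \eqref{definegamma} directly: for $u,u'$ in a common cylinder of length $n$ — which is exactly the situation for two points of $\cyl[i_0,\ldots,i_N]$ with $N\ge n$ — we have
\[
c_0\kappa^n\, d(u,u')\ \le\ d(\Pp^n u,\Pp^n u')\ \le\ c_0^{-1}\kappa_1^n\, d(u,u'),
\]
and since $\Pp^n$ agrees with $\hat\sigma^n$ under the identification $\zeta^+$, this says precisely that $\hat\sigma^n$ and its inverse are Lipschitz with constants $c_0^{-1}\kappa_1^n$ and $(c_0\kappa^n)^{-1}$ respectively. That proves the first assertion. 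For the ``moreover'' part, let $v$ be any section of $\hat\sigma^n$ whose image contains $\cyl[i_0,\ldots,i_N]$; then $v$ is a right inverse to $\hat\sigma^n$, so on $\cyl[i_n,\ldots,i_N]$ it must coincide with the inverse of the homeomorphism $\hat\sigma^n:\cyl[i_0,\ldots,i_N]\to\cyl[i_n,\ldots,i_N]$ constructed above (a right inverse of a bijection is the two-sided inverse). Hence $v|_{\cyl[i_n,\ldots,i_N]}$ is that inverse, which we just showed is bi-Lipschitz onto $\cyl[i_0,\ldots,i_N]$.

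The one point that needs a little care — and which I expect to be the main (mild) obstacle — is the interplay between the open cylinders, their closures, and the residual core $\hat U$: the estimates \eqref{definegamma} are stated for points in $\hat U_i$ lying in a common length-$n$ cylinder, while one wants the conclusion for the closed cylinder $\cyl[i_0,\ldots,i_N]$ as a topological space. This is handled by density: $\hat U$ is residual (hence dense) in $U$, so the bi-Lipschitz bounds, being closed conditions, extend from $\hat U\cap\cyl$ to its closure, and continuity of $\Pp^n=\hat\sigma^n$ lets the surjectivity pass to closures as well. With that remark the proof is essentially a bookkeeping exercise combining the Markov property (surjectivity), injectivity of the coding (bijectivity), and \eqref{definegamma} (bi-Lipschitz), so I would keep the written argument short and point to those three ingredients.
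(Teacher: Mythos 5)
Your proof is correct and follows essentially the route the paper intends: the paper omits the argument, saying only that the lemma follows from the Markov property, and your write-up supplies exactly that — Markov surjectivity, injectivity of the coding for bijectivity, the estimate \eqref{definegamma} for the two-sided Lipschitz bounds, and the observation that a section whose image contains $\cyl[i_0,\ldots,i_N]$ must agree with the inverse branch on $\cyl[i_n,\ldots,i_N]$. Two minor inaccuracies, neither of which affects the argument: cylinders $\cyl[i_0,\ldots,i_N]$ are by definition (open) subsets of the residual set $\hat U$, hence not compact, so the ``continuous bijection between compact metrizable spaces'' justification is not available — but continuity of the inverse already follows from the lower bound in \eqref{definegamma} — and for the same reason your closing concern about passing from $\hat U\cap\cyl$ to closed cylinders is moot, since the lemma is stated for the open cylinders themselves.
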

The proof is omitted for brevity.
  We choose a small $r_0 > 0$ and $z_i \in \hat U_i$ such that $2r_0 < \min_i(\diam(U_i))$ and $z_iN^+_{r_0} \cap \Omega \subset \mbox{int}^u(U_i)$ for each $i$ (here again $\Omega$ denotes the support of the BMS measure). 
   We fix $C_1 >0$ and $\rho_1 > 0$ to satisfy the following lemma:
\begin{lem} \cite[Lemma 3.2]{St}  \label{lengthsizelemma}There exist $C_1 > 0$ and $ \rho_1 > 0$ such that, for any cylinder $\mathsf{C}[i]$ of length $m$, we have 
\[ {c_0r_0}{\kappa_1^{-m}} \leq \diam(\mathsf{C}[i]) \leq C_1\rho_1^m.\]
\end{lem}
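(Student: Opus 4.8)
The plan is to prove the two inequalities separately, both by induction on the length $m$ of the cylinder, using the uniform hyperbolicity bounds \eqref{definegamma} together with the bounded geometry of the rectangles. Recall that a cylinder $\mathsf{C}[i]$ of length $m$ with $i = (i_0,\dots,i_m)$ is, by \eqref{defcy}, the set of $u \in \hat U_{i_0}$ whose orbit segment $u, \Pp u, \dots, \Pp^m u$ visits the prescribed sequence of rectangles; in particular $\Pp^m$ maps $\mathsf{C}[i]$ homeomorphically onto $\hat U_{i_m}$ (this is the content of the preceding unnumbered Lemma applied with $n = m$), and the restriction of $\Pp^m$ to $\mathsf{C}[i]$ satisfies the two-sided expansion estimate \eqref{definegamma} since $u, u'$ in a cylinder of length $m$ lie in a common cylinder of length $m$.

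\emph{Upper bound.} For any $u, u' \in \mathsf{C}[i]$ we have $\Pp^m u, \Pp^m u' \in \hat U_{i_m} \subset U_{i_m}$, so $d(\Pp^m u, \Pp^m u') \le \diam(U_{i_m}) \le \alpha$. The left inequality in \eqref{definegamma} gives $c_0 \kappa^m\, d(u, u') \le d(\Pp^m u, \Pp^m u') \le \alpha$, hence $d(u,u') \le (\alpha/c_0)\kappa^{-m}$. Since $\kappa > 1$ and $\kappa_1 > \kappa$, taking the supremum over $u, u'$ yields $\diam(\mathsf{C}[i]) \le (\alpha/c_0)\kappa^{-m} \le C_1 \rho_1^m$ for suitable $C_1 > 0$ and $\rho_1 \in (\kappa^{-1}, 1)$; in fact one may simply take $\rho_1 = \kappa_1^{-1}$ and absorb constants into $C_1$ to match the stated form.

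\emph{Lower bound.} Here I would use the choice of $r_0 > 0$ and $z_i \in \hat U_i$ with $z_i N^+_{r_0}\cap\Omega \subset \mbox{int}^u(U_i)$. The point $z_{i_m}$ lies in $\hat U_{i_m}$, so it has a preimage $w \in \mathsf{C}[i]$ under $\Pp^m$. By the Markov property, the image of $\mathsf{C}[i]$ under $\Pp^m$ is all of $\hat U_{i_m}$, which contains $z_{i_m}N^+_{r_0}\cap\Omega$; pick $w' \in \mathsf{C}[i]$ with $\Pp^m w'$ on the boundary sphere at distance $r_0$ from $z_{i_m}$ along $N^+$, so that $d(\Pp^m w, \Pp^m w') \ge r_0$ (using \eqref{boundaalpha} to compare the $N^+$-parameter with the $d$-distance). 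Now apply the \emph{right} inequality in \eqref{definegamma}: $r_0 \le d(\Pp^m w, \Pp^m w') \le c_0^{-1}\kappa_1^m\, d(w, w')$, so $d(w, w') \ge c_0 r_0 \kappa_1^{-m}$, and therefore $\diam(\mathsf{C}[i]) \ge c_0 r_0 \kappa_1^{-m}$ as claimed.

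I expect the main technical obstacle to be the lower bound, specifically checking that $\Pp^m(\mathsf{C}[i])$ genuinely contains a $d$-ball (or $N^+$-segment) of radius comparable to $r_0$ inside $\hat U_{i_m}$, rather than a thin sliver. This requires care because $\hat U$ is only a residual subset of $U$ and $\Omega$ is a Cantor-like set; the honest statement is about the closed cylinder and its image, and the comparison between $N^+$-arclength and the ambient metric $d$ relies on staying within the scale $4\alpha$ where \eqref{boundaalpha} is valid. Since $\diam(\mathsf{C}[i]) \le (\alpha/c_0)\kappa^{-m} < 4\alpha$ for all $m$, this scale restriction is automatically satisfied, so the comparison goes through. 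The remaining bookkeeping — that the constants $c_0, \kappa, \kappa_1$ and $r_0$ are the fixed ones chosen above and do not depend on $m$ or on the particular cylinder — is immediate from \eqref{definegamma} and the finiteness of the alphabet $\{1,\dots,k\}$.
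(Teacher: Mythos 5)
The paper itself gives no argument for this lemma—it is quoted from Stoyanov \cite[Lemma 3.2]{St}—so there is no in-paper proof to compare against; your proposal is a self-contained argument along the expected lines (two-sided hyperbolicity \eqref{definegamma} on cylinders plus the bounded geometry of the $U_i$), and the upper bound is correct as written, except for the parenthetical claim that "one may simply take $\rho_1=\kappa_1^{-1}$ and absorb constants into $C_1$": that is false, since the discrepancy between $\kappa^{-m}$ and $\kappa_1^{-m}$ is $(\kappa_1/\kappa)^m$, exponential in $m$. You should keep $\rho_1=\kappa^{-1}$ (or any value in $[\kappa^{-1},1)$), which is all the statement requires.

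The genuine gap is in the lower bound, and it is exactly the point you flag but then do not close. You take $w'\in\mathsf{C}[i]$ with $\Pp^m w'$ "on the boundary sphere at distance $r_0$ from $z_{i_m}$ along $N^+$", but no such point need exist: $\hat U_{i_m}\subset z_{i_m}N^+_\alpha\cap\Omega$ is Cantor-like, and the defining property $z_iN^+_{r_0}\cap\Omega\subset\operatorname{int}^u(U_i)$ of $r_0$ only says that whatever points of $\Omega$ lie on that segment belong to $U_i$; it gives no lower bound on how far out along $N^+$ the set $\hat U_{i_m}$ extends (it contains no $N^+$-segment at all). Your closing paragraph only dispatches the scale restriction needed for \eqref{boundaalpha}, not this existence question. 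The repair is already built into the paper's choices: $r_0$ was fixed with $2r_0<\min_i\diam(U_i)$, and since each $U_i$ is proper and $\hat U_i$ is residual, hence dense, in $U_i$, the image $\Pp^m(\mathsf{C}[i])$, which by the preceding lemma is the length-zero cylinder at $i_m$, has diameter equal to $\diam(U_{i_m})>2r_0$. So choose $y,y'$ in this image with $d(y,y')>r_0$, pull them back to $w,w'\in\mathsf{C}[i]$, and apply the right-hand inequality of \eqref{definegamma} to get $\diam(\mathsf{C}[i])\ge d(w,w')\ge c_0\kappa_1^{-m}d(y,y')\ge c_0r_0\kappa_1^{-m}$. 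With that substitution (and dropping the appeal to a point at $N^+$-distance exactly $r_0$ from $z_{i_m}$), your argument is complete and gives the stated constants.
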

We also fix  $p_0 \in \mathbb{N}$ and $ \rho \in (0, 1)$ to satisfy the following proposition:
\begin{prop} \cite[Proposition 3.3]{St} \label{smallercylindersaresmaller}
There exist  $p_0 \in \mathbb{N}$ and $ \rho \in (0, 1)$ such that, for any $n$, any cylinder $\mathsf{C}[i] $ of length $n$ and any sub-cylinders $\mathsf{C}[i'] , \mathsf{C}[i'']$ of length $(n + 1)$ and
$ (n + p_0)$ respectively, we have 
$$  \diam(\mathsf{C}[i'']) \leq \rho \diam(\mathsf{C}[i]) \leq \diam(\mathsf{C}[i']).$$
\end{prop}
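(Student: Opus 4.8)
This is \cite[Proposition~3.3]{St}. The plan is to reduce it to a single uniform \emph{bounded distortion} estimate for $\Pp$, and then read off both inequalities from that estimate, Lemma~\ref{lengthsizelemma}, and a suitable choice of $p_0$. The estimate I would establish is: there is $B\geq 1$, depending only on $\Gamma$, such that for every $n$ and every admissible word $(i_0,\ldots,i_N)$ with $N>n$, the branch $v$ of $\Pp^n$ carrying the length-$0$ cylinder $\hat U_{i_n}$ bi-Lipschitz homeomorphically onto $\mathsf{C}[i_0,\ldots,i_n]$ (which exists and is bi-Lipschitz by the lemma preceding Lemma~\ref{lengthsizelemma}) takes $\mathsf{C}[i_n,\ldots,i_N]$ onto $\mathsf{C}[i_0,\ldots,i_N]$ and satisfies
\[
  B^{-1}\,\frac{\diam \mathsf{C}[i_n,\ldots,i_N]}{\diam \hat U_{i_n}} \ \leq\ \frac{\diam \mathsf{C}[i_0,\ldots,i_N]}{\diam \mathsf{C}[i_0,\ldots,i_n]} \ \leq\ B\,\frac{\diam \mathsf{C}[i_n,\ldots,i_N]}{\diam \hat U_{i_n}} .
\]
Note that \eqref{definegamma} alone only bounds the distortion of $v$ by the useless quantity $c_0^{-2}(\kappa_1/\kappa)^n$, so one genuinely has to use that $\tilde\tau\in C_\theta(\Sigma)$: along a length-$n$ cylinder the logarithm of the infinitesimal expansion factor of $\Pp^n$ differs from the corresponding Birkhoff sum of $\tau$ by an amount bounded uniformly in $n$, and the oscillation of that Birkhoff sum on the cylinder is at most $||\tau||_{d_\theta}\,\theta/(1-\theta)$; exponentiating and comparing the $v$-images of $\hat U_{i_n}$ and of $\mathsf{C}[i_n,\ldots,i_N]$ gives the display. (Equivalently, the Gibbs bound \eqref{ggibbs} together with Sullivan's shadow lemma gives $\diam \mathsf{C}[i_0,\ldots,i_n]\asymp e^{-\tau_n}$ with uniform constants, which is the same fact.)

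Granting the display, fix a length-$n$ cylinder $\mathsf{C}[i]$ with last symbol $i_n$, and recall $2r_0 < \diam \hat U_{i_n} = \diam U_{i_n}\leq\alpha$ (the middle equality holds because $\hat U_{i_n}$ is dense in $U_{i_n}$). Any length-$(n+1)$ sub-cylinder $\mathsf{C}[i']\subset\mathsf{C}[i]$ has $\Pp^n$-image the length-$1$ cylinder $\mathsf{C}[i_n,i'_{n+1}]\subset\hat U_{i_n}$, whose diameter is $\geq c_0 r_0\kappa_1^{-1}$ by Lemma~\ref{lengthsizelemma}, so the left half of the display yields
\[
  \diam \mathsf{C}[i'] \ \geq\ \frac{c_0 r_0}{B\,\kappa_1\,\alpha}\,\diam \mathsf{C}[i] .
\]
Any length-$(n+p_0)$ sub-cylinder $\mathsf{C}[i'']\subset\mathsf{C}[i]$ has $\Pp^n$-image a length-$p_0$ cylinder inside $\hat U_{i_n}$, whose diameter is $\leq C_1\rho_1^{p_0}$ by Lemma~\ref{lengthsizelemma}, so the right half of the display yields $\diam \mathsf{C}[i'']\leq \tfrac{B C_1}{2r_0}\rho_1^{p_0}\,\diam \mathsf{C}[i]$. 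Now put $\rho:=\dfrac{c_0 r_0}{B\,\kappa_1\,\alpha}$, which lies in $(0,1)$ since $c_0<1$, $r_0<\alpha/2$ and $B,\kappa_1\geq 1$, and choose $p_0\in\mathbb{N}$ large enough that $\tfrac{B C_1}{2r_0}\rho_1^{p_0}\leq\rho$, which is possible because $\rho_1<1$. With these choices the two inequalities above become precisely
\[
  \diam \mathsf{C}[i''] \ \leq\ \rho\,\diam \mathsf{C}[i] \ \leq\ \diam \mathsf{C}[i'] ,
\]
which is the assertion.

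The entire weight of the argument sits in the uniform bounded-distortion estimate; the deduction above and the choice of constants are a short computation, the guiding idea being to take $p_0$ so large that after $p_0$ applications of $\Pp$ the largest relative diameter a sub-cylinder can have has been pushed below the smallest relative diameter a one-step sub-cylinder can have. In the present setting the distortion estimate is classical — $\Pp$ is a uniformly expanding piecewise-smooth map of a Cantor set and $\tau$ has summable variations — and it is exactly the mechanism behind the size estimates in \cite{St}, which is why in the paper one simply invokes Proposition~3.3 of that reference.
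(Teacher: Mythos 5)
Since the paper offers no proof of this statement at all—it is imported wholesale as \cite[Proposition 3.3]{St}—there is no internal argument to compare against; what you have written is a reconstruction of the standard proof, and in substance it is the same mechanism that underlies Stoyanov's. Your reduction from the bounded-distortion display to the proposition is correct and the constants check out: $\Pp^n$ sends the length-$(n+1)$ and length-$(n+p_0)$ sub-cylinders to cylinders of length $1$ and $p_0$ inside $\hat U_{i_n}$, Lemma \ref{lengthsizelemma} gives $c_0r_0\kappa_1^{-1}$ and $C_1\rho_1^{p_0}$ for their diameters, the bounds $2r_0<\diam U_{i_n}\le \alpha$ apply to the denominator (density of the core does justify $\diam\hat U_{i_n}=\diam U_{i_n}$), and the choices $\rho=c_0r_0/(B\kappa_1\alpha)\in(0,1)$ and $p_0$ with $BC_1\rho_1^{p_0}/(2r_0)\le\rho$ yield both inequalities, including the case $n=0$. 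The entire weight therefore rests on the distortion display, which you assert rather than prove, and two points there are exactly the nontrivial content: (a) the claim that the log of the expansion factor of $\Pp^n$ along unstable arcs equals $\tau_n$ up to an error bounded uniformly in $n$ does not follow from composing the one-step estimate $n$ times (that would accumulate $n$ holonomy factors); one either uses the Markov identity $\Pp^n=\pi_S\circ\mathcal P^n$ on $U$, so that the exact horocyclic scaling $n^+_s a_t=a_t n^+_{se^{t}}$ contributes precisely $e^{\tau_n}$ and only a single, uniformly bounded stable holonomy remains, or one runs the usual H\"older-summability argument for the log-Jacobian, which is what your oscillation bound $\Lip_{d_\theta}(\tilde\tau)\,\theta/(1-\theta)$ is implicitly doing; and (b) turning pointwise derivative/Birkhoff-sum comparisons into diameter comparisons needs $\Pp^n$ and its inverse branch to be controlled on the horocyclic arcs spanning the cylinders, not merely on the Cantor set $\hat U$, so one must either extend the return time and projections to a neighbourhood in the unstable leaf or argue through the cylinder metric $D$ of \eqref{defineD}. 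Both points are standard for the geodesic flow on a convex cocompact hyperbolic surface and are precisely what \cite{St} supplies in the general Axiom A setting, which is why the paper simply cites it. Finally, your parenthetical claim that the Gibbs bound \eqref{ggibbs} plus Sullivan's shadow lemma gives the "same fact" is only half right: Ahlfors regularity of the Patterson--Sullivan measure gives $\diam\mathsf{C}[i_0,\ldots,i_n]\gtrsim e^{-\tau_n}$ (enough for the left inequality), but the matching upper bound still requires the exact $e^{t}$-expansion or the distortion estimate, so that route complements rather than replaces the main display.
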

Choose also $p_1 > 1$ such that \be \label{definep_1} {1}/{4} \leq {1}/{2} - 2\rho^{p_1 - 1}.\ee 

\begin{fact} \label{propertiesofGibbsmeasures} It follows from a property of an equilibrium state  and the fact that $\mbox{Pr}_\sigma(-\delta\tau) = 0$ that there is a constant
 $0<c_1 <1$ such that for any $m\in \N$,
\[ c_1 {e^{-\delta \tau_m(y)} } \leq {\nu(\mathsf{C}[i])} \leq {c_1}^{-1} {e^{-\delta \tau_m(y)} } \]
for any cylinder $\mathsf{C}[i]$ of length $m$ and any $y \in \mathsf{C}[i]$. 
\end{fact}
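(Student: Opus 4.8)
The plan is to reduce the stated two-sided estimate to the conformality of the Ruelle--Perron--Frobenius measure and a standard bounded-distortion bound for the Birkhoff sums $\tau_m$. First I would record the identification, implicit in Theorem \ref{BMSisGIbbs} and Corollary \ref{c2.4} together with the fourth bullet of Theorem \ref{RPF}: under $\zeta^+$ the measure $\nu$ on $\hat U$ coincides with $\zeta^+_*(h_0\,\hat\nu_0)$, where $\hat\nu_0$ and $h_0$ are the probability measure and positive eigenfunction of Theorem \ref{RPF} for the potential $f=-\delta\tau$, and where $Pr_\sigma(-\delta\tau)=0$ forces $\lambda_0=1$; under this identification the cylinders of \eqref{defcy} correspond to genuine length-$m$ cylinders of $\Sigma^+$. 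Since $h_0$ is a positive continuous function on the compact space $\Sigma^+$ (equivalently on $U$), it is bounded between two positive constants, so it suffices to prove that $\hat\nu_0(\mathsf C[i])\asymp e^{-\delta\tau_m(y)}$ with constants independent of $m$, of the length-$m$ cylinder $\mathsf C[i]=\mathsf C[i_0,\dots,i_m]$, and of $y\in\mathsf C[i]$. This is exactly \eqref{ggibbs} specialized to $a=0$; for self-containedness I would also give the short direct argument, as sketched next.

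Because $\lambda_0=1$, the measure $\hat\nu_0$ is conformal: $\mathcal L^*_{-\delta\tau}\hat\nu_0=\hat\nu_0$, hence $\int \mathcal L^m_{-\delta\tau}g\,d\hat\nu_0=\int g\,d\hat\nu_0$ for every $g\in C(\Sigma^+)$. Let $v\colon \hat U_{i_m}\to\mathsf C[i]$ be the inverse branch of $\sigma^m$ furnished by the Markov property, so that $\sigma^m$ maps $\mathsf C[i]$ bijectively onto $\hat U_{i_m}$. For Borel $A\subset\hat U_{i_m}$ one computes $\mathcal L^m_{-\delta\tau}(\mathbf 1_{v(A)})(x)=e^{-\delta\tau_m(v(x))}\mathbf 1_A(x)$, whence $\hat\nu_0(v(A))=\int_A e^{-\delta\tau_m(v(x))}\,d\hat\nu_0(x)$; with $A=\hat U_{i_m}$ this gives $\hat\nu_0(\mathsf C[i])=\int_{\hat U_{i_m}}e^{-\delta\tau_m(v(x))}\,d\hat\nu_0(x)$. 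Now invoke bounded distortion: writing $\tau_m(u)-\tau_m(u')=\sum_{j=0}^{m-1}\bigl(\tau(\Pp^j u)-\tau(\Pp^j u')\bigr)$, the points $\Pp^j u,\Pp^j u'$ lie in a common cylinder of length $m-j$, hence within distance $C_1\rho_1^{m-j}$ by Lemma \ref{lengthsizelemma}, and $\tau$ is $T_0$-Lipschitz on length-$1$ cylinders, so $|\tau_m(u)-\tau_m(u')|\le T_0C_1\sum_{k\ge1}\rho_1^{k}=:T_1<\infty$ whenever $u,u'$ lie in a common length-$m$ cylinder. Consequently $e^{-\delta\tau_m(v(x))}=e^{\pm\delta T_1}e^{-\delta\tau_m(y)}$ for all $x\in\hat U_{i_m}$ and any fixed $y\in\mathsf C[i]$, while $\hat\nu_0(\hat U_{i_m})\le1$ is bounded below by $\min_i\hat\nu_0(\hat U_i)>0$ (full support of the Gibbs measure, using topological mixing of the transition matrix). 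Combining, $\hat\nu_0(\mathsf C[i])\asymp e^{-\delta\tau_m(y)}$ uniformly; multiplying by the two-sided bounds on $h_0$ yields the Fact with a suitable $0<c_1<1$.

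I do not expect a serious obstacle here, as the statement is standard. The only points that genuinely require care are (i) the clean passage between the two-sided model $(\Sigma,\sigma,\nu)$ and the one-sided model $(\Sigma^+,\sigma,\hat\nu_0)$ --- in particular checking that the one-sided cylinders of \eqref{defcy} are images under $\zeta^+$ of cylinders of $\Sigma^+$ and that $\Pp^m$ is onto $\hat U_{i_m}$ on each such cylinder, both guaranteed by the Markov property --- and (ii) tracking that every comparison constant ($T_1$, the distortion of $e^{-\delta\tau_m}$, the lower bound on $\hat\nu_0(\hat U_i)$, and the bounds on $h_0$) is independent of the cylinder length $m$; the geometric decay in \eqref{definegamma} and Lemma \ref{lengthsizelemma} is precisely what makes $T_1$ finite and $m$-independent.
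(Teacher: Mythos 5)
Your proof is correct and follows essentially the same route as the paper, which offers no argument beyond invoking the Gibbs property of the $-\delta\tau$-equilibrium state with zero pressure (exactly the content of \eqref{ggibbs} at $a=0$, transferred from $\hat\nu_0$ to $\nu=h_0\hat\nu_0$ by the two-sided bounds on $h_0$). Your fill-in via conformality of $\hat\nu_0$, the inverse-branch computation, and bounded distortion of $\tau_m$ on length-$m$ cylinders is the standard derivation of that Gibbs property and is sound as written.
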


Now we need to recall some consequences of non-joint-integrability of the $N^+, N^-$ foliations. 


\begin{lem}[Main Lemma of \cite{St}] \label{SML} There exist  $ n_1\in \mathbb{N}, \delta_0 \in (0, 1)$, a non-empty subset $U_0 \subset U_1$
 which is a finite union of cylinders of length $n_1\ge 1$, and $z_0 \in U_0$ such that, setting $\mathcal{U} = \sigma^{n_1}(U_0)$,
 $\mathcal{U}$ is dense in $U$ 
and that for any $N > n_1$, \begin{enumerate}
\item there exist Lipschitz sections $v_1, v_2: U \rightarrow U$ such that $\sigma^N(v_i(x)) = x$ for all $x\in \mathcal{U}$, and $v_i(\mathcal{U})$ is a finite union of 
open cylinders of length $N$;
\item $\overline{v_1(U)} \cap \overline{v_{2}(U)} = \emptyset$;
\item  for all $s\in\R$ such that $z_0 n^+_s \in U_0$, all $ 0 < |t| < \delta_0$ with $z_0n^+_{s + t} \in U_0 \cap \Omega$, we have 
\begin{multline*}
\frac{1}{t} | ( \tau_N \circ v_2 \circ \Pp^{n_1} - \tau_N \circ v_1 \circ \Pp^{n_1})(z_0n^+_{t + s} ) -\\
 ( \tau_N \circ v_2 \circ \Pp^{n_1} - \tau_N \circ v_1 \circ \Pp^{n_1}) (z_0n^+_{ s} ) |
   \geq \frac{\delta_0}{2} \end{multline*}
    (see \eqref{definenpm} for other notation). 
\end{enumerate}
\end{lem}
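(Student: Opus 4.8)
\noindent\emph{Plan of proof.} Part (3) is a quantitative non-joint-integrability statement for the horocyclic foliations $N^{+},N^{-}$ of the geodesic flow, and the plan is to make that non-integrability explicit for $G=\SL_{2}(\br)$. The underlying fact is the elementary Lie-algebra statement that the $\mathfrak{a}$-component of the bracket $[\,\mathfrak{n}^{+},\mathfrak{n}^{-}\,]$ does not vanish --- equivalently, that $\T^{1}(\G\ba\bH^{2})$ carries its canonical contact structure --- from which one extracts that the temporal distance function attached to pairs of nearby points of $\Omega$ is real analytic with non-vanishing $N^{+}$-derivative, with a lower bound depending only on $\G$ and $\alpha$. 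Lemma \ref{SML} is the organization of this fact, uniformly in the cylinder length $N$, into the form needed to construct the Dolgopyat operators; I would follow Stoyanov \cite{St}.

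Parts (1) and (2) are soft consequences of topological mixing of $\mathrm{Tr}$ together with the diameter bounds of Lemma \ref{lengthsizelemma} and Proposition \ref{smallercylindersaresmaller}. Fix $n_{1}$ to be at least a topological-mixing exponent of $\mathrm{Tr}$ and let $U_{0}\subset U_{1}$ be a union of cylinders of length $n_{1}$, one with each possible terminal symbol, so that $\mathcal{U}=\sigma^{n_{1}}(U_{0})$ is dense in $U$; choose $z_{0}$ in the interior of a component of $U_{0}\cap\Omega$ and so small that the forward endpoint $\xi_{s}$ of a fixed lift of $z_{0}n^{+}_{s}$ stays in a bounded subarc of $\Lambda(\G)$. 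For $N>n_{1}$, the expansion estimate \eqref{definegamma} and the Markov property show that every continuous branch of $\sigma^{-N}$ over a component of $\mathcal{U}$ is a Lipschitz section whose image is an open cylinder of length $N$; mixing provides many such branches over each component, so one selects two families $v_{1},v_{2}$ whose images lie in cylinders with disjoint closures --- e.g.\ by requiring the two length-$N$ words to begin with two distinct symbols and, on the intermediate coordinates, to repeat two fixed admissible blocks (the blocks chosen so that the auxiliary data below stabilizes as $N\to\infty$). This gives (1) and (2).

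For (3), set $\Xi_{N}:=\tau_{N}\circ v_{2}\circ\Pp^{n_{1}}-\tau_{N}\circ v_{1}\circ\Pp^{n_{1}}$ on $U_{0}$. Since $\tau$ is real analytic on the interior of each cylinder, the restriction of $\Xi_{N}$ to the curve $s\mapsto z_{0}n^{+}_{s}$ extends real analytically to the whole $s$-interval, so it suffices to bound $\bigl|\tfrac{d}{ds}\Xi_{N}(z_{0}n^{+}_{s})\bigr|$ below uniformly in $N$ and to check that this derivative has constant sign. I would compute $s\mapsto\tau_{N}\bigl(v_{i}(\Pp^{n_{1}}(z_{0}n^{+}_{s}))\bigr)$ in $\bH^{2}$: lifting and flowing shows that $v_{i}(\Pp^{n_{1}}(z_{0}n^{+}_{s}))$ lifts to the point of a fixed horocycle $H_{i}$ --- the $N^{+}$-orbit determined by the branch $v_{i}$, based at a point $\eta_{i}\in\partial\bH^{2}$ --- lying on the geodesic towards $\xi_{s}$, while $\tau_{N}(v_{i}(\cdots))$ is the flow time along that geodesic from $H_{i}$ to a horocycle based at $\xi_{s}$ that depends only on $\Pp^{n_{1}}(z_{0}n^{+}_{s})$, hence is the \emph{same} for $i=1,2$. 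Using $\beta_{\xi}(y,x)=\log\bigl(\Im x\,|y-\xi|^{2}\bigr)-\log\bigl(\Im y\,|x-\xi|^{2}\bigr)$ and the fact that two concentric horocycles lie a constant flow time apart, the $\xi_{s}$-horocycle contribution and the basepoint terms cancel in the difference, and one is left, up to an $s$-independent constant and an overall sign, with
\begin{align*}
\Xi_{N}(z_{0}n^{+}_{s})&=2\log|\xi_{s}-\eta_{1}|-2\log|\xi_{s}-\eta_{2}|,\\
\frac{d}{ds}\,\Xi_{N}(z_{0}n^{+}_{s})&=\frac{2\,\xi'(s)\,(\eta_{1}-\eta_{2})}{(\xi_{s}-\eta_{1})(\xi_{s}-\eta_{2})}.
\end{align*}
Here $|\xi'(s)|$ is bounded below on the compact $s$-range, $|\xi_{s}-\eta_{i}|$ is bounded above, and $|\eta_{1}-\eta_{2}|$ is bounded below; the last is exactly where non-joint-integrability enters, since $v_{1}(\mathcal{U})$ and $v_{2}(\mathcal{U})$ have disjoint closures while sitting far out along their respective backward geodesics, so their backward endpoints cannot be close. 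Choosing $\delta_{0}$ from these bounds --- and noting that $\xi_{s}$ stays in an interval missing $\eta_{1},\eta_{2}$, so that the displayed derivative keeps a constant sign --- yields $|\Xi_{N}(z_{0}n^{+}_{s+t})-\Xi_{N}(z_{0}n^{+}_{s})|\ge\tfrac{\delta_{0}}{2}|t|$ for $0<|t|<\delta_{0}$.

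The step I expect to be the main obstacle is the uniformity in $N$: the points $\eta_{i}$ are the images of $z_{j_{i}}^{-}$ under the length-$N$ cocycle products $\coc_{N}$, so both the lower bound on $|\eta_{1}-\eta_{2}|$ and the upper bound on $|\xi_{s}-\eta_{i}|$ must survive as $N$ grows. This is precisely what the choice of $v_{1},v_{2}$ with repeating blocks, together with the contraction estimates \eqref{definegamma}--\eqref{definegammaD}, is designed to control: they force $\eta_{i}$ to converge (to the image of a hyperbolic fixed point) as $N\to\infty$ and reduce the matter to finitely many small $N$, which are then absorbed into $\delta_{0}$ and $n_{1}$. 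The complete argument, valid for Axiom A flows satisfying the local non-integrability condition --- among them the geodesic flow on $\G\ba\bH^{2}$ --- is carried out in \cite{St}, to which I refer for the details.
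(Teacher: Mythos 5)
The paper does not prove Lemma \ref{SML} at all: it is imported verbatim, as the ``Main Lemma'' of Stoyanov \cite{St}, specialized to the geodesic flow on $\G\ba G$ with the Markov section fixed in Section 2, so the paper's ``proof'' is the citation you end with. In that sense your proposal lands in the same place as the paper. What you add is a sketch of a genuinely different, more concrete route: a direct verification of the non-integrability estimate in constant curvature via the temporal distance function, expressing the Birkhoff sums $\tau_N\circ v_i$ through Busemann functions and reducing the derivative of $\Xi_N$ along the $n^+_s$-curve to a cross-ratio-type expression in the boundary points $\eta_1,\eta_2,\xi_s$. This is essentially the mechanism used by Dolgopyat \cite{Do} and Naud \cite{Naud} for constant curvature, and it buys transparency (an explicit formula and an explicit $\delta_0$) at the price of being special to $\bH^2$, whereas Stoyanov's argument, which the paper leans on, works for general Axiom A flows from a local non-integrability hypothesis and delivers the statement in exactly the form (sections of $\sigma^N$ for every $N>n_1$, constants independent of $N$) that Section 3 consumes.

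Two cautions if you wanted to upgrade the sketch to a proof rather than a citation. First, the identification of $\tau_N(v_i(\cdot))$ with a flow time between horocycles is only true up to stable-holonomy corrections, since $\Pp=\pi_S\circ\mathcal P$ projects along stable leaves at every step; these corrections telescope, but they must be tracked, because they are exactly what turns the difference into the temporal distance function. Second, and more seriously, the lower bound on $|\eta_1-\eta_2|$ and the constancy of sign must hold uniformly in $N$ with $\delta_0$ fixed, while your branches $v_i$ (hence the points $\eta_i$) change with $N$; ``disjoint closures of $v_1(\mathcal U)$ and $v_2(\mathcal U)$'' alone does not give this, since those sets shrink as $N\to\infty$. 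The standard fix is not repeating blocks forcing $\eta_i$ to converge, but rather choosing $v_i$ to agree after the first $n_1$ symbols, so that the separation is produced by the two fixed initial branches and the contribution of the remaining $N-n_1$ symbols to the derivative is exponentially small by \eqref{definegamma}; this is how the bound survives uniformly in $N$ in \cite{St}. Since you flag this point and defer to \cite{St} for it, your write-up is acceptable in the same sense the paper's is, but the deferred step is the entire content of part (3).
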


The next step is to establish certain a priori bounds on the transfer operators. Fix notation as in the previous subsection and choose 
\be A_0 >  {2}{c_0}^{-1} e^{\frac{T_0}{c_0 (\kappa - 1)}} \max \left\lbrace 1, \frac{T_0}{\kappa - 1} \right\rbrace. \label{defineA0}\ee
\begin{lem}\label{l3.7} For all $a\in \mathbb{R}$ with $|a| < a_0'$ as in \eqref{defineT} and all $|b| > 1$, the following hold:
\begin{itemize}
\item if $h \in K_B(\hat U)$ for some $B>0$, then 
\[ \left|  \frac{\hat{\mathcal{L}}^m_{a0}h(u) - \hat{\mathcal{L}}^m_{a0}h(u')   }{  \hat{\mathcal{L}}^m_{a0}h(u') }  \right| \leq A_0 \left[ \frac{B}{\kappa^m} + \frac{T_0}{\kappa - 1} \right] D(u, u')\]
for all $m \geq 0$ and for all $u, u' \in \hat U_i$ for some $i$;
\item if the functions $0<h\in C(\hat U), H \in C(\hat U, \mathbb{C}^{F_q})$ and the constant $B > 0$ are such that
\[|H(v) - H(v') | \leq Bh(v')D(v, v')\]
whenever $v, v'\in \hat U_i$ for some $i$,
 then for any $m \in \N$ and  any  $|b| > 1$,
\[ |\hat{\mathcal{M}}^m_{ab, q}H(u) - \hat{\mathcal{M}}^m_{ab, q}H(u')|  \leq A_0 \left[ \frac{B}{\kappa^m}\hat{\mathcal{L}}^m_{a0}h(u') + |b|(\hat{\mathcal{L}}_{a0}^m|H|(u')) \right]D(u, u')  \]
whenever $u, u' \in \hat U_i$ for some $i$.
\end{itemize}
\end{lem}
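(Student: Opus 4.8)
The plan is to prove both bullet points by unwinding the definitions of the iterated (normalized) transfer operators as sums over preimage branches and estimating branch-by-branch, using that each branch of $\hat\sigma^n$ maps a cylinder into a cylinder of length $n$ and hence the distortion estimates \eqref{definegamma}--\eqref{definegammaD} apply. First I would fix $u,u'\in\hat U_i$ and write the $m$-th iterate $\hat{\mathcal L}^m_{a0}h(u)=\sum_v e^{f^{(a)}_m(v)}h(v)$, the sum over sections $v$ of $\hat\sigma^m$ over $u$; the corresponding $v'$ for $u'$ lies in the same length-$m$ cylinder as $v$, so $D(v,v')\le c_0^{-1}\kappa^{-m}D(u,u')$ by \eqref{definegammaD}. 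For the ratio in the first bullet I would estimate $|f^{(a)}_m(v)-f^{(a)}_m(v')|$ by summing the essential-Lipschitz bound \eqref{defineT} over the $m$ intermediate cylinders: since $\Pp^j v,\Pp^j v'$ lie in a common cylinder of length $m-j$, one gets $|f^{(a)}(\Pp^j v)-f^{(a)}(\Pp^j v')|\le T_0 d(\Pp^j v,\Pp^j v')\le T_0 c_0^{-1}\kappa^{-(m-j)}D(u,u')$, and summing the geometric series gives a bound of the form $\frac{T_0}{c_0(\kappa-1)}D(u,u')$ (up to the constant absorbed into $A_0$). Combined with $|h(v)-h(v')|\le Bh(v')D(v,v')\le Bc_0^{-1}\kappa^{-m}h(v')D(u,u')$ and the elementary inequality $|e^x-1|\le |x|e^{|x|}$, each term of $\hat{\mathcal L}^m_{a0}h(u)$ is within a factor $1+A_0[\frac{B}{\kappa^m}+\frac{T_0}{\kappa-1}]D(u,u')$ of the corresponding term of $\hat{\mathcal L}^m_{a0}h(u')$; since all terms are positive, the same holds after summing, which is exactly the first bullet. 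The constant $A_0$ is chosen in \eqref{defineA0} precisely so that $2c_0^{-1}e^{T_0/(c_0(\kappa-1))}\max\{1,T_0/(\kappa-1)\}$ dominates the resulting prefactor.

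For the second bullet I would write, for the vector-valued operator, $\hat{\mathcal M}^m_{ab,q}H(u)=\sum_v e^{(f^{(a)}_m+ib\tau_m)(v)}H(v)\coc_m^{-1}(v)$, and likewise for $u'$. Here $\coc$ is locally constant (Lemma \ref{cocycleexists}), so $\coc_m(v)=\coc_m(v')$ for $v,v'$ in a common length-$m$ cylinder, and the cocycle terms match exactly; moreover $\coc_m^{-1}(v)$ acts unitarily on $\mathbb C^{F_q}$, so it does not affect norms. Thus term-by-term
\[
\bigl|e^{(f^{(a)}_m+ib\tau_m)(v)}H(v)-e^{(f^{(a)}_m+ib\tau_m)(v')}H(v')\bigr|
\]
is estimated by the triangle inequality through an intermediate quantity $e^{(f^{(a)}_m+ib\tau_m)(v')}H(v)$: the difference of the exponential factors contributes $|e^{w}-1|$ with $w=(f^{(a)}_m+ib\tau_m)(v)-(f^{(a)}_m+ib\tau_m)(v')$, whose real part is controlled exactly as in the first bullet and whose imaginary part is $b(\tau_m(v)-\tau_m(v'))$, of size $\le |b|T_0 c_0^{-1}(\kappa-1)^{-1}D(u,u')$; bounding $|H(v)|\le h(v)$ and absorbing the $e^{\cdots}$ factor and $|b|>1$ into $A_0$, this contributes the $|b|\hat{\mathcal L}^m_{a0}|H|(u')$ term (note $|e^w|=e^{\Re w}$ is within a bounded factor of $1$). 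The difference $H(v)-H(v')$ contributes, via the hypothesis, $Bh(v')D(v,v')\le Bc_0^{-1}\kappa^{-m}h(v')D(u,u')$, giving the $\frac{B}{\kappa^m}\hat{\mathcal L}^m_{a0}h(u')$ term after summing over branches. Summing the triangle-inequality estimates over all branches $v$ and using positivity yields the stated bound.

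The main obstacle, and the place requiring care rather than difficulty, is bookkeeping the constants so that the single constant $A_0$ from \eqref{defineA0} genuinely absorbs all the accumulated factors in both bullets simultaneously — in particular the interplay between the $\max\{1,T_0/(\kappa-1)\}$ factor (which handles whichever of the two bracketed terms dominates) and the $e^{T_0/(c_0(\kappa-1))}$ factor coming from $|e^w-1|\le |w|e^{|w|}$ applied with $|w|$ possibly as large as $\sim T_0/(c_0(\kappa-1))$ (the $|b|$-part of $w$ is handled separately and does not enter this exponential, which is why only the real part matters there). A secondary point is to make sure the section $v\mapsto v'$ is well-defined and that $v,v'$ really do lie in a common cylinder of the appropriate length — this is exactly the content of the preceding (unproved) lemma on bi-Lipschitz sections of $\hat\sigma^n$, which I would invoke.
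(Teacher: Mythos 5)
Your overall route is the same as the paper's: for the second bullet you pair each branch $v$ over $u$ with the branch $v'(v)$ over $u'$ lying in the same length-$m$ cylinder (Markov property), use local constancy of $\coc_m$ (Lemma \ref{cocycleexists}) together with unitarity of the right regular action, the distortion bounds \eqref{definegammaD} and \eqref{defineT}, and the choice \eqref{defineA0} of $A_0$; for the first bullet you reproduce the standard distortion argument that the paper simply attributes to \cite{St}.

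There is, however, a slip in how you split the second bullet. With your intermediate term $e^{(f^{(a)}_m+ib\tau_m)(v')}H(v)$, the exponential-difference piece comes multiplied by $|H(v)|$, i.e.\ by $H$ evaluated along the branch over $u$, whereas the claimed bound needs $\hat{\mathcal L}^m_{a0}|H|(u')$, i.e.\ $e^{f^{(a)}_m(v')}|H(v')|$ summed over the branches over $u'$. Your remedy, ``bounding $|H(v)|\le h(v)$,'' is not a hypothesis of this lemma (that inequality only appears later, in Theorem \ref{dolgopyatoperatorsexist}), and even if it were available it would produce a $\hat{\mathcal L}^m_{a0}h$ term rather than the stated $|b|\,\hat{\mathcal L}^m_{a0}|H|(u')$ term. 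The fix is to split the other way, as the paper does: with $E=f^{(a)}_m+ib\tau_m$, write $e^{E(v)}H(v)-e^{E(v')}H(v')=e^{E(v)}\bigl(H(v)-H(v')\bigr)+\bigl(e^{E(v)}-e^{E(v')}\bigr)H(v')$; the first piece is handled by the hypothesis $|H(v)-H(v')|\le Bh(v')D(v,v')$ together with $e^{f^{(a)}_m(v)}\le c_0A_0e^{f^{(a)}_m(v')}$, which converts the weight to the branch over $u'$, and the second piece carries $|H(v')|$ directly, giving $|b|A_0D(u,u')\,\hat{\mathcal L}^m_{a0}|H|(u')$ after summing. (Alternatively one can keep your splitting and use $|H(v)|\le|H(v')|+Bh(v')D(v,v')$, absorbing the resulting second-order-in-$D(u,u')$ term into the first bracket, at the cost of extra constant bookkeeping.) Apart from this point, your outline matches the paper's proof.
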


\begin{proof} The first part is essentially proved in \cite{St}. We concentrate on the second claim. Let $u, u' \in \hat U_i$ for some $i$ and let $m > 0$ be an integer. Given $v \in \hat U$ with $\Pp^m v = u$, let $\mathsf{C}[i_0, \ldots ,i_m]$ be the cylinder of length $m$ containing $v$. Note that $i_m = i$ and that $\Pp^m \mathsf{C}[i_0, \ldots ,i_m] = \hat U_i$ by the Markov property. Moreover we know that $\Pp^m: \mathsf{C}[i_0, \ldots , i_m] \rightarrow \hat U_i$ is a homeomorphism, so there exists  $v' = v'(v)$ with $\Pp^m v' = u'$. We therefore have $$d(\Pp ^ j v',\Pp ^j v) \leq \frac{1}{c_0 \kappa ^{m-j}}d(u, u')$$ and so
\begin{eqnarray*} |f_m^{(a)}(v) - f_m^{(a)}(v') | &\leq&  \sum_{j=0}^{m-1} | f^{(a)}(\Pp^j v) - f^{(a)}(\Pp^jv') | \\
 &\leq&  \sum_{j=0}^{m-1}  ||f^{(a)} ||_{\Lip(d)}\frac{ D(u, u')}{c_0\kappa^{m - j}}  \\
&\leq& \frac{T_0}{c_0(\kappa - 1)}D(u, u'). \end{eqnarray*}
A similar estimate holds for $|\tau_m(v'(v)) - \tau_m(v)|$ by a similar calculation. In particular 
\be \label{usefuleqin3} e^{f_m^{(a)}(v)} \leq c_0A_0 e^{f_m^{(a)}(v'(v))},\ee
and 
\begin{align} \notag |e&^{(f_m^{(a)} + ib\tau_m)(v) - (f_m^{(a)} + ib\tau_m)(v'(v))} - 1|\\
\notag &\leq e^{|f_m^{(a)}(v) - f_m^{(a)}(v')  |} |(f_m^{(a)} + ib\tau_m)(v) - (f_m^{(a)} + ib\tau_m)(v'(v))|  \\
  \label{usefuleqin3b} &\leq  |b|A_0D(u, u') .   \end{align}

\begin{remark}This type of estimate will be used repeatedly for the rest of the paper, often with little comment. \end{remark}
Recall that $\coc_m(v'(v)) = \coc_m(v)$ by Lemma \ref{cocycleexists}. Using the fact that the diameter of $\hat U_i$ is bounded above by $1$, we now compute
\begin{align*} &|\hat{\mathcal{M}}_{ab, q}^mH(u) - \hat{\mathcal{M}}_{ab, q}^mH(u')| \\&\leq  \sum_{\Pp^m v = u}\left|e^{(f^{(a)}_m-ib\tau_m)(v)} H(v) - e^{(f^{(a)}_m - ib\tau_m)(v'(v))} H(v'(v))\right| \\
&\leq\sum_{\Pp^m v = u}e^{f^{(a)}_m(v)} |H(v) -H(v'(v))| \\ &+ \sum_{\Pp^m v = u}\left| e^{(f^{(a)}_m - ib\tau_m)(v)} - e^{(f^{(a)}_m - ib\tau_m)(v'(v))}\right|\cdot  | H(v'(v))|\\
&\leq\sum_{\Pp^m v = u}e^{f^{(a)}_m(v)}Bh(v'(v))D(v, v'(v)) \\ &+ \sum_{\Pp^m v = u}e^{f^{(a)}_m(v'(v))} \left|e^{(f_m^{(a)} + ib\tau_m)(v) - (f_m^{(a)} + ib\tau_m)(v'(v))} - 1\right| \cdot |H(v'(v)) |\\
&\leq c_0A_0BD(v, v'(v))  \sum_{\Pp^m v = u}e^{f^{(a)}_m(v'(v))}h(v'(v))\\ &+  |b|A_0D(u, u')\sum_{\Pp^m v = u}e^{f^{(a)}_m(v'(v))}   |H(v'(v)) |\end{align*}
by \eqref{usefuleqin3} and \eqref{usefuleqin3b}. By definitions and \eqref{definegammaD} this then yields
\begin{eqnarray*} &&|\hat{\mathcal{M}}_{ab, q}^mH(u) - \hat{\mathcal{M}}_{ab, q}^mH(u')|   \\
&\leq& \frac{A_0BD(u, u') }{\kappa^m} \hat{\mathcal{L}}_{a0}h(u')+|b|A_0D(u, u') \hat{\mathcal{L}}_{a0}|H|(u') \\
&\leq& A_0 \left( \frac{B}{\kappa^m}\hat{\mathcal{L}}_{a0}h(u') +|b|\hat{\mathcal{L}}_{a0}|H|(u')\right) D(u, u')\end{eqnarray*} 
as expected.
\end{proof}

\subsection{Construction of Dolgopyat operators.} We now recall the construction of Dolgopyat operators. Their definitions rely on a number of constants, which we now fix. The meanings of these constants will become clear throughout the rest of the section. Choose
\begin{equation} \label{chooseE} E > \max \left\lbrace  \frac{2A_0T_0}{\kappa - 1}, 4A_0, 1\right  \rbrace; \end{equation}
 \begin{equation}  \label{ChooseN}   N> n_1 \mbox{ such that }\kappa^N > \max \left\lbrace \frac{E}{4c_0} , 6A_0,  \frac{512\kappa_1^{n_1}E}{c_0^2\delta_0 \rho} ,   \frac{200\kappa_1^{n_1}A_0}{c_0^2}  \right\rbrace;\end{equation} 
  \begin{equation} \label{chooseepsilon1} \epsilon _1 <  \min \left\lbrace \frac{c_0^2(\kappa - 1)}{16T_0\kappa_1^{n_1}},  \frac{c_0r_0}{\kappa_1^{n_1}}, \frac{\delta_0}{2}\right\rbrace;  \end{equation}
  \begin{equation} \label{choosemu}  \mu < \min \left( \frac{1}{4}, \frac{c_0^2\rho^{p_0, p_1 + 2}\epsilon_1}{4\kappa_1^N}, \frac{c_2^2\epsilon_1^2}{256}\right);\end{equation}
  where $A_0$ is given in \eqref{defineA0}, and other constants are as in subsection \ref{ss3.1}.  
 Moreover set
 \[ b_0 = 1.\]



For the rest of this subsection, we fix $|b|>b_0$. Let $$\lbrace C_m:=C_m(b)\}$$ be the family of maximal closed cylinders contained in $\overline{U_0}$ (see Lemma \ref{SML}) with $\operatorname{diam}(C_m) \leq \epsilon_1/|b|$. As a consequence of \eqref{chooseepsilon1} and Lemma \ref{lengthsizelemma} we have:
\begin{lem} \label{lengthcm} Each of the cylinders $C_m$ has length at least $n_1 + 1$. \end{lem}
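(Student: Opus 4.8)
The plan is to read off the conclusion from the lower bound on cylinder diameters supplied by Lemma \ref{lengthsizelemma}, matched against the diameter constraint $\operatorname{diam}(C_m)\le \epsilon_1/|b|$ built into the definition of the $C_m$, and to exploit the explicit choice of $\epsilon_1$ in \eqref{chooseepsilon1}. So the proof will be a short bookkeeping argument rather than anything substantial.

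First I would recall from Lemma \ref{SML} that $U_0$ is a finite union of cylinders of length $n_1$, and that by the lower bound half of Lemma \ref{lengthsizelemma} any cylinder $\mathsf{C}$ of length $\ell$ satisfies $\operatorname{diam}(\mathsf{C})\ge c_0 r_0\kappa_1^{-\ell}$. Then I would argue by contradiction: if some $C_m$ had length $\ell\le n_1$, then, since $\kappa_1>1$, we get $\operatorname{diam}(C_m)\ge c_0 r_0\kappa_1^{-\ell}\ge c_0 r_0\kappa_1^{-n_1}$. By the choice \eqref{chooseepsilon1} we have $\epsilon_1< c_0 r_0\kappa_1^{-n_1}$, and since $|b|>b_0=1$ we have $\epsilon_1/|b|<\epsilon_1$. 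Chaining these inequalities gives $\operatorname{diam}(C_m)>\epsilon_1/|b|$, contradicting the fact that $C_m$ was chosen with $\operatorname{diam}(C_m)\le \epsilon_1/|b|$. Hence every $C_m$ has length at least $n_1+1$.

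I do not expect any real obstacle here: the term $c_0 r_0/\kappa_1^{n_1}$ appearing in \eqref{chooseepsilon1} was inserted precisely so that this comparison goes through, and the only point requiring a word of care is the reduction from $\epsilon_1$ to $\epsilon_1/|b|$, which is where the hypothesis $|b|>b_0=1$ is used.
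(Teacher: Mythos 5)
Your proof is correct and is exactly the argument the paper intends: the lemma is stated there as an immediate consequence of \eqref{chooseepsilon1} and Lemma \ref{lengthsizelemma}, i.e.\ a cylinder of length $\le n_1$ would have diameter at least $c_0 r_0 \kappa_1^{-n_1} > \epsilon_1 \ge \epsilon_1/|b|$, contradicting the defining bound $\diam(C_m)\le \epsilon_1/|b|$. Your handling of $|b|>b_0=1$ is the right (and only) point of care, so nothing further is needed.
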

\begin{cor}\label{previous} Let $v_1, v_2$ be the sections for $\Pp^N$ constructed by Lemma \ref{SML}. If $u, u' \in C_m \cap \hat U$, then $\mathsf{c}_N(v_i(\Pp^{n_1}u)) = \mathsf{c}_N(v_i(\Pp^{n_1}u'))$ for $i = 1, 2$. \end{cor}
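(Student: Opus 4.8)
The plan is to reduce the assertion to the local constancy of the cocycle. By Lemma~\ref{cocycleexists}(2) the map $\coc_N$ is constant on every cylinder of length $N$, so it suffices to show that $v_i(\Pp^{n_1}u)$ and $v_i(\Pp^{n_1}u')$ always lie in one and the same cylinder of length $N$. First I would unwind the hypothesis $u,u'\in C_m\cap\hat U$: by Lemma~\ref{lengthcm}, $C_m$ is a closed cylinder of some length $\ell\ge n_1+1$, say $C_m=\overline{\cyl[i_0,\dots,i_\ell]}$, and since boundary points of cylinders lie outside the core $\hat U$, the points $u,u'$ in fact lie in the open cylinder $\cyl[i_0,\dots,i_\ell]$, so $\Pp^j u,\Pp^j u'\in\mathrm{int}(U_{i_j})$ for $0\le j\le\ell$. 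Applying $\Pp^{n_1}$ shows that $\Pp^{n_1}u$ and $\Pp^{n_1}u'$ both lie in the cylinder $\cyl[i_{n_1},\dots,i_\ell]$, of length $\ell-n_1\ge 1$; in particular they share the same initial symbol $i_{n_1}$. Moreover, since $C_m\subset\overline{U_0}$ and $U_0$ is a finite union of cylinders of length $n_1$, a short argument with cores gives $\cyl[i_0,\dots,i_\ell]\cap\hat U\subset U_0$, hence $\Pp^{n_1}u,\Pp^{n_1}u'\in\sigma^{n_1}(U_0)=\mathcal U$, the set on which the sections $v_i$ of Lemma~\ref{SML} satisfy $\sigma^N\circ v_i=\mathrm{id}$.

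The heart of the matter is then a combinatorial property of these sections: \emph{for $x\in\mathcal U$, the length-$N$ cylinder containing $v_i(x)$ depends only on the initial symbol of $x$.} To prove this I would write $v_i(\mathcal U)=D_1\sqcup\dots\sqcup D_M$ as its given decomposition into (pairwise disjoint) open cylinders of length $N$. Since $\sigma^N\circ v_i=\mathrm{id}$ on $\mathcal U$, the map $v_i\colon\mathcal U\to v_i(\mathcal U)$ is a bijection with inverse $\sigma^N$, so the sets $v_i^{-1}(D_\alpha)=\sigma^N(D_\alpha)$ partition $\mathcal U$; and by the Markov property (exactly as in the proof of Lemma~\ref{l3.7}) $\sigma^N(D_\alpha)=\hat U_{j_\alpha}$, where $j_\alpha$ denotes the terminal symbol of the length-$N$ cylinder $D_\alpha$. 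As these sets are pairwise disjoint and non-empty, the terminal symbols $j_1,\dots,j_M$ are pairwise distinct, whence the cylinder $D_\alpha$ with $v_i(x)\in D_\alpha$ is the unique one whose terminal symbol equals the initial symbol of $x$. Feeding in $x=\Pp^{n_1}u$ and $x=\Pp^{n_1}u'$, which have the common initial symbol $i_{n_1}$, we conclude that $v_i(\Pp^{n_1}u)$ and $v_i(\Pp^{n_1}u')$ lie in the same length-$N$ cylinder $D_\alpha$ (with $N\ge 1$), and Lemma~\ref{cocycleexists}(2) then gives $\coc_N(v_i(\Pp^{n_1}u))=\coc_N(v_i(\Pp^{n_1}u'))$.

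The only mildly delicate ingredient is the bookkeeping with cores and boundaries of cylinders — confirming that $C_m\cap\hat U$ and $\overline{U_0}\cap\hat U$ behave like the corresponding open cylinders intersected with $\hat U$ — but this is routine from the residual-set properties of $\hat U$ recorded in Section~2. The substantive input is only that the $v_i$ are ``prefix-type'' maps compatible with the cylinder partition (so that the cylinder containing $v_i(x)$ is determined by a bounded-length prefix of $x$, here just its first symbol) together with the local constancy of $\coc_N$; given these, the corollary is immediate.
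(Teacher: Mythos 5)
Your proof is correct and follows essentially the same route as the paper's: both reduce the statement to showing that $v_i(\Pp^{n_1}u)$ and $v_i(\Pp^{n_1}u')$ lie in a common cylinder of length $N$ and then invoke the local constancy of $\coc_N$ from Lemma \ref{cocycleexists}(2). The only difference is that where the paper justifies that step by a one-line appeal to Lemma \ref{SML}, you spell out why the length-$N$ cylinder containing $v_i(x)$ is determined by the initial symbol of $x$ (via the disjointness of the cylinders comprising $v_i(\mathcal U)$ and the fact that $\sigma^N$ maps each of them onto a full $\hat U_j$), which is a correct elaboration of the same argument.
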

\begin{proof} Choose $u, u' \in C_m \cap \hat U$. They are both contained in some cylinder of length $n_1 + 1$. Thus $\Pp^{n_1} u, \Pp^{n_1}u'$ are both contained in some cylinder of length $1$. But then $v_i(\Pp^{n_1} u), v_i(\Pp^{n_1} u')$ are both contained in the same cylinder of length $N$ by Lemma \ref{SML}. The result then follows by Lemma \ref{cocycleexists}. \end{proof}
\begin{Nota} We set  $\coc^{(m)}_i= \coc_N(v_i(\Pp^{n_1}u))\in \G$ for any $u \in C_m$; this is well defined by Corollary \ref{previous}. \end{Nota}

Let $\lbrace D_j:=D_j(b): j = 1, \ldots , p \rbrace$ be the collection of sub cylinders of the $C_m$ of $\mbox{length}(C_m) + p_0p_1$. We will say that $D_j, D_{j'}$ are adjacent if they are both contained in the same $C_m$.  We set  
\[  \Xi(b) := \lbrace 1, 2 \rbrace \times \lbrace 1 ,\ldots , p(b) \rbrace,\]
$$\hat D_j := D_j \cap \hat U,\quad  Z_j :=\overline{\sigma^{n_1}(\hat D_j)}, \quad \hat Z_j := Z_j \cap \hat U$$
and
$$ X_{i, j} := \overline{ v_i (\hat Z_j )},\quad \hat X_{i, j} := X_{i, j} \cap \hat U $$
for each  $i \in \lbrace 1, 2 \rbrace$ and
$j \in \lbrace 1 ,\ldots , p\rbrace$. For $J \subset \Xi(b) $, we define
$ \beta_J: C(\hat U) \rightarrow \mathbb{R} $ by $$ \beta_J = 1 - \mu \sum_{(i, j) \in J}w_{i, j} $$
 where $w_{i j}$ is the indicator function of $X_{i, j}$. We recall a number of consequences of the constructions above:
\begin{enumerate}
\item Each cylinder $C_m$ is contained in some $U_n$ and has diameter at least $\rho \epsilon_1 / |b|$; apply Lemma \ref{lengthcm} and Proposition \ref{smallercylindersaresmaller}. 
\item $\rho^{p_0p_1 + 1} \frac{\epsilon_1}{|b|} \leq \operatorname{diam}(D_j) \leq \rho^{p_1} \frac{\epsilon_1}{|b|} $; this follows from the definition of $D_j$ and Proposition \ref{smallercylindersaresmaller}.
\item The sections $v_i$ are $d$-Lipschitz on each $\hat {U}_i$, with Lipschitz constant no larger than $\frac{1}{c_0\kappa^N}$; this follows from \eqref{definegamma}.
 \item The sets $\hat{X}_{i, j}$ are pairwise disjoint cylinders with diameters 
 \be \label{boundXdiam} \frac{c_0^2 \epsilon_1\rho^{p_0p_1 + 1} \kappa^{n_1}  }{ \kappa_1^{N}|b|} \leq \diam(\hat X_{i,j}) \leq \frac{\epsilon_1\rho^{p_1} \kappa_1^{n_1}  }{ c_0^2 \kappa^{N}|b|};\ee apply the previous two comments and \eqref{definegamma}.
 \item The function $\beta_J$ is $D$-Lipschitz on $\hat U$ with Lipschitz constant
 \begin{equation} \frac{\mu \kappa_1^{N}|b|}{c_0^2 \epsilon_1\rho^{p_0p_1 + 1}\kappa^{n_1}}  \label{boundforbeta}; \end{equation}
 this follows from the previous comment and the definition \eqref{defineD} of the metric $D$. 
 \item If $u, u' \in \Pp^{n_1}(C_m) \cap \hat U$ for some $m$,  then 
 \be \label{wj} D(v_i (u), v_i(u')) \leq \frac{\epsilon_1\kappa_1^{n_1}}{c_0^2|b|\kappa^{N}} \text{  for all $i \in \lbrace 1, 2\rbrace$}
 \ee see the definition of $C_m$ and \eqref{definegamma}.
\item If $u', u'' \in  \Pp^{n_1} (C_m) \cap \hat U$, then
\begin{align}   |b|\cdot\left|\left(\tau_N(v_2(u')) - \tau_N(v_1(u'))\right)-  \left(\tau_N(v_2(u'')) - \tau_N(v_1(u'')) \right) \right|  \leq \tfrac{1}{8}; \label{oneeighth} \end{align}
this follows from the definition of $C_m$, the choice \eqref{chooseepsilon1} of $\epsilon_1$, and \eqref{definegamma}. 
\end{enumerate}
Our next lemma, a simple special case of \cite[Lemma 5.9]{St} encapsulates the essential output of non-integrability for our argument. It is deduced from Lemma \ref{SML}.
\begin{lem} \label{largeangle}  For any $C_m$, there exist $D_{j'}, D_{j''} \subset C_m$ such that 
\begin{equation} |b|\cdot\left|\left(\tau_N(v_2(u')) - \tau_N(v_1(u'))\right)-  \left(\tau_N(v_2(u'')) - \tau_N(v_1(u'')) \right) \right| \\ \geq \tfrac{\epsilon_1 \delta_0\rho}{16}  \end{equation}
for all $u' \in \hat Z_{j'}$ and $u'' \in \hat Z_{j''}$. 
\end{lem}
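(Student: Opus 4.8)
The plan is to unwind the definitions of the cylinders $D_j \subset C_m$ and reduce the statement to the non-integrability estimate in part (3) of Lemma \ref{SML}. First, fix a cylinder $C_m$; by Lemma \ref{lengthcm} it has length at least $n_1+1$ and diameter at most $\epsilon_1/|b|$, while by consequence (1) above it has diameter at least $\rho\epsilon_1/|b|$. I would consider the function $\phi := (\tau_N\circ v_2 - \tau_N\circ v_1)$ on $\Pp^{n_1}(C_m)\cap\hat U$, so that the quantity we must bound from below is $|b|\cdot|\phi(u') - \phi(u'')|$ for $u' \in \hat Z_{j'}$, $u'' \in \hat Z_{j''}$, where $\hat Z_j = \overline{\sigma^{n_1}(\hat D_j)}\cap\hat U$.

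The key point is to transport the estimate of Lemma \ref{SML}(3), which is stated along the $N^+$-direction through the distinguished point $z_0 \in U_0$, to the cylinder $C_m$. Since $\mathcal U = \sigma^{n_1}(U_0)$ is dense in $U$ and the relevant sets are unions of cylinders, I would pick the horocyclic segment $\{z_0 n^+_s : z_0 n^+_s \in U_0\cap\Omega\}$ passing through $C_m$; because $\operatorname{diam}(C_m) \le \epsilon_1/|b|$ and $\epsilon_1 < \delta_0/2$ by \eqref{chooseepsilon1}, any two points of $C_m$ of the form $z_0 n^+_s$, $z_0 n^+_{s+t}$ have $0 < |t| < \delta_0$, so Lemma \ref{SML}(3) applies and gives
\[ |\phi(z_0 n^+_{s+t}) - \phi(z_0 n^+_s)| \geq \tfrac{\delta_0}{2}|t| . \]
Now $|t|$ is comparable to $d(z_0 n^+_s, z_0 n^+_{s+t})$ by \eqref{boundaalpha}, and using consequence (1) the diameter of $C_m$ is at least $\rho\epsilon_1/|b|$, so I can choose the two points at $D$-distance at least a fixed multiple of $\rho\epsilon_1/|b|$ inside $C_m$; this forces $|\phi(u') - \phi(u'')| \gtrsim \delta_0\rho\epsilon_1/|b|$ for a suitable pair of points. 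It remains to observe that these two points can be taken to lie in sub-cylinders $D_{j'}, D_{j''}$ of length $\operatorname{length}(C_m) + p_0 p_1$ (the defining length of the $D_j$): by Proposition \ref{smallercylindersaresmaller} and consequence (2), each $D_j$ has diameter at most $\rho^{p_1}\epsilon_1/|b|$, which is small compared to $\rho\epsilon_1/|b|$ once $p_1 > 1$, so the oscillation of $\phi$ across a single $D_j$ (estimate \eqref{oneeighth}, scale $1/8$ in the $|b|$-weighted norm, which comes from the Lipschitz bound \eqref{definegamma} and the choice of $\epsilon_1$) is negligible against the lower bound just obtained. Choosing $D_{j'} \ni u'$ and $D_{j''} \ni u''$ and tracking the constants — $\delta_0$ from Lemma \ref{SML}, $\rho$ from Proposition \ref{smallercylindersaresmaller}, $\epsilon_1$ from \eqref{chooseepsilon1}, and the factor $1/16$ absorbing the $\tfrac18$ loss and the metric comparison constants — yields the asserted bound $|b|\cdot|\phi(u') - \phi(u'')| \ge \tfrac{\epsilon_1\delta_0\rho}{16}$ uniformly over $\hat Z_{j'}$ and $\hat Z_{j''}$.

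The main obstacle I anticipate is bookkeeping rather than conceptual: one must verify that the horocyclic segment realizing the non-integrability estimate genuinely meets two well-separated sub-cylinders $D_{j'}, D_{j''}$ of $C_m$ (this uses density of $\mathcal U$ and that $C_m$, being a maximal cylinder of diameter $\le \epsilon_1/|b|$, is not much smaller than $\epsilon_1/|b|$), and that the loss incurred by replacing the single points $z_0 n^+_s, z_0 n^+_{s+t}$ by the full sets $\hat Z_{j'}, \hat Z_{j''}$ — controlled by \eqref{oneeighth} — does not overwhelm the gain; that is exactly why $p_1$ was chosen with $p_1 > 1$ in \eqref{definep_1} and why the final constant is $\epsilon_1\delta_0\rho/16$ rather than $\epsilon_1\delta_0/2$. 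Since this is the special case of \cite[Lemma 5.9]{St} indicated in the text, I would cite that reference for the detailed constant-chasing and record here only the reduction to Lemma \ref{SML}(3).
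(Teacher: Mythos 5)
Your overall reduction is the same as the paper's: both proofs work inside a fixed $C_m\subset\overline{U_0}$, parametrize its points as $z_0n^+_s$, and feed the separation of two sub-cylinders $D_{j'},D_{j''}$ into the non-integrability estimate of Lemma \ref{SML}(3), with $\diam(C_m)\le\epsilon_1/|b|<\delta_0/2$ guaranteeing that the estimate applies. The difference is in the last step, and there your argument has a genuine gap. You apply Lemma \ref{SML}(3) only to one extremal pair of points and then try to propagate the bound to all of $\hat Z_{j'}\times\hat Z_{j''}$ by saying the oscillation of $\phi=\tau_N\circ v_2-\tau_N\circ v_1$ over a single $D_j$ is ``negligible against the lower bound just obtained.'' The bound you invoke, \eqref{oneeighth}, controls the $|b|$-weighted oscillation over the \emph{whole} of $\Pp^{n_1}(C_m)$ at scale $\tfrac18$, which is larger, not smaller, than the gain $\approx\epsilon_1\delta_0\rho/4$ (indeed the lemma's conclusion $\epsilon_1\delta_0\rho/16$ is necessarily $\le\tfrac18$ precisely because of \eqref{oneeighth}); so as stated the comparison goes the wrong way. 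Even the correct single-$D_j$ oscillation bound, which scales like $\rho^{p_1}/8$ since $\diam(D_j)\le\rho^{p_1}\epsilon_1/|b|$, would need $\rho^{p_1-1}\lesssim\delta_0\epsilon_1$ to be absorbed, and the only constraint imposed on $p_1$ in \eqref{definep_1} is $\rho^{p_1-1}\le\tfrac18$; nothing in the paper's choice of constants makes $\rho^{p_1-1}$ small relative to $\delta_0\epsilon_1$, so this step would fail (or would force a re-choice of $p_1$ depending on $\delta_0,\epsilon_1$).

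The paper's proof needs no propagation at all: it fixes $v_0'\in D_{j'}$, $v_0''\in D_{j''}$ with $d(v_0',v_0'')>\tfrac12\diam(C_m)$ and then, for an \emph{arbitrary} pair $v'\in D_{j'}$, $v''\in D_{j''}$, uses the triangle inequality
$d(v',v'')\ge d(v_0',v_0'')-\diam(D_{j'})-\diam(D_{j''})\ge\tfrac{\rho\epsilon_1}{|b|}\bigl(\tfrac12-2\rho^{p_1-1}\bigr)\ge\tfrac{\rho\epsilon_1}{4|b|}$,
so that \eqref{definep_1} enters only through this metric separation. Since every such pair lies on the horocyclic segment through $z_0$ with parameter gap $|s_1-s_2|\ge\tfrac{\rho\epsilon_1}{8|b|}$ and $|s_1-s_2|<\delta_0$, Lemma \ref{SML}(3) is applied directly to each pair, giving the uniform bound $\tfrac{\epsilon_1\delta_0\rho}{16}$ with no oscillation loss. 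Replacing your propagation step by this direct application (and dropping the unnecessary appeal to density of $\mathcal U$ — all points of $C_m$ already lie on $z_0N^+$) repairs the argument.
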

\begin{proof} Fix $m$ and choose $v_0', v_0'', j', j''$ such that $v_0' \in D_{j'} \subset C_m $, and $v_0'' \in D_{j''} \subset C_m$ with $d(v_0', v_0'') > \frac{1}{2} \diam(C_m) $. For any $v' \in D_{j'}$ and $v'' \in D_{j''}$, we have 
\begin{equation*} d(v', v'') \geq d(v_0', v_0'')  - \diam(D_{j'}) - \diam(D_{j''}) \geq \frac{\epsilon_1\rho}{|b|} \left( \tfrac{1}{2} - 2\rho^{p_1 - 1} \right) \geq \tfrac{\epsilon_1\rho}{4}\end{equation*}
by \eqref{definep_1}. Now we recall $z_0$ as in Lemma \ref{SML} and choose $s_1, s_2 \in (-\alpha, \alpha)$ such that $v' = z_0n^+_{s_1}$ and $v'' = z_0n^+_{s_2}$. Thus $|s_1 - s_2| \geq \frac{\epsilon_1\rho}{8}$ by \eqref{boundaalpha}. On the other hand
\[ \tfrac{\delta_0}{2} \geq \diam(C_m) \geq d( v', v'') \geq \tfrac{1}{2} |s_1 - s_2| \] 
by \eqref{chooseepsilon1}  and \eqref{boundaalpha}; the result follows by Lemma \ref{SML} part 3. 
\end{proof}



We are finally in a position to give the definition of our Dolgopyat operators. For $|b| > b_0, |a| < a'_0$ and for each $J\subset \Xi(b) $, we define an operator $\mathcal{N}_{J, a}:C(\hat U) \to  C(\hat U)$ by
$$\mathcal{N}_{J, a}(h):=\hat{\mathcal{L}}_{a0}^N(\beta_Jh).$$

\subsection{Vector valued transfer operators and Dolgopyat operators.} We will now check that appropriate operators $\mathcal{N}_{J, a}$ satisfy the conditions of Theorem \ref{dolgopyatoperatorsexist}. First choose the subsets $J\in \Xi$ that will be of interest.
\begin{dfn} \label{definedense} \rm A subset $J \subset \Xi(b)$ will be called dense if for every $C_m$, there exists $(i, j) \in J$ with $D_j \subset C_m$. We write $\mathcal{J}(b)$ for the collection of all dense subsets of $\Xi(b)$.
\end{dfn}
The following proves parts 1 and 2 of Theorem \ref{dolgopyatoperatorsexist}.

\begin{lem}\label{sl5.6} There exist $a_0 \in (0, a_0')$ and $ \epsilon > 0$ such that for any $|a| < a_0$ and $|b| > b_0$, the family of operators $\lbrace \mathcal{N}_{J, a}: J\subset \mathcal{J}(b) \rbrace$ satisfies:
\begin{enumerate}
\item $\mathcal{N}_{J, a} h\in  K_{E|b|}(\hat U) $ whenever $h \in  K_{E|b|}(\hat U);$ 
\item $\int_U |\mathcal{N}_{J, a} h|^2 d\nu \leq (1 - \epsilon) \int_U |h|^2 d\nu $ for all $h \in K_{E|b|}(\hat U)$;
\item if $H\in C_D(\hat U, \mathbb{C}^{F_q})$ and $h \in K_{E|b|}(\hat U)$ are such that $|H| \leq h$ and 
\[ | H(v) - H(v')| \leq E|b|h(v')D(v, v'), \]
then
\[ |\hat{\mathcal{M}}^{N}_{ab, q} H(v) - \hat{\mathcal{M}}^{N}_{ab, q}  H(v')| \leq E|b|(\mathcal{N}_{J, a} h)(v')D(v, v')\]
where $N$ is given as in \eqref{ChooseN}. 
\end{enumerate}
\end{lem}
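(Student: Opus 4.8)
The three parts of Lemma~\ref{sl5.6} mirror those of Theorem~\ref{dolgopyatoperatorsexist}, with part~1 and part~2 being properties of the scalar Dolgopyat operators $\mathcal{N}_{J,a}=\hat{\mathcal{L}}_{a0}^N(\beta_J\cdot)$ and part~3 being the new ingredient that ties the vector-valued operators $\hat{\mathcal{M}}_{ab,q}^N$ to the scalar ones. For parts~1 and~2, I would follow Stoyanov verbatim. For part~1, given $h\in K_{E|b|}(\hat U)$, one estimates $|\mathcal{N}_{J,a}h(u)-\mathcal{N}_{J,a}h(u')|$ by splitting the difference $\hat{\mathcal{L}}_{a0}^N(\beta_J h)(u)-\hat{\mathcal{L}}_{a0}^N(\beta_J h)(u')$ into the contribution from the oscillation of the kernel $e^{f_m^{(a)}}$ (controlled by the first bullet of Lemma~\ref{l3.7} with $m=N$, using \eqref{ChooseN} to make $B/\kappa^N$ small), the contribution from $h$ itself (the $K_{E|b|}$ hypothesis), and the contribution from $\beta_J$ (its $D$-Lipschitz constant \eqref{boundforbeta}, absorbed using \eqref{choosemu} which makes $\mu$ small relative to the relevant ratios). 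Collecting terms and using the choices \eqref{chooseE}, \eqref{ChooseN}, \eqref{chooseepsilon1}, \eqref{choosemu} of the constants, one checks the constant that appears is again at most $E|b|$, so $\mathcal{N}_{J,a}h\in K_{E|b|}(\hat U)$. The Lipschitz-in-$a$ dependence of $h_0$, $\lambda_a$, $f^{(a)}$ lets one choose $a_0\in(0,a_0')$ so that none of these estimates degrade for $|a|<a_0$.

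For part~2, the $L^2$-contraction, one writes $\int_U|\mathcal{N}_{J,a}h|^2\,d\nu=\int_U|\hat{\mathcal{L}}_{a0}^N(\beta_J h)|^2\,d\nu$. The point is that on each $\hat X_{i,j}$ with $(i,j)\in J$ the weight $\beta_J$ is $1-\mu<1$, so it genuinely shrinks $h$ on a definite-measure portion of the domain; combined with the ``$L^2$-density'' of the images $v_i(\hat Z_j)$ of the sections (which is where denseness of $J$ and the construction of the $C_m$ via Lemma~\ref{SML} enter) and Cauchy--Schwarz applied to the transfer operator, one gets $\int|\mathcal{N}_{J,a}h|^2\,d\nu\le(1-\epsilon)\int|h|^2\,d\nu$ for a uniform $\epsilon>0$. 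This is exactly \cite[proof of the relevant lemma]{St} once one notes that $\nu$ (our pushed-forward Gibbs measure for $-\delta\tau$) has the Gibbs bounds of Fact~\ref{propertiesofGibbsmeasures}, so the measure of each $\hat X_{i,j}$ is comparable to a power of its diameter.

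The genuinely new part is part~3, and this is where I expect the main obstacle to lie. Given $H\in C_D(\hat U,\c^{F_q})$ with $|H|\le h$ and the stated Lipschitz bound, I would expand
\[
\hat{\mathcal{M}}_{ab,q}^N H(v)-\hat{\mathcal{M}}_{ab,q}^N H(v')=\sum_{\Pp^N w=v}e^{(f_N^{(a)}+ib\tau_N)(w)}H(w)\,\coc_N(w)-\sum_{\Pp^N w'=v'}e^{(f_N^{(a)}+ib\tau_N)(w')}H(w')\,\coc_N(w'),
\]
pair up each $w$ over $v$ with the corresponding $w'=w'(w)$ over $v'$ in the same length-$N$ cylinder, and estimate termwise exactly as in the second half of the proof of Lemma~\ref{l3.7}: the kernel-oscillation terms are controlled by \eqref{usefuleqin3} and \eqref{usefuleqin3b}, the variation of $H$ by the hypothesis $|H(w)-H(w')|\le E|b|h(w')D(w,w')$ together with \eqref{definegammaD} to contract $D(w,w')\le D(v,v')/(c_0\kappa^N)$, and — crucially — the cocycle term drops out because $\coc_N(w)=\coc_N(w'(w))$ by Lemma~\ref{cocycleexists}(2) (both lie in one length-$N$ cylinder) and $\coc_N(w)$ acts unitarily on $\c^{F_q}$, so $|H(w)\coc_N(w)-H(w')\coc_N(w')|=|H(w)-H(w')|$ after factoring out the common unitary. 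This reduces the vector-valued estimate to the scalar one and yields a bound of the form $A_0(\tfrac{E|b|}{\kappa^N}\hat{\mathcal{L}}_{a0}^N h(v')+|b|\hat{\mathcal{L}}_{a0}^N|H|(v'))D(v,v')$, which by \eqref{ChooseN} (making $A_0E/\kappa^N\le E/6$, say) and $|H|\le h\le\beta_J^{-1}(\mathcal{N}_{J,a}h)$-type bookkeeping is at most $E|b|(\mathcal{N}_{J,a}h)(v')D(v,v')$. The subtlety — and the reason denseness of $J$ matters here rather than just in part~2 — is that the replacement of $\hat{\mathcal{L}}_{a0}^N|H|$ by $\mathcal{N}_{J,a}h=\hat{\mathcal{L}}_{a0}^N(\beta_J h)$ loses a factor $\beta_J^{-1}\le(1-\mu)^{-1}$ on the sets $\hat X_{i,j}$, which must be reabsorbed into the $1/\kappa^N$ gain; one has to track the constants through \eqref{chooseE}--\eqref{choosemu} carefully to see this closes. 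I expect the bookkeeping of these constants, rather than any conceptual difficulty, to be the main labor, and the whole argument is essentially \cite[Lemmas~5.6--5.9]{St} with the observation that unitarity and local constancy of $\coc$ make the cocycle invisible to the estimates.
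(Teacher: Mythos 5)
Your proposal is correct and takes essentially the same route as the paper: parts 1 and 2 follow Stoyanov (the paper simply cites his Lemma 5.8 for the $L^2$ contraction, noting a normalization factor $\sup h_a/\inf h_a$ absorbed for small $a_0$), part 1 is done by showing $\beta_J h$ lies in an appropriate $K_{B}(\hat U)$ class and applying Lemma \ref{l3.7}, and part 3 is exactly the ``direct calculation using Lemma \ref{l3.7}'' the paper invokes, with the cocycle disappearing by local constancy and unitarity. One small correction: the $(1-\mu)^{-1}$ loss in replacing $\hat{\mathcal{L}}_{a0}^N h$ by $\mathcal{N}_{J,a}h$ uses only the pointwise bound $\beta_J\ge 1-\mu$, valid for every $J\subset\Xi(b)$, so denseness of $J$ plays no role in part 3 (it is needed only for the $L^2$ contraction and for Proposition \ref{l3.9}).
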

\begin{proof} The second part is Lemma 5.8 of \cite{St}; although that paper uses a differently normalized transfer operator, the error is at most a factor  $\sup_{|a| \leq  a_0}\frac{\sup h_a}{\inf h_a}$, which can be absorbed into the decay term for $a_0$ sufficiently small. The other parts are contained in the same paper for complex valued functions; we include the argument for completeness.  Suppose that $h \in K_{E|b|}(\hat U)$, and that $u, u' \in \hat U$. We compute
 \begin{eqnarray*} \left| h\beta_J(u) - h\beta_J(u')\right| &\leq& |h(u) - h(u') | + h(u') |\beta_J(u) - \beta_J(u') |.\end{eqnarray*}
Thus, recalling \eqref{boundforbeta};
\begin{eqnarray*}  |h(u) - h(u') | + h(u') |\beta_J(u) - \beta_J(u')| &\leq& |b| D(u, u') h(u')( E +  \frac{\mu \kappa^N}{c_0\rho^{p_0p_1 + 1}}) \\
  &\leq& |b| D(u, u') h(u')( E +  \tfrac{\rho }{4})
   \end{eqnarray*} 
 by \eqref{choosemu}.  It follows that 
   \[  h\beta_J\in K_{(E + \frac{\rho}{4})|b|/(1 - \mu)}(\hat U).\]
    We may now apply Lemma \ref{l3.7} above to give
 \begin{eqnarray*}\left|  \mathcal{N}_{J, a}h(u) - \mathcal{N}_{J, a}h(u')\right| &=& \left| \mathcal{L}^N_{a0}(h\beta_{J})(u) - \mathcal{L}_{a0}^N (h\beta_{J}) (u')  \right| \\
 &\leq& A_0\left( \frac{(E + \rho/4)|b|}{\kappa^N(1 - \mu)} + \frac{T_0}{\kappa - 1} \right)D(u, u')\mathcal{L}_{a0}^N (h\beta_{J}) (u')  \\
  &\leq& A_0 \left( 2\frac{E|b|}{\kappa^N} + \frac{T_0}{\kappa - 1} \right)D(u, u')\mathcal{N}_{J, a}h(u')  \\
  &\leq& E|b| D(u, u')\mathcal{N}_{J, a}h(u') 
  \end{eqnarray*}
as required. The final part also follows as a direct calculation using Lemma \ref{l3.7}

\end{proof}

Our final task for this section is to prove the following key proposition, which completes the proof of Theorem \ref{dolgopyatoperatorsexist} by addressing part 3. 

\begin{prop}\label{l3.9}  There exists $a_0>0$ with the following property. For any $h, H$ as in Theorem \ref{dolgopyatoperatorsexist}, any $|a| < a_0$, and any $|b| > b_0$ there exists $J \in \mathcal{J}(b)$ such that for all $v\in\hat U$,
\[ |\hat{\mathcal{M}}^N_{ab, q} H(v)| \leq \mathcal{N}_{J, a} h(v) \]
where $N$ is given as in \eqref{ChooseN}. 
\end{prop}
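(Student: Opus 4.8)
I plan to prove Proposition \ref{l3.9} by the vector-valued version of Dolgopyat's cancellation dichotomy. Fix $|b| > b_0$ and $h, H$ as in Theorem \ref{dolgopyatoperatorsexist}, and write $f^{(a)}_N, \tau_N$ for the relevant Birkhoff sums over $\Pp$. The starting point is the a priori domination
\[ |\hat{\mathcal{M}}^N_{ab, q} H(v)| \;=\; \Bigl| \sum_{\Pp^N w = v} e^{(f^{(a)}_N + ib\tau_N)(w)} H(w)\,\coc_N(w) \Bigr| \;\le\; \sum_{\Pp^N w = v} e^{f^{(a)}_N(w)} h(w) \;=\; \hat{\mathcal{L}}^N_{a0}h(v), \]
valid because $|e^{ib\tau_N}| = 1$, because each $\coc_N(w)$ acts on $\mathbb{C}^{F_q}$ by the (unitary) right regular representation so $|H(w)\coc_N(w)| = |H(w)|$, and because $|H| \le h$. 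Since $\mathcal{N}_{J, a} h = \hat{\mathcal{L}}^N_{a0}(\beta_J h)$ with $\beta_J = 1 - \mu \sum_{(i,j) \in J} w_{i,j}$, the proposition reduces to choosing a dense $J \in \mathcal{J}(b)$ for which this bound can be sharpened at every $v$ by subtracting $\mu\, e^{f^{(a)}_N(w)} h(w)$ for each preimage $w$ of $v$ lying in some $\hat X_{i,j}$, $(i,j) \in J$. As the cylinders $\hat X_{i,j} = \overline{v_i(\hat Z_j)}$ are pairwise disjoint and $v_1, v_2$ are the two fixed sections of $\Pp^N$ from Lemma \ref{SML}, the only preimages of a given $v$ that can carry a discount are $v_1(v)$ and $v_2(v)$, and only when $v$ lies in the corresponding $\hat Z_j$.

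The argument therefore localizes to each cylinder $C_m$: I would select a pair $(i_m, j_m)$ — and, in case (ii) below, both $(1, j_m)$ and $(2, j_m)$ — with $D_{j_m} \subset C_m$ such that, for all $v \in \hat Z_{j_m}$,
\[ |T_1(v) + T_2(v)| \;\le\; |T_1(v)| + |T_2(v)| - \mu\bigl( e^{f^{(a)}_N(v_1(v))} h(v_1(v)) + e^{f^{(a)}_N(v_2(v))} h(v_2(v)) \bigr), \]
where $T_i(v) := e^{(f^{(a)}_N + ib\tau_N)(v_i(v))} H(v_i(v)) \coc_N(v_i(v))$. Separating $T_1 + T_2$ from the remaining fiber sum by the triangle inequality, and using that all the terms $e^{f^{(a)}_N(w)} h(w)$ over the fiber are mutually comparable (the Gibbs property, cf. \eqref{ggibbs}), then gives the sharpened inequality at $v$; collecting over the at most two preimages $v_1(v), v_2(v)$ that can be discounted, with the bookkeeping of the cases arranged exactly as in Stoyanov's scalar argument, yields $|\hat{\mathcal{M}}^N_{ab,q} H(v)| \le \hat{\mathcal{L}}^N_{a0}(\beta_J h)(v)$ for all $v$. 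Having one pair per $C_m$, the set $J$ is dense.

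For the local inequality I would run the dichotomy, the crucial point being the two elementary properties of $\coc$ recorded in this section: local constancy (Lemma \ref{cocycleexists}) and unitarity of the right regular action. On $C_m$ the values $\coc_N(v_i(\Pp^{n_1}\cdot)) = \coc^{(m)}_i$ are constant (Corollary \ref{previous}), so the Hermitian inner product $P(v) := \langle H(v_1(v))\coc^{(m)}_1,\, H(v_2(v))\coc^{(m)}_2 \rangle$ moves only through the slow variation of $H$: by the Lipschitz hypothesis on $H$ together with \eqref{wj} and \eqref{boundXdiam}, both $\arg P$ and $|P|/(|H(v_1)|\,|H(v_2)|)$ change by at most $O\!\bigl(E\kappa_1^{n_1}\epsilon_1/(c_0^2\kappa^N)\bigr)$ over all of $C_m$, which is negligible by the choice \eqref{ChooseN} of $N$; the same slow variation also makes, for each $(i,j)$ with $j\in\{j',j''\}$, the alternatives ``$|H(v_i(\cdot))|\le(1-\mu)h(v_i(\cdot))$ on all of $\hat Z_j$'' and ``$|H(v_i(\cdot))|>\tfrac12 h(v_i(\cdot))$ on all of $\hat Z_j$'' exhaustive. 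Case (i): if some such $(i,j)$ satisfies the first alternative, no cancellation is needed — the discount at $v_i$ is already built into $|T_i| = e^{f^{(a)}_N(v_i)}|H(v_i)|$ — and we put $(i,j)$ into $J$. Case (ii): otherwise $|H(v_i(v))| > \tfrac12 h(v_i(v))$ for both $i$ on $\hat Z_{j'}\cup\hat Z_{j''}$, so $|T_1(v)|, |T_2(v)|$ are comparable and each a fixed multiple of $e^{f^{(a)}_N(v_i(v))} h(v_i(v))$ from below. Writing $\langle T_1(v), T_2(v) \rangle = e^{f^{(a)}_N(v_1(v)) + f^{(a)}_N(v_2(v))}\, e^{\,ib(\tau_N(v_2(v)) - \tau_N(v_1(v)))}\, P(v)$ and using that $P$ is essentially constant on $C_m$ while the scalar phase $b(\tau_N(v_2(\cdot)) - \tau_N(v_1(\cdot)))$ differs by at least $\epsilon_1 \delta_0 \rho/16$ between $\hat Z_{j'}$ and $\hat Z_{j''}$ (Lemma \ref{largeangle}) and varies by at most $\tfrac18$ in total across $C_m$ (cf. \eqref{oneeighth}), one sees that $\arg\langle T_1(v), T_2(v) \rangle$ stays bounded away from $0\pmod{2\pi}$, by a fixed multiple of $\epsilon_1\delta_0\rho$, on at least one of $\hat Z_{j'}, \hat Z_{j''}$. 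On that subcylinder the identity $|T_1 + T_2|^2 = |T_1|^2 + |T_2|^2 + 2\Re\langle T_1, T_2\rangle$ gives $|T_1(v) + T_2(v)| \le \bigl(1 - c(\epsilon_1\delta_0\rho)^2\bigr)(|T_1(v)| + |T_2(v)|)$ for a fixed $c>0$, and since $\mu$ was chosen far smaller than $(\epsilon_1\delta_0\rho)^2$ in \eqref{choosemu}, this loss absorbs the discount at both $v_1(v)$ and $v_2(v)$; we then put both $(1,j)$ and $(2,j)$ into $J$. All constants here are those of subsection \ref{ss3.1} and Lemma \ref{l3.7} and are uniform for $|a|$ small, so the argument holds for $|a| < a_0$ with a suitable $a_0 \in (0, a_0')$.

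The step I expect to be the main obstacle is case (ii): one must cleanly disentangle the genuinely $\mathbb{C}^{F_q}$-valued factor $P(v)$ from the scalar oscillation $e^{ib\tau_N}$ and verify that it is the scalar part — the only part whose oscillation is controlled, via the non-integrability input of Lemma \ref{SML} — that forces the cancellation, while the vector factor, precisely because $\coc$ is locally constant on $C_m$ and acts unitarily, neither oscillates on $C_m$ nor affects the norms $|T_i(v)|$ nor the a priori domination. This is exactly where the local constancy (Lemma \ref{cocycleexists}) and unitarity of $\coc$ are indispensable.
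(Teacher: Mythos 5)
Your overall architecture — localize to each $C_m$, use local constancy of $\coc$ there (Corollary \ref{previous}) and unitarity of the right regular action so that only the scalar phase $b\,(\tau_N\circ v_2-\tau_N\circ v_1)$ oscillates, then run a Dolgopyat dichotomy — is the paper's. The genuine gap is in your case (ii). From $|H(v_i(v))|>\tfrac12 h(v_i(v))$ you conclude that $|T_1(v)|$ and $|T_2(v)|$ are ``comparable'' and deduce a cancellation proportional to the sum, $|T_1+T_2|\le\bigl(1-c(\epsilon_1\delta_0\rho)^2\bigr)(|T_1|+|T_2|)$, which you then spend by putting \emph{both} $(1,j)$ and $(2,j)$ into $J$, i.e.\ discounting both preimages. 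But the lower bound $|T_i|\ge\tfrac12 e^{f^{(a)}_N(v_i(v))}h(v_i(v))$ only compares $|T_i|$ with its own weight; it says nothing about the ratio $e^{f^{(a)}_N(v_1(v))}h(v_1(v))/e^{f^{(a)}_N(v_2(v))}h(v_2(v))$. For $h\in K_{E|b|}(\hat U)$ this ratio is controlled only up to a factor of order $1+E|b|\,\diam(\hat U_i)$ (and not at all when $v_1(v),v_2(v)$ lie in different $U_i$'s), hence is unbounded as $|b|\to\infty$ — and such $h$ do occur in the iteration, since the whole point of the class $K_{E|b|}$ is to admit Lipschitz constants of size $|b|$. (For the same reason your appeal to \eqref{ggibbs} for comparability of the fiber terms $e^{f^{(a)}_N(w)}h(w)$ is unjustified: the Gibbs property compares the weights $e^{f^{(a)}_N}$, not $h$.) If, say, $|T_1|\le 10^{-6}|T_2|$, no angle can yield a saving proportional to $|T_1|+|T_2|$, since always $|T_1+T_2|\ge|T_2|-|T_1|$; and the inequality you actually need when both slots are discounted, $|T_1+T_2|\le(1-\mu)\bigl(e^{f^{(a)}_N(v_1)}h(v_1)+e^{f^{(a)}_N(v_2)}h(v_2)\bigr)$, fails outright when $|H(v_2)|=h(v_2)$ and $e^{f^{(a)}_N(v_1)}h(v_1)\le\tfrac{\mu}{2}\,e^{f^{(a)}_N(v_2)}h(v_2)$.

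The oscillation argument only buys a saving proportional to the \emph{smaller} of the two terms: the correct elementary inequality is $(1-\mu)\min(|T_1|,|T_2|)+\max(|T_1|,|T_2|)\ge|T_1+T_2|$, valid because $\mu\le c_2^2\epsilon_1^2/256$ by \eqref{choosemu}. This is precisely why the paper introduces the two functions $\chi^{(1)},\chi^{(2)}$, each carrying the factor $(1-\mu)$ in a single slot of the denominator, proves in Lemma \ref{isdense} (via Lemma \ref{alternatives}, Lemma \ref{largeangle} and \eqref{oneeighth}) that inside each $C_m$ one of $\chi^{(1)},\chi^{(2)}$ is $\le 1$ on a suitable $\hat Z_j$, and then constructs $J$ so that $(1,j)$ and $(2,j)$ are never both included; the pointwise verification of $|\hat{\mathcal{M}}^N_{ab,q}H|\le\mathcal{N}_{J,a}h$ uses exactly that if $(1,j)\in J$ then $\beta_J(v_2(u))=1$. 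Your case (i) and the reduction to the fiber over $v$ are fine (and your dichotomy thresholds $(1-\mu)$ versus $\tfrac12$ can indeed be justified from \eqref{boundXdiam} and \eqref{ChooseN}), but case (ii) must be restructured so that the discount is attached to one index only, chosen according to which of $\chi^{(1)},\chi^{(2)}$ can be forced $\le1$ on the selected subcylinder.
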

We proceed via a series of lemmas. 
\begin{lem}\label{alternatives} For $|b|>b_0$, functions $h, H$ as in Theorem \ref{dolgopyatoperatorsexist}, we have, for any  $(i, j) \in \Xi(b)$, 
\begin{enumerate}\item  
\[  \frac{1}{2} \leq \frac{ h(v_i(u')) }{h(v_i(u))} \leq 2 \quad \mbox{  for all } u, u' \in \hat Z_j;\]
\item
either $ |H(v_i(u))| \leq \tfrac{3}{4} h(v_i(u)) $ for all $u \in \hat Z_j$ or
$ |H(v_i(u))| \geq \tfrac{1}{4} h(v_i(u)) $ for all  $u \in \hat Z_j$.  
\end{enumerate}
\end{lem}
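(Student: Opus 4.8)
The plan is to reduce both statements to a single geometric input: all the points $v_i(u)$ with $u \in \hat Z_j$ lie inside one cylinder $\hat X_{i,j}$ — in particular inside a common $\hat U_n$ — and their pairwise $D$-distances are uniformly minuscule. Indeed, $\hat Z_j$ is contained in $\Pp^{n_1}(C_m)\cap\hat U$ for the cylinder $C_m\supset D_j$ (this is exactly why $D_j$ was taken as a deep sub-cylinder of $C_m$, together with Lemma \ref{lengthcm}), so property (6) of the list preceding Lemma \ref{largeangle} gives $D(v_i(u),v_i(u'))\le \epsilon_1\kappa_1^{n_1}/(c_0^2|b|\kappa^N)$ for all $u,u'\in\hat Z_j$. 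Multiplying by $E|b|$ and using first \eqref{ChooseN} and then \eqref{chooseepsilon1} (with $\epsilon_1<\delta_0/2$ and $\delta_0,\rho\in(0,1)$) I obtain the crude but amply sufficient bound
\[ E|b|\, D(v_i(u),v_i(u')) < \frac{\epsilon_1\delta_0\rho}{512} < \tfrac14 \qquad\text{for all } u,u'\in\hat Z_j. \]

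For part (1): since $h\in K_{E|b|}(\hat U)$ (a hypothesis of Theorem \ref{dolgopyatoperatorsexist}) and $v_i(u),v_i(u')$ lie in the same $\hat U_n$, the defining inequality of $K_{E|b|}(\hat U)$ together with the displayed bound gives $|h(v_i(u))-h(v_i(u'))|\le E|b|\,h(v_i(u'))\,D(v_i(u),v_i(u'))\le h(v_i(u'))$, hence $h(v_i(u))\le 2h(v_i(u'))$; interchanging $u$ and $u'$ gives the reverse inequality, which is part (1) after relabelling. For part (2): suppose the first alternative fails, so there is $u_0\in\hat Z_j$ with $|H(v_i(u_0))|>\tfrac34 h(v_i(u_0))$. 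For an arbitrary $u\in\hat Z_j$, the hypothesis $|H(w)-H(w')|\le E|b|\,h(w)\,D(w,w')$ (applied with $w=v_i(u_0)$, $w'=v_i(u)$) and the displayed bound give
\[ |H(v_i(u))| \ge |H(v_i(u_0))| - E|b|\,h(v_i(u_0))\,D(v_i(u),v_i(u_0)) > \tfrac34 h(v_i(u_0)) - \tfrac14 h(v_i(u_0)) = \tfrac12 h(v_i(u_0)), \]
and then part (1), in the form $h(v_i(u_0))\ge\tfrac12 h(v_i(u))$, upgrades this to $|H(v_i(u))|>\tfrac14 h(v_i(u))$, which is the second alternative.

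The only genuine work is bookkeeping rather than estimation, and I expect this to be the sole (minor) obstacle: one must confirm that $v_i$ is defined on all of $\hat Z_j$ with the relation $\sigma^N\circ v_i=\mathrm{id}$ available there, that $\hat Z_j\subset\Pp^{n_1}(C_m)\cap\hat U$ so that property (6) genuinely applies, and that $v_i(\hat Z_j)\subset\hat X_{i,j}\subset\hat U_n$ for a single $n$. All three follow directly from the construction in Lemma \ref{SML} and the definitions of $Z_j$ and $X_{i,j}$ — indeed the cylinder and disjointness assertions are items (3)–(4) of the list preceding Lemma \ref{largeangle} — so no further estimates are needed; this is a short, standard Dolgopyat-type lemma.
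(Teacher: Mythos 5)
Your argument is correct and follows essentially the paper's own proof: part (1) is the same two-line combination of the $K_{E|b|}(\hat U)$ Lipschitz property of $h$ with the smallness of $D(v_i(u),v_i(u'))$ forced by \eqref{ChooseN} (you invoke property (6) of the preceding list where the paper invokes the equivalent diameter bound \eqref{boundXdiam}), and your part (2) is precisely the ``similar calculation'' the paper omits, carried out with the triangle inequality and part (1). No gaps.
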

\begin{proof}
For $h \in K_{E|b|}(\hat U)$ and $u, u' \in \hat Z_j$ with $D_j \subset C_m$, we simply calculate
\begin{eqnarray*} h(v_i(u')) &\leq& h(v_i(u)) + E|b|D(v_i(u'), v_i(u))h(v_i(u))\\
 &\leq& h(v_i(u)) \left( 1 + E|b| \diam(\hat X_{i, j} ) \right)\\
 &\leq& 2h(v_i(u) ) \end{eqnarray*}
by \eqref{boundXdiam} and \eqref{ChooseN}. The other bound follows by symmetry. The second part of the  observation follows by similar calculations, which we shall omit. 
\end{proof}

 \begin{dfn}\rm Let $a \in(0, a_0')$ and choose  $|b| >b_0, h, H$ as in Theorem \ref{dolgopyatoperatorsexist}. For each fixed $C_m = C_m(b)$,
  recall that $$\coc_i^{(m)} = \coc_N(v_i(\Pp^{n_1}(u))) \quad\text{for all } u \in C_m. $$
 Define the  functions
 \[ \chi^{(1)}[H, h](u) := \frac{\left| e^{(f^{(a)}_N + ib\tau_N)(v_1(u))}H(v_1(u))\coc_1^{(m)} + e^{(f^{(a)}_N + ib\tau_N)(v_2(u))}H(v_2(u)) \coc_2^{(m)} \right| }{  (1 - \mu) e^{f^{(a)}_N(v_1(u))}h(v_1(u)) + e^{f^{(a)}_N(v_2(u))}h(v_2(u)) } \]
and   \[ \chi^{(2)}[H, h](u) := \frac{\left| e^{(f^{(a)}_N + ib\tau_N)(v_1(u))}H(v_1(u)) \coc_1^{(m)} + e^{(f^{(a)}_N + ib\tau_N)(v_2(u))}H(v_2(u))\coc_2^{(m)}  \right| }{  e^{f^{(a)}_N(v_1(u))}h(v_1(u)) +  (1 - \mu) e^{f^{(a)}_N(v_2(u))}h(v_2(u)) }. \]
 \end{dfn}
We claim the following:

\begin{lem}\label{isdense}
 For every $C_m$, there exist $i \in \lbrace 1, 2 \rbrace$ and $ j  \in \lbrace 1 \ldots p \rbrace$ such that $D_{j} \subset C_m$ and $\chi^{(i)}[H, h](u) \leq 1 $ for all $u \in \hat Z_{j}$. 
\end{lem}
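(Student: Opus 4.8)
The strategy is a standard Dolgopyat-style dichotomy argument, but with the crucial extra book-keeping that $H$ is now vector-valued in $\mathbb C^{F_q}$ and that the cocycle elements $\coc_i^{(m)}$ act unitarily. Fix a cylinder $C_m$. By Lemma \ref{largeangle} there are two subcylinders $D_{j'}, D_{j''} \subset C_m$ such that the phase difference $b\bigl((\tau_N\circ v_2)-(\tau_N\circ v_1)\bigr)$ oscillates by at least $\epsilon_1\delta_0\rho/16$ between $\hat Z_{j'}$ and $\hat Z_{j''}$. We will run the argument on the pair $\{j',j''\}$ and on the two indices $i\in\{1,2\}$, and show that for \emph{at least one} choice of $(i,j)$ with $j\in\{j',j''\}$ the function $\chi^{(i)}[H,h]$ stays below $1$ on $\hat Z_j$.

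First I would dispose of the easy case. Using Lemma \ref{alternatives}(2), on each $\hat Z_j$ and for each $i$ we have either $|H(v_i(u))|\le \tfrac34 h(v_i(u))$ for all $u\in\hat Z_j$ (``$H$ small'') or $|H(v_i(u))|\ge\tfrac14 h(v_i(u))$ for all $u\in\hat Z_j$ (``$H$ large''). If for some choice the ``$H$ small'' alternative holds simultaneously for both $i=1$ and $i=2$ on the same $\hat Z_j$ (with $D_j\subset C_m$), then by the triangle inequality, unitarity of the $\coc_i^{(m)}$ (so $|H(v_i(u))\coc_i^{(m)}| = |H(v_i(u))|$), and the fact that $e^{f_N^{(a)}(v_i(u))}>0$, the numerator of $\chi^{(i)}[H,h](u)$ is at most $\tfrac34\bigl(e^{f_N^{(a)}(v_1(u))}h(v_1(u)) + e^{f_N^{(a)}(v_2(u))}h(v_2(u))\bigr)$, while the denominator is at least $(1-\mu)$ times that sum; since $\mu<\tfrac14$ we get $\chi^{(i)}\le \tfrac34/(1-\mu)\le 1$, and we are done for this $C_m$.

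The remaining case is that on each $\hat Z_j$ ($j\in\{j',j''\}$) there is at least one $i$ with the ``$H$ large'' alternative. Here the oscillation must be exploited: when $|H(v_1(u))|$ and $|H(v_2(u))|$ are both comparable to $h(v_1(u))\sim h(v_2(u))$ (using Lemma \ref{alternatives}(1) to compare $h$-values across $\hat Z_j$, and the essential Lipschitz bounds \eqref{usefuleqin3}, \eqref{boundXdiam} to compare the weights $e^{f_N^{(a)}}$), the two summands $e^{(f_N^{(a)}+ib\tau_N)(v_i(u))}H(v_i(u))\coc_i^{(m)}$ are two vectors of comparable length, and one shows their sum has length strictly less than the sum of lengths \emph{unless} the phases $b\tau_N(v_1(u))$ and $b\tau_N(v_2(u))$ are nearly aligned (modulo the alignment of the unit vectors $H(v_i(u))\coc_i^{(m)}/|H(v_i(u))\coc_i^{(m)}|$, which by \eqref{oneeighth}-type control and the smallness of $\mathrm{diam}(\hat X_{i,j})$ vary by at most a small amount on each $\hat Z_j$). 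By Lemma \ref{largeangle} the quantity $b\bigl(\tau_N(v_2)-\tau_N(v_1)\bigr)$ cannot be nearly aligned on \emph{both} $\hat Z_{j'}$ and $\hat Z_{j''}$; on whichever one of the two it is misaligned by the definite amount $\epsilon_1\delta_0\rho/16$, a direct estimate $|w_1+w_2|^2 = |w_1|^2+|w_2|^2+2\mathrm{Re}(w_1\bar w_2)$ (done coordinatewise in $\mathbb C^{F_q}$, or via the inner product on $\mathbb C^{F_q}$) gives $|w_1+w_2|\le (1-c)(|w_1|+|w_2|)$ for an explicit $c$ depending only on the constants fixed in \S\ref{ss3.1}; choosing $\mu$ small enough (as in \eqref{choosemu}) that $c$ beats the $\mu$-loss in the denominator yields $\chi^{(i)}[H,h](u)\le 1$ on that $\hat Z_j$.

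The main obstacle, and the only genuinely new point compared to Stoyanov's scalar argument, is controlling the \emph{directional} variation of the vector-valued factor $H(v_i(u))\coc_i^{(m)}$ over $\hat Z_j$: one needs that, after factoring out the (locally constant, by Corollary \ref{previous}) cocycle $\coc_i^{(m)}$, the map $u\mapsto H(v_i(u))$ does not rotate too much on $\hat Z_j$, so that the oscillation in $e^{ib\tau_N}$ genuinely produces cancellation rather than being cancelled by a compensating rotation of $H$. This is exactly where the hypotheses $|H(v)-H(v')|\le E|b|h(v')D(v,v')$ together with the diameter bound \eqref{boundXdiam}, $\mathrm{diam}(\hat X_{i,j})\lesssim \epsilon_1/(|b|\kappa^N)$, and the largeness of $\kappa^N$ from \eqref{ChooseN} are used: they force the relative variation of $H(v_i(\cdot))$ on $\hat Z_j$ to be at most a small fixed fraction, small compared to the angle $\epsilon_1\delta_0\rho/16$ supplied by Lemma \ref{largeangle}. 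Once this is in hand the argument closes exactly as in the scalar case, and the constants have been arranged in \eqref{chooseE}--\eqref{choosemu} precisely so that all the inequalities line up.
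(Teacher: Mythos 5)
Your overall strategy is the paper's: Lemma \ref{largeangle} supplies the phase oscillation between two subcylinders $D_{j'},D_{j''}\subset C_m$, Lemma \ref{alternatives} supplies the small/large dichotomy, the Lipschitz hypothesis on $H$ together with \eqref{wj} and \eqref{boundXdiam} controls the rotation of $H(v_i(\cdot))$, unitarity of the $\coc_i^{(m)}$ preserves norms, and the trigonometric estimate tied to $\mu\le c_2^2\epsilon_1^2/256$ closes the argument. However, your case decomposition has a hole. Your ``easy case'' requires the small alternative of Lemma \ref{alternatives}(2) to hold for \emph{both} $i=1$ and $i=2$ on the same $\hat Z_j$, while your oscillation argument needs $|H(v_i(u))|\ge\tfrac14 h(v_i(u))$ for \emph{both} $i$ on \emph{both} $\hat Z_{j'}$ and $\hat Z_{j''}$: without a lower bound on $|H(v_i(u'))|$ the ratio $|H(v_i(u'))-H(v_i(u''))|/\min\{|H(v_i(u'))|,|H(v_i(u''))|\}$ need not be small, so the direction of $H(v_i(\cdot))$ can rotate arbitrarily and cancel the phase oscillation. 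The mixed case --- say $H(v_2(\cdot))$ satisfies the small alternative on $\hat Z_{j'}$ while $H(v_1(\cdot))$ is large there --- falls into neither branch as you wrote them, so as stated the proof does not cover all configurations.

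The repair is exactly the observation with which the paper opens its proof: a \emph{single} small alternative already suffices. If $|H(v_i(u))|\le\tfrac34 h(v_i(u))$ on $\hat Z_t$ for one index $i$ and one $t\in\{j',j''\}$, then, since the factor $(1-\mu)$ sits only on the $i$-th term of the denominator of $\chi^{(i)}$, since the other numerator term is bounded using the hypothesis $|H|\le h$, and since $\tfrac34\le 1-\mu$ (as $\mu<\tfrac14$), one gets $\chi^{(i)}[H,h]\le 1$ on $\hat Z_t$ directly. With this, the complementary case is precisely ``$|H(v_i(\cdot))|\ge\tfrac14 h(v_i(\cdot))$ for both $i$ on all of $\hat Z_{j'}\cup\hat Z_{j''}$'', which is exactly what your oscillation argument requires, and the rest of your outline then goes through as in the paper. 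One further small point: you should not assert that the two summands have ``comparable length'' or that $h(v_1(u))\sim h(v_2(u))$ --- no such comparison is available, since $v_1(u)$ and $v_2(u)$ lie in disjoint cylinders --- but none is needed: the estimate $|v+w|\le(1-\mu)|v|+|w|$ holds for arbitrary $|v|\le|w|$ once the angle between $v$ and $w$ is at least $c_2\epsilon_1/4$.
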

\begin{proof} Fix $m$ and choose $j', j''$ as in Lemma \ref{largeangle}.  Consider $\hat Z_{j'}$ and $\hat Z_{j''}$. If there exist $t\in \lbrace j', j'' \rbrace$ and $i \in \lbrace 1, 2\rbrace $ such that the first alternative of 
Lemma \ref{alternatives} (2) holds for $\hat Z_t$, then $\chi^{(i)}[H, h](u) \leq 1$ for all $u \in \hat Z_t$. So from now on in this proof we assume the converse, i.e.,
for each $i$, $ |H(v_i(u))| \geq \tfrac{1}{4} h(v_i(u)) $ for all  $u \in \hat Z_{j'}\cup \hat Z_{j''}$.

Consider now $u' \in \hat Z_{j'}$ and $u'' \in \hat Z_{j''}$.  Then the properties of $h$ and $ H$ imply 
\begin{eqnarray*} \frac{|H(v_i(u')) - H(v_i(u''))|}{\min \lbrace | H(v_i(u'))|, | H(v_i(u''))| \rbrace }  &\leq& \frac{E|b|h(v_i(u'))D(v_i(u'), v_i(u''))}{\min \lbrace | H(v_i(u'))|, | H(v_i(u''))| \rbrace}\\
&\leq& 4E|b|D(v_i (u'), v_i(u''))  \\ 
&<&\frac{\epsilon_1\delta_0\rho}{128} \mbox{ by \eqref{wj}},\end{eqnarray*}
where we have assumed $|H(v_i(u'))| \leq |H(v_i(u''))|$ without loss of generality.

In particular this is less than $\frac{1}{2}$. We write $c_2 = \frac{\delta_0 \rho}{16}$. The sine of the angle $\theta_i$ between $H(v_i(u'))$ and $H(v_i(u''))$ is therefore at most $\sin \theta_i\leq \frac{c_2\epsilon_1}{8}$, so 
\be \label{smallangle} \theta_i\leq \frac{c_2\epsilon_1}{4}. \ee
 We need to use this to show that at least one of the angles 
\[ \theta(e^{ib\tau_N(v_1(u')} H(v_1(u'))\coc_1^{(m)} ,  e^{ib\tau_N(v_2(u')} H(v_2(u'))\coc_2^{(m)} )\]
or 
\[\theta( e^{ib\tau_N(v_1(u'')}H(v_1(u''))\coc_1^{(m)} ,  e^{ib\tau_N(v_2(u'')}H(v_2(u''))\coc_2^{(m)} )\]
is greater than $c_2\epsilon_1/4$. Supposing that the first term is less than $c_2\epsilon_1/4$, we will
show that the second term is bigger than  $c_2\epsilon_1/4$.
 Write
 \[ \phi(w) := b \cdot \left( \tau_N(v_2(w)) - \tau_N(v_1(w)) \right)\]
 and note that 
 \[ c_2\e_1 \le  | \phi(u') - \phi(u'') | \leq \tfrac{1}{8}\]
 for all $u' \in \hat {Z}_{j'}$ and all $u'' \in \hat{Z}_{j''}$ by \eqref{oneeighth} and Lemma \ref{largeangle}.
 
We compute
\begin{eqnarray*} &&\theta( e^{ib\tau_N(v_1(u''))}H(v_1(u''))\coc_1^{(m)} ,  e^{ib\tau_N(v_2(u''))}H(v_2(u'')) \coc_2^{(m)} ) \\
&=& \theta( e^{-i\phi(u'')}H(v_1(u''))\coc_1^{(m)} ,  H(v_2(u''))\coc_2^{(m)} )\\
&\geq&  \theta( e^{-i\phi(u'')}H(v_1(u''))\coc_1^{(m)} ,  e^{-i\phi(u')}H(v_1(u''))\coc_1^{(m)} )\\
&&-   \theta( e^{-i\phi(u')}H(v_1(u''))\coc_1^{(m)} ,  H(v_2(u''))\coc_2^{(m)} )\\
 &\geq& | \phi(u')  -  \phi(u'')|  -  \theta( e^{-i\phi(u')}H(v_1(u''))\coc_1^{(m)} ,  H(v_2(u''))\coc_2^{(m)} ) \\
&\geq&
 c_2\epsilon_1 - c_2\e_1/ 2-  \theta( e^{-i\phi(u')}H(v_1(u'))\coc_1^{(m)} ,  H(v_2(u'))\coc_2^{(m)} )\\ &\geq&
 c_2\epsilon_1/4\end{eqnarray*}
  by \eqref{smallangle} and the assumption.
  Write
\[ v = e^{(f^{(a)}_N + ib\tau_N(v_1(u''))}H(v_1(u'')) \coc_1^{(m)} \text{ and} \]
\[ w = e^{(f^{(a)}_N + ib\tau_N)(v_2(u''))}H(v_2(u''))\coc_2^{(m)} \]
so that $|v + w|$ is the numerator of $\chi^{(i)}[H, h](u'') $. 
 Without loss of generality, we assume that $|v| \leq |w|$.
We now claim that $\chi^{(1)}[H, h](u'') \leq 1$ for all $u''\in \hat Z_{j''}$. 
This  now follows from rather simple trigonometry.  Since the angle $\tilde \theta$ between $v$ and $w$ is at least $c_2 \epsilon_1/4$, we have
\[ 1 + 2\cos\tilde \theta \leq 2 + \cos \tilde \theta \leq 3 - \frac{c_2^2\epsilon_1^2}{16} \leq 3(1 - \mu)^2 \leq (1 - \mu)^2 + 2 (1 - \mu). \]
Thus 
\[ |v| + 2|v| \cos\tilde \theta \leq (1 - \mu)^2 |v| + 2(1 - \mu)|v|.\]
Now
\[ |v| + 2 |w|\cos \tilde \theta \leq (1 - \mu)^2 |v| + 2(1 - \mu)|w|,\]
and so $ (1 - \mu)|v| + |w| \geq |v + w|$, and $\chi^{(1)}[H,h] \leq 1$ on $\hat Z_{j''}$ as expected.  
\end{proof}

\begin{proof}[Proof of Proposition \ref{l3.9}] Choose $h, H, |b|  > b_0$ as in the hypotheses of Theorem \ref{dolgopyatoperatorsexist} and choose $a_0 \in (0, a_0')$ to satisfy Lemma \ref{sl5.6}. We choose a subset $J \in \mathcal{J}( b)$ as follows. First include in $J$ all $(1, j) \in \Xi$ such that $\chi^{(1)}[H, h] \leq 1$ on $\hat Z_j$. Then for any $j \in \lbrace 1 \ldots p \rbrace$, include $(2, j)$ in $J$ if $(1, j)$ is not already in $J$ and $ \chi^{(2)}[H, h] \leq 1$ on $\hat Z_j$. By Lemma \ref{isdense},  this subset $J$ is dense (in the sense of Definition \ref{definedense}), so that  $J\in \mathcal J (b)$.
We will show that for all $u\in \hat U$
\[| \hat{\mathcal{M}}^N_{ab, q}H(u) |\leq \mathcal{N}_{J, a}h(u).\] Let $u \in \hat U$. Suppose first that $u \notin \hat{Z}_j$  for any $(i, j) \in J$; then $\beta_J(v) = 1$ whenever $\Pp^N(v) = u$, and the bound follows. Suppose instead that $u \in \hat Z_j\subset C_m $ with $(1, j) \in J$. Then $(2, j)\notin  J$ and so $\beta_J(v_1(u)) \geq 1 - \mu$ and $\beta_J(v_2(u)) = 1$. We therefore have $\chi^{(1)}[H, h] \leq 1$ on $\hat Z_j$, so 
\begin{align*}& | \hat{\mathcal{M}}^N_{ab, q}H(u)| \leq \sum_{\Pp^Nv = u, v\neq v_1(u), v_2(u) }e^{f_N^{(a)}(v)}|H(v)|\\
 &+\left| e^{(f^{(a)}_N + ib\tau_N(v_1(u))}H(v_1(u))\coc_1^{(m)} + e^{(f^{(a)}_N + ib\tau_N)(v_2(u))}H(v_2(u)) \coc_2^{(m)} \right| \\
 &\leq \sum_{\Pp^Nv = u, v\neq v_1(u), v_2(u) }e^{f_N^{(a)}(v)}|h(v)|\\
 &+(1 - \mu) e^{f^{(a)}_N(v_1(u))}h(v_1(u)) + e^{f^{(a)}_N(v_2(u))}h(v_2(u)) \\
 & \leq \mathcal{N}_{J, a}h(u). \end{align*}
 The case $u \in \hat Z_j$ with $(2, j) \in J$ is similar. This finishes the proof.
 \end{proof}
\noindent Together with Lemma \ref{sl5.6}, this completes the proof of Theorem \ref{dolgopyatoperatorsexist}.

\section{The expansion machinery} \label{bbounded}
 \subsection{Some reductions.}
In this section we assume that $\G$ is a convex cocompact subgroup in $\SL_2(\z)$ and that $q_0$ is as in  \eqref{strongapproximation}. Let $b_0>0$ be as in Theorem \ref{Dargtheorem}. The main aim of this Section is to prove the following theorem.
\begin{thm} \label{BGSbound} 
There exist $ \epsilon \in (0, 1), a_0 > 0, C>1, q_0' > 1$ such that for all $|a| < a_0$,  $|b| \leq b_0$, and all square free $q\ge 1$ with $(q,q_0q_0')=1$,
 we have
\[  ||\hat{\mathcal{M}}^m_{ab, q} H||_2 < C(1 - \epsilon)^mq^C ||H||_{\Lip(d)} \]
for all $m\in \mathbb{N}$ and all $H \in  \mathcal{W}(\hat U, \mathbb{C}^{\op{SL}_2(q)})$; see \eqref{defineW} for notation.  
\end{thm}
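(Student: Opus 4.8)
The strategy is to exploit the decomposition $\hat U^q \simeq \hat U \times \SL_2(q)$ and the fact that, for the bounded-frequency range $|b| \leq b_0$, the scalar transfer operator $\hat{\mathcal{L}}_{a0}$ already has a spectral gap (by the complex RPF theorem \ref{RPF}), so the only mechanism that can produce growth in $\|\hat{\mathcal M}^m_{ab,q}H\|_2$ for $H \in \mathcal W$ is the presence of a nontrivial invariant or slowly decaying subspace coming from the $\SL_2(q)$-direction. First I would reduce to a purely $\SL_2(q)$-statement by the standard ``separation of variables'': iterate $\hat{\mathcal M}_{ab,q}$ a controlled number $n_0(q)$ of steps (with $n_0 \asymp \log q$, so that cylinders of length $n_0$ have diameter $\asymp q^{-O(1)}$), and on each such short cylinder the cocycle $\coc_{n_0}$ is locally constant (Lemma \ref{cocycleexists}) while $e^{(f^{(a)}_{n_0}+ib\tau_{n_0})}$ is essentially constant up to a bounded multiplicative factor (the estimates \eqref{usefuleqin3}, \eqref{usefuleqin3b}); this lets me approximate $\hat{\mathcal M}^{n_0}_{ab,q}H$ on each cylinder by a convolution operator on $\C^{\SL_2(q)}$ built from the Patterson--Sullivan weights of the preimage cylinders.

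The heart of the argument is then a flattening/expansion dichotomy for these convolution-type operators. The relevant probability measure on $\SL_2(q)$ is supported on the set of cocycle values $\{\coc_{n_0}(v)\}$ over the preimages, weighted (via Fact \ref{propertiesofGibbsmeasures} / \eqref{ggibbs} and Corollary \ref{c2.4}) by $\nu$-masses of cylinders, i.e. essentially by $e^{-\delta\tau_{n_0}}$; Sullivan's shadow lemma shows this weighted measure is not concentrated on any proper subgroup or coset and has controlled ``entropy'' growing linearly in $n_0$. I would then invoke the Bourgain--Gamburd--Sarnak expansion machinery: the $\ell^2$-flattening lemma (Lemma \ref{bgsflatteninglemma}) says that convolving such a measure with itself a bounded number of times either spreads out in $\ell^2$ by a definite power, or else the measure is already close to being concentrated on a coset of a proper subgroup; the latter is excluded (for square-free $q$ coprime to $q_0 q_0'$) by the expansion/spectral-gap input, which forces the convolution operators on $L^2_0(\SL_2(q))$ to contract by a factor $1-\epsilon$ after $O(\log q)$ convolutions. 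Translating back through the symbolic model, $\hat{\mathcal M}^{m}_{ab,q}$ contracts the $L^2(\nu)$-norm on $\mathcal W$ by $(1-\epsilon)$ every $O(\log q)$ steps; bootstrapping via the uniform a priori operator-norm bound $R_0$ (as in the deduction of Theorem \ref{Dargtheorem}) and absorbing the $\log q$ many ``wasted'' initial steps into the polynomial factor $q^C$ gives the stated $C(1-\epsilon)^m q^C\|H\|_{\Lip(d)}$.

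I would also need two supporting estimates that are routine but essential: uniform control (in $q$ and in $|a|<a_0$, $|b|\le b_0$) on how well the genuine operator $\hat{\mathcal M}^{n_0}_{ab,q}$ is approximated by the convolution model, which follows from the Lipschitz bounds on $f^{(a)}$, $h_0$, $\tau$ on cylinders together with \eqref{definegamma}; and the passage between the $\|\cdot\|_{1,b}$ (equivalently $\|\cdot\|_{\Lip(d)}$, since $|b|\le b_0$ is bounded here) norm and the $L^2(\nu)$-norm, which is where the Lipschitz-to-$L^2$ smoothing built into iterating the transfer operator is used.

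The main obstacle I expect is making the flattening argument genuinely \emph{uniform in $q$}: the compactness argument available in the scalar case is unavailable, so one must extract an explicit spectral gap for the $\SL_2(q)$-convolution operators with constants independent of $q$, and this is exactly where one is forced to feed in the quantitative expander theorem of Bourgain--Gamburd--Sarnak and the $\ell^2$-flattening lemma in the form valid for square-free $q$ with no small prime divisors — hence the hypothesis $(q,q_0q_0')=1$ and the restriction to square-free $q$. A secondary technical nuisance is keeping the cocycle locally constant on the relevant length scale $n_0\asymp\log q$ while simultaneously keeping the oscillation of $e^{ib\tau_{n_0}}$ harmless (here it helps that $|b|\le b_0$ is bounded, so this oscillation is mild), and ensuring the weighted cocycle measure on $\SL_2(q)$ really does surject onto $\SL_2(q)$ and has the requisite non-concentration — this is the step that uses the Zariski density of $\Gamma$, strong approximation \eqref{strongapproximation}, and Sullivan's shadow lemma.
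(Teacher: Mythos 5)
Your plan follows essentially the same route as the paper: approximate the $n_q\asymp\log q$-step iterate of $\hat{\mathcal{M}}_{ab,q}$ by convolution operators on $\SL_2(q)$ whose weights are Gibbs/Patterson--Sullivan masses of cylinders, verify the hypotheses of the Bourgain--Gamburd--Sarnak $\ell^2$-flattening lemma, and iterate while controlling both supremum and Lipschitz norms. Two steps in your sketch, however, are not correct as stated and would need repair before the argument closes.

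First, the flattening lemma (Lemma \ref{bgsflatteninglemma}) yields the decay $q^{-\kappa'}$ only for $\phi\in E^q_q$, i.e.\ functions \emph{new} at the full level $q$; it does not directly force contraction of the convolution operators on all of $L^2_0(\SL_2(q))$, since a function invariant under $\Gamma(q')$ for a proper divisor $q'\mid q$ can only see decay in terms of $q'$. The paper therefore first decomposes $\mathcal{W}(\hat U,\mathbb{C}^{\SL_2(q)})=\oplus_{1\neq q'\mid q}\tilde E^q_{q'}$, projects each new piece down to level $q'$, and applies the level-$q'$ statement to each summand; this divisor decomposition is precisely where the ``no small prime divisors'' hypothesis (coprimality with $q_0'$) enters, so that every divisor $q'$ is large---not merely inside the flattening lemma itself, as your sketch suggests. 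Second, Sullivan's shadow lemma does not by itself show that the cocycle measure is non-concentrated on cosets of proper subgroups; in the paper it produces only the pointwise bound $\|\mu\|_\infty\le q^{-\kappa_1}B$ (via Lemma \ref{sha}). The coset non-concentration $|||\pi_{q'}(\mu)|||_\infty<B(q')^{-\kappa}$ requires in addition that for any subgroup $\Gamma_0<\Gamma$ whose projection is proper modulo each prime dividing $q'$, the number of elements of a coset of $\Gamma_0$ inside a word-metric ball $\mathcal{B}_m(e)$ grows sub-exponentially in $m$ (the argument of \cite{BGS}, cf.\ Lemma 5.5 of \cite{MOW}); combined with the pointwise bound this gives Claim 3 in Propositions \ref{arb1} and \ref{arb2}. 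Without this input the key hypothesis of the flattening lemma is unverified. With these two repairs, your outline matches the paper's proof.
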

Since the $\Lip(d)$ norm and the $||\cdot||_{1, b}$ norm are equivalent for  all $|b| \leq b_0$, this theorem and Theorem \ref{Dargtheorem} imply Theorem \ref{spectralbound}. 

 The key ingredient of the proof of Theorem \ref{BGSbound}  is the expander technology, introduced in this context by Bourgain, Gamburd, and Sarnak \cite{BGS}, from which we draw heavily throughout this section. The idea of the expansion machinery is that random walks on the Cayley graphs of $\operatorname{SL}_2(q)$ have good spectral properties. We don't have a random walk in the usual sense, but the randomness inherent in the Gibbs measure provides the same effect.

We recall the sequence spaces $\Sigma^+, \Sigma$, the shift map $\sigma$ and the embedding $\zeta: \Sigma^+ \rightarrow \hat U$. 

\begin{Nota} For any function $H \in C(\hat U, \c^{\SL_2(q)})$, we will denote $\tilde H = H \circ \zeta: \Sigma^+ \rightarrow  \c^{\SL_2(q)}$.  Similarly $\tilde \tau$ will denote $\tau \circ \zeta$. \end{Nota}

We recall the constant $\theta \in (0, 1)$ chosen sufficiently close to one (see subsection \ref{ss2.2}) and the metric $d_\theta$ on $\Sigma$ (resp. on $\Sigma^+$). Write  
$$||\tH||_\infty := \sup_{\omega \in \hat  \Sigma^+} |\tilde H(\omega)| $$
 and
\[ \Lip_{d_\theta}(\tH) := \sup_{\omega \neq \omega' \in \hat \Sigma^+} \frac{ |\tH(\omega) - \tH(\omega')|}{d_\theta(\omega, \omega')} \]
which is the minimal Lipschitz constant of $\tH$. We also write $$||\tH||_{d_\theta} := ||\tH||_\infty + \Lip_{d_\theta}(\tH).$$ We fix the following constant for later convenience
\be \label{defineetatheta} \eta_\theta := \frac{\Lip_{d_\theta}(\tau) + \sup_{|a| < 1}\Lip_{d_\theta}(f^{(a)})  }{1 - \theta}. \ee

Rather than proving Theorem \ref{BGSbound} directly, we will instead start by describing some reductions to a simpler form. For $q'  | q$, we define $\hat E^q_{q'} \subset L^2_0(\SL_2(q))$ to be the space of functions invariant under the left action of $\G(q')$. We may then write
$$E^q_{q'} := \hat E^q_{q'} \cap \left( \oplus_{q' \neq q'' | q'} \hat E^q_{q''}\right) ^\perp.$$ 
We think of $E^q_{q'}$ as the space of {\it new} functions at the level $q'$. We can then define $\tilde E^q_{q'}$ as the subspace of functions $H$ in $ \mathcal{W}(\hat U, \mathbb{C}^{\op{SL}_2(q)})$ with $H(u, \cdot) \in E^q_{q'}$ for all $u$. We recall the orthogonal decomposition 
\[ L^2_0(\SL_2(q)) = \oplus_{1 \neq q'|q} E^q_{q'} \]
and the induced direct sum decomposition
\[  \mathcal{W}(\hat U, \mathbb{C}^{\op{SL}_2(q)}) = \oplus_{1\neq q' | q} \tilde E^q_{q'}. \]
Write 
\[e_{q, q'} : \mathcal{W}(\hat U, \mathbb{C}^{\op{SL}_2(q)}) \rightarrow \tilde E^q_{q'} \]
for the projection operator, and note that $e_{q, q'}$ is norm decreasing for both the $|| \cdot ||_{\Lip(d)}$ norm and the $||\cdot||_2$ norm. 
\begin{remark}
The projection operators commute with the congruence transfer operators: we have
\[ e_{q, q'}  \circ \hat{\mathcal{M}}_{ab, q}=\hat{\mathcal{M}}_{ab, q} \circ  e_{q, q'}   \]
for any $q'|q$. 
\end{remark}
The first reduction is that we only need to consider functions in $\tilde E^q_q$. 
\begin{thm} \label{BGSbound2} 
There exist $\epsilon \in (0, 1), a_0 > 0, C>1, q_1>1$ such that for all $|a| < a_0$, $|b| <  b_0$ and $q\ge q_1$ square free with $(q,q_0)=1$, 
 we have
\begin{equation}||  \hat{\mathcal{M}}^{m}_{ab, q} H||_2 < C(1 - \epsilon)^mq^C ||H||_{\Lip(d)} \end{equation}
for all $m\in \mathbb{N}$ and all $H \in \tilde E^q_q$.  
\end{thm}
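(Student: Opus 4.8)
The plan is to run $\hat{\mathcal{M}}_{ab,q}$ for a window of $n=n(q):=\lceil C_{0}\log q\rceil$ steps (with $C_{0}$ to be fixed), convert the resulting preimage sum into a $u$-indexed family of convolution operators on $L^{2}_{0}(\SL_2(q))$, invoke the expansion machinery of \cite{BGS} to gain a power of $q$ in the $L^{2}$ norm, and then bootstrap. For $H\in\tilde E^{q}_{q}$,
\[
\hat{\mathcal{M}}^{n}_{ab,q}H(u)=\sum_{\Pp^{n}(v)=u}e^{(f^{(a)}_{n}+ib\tau_{n})(v)}\,H(v)\,\coc_{n}(v),
\]
where $\coc_{n}(v)\in\G$ acts through the (unitary) right regular representation of $\SL_2(q)$ on $\c^{\SL_2(q)}$. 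I would group the summands according to the length-$n$ cylinder $\cyl$ containing $v$: by Lemma \ref{cocycleexists} the cocycle $\coc_{n}$ is constant on each such $\cyl$, say $\coc_{n}(v)=\coc_{n}(\cyl)$, while any two points of a common length-$n$ cylinder are $\theta^{n}$-close in $d_{\theta}$, so by the a priori estimates of Section \ref{ss3.1} (notably the essential $d$-Lipschitz property of $f^{(a)},\tau$ and the hyperbolicity of $\Pp$) one may replace $H(v)$ by the value $H(x_{\cyl})$ at a fixed representative $x_{\cyl}\in\cyl$ at the cost of an error $O(\theta^{n})\,\|H\|_{\Lip(d)}\,\hat{\mathcal{L}}^{n}_{a0}\mathbf{1}(u)=O(q^{-c})\|H\|_{\Lip(d)}$. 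Writing $w_{\cyl}(u)=e^{f^{(a)}_{n}(v_{\cyl}(u))}$, so that $w_{\cyl}(u)\asymp\nu(\cyl)$ by the Gibbs property (Fact \ref{propertiesofGibbsmeasures}) and $\sum_{\cyl}w_{\cyl}(u)=\hat{\mathcal{L}}^{n}_{a0}\mathbf{1}(u)\asymp1$ by the eigenfunction equation of Theorem \ref{RPF}, the surviving term is $\sum_{\cyl}w_{\cyl}(u)\,H(x_{\cyl})\,\coc_{n}(\cyl)$. In the model case $H(x_{\cyl})\equiv v$ this is the right-convolution $v*\mu_{a,n}(u)$ of $v\in L^{2}_{0}(\SL_2(q))$ by the probability measure $\mu_{a,n}(u):=\sum_{\cyl}w_{\cyl}(u)\,\delta_{\coc_{n}(\cyl)}$, whence $\|\hat{\mathcal{M}}^{n}_{ab,q}H(u)\|^{2}\le\|\mu_{a,n}(u)*\check\mu_{a,n}(u)\|_{L^{2}_{0}\to L^{2}_{0}}\,\|v\|^{2}$; the general $H$ reduces to this after one further grouping of cylinders, exploiting the memoryless Markov structure (the cocycle over $\cyl[i_{0},\ldots,i_{n}]$ is the product $\prod_{j}\coc(i_{j},i_{j+1})$ of transition elements, whereas $H(x_{\cyl})$ is, up to a small error, determined by a short prefix of $\cyl$, and $\hat{\mathcal{M}}_{ab,q}$ preserves $\tilde E^{q}_{q}$).

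The decisive input is then the estimate $\sup_{u\in\hat U}\|\mu_{a,n}(u)*\check\mu_{a,n}(u)\|_{L^{2}_{0}(\SL_2(q))\to L^{2}_{0}(\SL_2(q))}\le q^{-\epsilon_{0}}$ for $n\ge C_{0}\log q$, uniformly over $|a|<a_{0}$, $|b|\le b_{0}$ and square free $q$ coprime to $q_{0}$; this is the place where $\tilde E^{q}_{q}$ --- new vectors at level $q$ --- is used, since it is on this space that the Bourgain--Gamburd--Sarnak machinery gives a uniform gain. Following \cite{BGS}, this rests on three ingredients. First, a non-concentration estimate: $\mu_{a,n}(u)$ gives mass $\le q^{-\epsilon_{1}}$ to every coset of every proper subgroup of $\SL_2(q)$ once $n$ is a large enough multiple of $\log q$; this uses that $\G$ is Zariski dense in $\SL_2$ together with Sullivan's shadow lemma, which --- via Fact \ref{propertiesofGibbsmeasures} and Corollary \ref{c2.4} --- identifies $w_{\cyl}(u)$ with the $\mPS{o}$-mass of the shadow of $\cyl$ up to a bounded factor, so that the Gibbs weights inherit the $\delta$-dimensional regularity of the Patterson--Sullivan density and cannot conspire to concentrate on few subgroup cosets. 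Second, the $\ell^{2}$-flattening lemma for square free $q$ (Lemma \ref{bgsflatteninglemma}; this is the only place the square free hypothesis enters), which, iterated along the multiplicative decomposition $\mu_{a,n}(u)\approx\mu_{a,n/2}(\cdot)^{*2}$ furnished by the Markov structure, forces $\|\mu_{a,n}(u)\|_{2}\le q^{o(1)}|\SL_2(q)|^{-1/2}$ once $n\ge C_{0}\log q$. Third, quasirandomness of $\SL_2(q)$ --- every nontrivial irreducible representation has dimension $\gg q$ --- which converts this near-uniformity into the operator-norm bound $q^{-\epsilon_{0}}$ on $L^{2}_{0}$. Combined with the approximation of the previous paragraph, this yields
\[
\|\hat{\mathcal{M}}^{n}_{ab,q}H\|_{2}\le q^{-\epsilon_{0}}\|H\|_{\Lip(d)}\qquad\bigl(H\in\tilde E^{q}_{q},\ n=\lceil C_{0}\log q\rceil\bigr).
\]

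Finally I would bootstrap this block estimate, using two a priori bounds. The first is $\|\hat{\mathcal{M}}^{r}_{ab,q}\|_{L^{2}(\nu)\to L^{2}(\nu)}\le C$ uniformly in $r$, from Cauchy--Schwarz in the defining sum together with the fact that, since $Pr_{\sigma}(f^{(a)})=0$, $\hat{\mathcal{L}}_{a0}$ has an eigenmeasure equivalent to $\nu$ with uniformly bounded density. The second is a one-step Lipschitz estimate $\|\hat{\mathcal{M}}^{n}_{ab,q}H\|_{\Lip(d)}\le C\|H\|_{2}+q^{-c}\|H\|_{\Lip(d)}$ (throughout this step it is cleanest to track the $D$-metric Lipschitz constant appearing in Lemma \ref{l3.7}, which is dominated by $\|\cdot\|_{\Lip(d)}$): this follows from the second part of Lemma \ref{l3.7} once one uses the Ruelle--Perron--Frobenius mixing bound of Theorem \ref{RPF} to see that $\hat{\mathcal{L}}^{n}_{a0}|H|$, with $n\asymp\log q$, lies within $q^{-c}\|H\|_{\Lip(d)}$ of $\nu(|H|)\cdot(h_{a}/h_{0})$, and $\nu(|H|)\le\|H\|_{2}$. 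With $a_{k}:=\|\hat{\mathcal{M}}^{kn}_{ab,q}H\|_{2}$ and $L_{k}:=\|\hat{\mathcal{M}}^{kn}_{ab,q}H\|_{\Lip(d)}$, these give $a_{k+1}\le q^{-\epsilon_{0}}L_{k}$ and $L_{k+1}\le C a_{k}+q^{-c}L_{k}$; the associated $2\times2$ transition matrix has bounded entries and, for all large $q$, spectral radius $\le q^{-\epsilon_{2}}$ and uniformly bounded condition number, so $a_{k}\le C'q^{-\epsilon_{2}k}\|H\|_{\Lip(d)}$. For general $m=kn+r$ with $0\le r<n$ we get $\|\hat{\mathcal{M}}^{m}_{ab,q}H\|_{2}\le C\,a_{k}\le C'\,q^{-\epsilon_{2}k}\|H\|_{\Lip(d)}$, and since $k\ge m/n-1$ with $n=C_{0}\log q$,
\[
q^{-\epsilon_{2}k}\le q^{\epsilon_{2}}\,q^{-\epsilon_{2}m/(C_{0}\log q)}=q^{\epsilon_{2}}\,e^{-\epsilon_{2}m/C_{0}},
\]
so $\|\hat{\mathcal{M}}^{m}_{ab,q}H\|_{2}\le C\,q^{C}(1-\epsilon)^{m}\|H\|_{\Lip(d)}$ with $1-\epsilon:=e^{-\epsilon_{2}/C_{0}}$ uniform in $q$ --- the conversion of the per-block power-of-$q$ gain into a genuinely uniform exponential rate being the reason the window length must be $\asymp\log q$ --- and the finitely many small $q$ coprime to $q_{0}$ are absorbed into the constant, producing $q_{1}$. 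The crux, and the reason this section is substantial, is the non-concentration ingredient: in \cite{BGS} the underlying random walk has a fixed, finitely supported driving measure, whereas here the \emph{steps} $\coc_{n}(\cyl)$ carry $n$- and $u$-dependent Gibbs weights, so equidistribution away from proper subgroups at the scale $n\asymp\log q$ must be extracted purely from Zariski density, the shadow lemma, and the product structure of the cocycle, uniformly in every parameter; it is precisely the local constancy (Lemma \ref{cocycleexists}) and the unitarity of $\coc$ that decouple this group-theoretic input from the oscillatory factor $e^{ib\tau_{n}}$ and from the fine geometry of $\hat U$.
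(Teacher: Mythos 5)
Your overall architecture is essentially the paper's: a block of length $\asymp\log q$, the local constancy of $\coc$ (Lemma \ref{cocycleexists}) and unitarity of the right regular action converting the block sum into convolutions against vectors of $E^q_q$, an expansion input of Bourgain--Gamburd--Sarnak type, and a bootstrap over blocks; your $2\times 2$ recursion in $(\|\cdot\|_2,\|\cdot\|_{\Lip})$ is a harmless repackaging of the paper's contraction of $\|\cdot\|_\infty+\theta^{n_q/2}\Lip_{d_\theta}(\cdot)$, and your observation that the Lipschitz row only needs boundedness (via Lemma \ref{l3.7} and Theorem \ref{RPF}), so that an analogue of Proposition \ref{arb2} can perhaps be avoided, is defensible. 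The problem is that the decisive arithmetic input is asserted rather than proved.

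The hypothesis of the flattening lemma (Lemma \ref{bgsflatteninglemma}) is the coset non-concentration $|||\pi_{q'}(\mu)|||_\infty\le q^{-\kappa}B$ for all $q'\mid q$ with $q'>q^{1/10}$, and this is exactly the point your sketch does not establish. You claim that $\mu_{a,n}(u)$ gives mass $\le q^{-\epsilon_1}$ to every coset of every proper subgroup ``once $n$ is a large enough multiple of $\log q$,'' deducing it from Zariski density and Sullivan's shadow lemma. But the shadow lemma (through Lemma \ref{sha}, i.e.\ Claim 2 of Proposition \ref{arb1}) only bounds individual atoms, $|\mu|(\gamma)\le Bq^{-\kappa_1}$; a coset of $\Gamma_0=\G\cap\pi_{q'}^{-1}(H_0)$, $H_0$ a proper subgroup of $\SL_2(q')$, contains many elements of the word ball, so one must also count them. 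The paper does this by building $\mu$ only over a \emph{short} sub-block of length $m_q\le\frac1{d_0}\log q$ (freezing the remaining $r_q$ symbols into the test function $\phi$), precisely so that $\mathcal{B}_{m_q}(e)$ injects into $\SL_2(q')$ and the structure of proper subgroups of $\SL_2(q')$ yields the subexponential count $\#(a\Gamma_0\cap\mathcal{B}_{m_q}(e))=O(q^{\kappa_1/2})$ quoted from \cite{BGS} and \cite{MOW}; Claim 3 of Proposition \ref{arb1} is then the product of this count with the per-atom bound. At your scale $n=C_0\log q$ with $C_0$ large this route is unavailable: $\Gamma_0$ has finite index in $\G$, so $\#(a\Gamma_0\cap\mathcal{B}_n(e))$ grows at the full exponential rate $q^{cC_0}$, and bounding the coset mass there directly amounts to the very equidistribution among congruence cosets that the theorem is proving. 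One could pass from a base-scale bound to long scales by (approximate) monotonicity of coset mass under convolution, and your proposed iteration $\mu_{a,n}\approx\mu_{a,n/2}^{*2}$ likewise needs non-concentration at each intermediate scale, but in either case the base-scale estimate still has to be produced by the injectivity-plus-counting argument, which never appears in your proposal. Until that ingredient (the analogue of Claims 2--3 of Proposition \ref{arb1}) is supplied, the step $a_{k+1}\le q^{-\epsilon_0}L_k$ rests on an unverified hypothesis, and the proof is incomplete at its central point.
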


\begin{proof}[Proof that Theorem \ref{BGSbound2} implies Theorem \ref{BGSbound}] Set $q_0'$ to be the product of all primes less than or equal to $q_1$. We will first explain how to deduce Theorem \ref{BGSbound} from Theorem \ref{BGSbound2}. Fix $\epsilon, a_0, b_0, C, q_1, q_0$ as in Theorem \ref{BGSbound2}. Fix also $q$ square free such that $(q, q_0q_0') = 1$.  For $q'|q$, we consider the projection maps
\[ \mbox{proj}_{q, q'} :  E^q_{q'} \rightarrow E_{q'}^{q'}\]
by choosing $(\mbox{proj}_{q, q'} F)(\gamma) = F( \tilde \gamma)$, where $\tilde \gamma$ is any pre-image of $\gamma$ under the natural projection map $\SL_2(q) \rightarrow \SL_2(q')$. By abuse of notation we will also write $\mbox{proj}_{q, q'}$ for the induced maps $\tilde{E}^q_{q'} \rightarrow \tilde{E}^{q'}_{q'}$. Write $\spadesuit_{q, q'}:=\frac{\# \SL_2(q')}{\# \SL_2(q)} $. We note that 
\[ || (\mbox{proj}_{q, q'} H)  ||_{\Lip(d)} = \sqrt{\spadesuit_{q, q'}} || H ||_{\Lip(d)},\]
that
\[  || (\mbox{proj}_{q, q'} H)  ||_2 = \sqrt{\spadesuit_{q, q'}} || H ||_2,\]
and that 
\[ \hat{\mathcal{M}}_{ab, q'} \circ \mbox{proj}_{q, q'}   =  \mbox{proj}_{q, q'} \circ   \hat{\mathcal{M}}_{ab, q} .\]
Now consider  $ H \in  \mathcal{W}(\hat U, \mathbb{C}^{\op{SL}_2(q)})$. We calculate, for $|a| < \min(a_0, a_0')$ and $|b| \leq b_0$, 
\begin{eqnarray*} ||\hat{\mathcal{M}}^m_{ab, q} H ||^2_2 &=& \sum_{1 \neq q' | q}   ||e_{q, q'} \hat{\mathcal{M}}^m_{ab, q} H ||^2_2 \\
&=& \sum_{1 \neq q' | q}   ||\hat{\mathcal{M}}^m_{ab, q}( e_{q, q'}  H) ||^2_2\\
&=&  \sum_{1 \neq q' | q}  {\spadesuit_{q, q'}  } || \mbox{proj}_{q, q'}(\hat{\mathcal{M}}^m_{ab, q}( e_{q, q'}  H)) ||^2_2\\ 
&=&  \sum_{1 \neq q' | q}   {\spadesuit_{q, q'}}    ||\hat{\mathcal{M}}^m_{ab, q'}( \mbox{proj}_{q, q'}( e_{q, q'}  H)) ||^2_2.\end{eqnarray*}
Applying Theorem \ref{BGSbound2}, we obtain 
\begin{eqnarray*} ||\hat{\mathcal{M}}^m_{ab, q} H ||^2_2  &\leq&   C^2(1 - \epsilon)^{2m} (q')^{2C}  \sum_{1 \neq q' | q}    \spadesuit_{q, q'}   || \mbox{proj}_{q, q'}( e_{q, q'}  H) ||^2_{\Lip(d)}.\\
&\leq&    C^2(1 - \epsilon)^{2m} \sum_{1 \neq q' | q, q' \geq q_1}    (q')^{2C}  ||e_{q, q'}  H ||^2_{\Lip(d)}\\
&\leq& (C'')^2 (1 - \epsilon'')^{2m}q^{2C'' + 1}|| H ||^2_{\Lip(d)}\end{eqnarray*}
as expected.
\end{proof}

The most convenient formulation to prove will be the following:
\begin{thm}\label{af1}
There exist $\kappa > 0, a_0 > 0, q_1 > 0$ such that 
\[ ||\hat{\mathcal{M}}^{ln_q}_{ab, q} \tH||_2 \leq q^{- l\kappa} ||\tH||_{d_\theta} \]
for all $|a| < a_0, |b| \leq b_0 ,l \in \mathbb{N}$, all $q > q_1$ square free and coprime to $q_0$, and all $H \in \tilde E^q_q$; here $n_q$ denotes the integer part of $\log q$. 
\end{thm}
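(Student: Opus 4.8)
The plan is to realize $\hat{\mathcal{M}}^{n_q}_{ab, q}$, up to a bounded multiplicative perturbation, as a convolution operator on $\SL_2(q)$ and then feed it into the expansion machinery of Bourgain--Gamburd--Sarnak \cite{BGS}. Write out the iterate
\[ \hat{\mathcal{M}}^{n_q}_{ab, q}\tH(u, \gamma) = \sum_{\hat\sigma^{n_q}(v) = u} e^{(f^{(a)}_{n_q} + ib\tau_{n_q})(v)}\,\tH\bigl(v, \gamma\,\coc_{n_q}(v)^{-1}\bigr) \]
and group the preimages $v$ by the length-$n_q$ cylinder $\cyl$ containing them. By Lemma \ref{cocycleexists} the value $\coc_{n_q}(v)$ is constant on each such $\cyl$, equal to some $g_\cyl \in \Gamma$; by \eqref{ggibbs} and Fact \ref{propertiesofGibbsmeasures} the weight $e^{f^{(a)}_{n_q}(v)}$ is, for $|a| < a_0$, comparable up to a fixed factor to $\nu(\cyl)$; and for $|b| \le b_0$ the factor $e^{ib\tau_{n_q}}$ varies by only $O(|b|)$ across $\cyl$. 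Using these facts, the a priori estimate $\hat{\mathcal{L}}^{n_q}_{a0}1 \asymp 1$, and the quasi-product/memoryless structure of the Gibbs measure $\nu$, the first step is to show
\[ \|\hat{\mathcal{M}}^{n_q}_{ab, q}\tH\|_2 \;\le\; C\,\|\mu_q * \tH\|_2 \]
where $\mu_q$ is the probability measure on $\SL_2(q)$ obtained by pushing $\nu$ forward under $\omega \mapsto (g_\cyl \bmod q)$, and $*$ denotes the right regular action of $\mu_q$ on the $\c^{\SL_2(q)}$-component. Because $\tH$ takes values in $E^q_q \subset L^2_0(\SL_2(q))$, only the nontrivial part of the regular representation enters.

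\emph{Step 2 (flattening).} The core is to bound the operator norm of $\mu_q*(\cdot)$ on the space of ``new'' functions. Here the Markov/memoryless property again gives $\mu_q^{(2n)} \approx \mu_q^{(n)} * \mu_q^{(n)}$ (where $\mu_q^{(n)}$ is the analogous measure for cylinders of length $n$), so the $\ell^2$-flattening lemma (Lemma \ref{bgsflatteninglemma}), valid for square-free $q$, applies: provided $\mu_q^{(n)}$ is non-concentrated near cosets of proper subgroups, one convolution square reduces $\|\mu_q^{(n)}\|_2$ to a power $\|\mu_q^{(n)}\|_2^{1+\epsilon_0}$, until it saturates near the uniform value $(\#\SL_2(q))^{-1/2}$. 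Since the entropy of $\nu$ (equivalently the growth rate $\delta\bar\tau$) is a fixed positive constant independent of $q$, only $O(1)$ dyadic refinements are needed, so after $\le l_0 n_q$ cocycle steps (for a fixed $l_0$) the measure is $q^{-\kappa_0}$-flat in the relevant sense, yielding a contraction of $\hat{\mathcal{M}}^{l_0 n_q}_{ab, q}$ by a fixed power $q^{-\kappa_0}$ on $\tE^q_q$.

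\emph{Step 3 (non-concentration near proper subgroups).} The hypothesis fed into Step 2 is that for every proper subgroup $H < \SL_2(\z/q\z)$ and every coset, $\mu_q^{(n)}(\text{coset}) \le q^{-\kappa'}$ for $n \asymp \log q$. For square-free $q$ coprime to $q_0$ one splits via CRT into prime levels and invokes escape-from-subgroups in $\SL_2(p)$ (the product theorem of Helfgott, used through \cite{BGS}); the dynamical input is that the set $\{\omega : \coc_n(\omega)\bmod q \in \text{coset}\}$ is a union of cylinders whose total $\nu$-measure is small, which is established by relating this set to a shrinking shadow in $\partial\bH^2$ and applying Sullivan's shadow lemma $\mu_o^{\PS}(\text{shadow of radius } r)\asymp r^\delta$ together with Corollary \ref{c2.4} (absolute continuity of $\nu$ with respect to the visual image of $\mu_o^{\PS}$). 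It is precisely here that the ``new vector'' condition $H \in \tE^q_q$ is used: it kills the contribution of functions pulled back from proper divisors $q'\mid q$, so the only obstructions to escape are genuine proper subgroups of $\SL_2(\z/q\z)$, which the above estimates exclude. The Zariski density of $\Gamma$ in $\SL_2$ (via strong approximation, \eqref{strongapproximation}) guarantees the cocycle is not confined to any fixed proper subgroup in the first place.

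\emph{Step 4 (iteration in $l$).} Once $\|\hat{\mathcal{M}}^{l_0 n_q}_{ab, q}H\|_2 \le q^{-\kappa_0}\|H\|_{d_\theta}$ is known, one promotes it to an $L^2\to L^2$ contraction on $\tE^q_q$ (using that the congruence transfer operator commutes with the projections $e_{q,q'}$ and the uniform a priori $L^2$-bound on $\hat{\mathcal{M}}_{ab,q}$), and then composes blocks, absorbing the $l_0$ and the implied constants into a smaller $\kappa$ and a threshold $q_1$, to obtain the stated bound $\|\hat{\mathcal{M}}^{l n_q}_{ab, q}\tH\|_2 \le q^{-l\kappa}\|\tH\|_{d_\theta}$.

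\emph{Main obstacle.} The heart of the difficulty is Step 3: the quantitative, $q$-uniform non-concentration of the Gibbs-weighted cocycle distribution near cosets of proper subgroups after only $\asymp\log q$ steps. This requires marrying the arithmetic-combinatorial expansion results for $\SL_2(p)$ (where \cite{BGS} is indispensable) with the thermodynamic-formalism estimates --- the shadow lemma, the Gibbs comparison \eqref{ggibbs}, and the memorylessness of the Markov model --- and threading the new-vector hypothesis through so that it neutralizes exactly the intermediate-level subgroup obstructions. The square-free hypothesis enters only through the availability of Lemma \ref{bgsflatteninglemma}.
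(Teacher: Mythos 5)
Your overall strategy --- approximate the block $\hat{\mathcal M}^{n_q}_{ab,q}$ by convolution operators on $\SL_2(q)$, verify non-concentration via the shadow lemma together with subexponential growth along cosets of proper subgroups, and then invoke the $\ell^2$-flattening lemma on the new space $E^q_q$ --- is indeed the approach of the paper. However, two of your steps have genuine gaps.

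First, the reduction to a convolution. You convolve with the pushforward of $\nu$ under the \emph{full} cocycle $\coc_{n_q}$ over length-$n_q$ cylinders and assert $\|\hat{\mathcal M}^{n_q}_{ab,q}\tH\|_2\le C\|\mu_q*\tH\|_2$. This glosses over the fact that $\tH(v,\cdot)$ depends on exactly the indices being summed over, so the group variable does not decouple from the $\hat U$ variable for free; and words of length $n_q\approx\log q$ are too long for the coset non-concentration to be checked (the relevant ball in $\G$ need not inject into $\SL_2(q')$, and the subexponential count of $\G_0$-coset elements in balls must beat the pointwise bound, which calibrates the word length to $m_q\le \log q/d_0$). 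The paper instead splits $n_q=r_q+m_q$, replaces the head of each preimage word by a fixed reference word $\omega(i_{m_q+1})$ at cost $\Lip_{d_\theta}(\tH)\,\theta^{r_q}$ (Lemma \ref{notausefulname}), and convolves only over the cocycle of the first $m_q+1$ steps; this is what makes the measures of Proposition \ref{arb1} supported in a ball of radius $m_q$ and makes $\phi$ a genuine function of the group variable lying in $E^q_q$.

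Second, the iteration. Your Step 4 claims the one-block bound can be ``promoted to an $L^2\to L^2$ contraction on $\tE^q_q$'' and then composed. It cannot: the unavoidable error from freezing the $\hat U$-dependence is of size $\theta^{r_q}\Lip_{d_\theta}(\tH)$, which is controlled neither by $\|\tH\|_2$ nor by $\|\tH\|_\infty$, so a bound for smooth inputs does not by itself iterate. This is why the paper also proves a flattening estimate for difference measures (Proposition \ref{arb2}), uses it to contract the Lipschitz seminorm (Lemma \ref{Lipbound}), and then iterates the Lasota--Yorke-type quantity $\|\tH\|_\infty+\theta^{n_q/2}\Lip_{d_\theta}(\tH)$, passing to $\|\cdot\|_2\le\|\cdot\|_\infty$ only at the very last line. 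A minor further point: your ``memoryless'' identification of the length-$2n$ cocycle distribution with the convolution square of the length-$n$ one is only valid up to Gibbs constants, because of the Markov admissibility constraint at the junction; the paper sidesteps this entirely by verifying the $\ell^1$, $\ell^\infty$ and coset hypotheses of Lemma \ref{bgsflatteninglemma} directly for each block measure rather than by convolution squaring.
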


\begin{proof}[Proof that Theorem \ref{af1} implies Theorem \ref{BGSbound2}] Choose $a_0>0$ small enough that $|\log \lambda_a| \leq \epsilon \leq \min( \kappa/ 2, 1)$ for all $|a| < a_0$. 
Set  $$C: = \max \left\lbrace \log\left(  \sup_{|a| \leq a_0, b\in\br} ||\hat{\mathcal{M}}_{ab, q}||_2\right), 0 \right\rbrace .$$ Then for all
 $0 \leq r < n_q$,  we have  $|| \hat{\mathcal{M}}_{ab, q}||^r_2 \leq q^C$. 

For any $m\in \N$, we write  $m = ln_q + r$, with $0 \leq r < n_q$. Thus Theorem \ref{af1} yields 
\begin{eqnarray*}   ||\hat{\mathcal{M}}^{m}_{ab, q} \tH||_2 &\leq& || \hat{\mathcal{M}}_{ab, q}||^r_2 \cdot ||\hat{\mathcal{M}}^{ln_q}_{ab, q} \tH||_2\\
&\leq& q^C  q^{- l\kappa} ||\tH||_{d_\theta}\\
&\leq& q^C e^{-  ln_q\epsilon} ||\tH||_{d_\theta}\\
&\leq& q^{C+1} e^{-\epsilon m} ||\tH||_{d_\theta},\end{eqnarray*}
as desired. 
\end{proof}

\subsection{The $\ell^2$-flattening lemma} The rest of this section is devoted to a proof of Theorem \ref{af1}.  The key ingredient is a version of the $\ell^2$-flattening lemma \ref{bgsflatteninglemma} of Bourgain-Gamburd-Sarnak \cite[Lemma 7.2]{BGS}. For the rest of this section we will assume that
\[q \mbox{ is square free and coprime to $q_0$ (as in \eqref{strongapproximation})}.\] 

\begin{dfn}\rm For a complex valued measure $\mu$ on $\SL_2(q)$ and $q' | q$, we define $|||\pi_{q'}(\mu)|||_\infty$ to be the maximum weight of $|\mu|$ over all cosets of subgroups of $\operatorname{SL}_2(q')$ that have proper projection in each divisor of $q'$. 
\end{dfn}
\begin{Nota}
For a function $\phi$ and a measure $\mu$ on $\SL_2(q)$, we denote the convolution by 
$$\mu*\phi(g)=\sum_{\gamma \in \SL_2(q)} \mu(\gamma)\phi(g\gamma ^{-1}) .$$
\end{Nota}

\begin{lem} [\cite{BG}, \cite{BGS}] \label{bgsflatteninglemma}\label{flat}
Given $\kappa > 0$ there exist $\kappa' > 0$ and $ C>0$ such that if $\mu$ satisfies $||\mu||_1 \leq B$ and 
\[ |||\pi_{q'}(\mu)|||_\infty < q^{-\kappa}B \mbox{ for  all } q' | q, q' > q^{1/10} \quad\text{for some $B>0$},\]
then for each $q$ and $\phi\in  E^q_q$,
\[ ||\mu*\phi||_2 \leq Cq^{-\kappa'}B||\phi||_2 .\]
\end{lem}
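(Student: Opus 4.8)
The plan is to reduce the statement, via the quasirandomness of $E^q_q$, to an $\ell^2$‑flattening estimate for the convolution powers of $\mu$, and then to run the Bourgain–Gamburd iteration with the $\SL_2(\Z/q\Z)$ product theorem as its engine. By homogeneity we may take $B=\|\mu\|_1=1$. Writing $\tilde\mu(g):=\overline{\mu(g^{-1})}$, set $\mu':=\mu*\tilde\mu$; this is self‑adjoint ($\widetilde{\mu'}=\mu'$) with $\|\mu'\|_1\le 1$, the convolution operator $T_\mu\colon\phi\mapsto\mu*\phi$ preserves the translation‑invariant subspace $E^q_q\subset L^2_0(\SL_2(q))$, one has $T_{\mu'}=T_\mu T_\mu^\ast\ge 0$ on $E^q_q$, and therefore, for every $k\in\N$,
\[ \big\|T_\mu|_{E^q_q}\big\|=\big\|T_{\mu'}|_{E^q_q}\big\|^{1/2}=\big\|T_{(\mu')^{(k)}}|_{E^q_q}\big\|^{1/(2k)}, \]
where $(\mu')^{(k)}$ denotes the $k$‑fold convolution of $\mu'$. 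Moreover each $(\mu')^{(k)}$ inherits the non‑concentration hypothesis: $|||\pi_{q'}((\mu')^{(k)})|||_\infty\le|||\pi_{q'}(\mu)|||_\infty<q^{-\kappa}$ for all $q'\mid q$ with $q'>q^{1/10}$, since convolving with a measure of total mass $\le 1$ cannot increase these coset weights. Taking $q'=q$ and noting that a singleton is a coset of the trivial (hence proper) subgroup, we get in particular $\|\mu\|_\infty\le q^{-\kappa}$, so $\|\mu'\|_2\le\|\mu\|_2\le q^{-\kappa/2}$, which provides a small starting value for the iteration.

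Next I would use quasirandomness. Every irreducible constituent of $E^q_q$ is an outer tensor product $\bigotimes_{p\mid q}V_p$ with each $V_p$ a non‑trivial irreducible representation of $\SL_2(p)$, hence of dimension $\ge\prod_{p\mid q}\tfrac{p-1}{2}=q^{1-o(1)}$ (using $2^{\omega(q)}=q^{o(1)}$ and that all prime divisors of $q$ are large). Thus $E^q_q$ is $q^{1-o(1)}$‑quasirandom, and the standard Plancherel bound on a quasirandom subrepresentation gives
\[ \big\|T_{(\mu')^{(k)}}|_{E^q_q}\big\|\ \le\ \Big(\tfrac{|\SL_2(q)|}{q^{1-o(1)}}\Big)^{1/2}\,\big\|(\mu')^{(k)}\big\|_2\ \le\ q^{1+o(1)}\,\big\|(\mu')^{(k)}\big\|_2. \]
Combining with the previous display, it suffices to produce a fixed $k=k(\kappa)\in\N$ and a fixed $\epsilon_0>0$ with $\|(\mu')^{(k)}\|_2\le q^{-1-\epsilon_0}$; then $\|T_\mu|_{E^q_q}\|\le q^{(-\epsilon_0+o(1))/(2k)}\le Cq^{-\kappa'}$ for a suitable $\kappa'>0$ and absolute $C$, and undoing the normalization finishes the lemma.

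The heart of the matter is this flattening estimate, which is the Bourgain–Gamburd–Sarnak iteration. I would fix $\epsilon>0$ small — smaller than both the tripling exponent tolerated by the product theorem and a small constant multiple of $\kappa$ — and examine $\|(\mu')^{(2^j)}\|_2$ for $j=0,1,2,\dots$, starting from $\|\mu'\|_2\le q^{-\kappa/2}$. If at every step flattening occurs, i.e.\ $\|(\mu')^{(2^{j+1})}\|_2\le\|(\mu')^{(2^j)}\|_2^{1+\epsilon}$, then the exponent multiplies by $1+\epsilon$ each time and after $k=O_\kappa(1)$ steps we reach $\|(\mu')^{(2^j)}\|_2\le q^{-1-\epsilon_0}$, as desired. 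Otherwise, let $j$ be the first step at which flattening fails; then by the Balog–Szemer\'edi–Gowers theorem and the Pl\"unnecke–Ruzsa inequalities, $(\mu')^{(2^j)}$ concentrates mass $\ge\|(\mu')^{(2^j)}\|_2^{O(\epsilon)}$ on a symmetric set $A\subset\SL_2(q)$ with $|A|\le\|(\mu')^{(2^j)}\|_2^{-2-O(\epsilon)}$ and small tripling $|A\cdot A\cdot A|\le|A|^{1+O(\epsilon)}$. Since before reaching the target $\|(\mu')^{(2^j)}\|_2>q^{-1-\epsilon_0}$, the set $A$ has size bounded by a small fixed power of $|\SL_2(q)|$, so the product theorem for $\SL_2$ over $\Z/q\Z$ with $q$ square‑free (Helfgott and Bourgain–Gamburd for $q$ prime \cite{BG}, in the form used in \cite{BGS}) applies and forces $A$ to be covered by $|A|^{O(\epsilon)}$ cosets of a proper subgroup $H<\SL_2(q)$ whose projection modulo every divisor of $q$ is proper. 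Hence one such coset carries $(\mu')^{(2^j)}$‑mass $\ge\|(\mu')^{(2^j)}\|_2^{O(\epsilon)}>q^{-O(\epsilon)(1+\epsilon_0)}>q^{-\kappa}$, by the choice of $\epsilon$ — contradicting $|||\pi_{q'}((\mu')^{(2^j)})|||_\infty<q^{-\kappa}$ for the relevant $q'$. So flattening cannot fail before the target, completing the estimate and hence the proof.

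The step I expect to be the main obstacle is this last one: the non‑commutative sum–product (product) theorem in $\SL_2(\Z/q\Z)$, which is where square‑freeness of $q$ enters and which, as remarked in the introduction, is the sole obstruction to removing that hypothesis. Everything else is bookkeeping: choosing $\epsilon$ small enough that the tripling bound lands in the range of the product theorem and that the surviving coset mass beats $q^{-\kappa}$, tracking the quantitative links between $\|(\mu')^{(2^{j+1})}\|_2$, the multiplicative energy, the density and size of $A$, and the mass on a coset of $H$, and verifying that the number of flattening steps $k$ is bounded purely in terms of $\kappa$ — all of which is routine given the cited inputs of \cite{BG} and \cite{BGS}.
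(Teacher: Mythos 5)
The paper does not prove this lemma at all: it is quoted verbatim from \cite{BG} and \cite{BGS} (it is Bourgain--Gamburd--Sarnak's $\ell^2$-flattening lemma), so there is no in-paper argument to compare with. Your sketch is essentially the argument of those references --- reduce to the self-adjoint measure $\mu*\tilde\mu$, use quasirandomness of the ``new'' subspace $E^q_q$ to convert an $\ell^2$ bound on a bounded convolution power into an operator-norm bound, and run the Bourgain--Gamburd dyadic flattening iteration with Balog--Szemer\'edi--Gowers plus the product theorem in $\SL_2(\Z/q\Z)$ for square-free $q$, the non-concentration hypothesis ruling out the coset obstruction --- so you are taking the same route as the cited source. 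The only caveat is that, like the paper, you leave the genuinely hard ingredients (the product theorem modulo square-free $q$ and the bookkeeping that the obstructing subgroup is detected modulo some divisor $q'>q^{1/10}$, which is exactly where the restriction in the hypothesis is used) to \cite{BG} and \cite{BGS}, which is appropriate here since the lemma is invoked as a black box.
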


\subsection{Measure estimates on cylinders} Before we can apply the expansion machinery we must first establish certain a priori measure estimates on cylinders; that will be the topic of this subsection.  We define:
\begin{Nota}
\begin{itemize}
\item For $x = (x_1, x_2, \ldots ) \in \Sigma^+$ and a sequence $i_1, \ldots ,  i_n$ of symbols,
 we denote the concatenation by $$(i_n ,\ldots ,i_1, x) = (i_n, \ldots ,i_1, x_1, x_2, \ldots );$$
\item For a function $f$ on $\Sigma^+$ and  $x\in \Sigma^+$, we set 
$$f_n(x) := f(x) + f(\sigma(x)) + \ldots + f(\sigma^{n-1}(x));$$
\item For $x=(x_i)\in \Sigma^+$, put $\coc(x) = \coc(\zeta(x))$, and  $$\coc_n(x) := \coc(\zeta(x))\coc(\zeta(\sigma x)) \ldots \coc(\zeta(\sigma^{n-1}x))\in \G.$$
\end{itemize} \end{Nota}

\begin{lem} \label{cocdep}
For sequences $x, y \in \hat \Sigma^+$ with $x_i = y_i$ for $i = 0\ldots k$ for some $k\ge 1$, we have $\coc_{k}(x) = \coc_{k}(y)$.  
\end{lem}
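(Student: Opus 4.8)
The statement is, up to the symbolic coding, just a transport of Lemma \ref{cocycleexists}(2) through the embedding $\zeta\colon\hat\Sigma^+\to\hat U$, so the plan is short. First I would record that $\zeta$ intertwines $\sigma$ with $\Pp$ on the cores: since $\hat\Sigma^+$ is $\sigma$-invariant and $\hat U$ is $\Pp$-invariant, for $x\in\hat\Sigma^+$ and any $j\ge 0$ one has $\zeta(\sigma^j x)=\Pp^j(\zeta(x))\in\hat U$. Comparing the definition of $\coc_n$ on $\Sigma^+$ (the Notation preceding the lemma) with the definition of $\coc_n$ on $\hat U\subset R$ from Section 2, this gives
\[
\coc_k(x)=\coc(\zeta x)\,\coc(\Pp\zeta x)\cdots\coc(\Pp^{k-1}\zeta x)=\coc_k(\zeta x),
\]
and likewise $\coc_k(y)=\coc_k(\zeta y)$.

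Next I would observe that because $x\in\hat\Sigma^+$, the point $\zeta(x)$ satisfies $\Pp^j(\zeta x)\in\operatorname{int}^u(U_{x_j})$ for every $j\ge 0$ — this is exactly the coding property defining $\omega'$ together with membership in the core $\hat U$ — so $\zeta(x)$ lies in the open cylinder $\mathsf{C}[x_0,\dots,x_k]$, which has length $k$. The hypothesis $x_i=y_i$ for $i=0,\dots,k$ then says precisely that $\zeta(x)$ and $\zeta(y)$ lie in this same cylinder of length $k\ge 1$. Applying Lemma \ref{cocycleexists}(2) gives $\coc_k(\zeta x)=\coc_k(\zeta y)$, and combining with the previous identities yields $\coc_k(x)=\coc_k(y)$, as claimed.

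I do not expect any real obstacle here; the only points requiring a line of care are (a) checking that $\zeta(x)$ lands in the open cylinder rather than merely its closure, which follows from forward-invariance of the core $\hat U$ under $\Pp$, and (b) that the intertwining $\zeta\sigma=\Pp\zeta$ is valid at each of the finitely many iterates $0,\dots,k-1$ involved — both facts are already established in Section 2.
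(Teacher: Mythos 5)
Your argument is correct and is exactly the route the paper takes: its proof of Lemma \ref{cocdep} is the one-line remark that it is a straightforward consequence of Lemma \ref{cocycleexists}, and your write-up simply supplies the (accurate) details — the intertwining $\zeta\sigma^j=\Pp^j\zeta$ on the cores and the observation that $\zeta(x),\zeta(y)$ lie in the open cylinder $\mathsf{C}[x_0,\dots,x_k]$ — before invoking Lemma \ref{cocycleexists}(2).
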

\begin{proof}
This is a straightforward consequence of Lemma \ref{cocycleexists}. 
\end{proof}

We may therefore write $\coc(x)=\coc(x_0, x_1)$, and more generally,
for $n\ge 2$,
$\coc_n(x)$ is the product $\coc(x_0,x_1)\cdots \coc(x_{n-1}, x_n)$.
\begin{Nota} \rm In the rest of the section, the notation $\sum_{i_1, \cdots, i_\ell}$ means the sum taken over all sequences $(i_1, \cdots, i_\ell)$ such that
any concatenation following the sum sign is admissible.
\end{Nota}

\begin{lem} \label{a_0}There exist $0<a_0<1$ and $c>1$ such that for all $|a|<a_0$, $x\in \hat \Sigma^+$ and for all $n\in \N$,
$$ \sum_{i_n, \cdots, i_1} e^{f^{(a)}_n(i_n, \cdots, i_1, x)}\le c.$$
\end{lem}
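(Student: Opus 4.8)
The plan is to reduce the estimate to the Ruelle--Perron--Frobenius theory already set up for the scalar transfer operator $\hat{\mathcal L}_{a0}$. Recall from \eqref{normalized} that $f^{(a)} = -(\delta+a)\tau + \log h_0 - \log h_0\circ\sigma - \log\lambda_a$, and that the associated normalized operator $\hat{\mathcal L}_{a0}$ has leading eigenvalue $1$ with eigenfunction $h_a/h_0$ (by Theorem \ref{RPF} and the remark following \eqref{normalized}, since $Pr_\sigma(f^{(a)})=0$). The key observation is that, by the very definition of a transfer operator, the sum in question is exactly an iterate of $\hat{\mathcal L}_{a0}$ applied to the constant function $\mathbf 1$:
\[
(\hat{\mathcal L}_{a0}^n \mathbf 1)(x) = \sum_{\sigma^n(y)=x} e^{f^{(a)}_n(y)} = \sum_{i_n,\ldots,i_1} e^{f^{(a)}_n(i_n,\ldots,i_1,x)},
\]
where the last sum is over admissible concatenations with $x$, matching the notation fixed just before the lemma. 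So the lemma is equivalent to the assertion that $\sup_n \|\hat{\mathcal L}_{a0}^n \mathbf 1\|_\infty$ is bounded, uniformly for $|a|<a_0$.

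First I would invoke Theorem \ref{RPF} applied to the potential $f^{(a)}$ (which lies in $C_\theta(\Sigma^+)$ with Lipschitz norm bounded uniformly for $|a|<a_0'$, by the standard regularity of $h_0$, $\tau$ and the Lipschitz dependence of $\lambda_a$ on $a$ recorded in the text): there exist $c>0$, $\epsilon>0$ such that $\|\hat{\mathcal L}_{a0}^n\psi - \hat\nu_a(\psi)\hat h_a\|_\infty \le c(1-\epsilon)^n\|\psi\|_{\mathrm{Lip}(d_\theta)}$, where $\hat h_a$ is the normalized positive eigenfunction and $\hat\nu_a$ the eigenmeasure, and the constants $c,\epsilon$ and $\|\hat h_a\|_\infty$ are bounded uniformly in $a$ for $|a|$ small. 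Applying this with $\psi = \mathbf 1$ gives
\[
\|\hat{\mathcal L}_{a0}^n \mathbf 1\|_\infty \le \hat\nu_a(\mathbf 1)\|\hat h_a\|_\infty + c(1-\epsilon)^n \le \|\hat h_a\|_\infty + c =: c_0,
\]
since $\hat\nu_a$ is a probability measure and $(1-\epsilon)^n \le 1$. This is exactly the desired bound with $c = c_0 + 1$ say, and the uniformity over $|a|<a_0$ follows from the uniformity clause in Theorem \ref{RPF} (``the constants $c,\epsilon$ and the Lipschitz norm of $\hat h$ can be bounded in terms of the Lipschitz norm of $f$'').

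There is essentially no serious obstacle here; the only point requiring a line of care is making sure the RPF constants are genuinely uniform in $a$, which amounts to checking that $a\mapsto f^{(a)}$ has uniformly bounded Lipschitz norm on $|a|<a_0'$ — and this is immediate from \eqref{defineT}, the Lipschitz-in-$a$ property of $\lambda_a$, and the $d$-Lipschitz property of $h_0$ recalled after \eqref{normalized} (clarified via \cite{PS} for the $\hat U$ picture). One should also note the harmless discrepancy that the lemma sums over $x\in\hat\Sigma^+$ while $\hat{\mathcal L}_{a0}$ is defined on all of $\Sigma^+$; since $\hat\Sigma^+\subset\Sigma^+$ this only makes the sum over preimages smaller, so the bound $c$ is inherited. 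Alternatively, and even more cheaply, one can avoid Theorem \ref{RPF} entirely and argue directly: the Gibbs estimate \eqref{ggibbs} gives $\sum_{i_n,\ldots,i_1} e^{-(\delta+a)\tau_n} \lambda_a^{-n} \asymp \sum \hat\nu_a(\mathsf C[\cdot]) = 1$, and \eqref{ggibbs} combined with the bounded oscillation of $\log h_0$ along cylinders converts $e^{f^{(a)}_n}$ into $e^{-(\delta+a)\tau_n}\lambda_a^{-n}$ up to a uniform multiplicative constant; telescoping the $\log h_0 - \log h_0\circ\sigma$ terms leaves only the boundary factor $h_0(x)/h_0(\sigma^n y)$, which is bounded above and below uniformly since $h_0$ is a fixed positive continuous function on a compact space. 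Either route finishes the proof.
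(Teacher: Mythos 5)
Your proposal is correct, and in fact it contains the paper's own argument: the proof in the paper is a one-liner, ``this follows easily from \eqref{ggibbs}'', which is precisely your ``alternative'' route --- write $e^{f^{(a)}_n(i_n,\ldots,i_1,x)}=\lambda_a^{-n}e^{-(\delta+a)\tau_n(i_n,\ldots,i_1,x)}\,h_0(i_n,\ldots,i_1,x)/h_0(x)$ by telescoping, absorb the $h_0$ ratio into a uniform constant since $h_0$ is positive and bounded away from $0$ and $\infty$, and then \eqref{ggibbs} converts each term into (a constant times) $\hat\nu_a(\mathsf C[i_n,\ldots,i_1,x_0])$; the cylinders indexed by distinct admissible words are pairwise disjoint, so their $\hat\nu_a$-masses sum to at most $1$ (note ``at most'', not exactly $1$ as you wrote, since the terminal symbol $x_0$ is fixed --- harmless, as only the upper bound is used), and the uniformity in $|a|<a_0$ is exactly the uniformity clause stated after \eqref{ggibbs}. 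Your primary route, identifying the sum as $(\hat{\mathcal L}_{a0}^n\mathbf 1)(x)$ and bounding it via the RPF convergence with constants controlled by $\|f^{(a)}\|_{\Lip(d_\theta)}$, is also valid and correctly handles the uniformity in $a$; it is somewhat heavier machinery than needed, since the Gibbs/disjointness argument gives the bound without invoking spectral convergence, but both are sound.
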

\begin{proof}
This follows easily from \eqref{ggibbs}. 
\end{proof}

We recall that for $x, y\in \bH^2$ and $r>0$, the shadow $O_r(x,y)$
is defined to be the set of 
 all points $\xi \in \partial(\bH^2)$ such that the geodesic ray from $x$ to $\xi$ intersects the ball $B_{r}(y)$ non-trivially.
 We need the following: recall the Patterson-Sullivan density $\{\mPS{x}: x\in \bH^2\}$ for $\Gamma$.
\begin{lem}[Sullivan's shadow lemma  \cite{Sullivan1979}] \label{ssl} Let $x\in \bH^2$. There exists $r_0=r_0(x)>1$ such that for all $r>r_0$, there exists $c>1$ such that
for all $\gamma\in \G$,
$$c^{-1} e^{-\delta d(x, \gamma x)} 
 \le \mu_x^{\PS} (O_r(x, \gamma x)) \le c e^{-\delta d(x, \gamma x)} .$$

\end{lem}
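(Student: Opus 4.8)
\noindent\emph{Proof plan.} The plan is to reduce the estimate, via the conformality and $\G$-equivariance of the Patterson--Sullivan density, to two elementary facts about shadows in $\bH^2$ plus one uniform non-atomicity input. Fix $x\in\bH^2$ and $\g\in\G$; write $y=\g x$ and $d=d(x,y)$. We may assume $\g\neq e$, so that $y\neq x$ (being torsion-free with no parabolics, every nontrivial element of $\G$ acts on $\bH^2$ without fixed points; the case $\g=e$ is trivial, since then $O_r(x,x)=\partial(\bH^2)$ and $d=0$). Using property (2), $\tfrac{d\mu_x^{\PS}}{d\mu_y^{\PS}}(\xi)=e^{\delta\beta_\xi(y,x)}$, and property (1), $\mu_y^{\PS}=\g_*\mu_x^{\PS}$, I would write
\[ \mu_x^{\PS}(O_r(x,y))=\int_{O_r(x,y)} e^{\delta\beta_\xi(y,x)}\, d\mu_y^{\PS}(\xi), \]
and then control the exponent on $O_r(x,y)$ and the quantity $\mu_y^{\PS}(O_r(x,y))$ separately.

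For the exponent: the triangle inequality gives $\beta_\xi(y,x)\ge -d$ for every $\xi\in\partial(\bH^2)$, while if $\xi\in O_r(x,y)$ the geodesic ray from $x$ to $\xi$ meets $B_r(y)$ at some time $t_0\in[d-r,d+r]$, whence $\beta_\xi(y,x)\le r-t_0\le 2r-d$. So $e^{-2\delta r}e^{-\delta d}\le e^{\delta\beta_\xi(y,x)}\le e^{2\delta r}e^{-\delta d}$ on $O_r(x,y)$, and the displayed integral is squeezed between $e^{-2\delta r}e^{-\delta d}\,\mu_y^{\PS}(O_r(x,y))$ and $e^{2\delta r}e^{-\delta d}\,\mu_y^{\PS}(O_r(x,y))$. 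It remains to bound $\mu_y^{\PS}(O_r(x,y))$ above and below by positive constants uniform in $\g$. The upper bound is immediate: $\mu_y^{\PS}(O_r(x,y))\le |\mu_y^{\PS}|=|\mu_x^{\PS}|$, as a pushforward by a homeomorphism preserves total mass.

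For the lower bound I would invoke the second shadow fact: there is an absolute constant $C_0$ such that, for all $x,y$ and all $r$, the complement $\partial(\bH^2)\setminus O_r(x,y)$ is contained in the ball $\mathcal{B}_y(\xi_{x,y},C_0e^{-r})$ of radius $C_0e^{-r}$, measured in a fixed visual metric seen from $y$, about the forward endpoint $\xi_{x,y}$ of the geodesic ray issuing from $y$ through $x$ (a direct computation in the upper half-plane model after normalising by an isometry so that $x=i$, $y=ie^{d}$). Hence
\[ \mu_y^{\PS}(O_r(x,y))\ \ge\ |\mu_x^{\PS}|-\mu_y^{\PS}\!\left(\mathcal{B}_y(\xi_{x,y},C_0e^{-r})\right). \]
Since isometries of $\bH^2$ carry the visual metric based at $z$ to the one based at their image of $z$, one has $\g^{-1}\mathcal{B}_y(\eta,s)=\mathcal{B}_x(\g^{-1}\eta,s)$, so $\mu_y^{\PS}(\mathcal{B}_y(\eta,s))=\mu_x^{\PS}(\mathcal{B}_x(\g^{-1}\eta,s))\le\omega(s)$, where $\omega(s):=\sup_{\eta\in\partial(\bH^2)}\mu_x^{\PS}(\mathcal{B}_x(\eta,s))$. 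As $\mu_x^{\PS}$ is a finite \emph{non-atomic} Borel measure on the compact metric space $\partial(\bH^2)$, $\omega(s)\to 0$ as $s\to 0$; so I would pick $r_0=r_0(x)>1$ with $\omega(C_0e^{-r_0})<\tfrac12|\mu_x^{\PS}|$, which gives $\mu_y^{\PS}(O_r(x,y))\ge\tfrac12|\mu_x^{\PS}|$ for all $r>r_0$ and all $\g\in\G$. (Non-atomicity of $\mu_x^{\PS}$ is standard here: an atom would force a finite $\G$-orbit in $\Lambda(\G)$, contradicting non-elementarity.)

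Combining the two parts with $c:=\max\{|\mu_x^{\PS}|\,e^{2\delta r},\ 2|\mu_x^{\PS}|^{-1}e^{2\delta r}\}$, which exceeds $1$ once $r$ is large, yields $c^{-1}e^{-\delta d(x,\g x)}\le\mu_x^{\PS}(O_r(x,\g x))\le c\,e^{-\delta d(x,\g x)}$ for all $\g\in\G$, as required. I expect the main obstacle to be the second shadow fact --- the quantitative statement that the complement of $O_r(x,y)$ is an exponentially small visual ball about a controlled point --- together with the (conceptual) observation that the needed uniformity over $\g$ is precisely the uniform-in-basepoint non-atomicity of $\mu_x^{\PS}$, transported by the isometric $\G$-action on the visual boundary; the rest is bookkeeping with the conformal factor.
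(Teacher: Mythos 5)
Your argument is correct in its essentials, but note that the paper does not prove this lemma at all: it is quoted from Sullivan \cite{Sullivan1979}, so there is no internal proof to compare against. What you have written is essentially the standard proof of the shadow lemma: use $\frac{d\mu_x^{\PS}}{d\mu_y^{\PS}}(\xi)=e^{\delta\beta_\xi(y,x)}$ with $y=\g x$, observe that $-d\le\beta_\xi(y,x)\le 2r-d$ on $O_r(x,y)$ (your ray computation for the upper bound is right), bound $\mu_y^{\PS}(O_r(x,y))$ above by the total mass $|\mu_x^{\PS}|$, and below by a constant uniform in $\g$ using the fact that the complement of the shadow sits inside a visual ball of radius $O(e^{-r})$ about the endpoint of the ray from $y$ through $x$, transported back to the basepoint $x$ by equivariance. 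Your quantitative treatment of the last step (via the modulus $\omega(s)=\sup_\eta\mu_x^{\PS}(\mathcal{B}_x(\eta,s))$) replaces the compactness-contradiction argument usually found in the literature, and the containment of the complement in a small visual ball is a genuine but standard fact (it follows, e.g., from $d(y,[x,\xi))=(x|\xi)_y+O(1)$ and $(x|\xi)_y\le(\xi_{x,y}|\xi)_y+O(1)$, or from your explicit half-plane computation).

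One caveat: your parenthetical justification of non-atomicity is not right as stated. Since $\{\mu_x^{\PS}\}$ is a conformal density rather than an invariant measure, the points of the orbit of an atom carry unequal masses (weighted by $e^{\delta\beta}$), so an atom does not force a finite $\G$-orbit in $\Lambda(\G)$. Non-atomicity is nevertheless true here and easy to source: the paper itself records that for convex cocompact $\G$ the density $\mu_x^{\PS}$ is (a multiple of) the $\delta$-dimensional Hausdorff measure on $\Lambda(\G)$ with $\delta>0$, which has no atoms; alternatively, every limit point of a convex cocompact group is conical, and a conical point $\xi$ cannot be an atom because equivariance gives $\mu_x^{\PS}(\{\g_n^{-1}\xi\})=e^{\delta\beta_\xi(x,\g_n x)}\mu_x^{\PS}(\{\xi\})\to\infty$ along a sequence $\g_n x$ tracking the ray to $\xi$. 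In fact your argument needs even less: it suffices that no single point carries the full mass of $\mu_x^{\PS}$, which is immediate from $\supp\mu_x^{\PS}=\Lambda(\G)$ being infinite, and then $\lim_{s\to0}\omega(s)<|\mu_x^{\PS}|$ already yields the uniform lower bound after adjusting the constant $\tfrac12$. With that repair the proof is complete and matches the standard argument behind the cited result.
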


\begin{lem}\label{sha} There exists $c'>1$ such that
for any $x\in  \hat \Sigma^+$, $\gamma\in \Gamma$, any $m \in \mathbb{N}$, and any fixed $i_{m + 1}$,  we have
$$ \sum_{i_1 ,\ldots ,i_m, \coc_{m+1}(i_{m+ 1}, i_m, \ldots , i_1, x) = \gamma} e^{f^{(0)}_m(i_m, \ldots ,i_1, x)} \le  c'\cdot  e^{-\delta m\inf (\tau)}.$$
\end{lem}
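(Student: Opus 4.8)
The plan is to reinterpret the cocycle condition $\coc_{m+1}(i_{m+1},i_m,\ldots,i_1,x)=\gamma$ geometrically: it forces the forward endpoints of all the geodesics in the associated length-$(m+1)$ cylinder into a single shadow $O_{r_0}(o,\gamma o)$, whose $\mu_o^{\PS}$-mass decays like $e^{-\delta d(o,\gamma o)}$ by Sullivan's shadow lemma (Lemma~\ref{ssl}); and the same condition forces $d(o,\gamma o)$ to be at least $\tau_{m+1}\ge m\inf(\tau)$ up to an additive constant, which produces the claimed bound. The bridge from $e^{f^{(0)}_m}$ to $\mu_o^{\PS}$ is supplied by Fact~\ref{propertiesofGibbsmeasures} and Corollary~\ref{c2.4}.

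First I would pass to measures of cylinders. Since $\lambda_0=1$, the potential $f^{(0)}=-\delta\tau+\log h_0-\log h_0\circ\sigma$ telescopes, so $e^{f^{(0)}_m(y)}\asymp e^{-\delta\tau_m(y)}$ ($h_0$ being continuous and positive on the compact set $\hat U$). Writing $y=(i_m,\ldots,i_1,x)$ and $y'=(i_{m+1},y)$, one has $\tau_{m+1}(y')=\tau(y')+\tau_m(y)\le ||\tau||_\infty+\tau_m(y)$, so $e^{f^{(0)}_m(y)}\lesssim e^{-\delta\tau_{m+1}(y')}\asymp\nu(\cyl[i_{m+1},i_m,\ldots,i_1,x_0])$, a cylinder of length $m+1$, by Fact~\ref{propertiesofGibbsmeasures}. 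As $(i_1,\ldots,i_m)$ ranges over the admissible tuples satisfying the cocycle condition, these cylinders are pairwise disjoint subsets of $\hat U_{i_{m+1}}$, so the sum in the lemma is $\lesssim\nu\bigl(\bigcup\cyl[i_{m+1},i_m,\ldots,i_1,x_0]\bigr)$; and by Corollary~\ref{c2.4} this is $\asymp\mu_o^{\PS}$ of the union, over these tuples, of the sets of forward endpoints $\{v^+:v\in\widetilde{\cyl}\,\}$ of the lifted cylinders $\widetilde{\cyl}\subset\tilde U_{i_{m+1}}$.

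The geometric heart is to locate these endpoints. By Lemma~\ref{cocdep} the cocycle $\coc_{m+1}$ is constant, equal to $\gamma$, on each such lifted cylinder (it depends only on the first $m+2$ symbols), so the iterated form of the defining relation \eqref{defcoc} gives $v\,a_{\tau_{m+1}(v)}\in\gamma\tilde R_{x_0}$ for every $v$ in it. Since all the lifted rectangles $\tilde R_j$ project into one fixed compact subset of $\bH^2$ containing $o$ (recall they were chosen to meet $\overline{\mathcal D}$, and $\overline{\mathcal D}$ intersected with the convex hull of $\LG$ is compact by convex cocompactness), the geodesic ray $t\mapsto v\,a_t\,o$ starts within a bounded distance $D_0$ of $o$ and passes within a bounded distance of $\gamma o$ at time $\tau_{m+1}(v)$; hence its endpoint $v^+$ lies in a shadow $O_{r_0}(o,\gamma o)$ for some $r_0$ depending only on $\G$ and on the fixed Markov section (enlarged, if needed, past the threshold of Lemma~\ref{ssl} and to absorb the bounded basepoint shift from $v\,o$ to $o$). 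The same relation gives $d(o,\gamma o)\ge\tau_{m+1}(v)-O(1)\ge m\inf(\tau)-O(1)$ for any $v$ in any nonempty such cylinder (if there is none, the lemma is trivial). Combining with Lemma~\ref{ssl},
\[ \sum_{\substack{i_1,\ldots,i_m\\ \coc_{m+1}(i_{m+1},i_m,\ldots,i_1,x)=\gamma}} e^{f^{(0)}_m(i_m,\ldots,i_1,x)}\ \lesssim\ \mu_o^{\PS}\bigl(O_{r_0}(o,\gamma o)\bigr)\ \lesssim\ e^{-\delta\,d(o,\gamma o)}\ \lesssim\ e^{-\delta m\inf(\tau)}, \]
with all implied constants depending only on $\G$ and the chosen Markov section; this is the assertion, with $c'$ their product.

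The step I expect to be the main obstacle is the geometric bookkeeping of the third paragraph: checking that the iterated form of \eqref{defcoc} really places $v\,a_{\tau_{m+1}(v)}$ in $\gamma\tilde R_{x_0}$ for every $v$ in the lifted cylinder, not merely for the distinguished sequence; and verifying that the shadows of bounded radius based at the various points $v\,o$ (all within $D_0$ of $o$) can be replaced, at the cost of an additive constant in the radius, by a single shadow based at $o$. These are the standard but slightly delicate hyperbolic-geometry estimates underpinning Sullivan's shadow lemma, and it is precisely their uniformity in $v$, $\gamma$ and $m$ that gives the uniform constant $c'$.
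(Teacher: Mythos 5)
Your proposal is correct and follows essentially the same route as the paper: pass from $e^{f^{(0)}_m}$ to $\nu$-measures of length-$(m+1)$ cylinders via the Gibbs property (Fact \ref{propertiesofGibbsmeasures}), observe that the cocycle condition and local constancy force $\tilde v a_{\tau_{m+1}(\tilde v)}$ into a bounded neighborhood of $\gamma o$ so the forward endpoints land in a shadow $O_{2R_1}(o,\gamma o)$ while $d(o,\gamma o)\ge (m+1)\inf(\tau)-O(1)$, and conclude with Corollary \ref{c2.4} and Sullivan's shadow lemma. Your handling of the degenerate case (no admissible cylinders) and the basepoint shift in the shadow radius matches the paper's Case 1/Case 2 split and its constant $R_1$, so the arguments coincide up to bookkeeping.
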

\begin{proof} Recalling the definition \eqref{normalized} of $f^{(0)}$ in terms of $\tau$, we calculate
 \begin{eqnarray*}&&\sum_{i_1, \ldots , i_m, \coc_{m + 1}(i_{m+ 1}, i_m, \ldots , i_1, x) = \gamma} e^{f^{(0)}_m(i_m, \ldots ,i_1, x)}  \\
 &\leq &\frac{\sup(h_0)}{\inf(h_0)}  \sum_{i_1 ,\ldots ,i_m, \coc_{m + 1}(i_{m+ 1}, i_m, \ldots , i_1, x) = \gamma} e^{-\delta  \tau_m(i_m, \ldots ,i_1, x)} \\
 &\leq&e^{\delta \sup(\tau)}\frac{\sup(h_0)}{\inf(h_0)}   \sum_{i_1, \ldots ,i_m, \coc_{m + 1}(i_{m+ 1}, i_m, \ldots , i_1 ,x) = \gamma} e^{-\delta  \tau_{m+1}(i_{m+1}, \ldots , i_1, x)} \\
 &\leq&c_1'\sum_{i_1, \ldots ,i_m, \coc_{m  + 1}(i_{m+ 1}, i_m, \ldots ,i_1, x) = \gamma}\nu(\mathsf{C}[i_{m+1}, i_m ,\ldots , i_1]) \mbox{ (see Fact \ref{propertiesofGibbsmeasures}}) \end{eqnarray*}
 where $c_1' =c_1 e^{\delta \sup(\tau)}\frac{\sup(h_0)}{\inf(h_0)}  $.
 
  Recall that $\mathcal D$ denotes the intersection of the Dirichlet domain for $(\G, o)$ with the convex core of $\G$, and
   the lifts $\tilde U_i$ of $U_i$ chosen to intersect $\mathcal{D}$. Recall also the projection map $\pi$ from $G$ to $\G \ba G$. It is a consequence of the definition of $\coc$ \eqref{defcoc} that 
  
   \be   \bigcup_{i_m, \ldots ,i_1: c_{ m + 1}(i_{m+1} ,\ldots ,i_1, x) = \gamma}\hspace{-15mm} \mathsf{C}[i_{m+1}, i_m ,\ldots , i_1] \subset \pi \left( \lbrace \tilde u \in \tilde U_{i_{m+1}}: d( \tilde ua_{ \tau_{m + 1}(\tilde u)}, \gamma o ) < R_1 \rbrace \right) \ee
  where $R_1$ denotes thrice the size of the Markov section plus twice the diameter of $\mathcal{D}$ plus the constant $r_0(o)$ defined in Lemma \ref{ssl}. 
  
  \textbf{Case 1:}  If $d(\tilde u, \gamma \tilde u) < (m + 1)\inf (\tau)  - R_1$, then 
 \[\lbrace \tilde u \in \tilde  U_{i_{m+1}}: ua_{\tau_{m + 1}(\tilde u)} \in B_{R_1}(\gamma \tilde u) \rbrace = \emptyset,\]
 and the claim follows.
 
 \textbf{Case 2:} We now assume that $d(\tilde u, \gamma \tilde u) \geq (m + 1)\inf (\tau)  - R_1$.  Then $d(o, \g o) \geq (m + 1) \inf (\tau) - 2R_1$. A straightforward argument in hyperbolic geometry yields
  \begin{eqnarray*} \lbrace \tilde u \in \tilde U_{i_{m+1}}: d( \tilde ua_{ \tau_{m + 1}(\tilde u)}, \gamma o ) < R_1 \rbrace  &\subset& \lbrace  \tilde u \in \tilde U_{i_{m+1}}: \mbox{vis}(\tilde u) \in O_{R_1}(\tilde u, \gamma o)\rbrace  \\
  &\subset&  \lbrace  \tilde u \in \tilde U_{i_{m+1}}: \mbox{vis}(\tilde u) \in O_{2R_1}(o, \gamma o)\rbrace.   \end{eqnarray*}
  Applying Corollary \ref{c2.4} and Lemma \ref{ssl}, we obtain 
      \begin{align*}& \sum_{i_1 ,\ldots , i_m, \coc_{m + 1}(i_{m+ 1}, i_m, \ldots , i_1, x) = \gamma} e^{-\delta  \tau_m(i_m, \ldots , i_1, x)} \\ &\le c_1'\sum_{i_1 , \ldots , i_m, \coc_{m + 1}(i_{m+ 1}, i_m, \ldots , i_1, x) = \gamma}\nu(\mathsf{C}[i_{m+1}, i_m ,\ldots , i_1]) \\
      &\le c_1' \nu \left(  \lbrace \pi \tilde u:   \tilde u \in \tilde U_{i_{m+1}}  \mbox{ and vis}(\tilde u) \in O_{2R_1}(o, \gamma o)\rbrace \right)\\
            &\le c_2' \mPS{o} \left(   O_{2R_1}(o, \gamma o)\right) \\
             &\le c_3' e^{-\delta \inf (\tau)m}
   \end{align*}
  as required. 
\end{proof}

\subsection{Decay estimates for convolutions.} We will now use the cylinder estimates and the expansion machinery to provide technical estimates on the $L^2$ norm of certain convolutions. This is the last preparatory step before we begin the proof of Theorem \ref{af1} in earnest.

We observe that the set $$\mathcal{S} :=\{\pm \coc(x), \pm \coc(x)^{-1} \in \G:x\in \Sigma^+\}$$
is a finite symmetric subset of $\G$.
 
 \begin{lem}
 The set $\mathcal{S}$ generates $\G$. 
 \end{lem}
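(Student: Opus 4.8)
Write $\Gamma':=\langle\mathcal S\rangle\le\Gamma$. The strategy is: (i) use an equivariance between the shift and the cocycle to show that $\Lambda(\Gamma')$ contains a relatively open subset of $\Lambda(\Gamma)$; (ii) deduce that $[\Gamma:\Gamma']<\infty$; (iii) upgrade this to $[\Gamma:\Gamma']=1$ by a connectedness argument in the intermediate cover. For (i): given $x\in\hat\Sigma^+$ let $\tilde u_x\in\tilde U_{x_0}$ be the lift of $\zeta^+(x)$. Tracking the first–return orbit of $\tilde u_x$ in $G$ through $n$ returns and using the defining property \eqref{defcoc} of $\coc$, one gets $\coc_n(x)^{-1}\tilde u_x a_{\tau_n(x)}\in\tilde R$ for every $n$; since $\tilde R$ is bounded and the return times $\tau_n(x)\to\infty$, the point $\tilde u_x a_{\tau_n(x)}$ runs to the forward endpoint $\operatorname{vis}^+(\tilde u_x)\in\partial(\mathbb H^2)$, hence so does $\coc_n(x)\,o$. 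As $\coc_n(x)\in\Gamma'$, the limit lies in $\Lambda(\Gamma')$; thus $\operatorname{vis}^+(\tilde u_x)\in\Lambda(\Gamma')$ for all $x\in\hat\Sigma^+$. The set $\{\operatorname{vis}^+(\tilde u_x):x\in\hat\Sigma^+\}$ is a residual (hence dense) subset of $\bigcup_j\operatorname{vis}^+(\tilde U_j)$, which contains the nonempty relatively open set $\bigcup_j\operatorname{vis}^+(\operatorname{int}^u(U_j))\subset\Lambda(\Gamma)$; since $\Lambda(\Gamma')$ is closed, it contains this open set.

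For (ii): $\Gamma'$ is finitely generated (by the finite set $\mathcal S$), hence, being Fuchsian, geometrically finite; and since $\Gamma$ has no parabolic elements neither does $\Gamma'$, so $\Gamma'$ is convex cocompact and $\delta_{\Gamma'}$ equals the Hausdorff dimension of $\Lambda(\Gamma')$. Because $\Lambda(\Gamma)$ is Ahlfors $\delta$-regular, every nonempty relatively open subset of it has full Hausdorff dimension $\delta$; combining this with step (i) and the trivial inequality $\delta_{\Gamma'}\le\delta_\Gamma$ gives $\delta_{\Gamma'}=\delta_\Gamma$. For convex cocompact subgroups this equality of critical exponents forces $[\Gamma:\Gamma']<\infty$, equivalently $\Lambda(\Gamma')=\Lambda(\Gamma)$. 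I expect this implication to be the technical heart of the argument; everything else is formal manipulation of the symbolic model and the covering.

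For (iii): form the intermediate cover $p:\Gamma'\backslash G\to\Gamma\backslash G$ and lift the Markov section $\mathcal R$ to one indexed by $\{1,\dots,k\}\times(\Gamma'\backslash\Gamma)$, exactly as the congruence section $\mathcal R^q$ but with $F_q$ replaced by the coset space; the first–return map acts by $(u,\Gamma'\gamma)\mapsto(\mathcal P u,\Gamma'\gamma\coc(u))$. Since every $\coc(u)\in\Gamma'$, the sub-collection indexed by $\{1,\dots,k\}\times\{\Gamma' e\}$ is invariant under the return map and its inverse; flowing it up produces a compact, $a_t$-invariant subset $\Omega_0\subset p^{-1}(\Omega)$ which is open and closed in $p^{-1}(\Omega)$ (the covering is trivial over neighbourhoods of $\Omega$ and $\Omega_0$ is a single sheet), and on which $p$ restricts to an $a_t$-equivariant homeomorphism onto $\Omega$. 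Since $\Lambda(\Gamma')=\Lambda(\Gamma)$, $p^{-1}(\Omega)$ is precisely the support of $m^{\BMS}_{\Gamma'}$, on which the geodesic flow is topologically transitive (indeed mixing, by convex cocompactness). A nonempty clopen $a_t$-invariant subset of a topologically transitive system is everything, so $\Omega_0=p^{-1}(\Omega)$; but $p|_{\Omega_0}$ is injective while $p^{-1}(\Omega)\to\Omega$ is $[\Gamma:\Gamma']$-to-one, forcing $[\Gamma:\Gamma']=1$, i.e. $\Gamma'=\Gamma$.
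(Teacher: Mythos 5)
Your step (i) is fine: iterating \eqref{defcoc} gives $\tilde u_x a_{\tau_n(x)}\in\coc_n(x)\tilde R$, so $\coc_n(x)o$ converges to the forward endpoint, and density of $\hat U_j$ in $U_j$ plus closedness of $\Lambda(\Gamma')$ indeed puts a nonempty relatively open piece of $\Lambda(\Gamma)$ inside $\Lambda(\Gamma')$. The genuine gap is the pivot of step (ii): the assertion that for a convex cocompact subgroup $\Gamma'\le\Gamma$ equality of critical exponents forces $[\Gamma:\Gamma']<\infty$. This is true, but it is a deep rigidity statement (it amounts to the growth-gap/non-coamenability theorem for infinite-index quasiconvex subgroups), you neither prove it nor cite it, and the classical spectral (Brooks-type) route is unavailable exactly in the regime $\delta\le\frac12$ that this paper is about. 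Your open-subset conclusion from (i) does not let you bypass it either: $\Lambda(\Gamma')$ is only $\Gamma'$-invariant, so containing an open piece of $\Lambda(\Gamma)$ does not formally give $\Lambda(\Gamma')=\Lambda(\Gamma)$ without such a theorem. So, as written, the technical heart of your argument is outsourced to an unproved input. By contrast the paper's proof is two lines: by Button's theorem $p(\Gamma)$ is a classical Schottky group, its Dirichlet domain $D$ is the exterior of finitely many disjoint disks, and the cocycle values already contain every $\gamma$ with $\overline D\cap\gamma\overline D\neq\emptyset$, which is a generating set.

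Step (iii) also has an unjustified step: the claim that $\Omega_0$, the closure of the flowed distinguished sheet, is open in $p^{-1}(\Omega)$. Local triviality of the covering does not give this: whether the lift, near a point of $\Omega_0$, of a nearby rectangle lies in the \emph{same} sheet is governed by whether the comparison element of $\Gamma$ (relating the chosen lifts $\tilde R_i$, $\tilde R_j$ of two nearby rectangles) belongs to $\Gamma'$ --- which is essentially what you are trying to prove, so the openness claim is circular as stated. The step can be repaired by working measure-theoretically instead of topologically: since every value of $\coc$ lies in $\Gamma'$, the trivial-coset sheet, flowed up, is an $a_t$-invariant subset of $\Gamma'\backslash G$ of positive but (if $[\Gamma:\Gamma']\ge 2$) non-full $m^{\BMS}$-measure, contradicting ergodicity of the BMS measure for the convex cocompact group $\Gamma'$. (Note also that for non-normal $\Gamma'$ the return map sends the sheet $\Gamma'\gamma$ to $\Gamma'\gamma\coc(u)$, so only the trivial sheet is invariant; that is all the ergodicity argument needs, but your picture of $n$ parallel invariant sheets is not quite right.) Even with that repair, the proof still stands or falls with the missing rigidity input in step (ii).
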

\begin{proof} By our assumption, the projection $p(\Gamma)$ is a torsion-free
convex cocompact subgroups of $\PSL_2(\br)$, and hence it is a classical Schottky group by \cite{Bu}. 
Therefore the Dirichlet domain $D$ for $(p(\Gamma ), o)$ is the common exterior of a finitely many disks $D_i$, $i=1, \cdots, 2\ell$, which meets
$\partial(\bH^2)$ perpendicularly and
whose closures are pairwise disjoint. 
It is now clear from the definition of the cocycle that $\{\coc (x)\}$ contains all $\gamma\in p(\G)$ such that $\overline D \cap \gamma (\overline D)$
is non-empty, and hence contains a generating set for $p(\Gamma)$. 

\end{proof}

\begin{Nota}\rm  For $m \in \N$, we write $\mathcal{B}_m(e) \subset \G$ for the ball of radius $m$ around the identity $e$ in the word metric defined by $\mathcal{S}$. 

\end{Nota}

\begin{Nota}\rm  We write $n_q$ for the integer part of $\log q$. There exists $d_0>3$  such that for any $m_q \le \frac{1}{d_0}\log q,$ the ball $\mathcal{B}_{m_q}(e)$ injects to $\SL_2(q')$ whenever   $q'|q, q'>q^{1/10}$. For each $q$ we fix a choice
\[ \frac{n_q}{2d_0} < m_q < \frac{n_q}{d_0}\] 
and denote $r_q = n_q - m_q$, so that 
\[ \frac{(d_0 - 1)n_q}{d_0} < r_q < \frac{(d_0 - 1/2)n_q}{d_0}. \]
\end{Nota}
\begin{Nota}\rm For each element $i$ of the alphabet defining  $\Sigma$, we choose an element $\omega(i) \in \hat \Sigma^+$ such that the concatenation $(i, \omega(i))$ is admissible. \end{Nota}

For $\g \in \SL_2(q)$, we write $\delta_{\g}$ for the dirac measure at $\g$. Given real numbers $a, b$, an element $x\in \hat \Sigma^+$, and an admissible sequence $(i_{n_q}, \cdots, i_{m_q+1})$, we define a complex valued measure $ \mu^{a, b}_{x, (i_{n_q}, \cdots, i_{m_q+1})}$ on $\SL_2(q)$  by
\begin{equation} \mu^{a, b}_{x, (i_{n_q}, \cdots, i_{m_q+1})} := \sum_{i_1, \ldots ,i_{m_q}}e^{(f^{(a)}_{n_q} + ib\tau_{n_q})(i_{n_q} ,\ldots , i_1, x)}\delta_{\coc_{m_q + 1}(i_{m_q + 1}, \ldots , i_1, x)}\label{hereismu} . \end{equation}

That is, for $\gamma \in \SL_2(q)$, the value $\mu^{a, b}_{x, (i_{n_q}, \cdots, i_{m_q+1})} (\gamma)$ is given by the sum 
$$ \sum e^{(f^{(a)}_{n_q} + ib\tau_{n_q})(i_{n_q} ,\ldots , i_1, x)}$$ over all indices
$(i_1, \ldots ,i_{m_q})$ satisfying
$ \coc  (i_{m_q + 1}, i_{m_q}) \cdots \coc(i_2, i_1) \coc( i_1, x_0)=\gamma$.


Our first goal in this subsection is to prove the following proposition, which is essential to prove bounds on
 the supremum norm  $\|\hat{\mathcal{M}}_{ab, q} \tilde H\|_{\infty}$. 
\begin{prop} \label{arb1}\label{flatteningoutput} Let 
$$\mu:= \mu^{a, b}_{x, (i_{n_q}, \cdots, i_{m_q+1})}\quad$$
 and 
$$B = B^{a}_{i_{n_q}, \ldots ,i_{m_q + 1}}:=c e^{f^{(a)}_{r_q}(i_{n_q}, \ldots ,i_{m_q+1}, \omega)} e^{\eta_\theta},$$
with $c > 1$ the constant from Lemma \ref{a_0}, $\omega = \omega(i_{m_q + 1})$, and $\eta_\theta$ as in \eqref{defineetatheta}. There exist constants $\kappa>0, a_0 > 0, q_1 > 1, C>1$ such that for any $x \in \hat \Sigma^+$, for any   square-free $q > q_1$, $|a| < a_0, |b| \leq b_0$, and  for all $\phi \in E^q_q$,
\begin{eqnarray*} ||\mu*\phi||_2 &\leq& BCq^{-\kappa} ||\phi||_2 .\end{eqnarray*}
 The constants $C, \kappa$ may be chosen independent of $q, x, a, b$, and $i_{n_q} ,\ldots , i_{m_q + 1}$.  
\end{prop}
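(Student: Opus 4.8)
The plan is to verify the two hypotheses of the $\ell^2$-flattening lemma (Lemma \ref{bgsflatteninglemma}) for the measure $\mu$ with weight $B$, and then quote its conclusion. Concretely it suffices to establish: (i) the total variation bound $\|\mu\|_1 \le B$; and (ii) the non-concentration bound $|||\pi_{q'}(\mu)|||_\infty < q^{-\kappa_0} B$ for every divisor $q'\mid q$ with $q' > q^{1/10}$, with $\kappa_0>0$ fixed and all the implied quantities independent of $q,x,a,b$ and of the block $(i_{n_q},\ldots,i_{m_q+1})$. Given (i) and (ii), Lemma \ref{bgsflatteninglemma} applied with $\kappa=\kappa_0$ returns $\kappa'>0$ and $C>1$ with $\|\mu*\phi\|_2 \le Cq^{-\kappa'}B\|\phi\|_2$ for all $\phi\in E^q_q$, which is exactly the assertion (set $\kappa:=\kappa'$).

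Estimate (i) is a short computation. Writing $n_q = m_q + r_q$, split $f^{(a)}_{n_q}(i_{n_q},\ldots,i_1,x) = f^{(a)}_{r_q}(i_{n_q},\ldots,i_1,x) + f^{(a)}_{m_q}(i_{m_q},\ldots,i_1,x)$, the second term being $f^{(a)}_{m_q}$ evaluated at $\sigma^{r_q}$ of the sequence. Since $\tau_{n_q}$ is real, $|e^{ib\tau_{n_q}}|=1$, so $\|\mu\|_1 \le \sum_{i_1,\ldots,i_{m_q}} e^{f^{(a)}_{r_q}(i_{n_q},\ldots,i_1,x)}\,e^{f^{(a)}_{m_q}(i_{m_q},\ldots,i_1,x)}$. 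The sequences $(i_{n_q},\ldots,i_1,x)$ and $(i_{n_q},\ldots,i_{m_q+1},\omega)$ with $\omega=\omega(i_{m_q+1})$ coincide in their first $r_q$ symbols, so summing the $d_\theta$-Lipschitz bound for $f^{(a)}$ along the forward orbit and using \eqref{defineetatheta} gives $|f^{(a)}_{r_q}(i_{n_q},\ldots,i_1,x) - f^{(a)}_{r_q}(i_{n_q},\ldots,i_{m_q+1},\omega)| \le \eta_\theta$, uniformly in $(i_1,\ldots,i_{m_q})$ and in $|a|<1$. Factoring out $e^{\eta_\theta}e^{f^{(a)}_{r_q}(i_{n_q},\ldots,i_{m_q+1},\omega)}$ and bounding the remaining sum $\sum_{i_1,\ldots,i_{m_q}} e^{f^{(a)}_{m_q}(i_{m_q},\ldots,i_1,x)} \le c$ by Lemma \ref{a_0} yields $\|\mu\|_1 \le B$.

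For (ii), fix $q'\mid q$ with $q'>q^{1/10}$ and a coset $gH\subseteq\SL_2(q')$ with $H$ having proper projection in every divisor of $q'$; we must bound the $|\mu|$-weight of the set of elements reducing into $gH$ modulo $q'$. Carrying out the same splitting and the same factoring out of $e^{\eta_\theta}e^{f^{(a)}_{r_q}(i_{n_q},\ldots,i_{m_q+1},\omega)}$ as in (i), and then using the cohomology relation $f^{(a)} = f^{(0)} - a\tau - \log\lambda_a$ (recall $\lambda_0=1$) to replace $e^{f^{(a)}_{m_q}}$ by $e^{\epsilon_a m_q}e^{f^{(0)}_{m_q}}$ with $\epsilon_a=O(|a|)$, matters reduce to bounding, by a fixed power $q^{-\kappa_0'}$ (after which $a_0$ is shrunk so $\epsilon_a m_q$ is absorbed), the sum of $e^{f^{(0)}_{m_q}(i_{m_q},\ldots,i_1,x)}$ over all admissible $(i_1,\ldots,i_{m_q})$ whose cocycle $\coc_{m_q+1}(i_{m_q+1},\ldots,i_1,x)$ reduces modulo $q'$ into $gH$. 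Organize this sum by the $\Gamma$-value $\gamma=\coc_{m_q+1}(i_{m_q+1},\ldots,i_1,x)$: by Lemma \ref{sha} — this is where Sullivan's shadow lemma and Corollary \ref{c2.4} enter — each fixed $\gamma$ contributes at most $c'\,e^{-\delta m_q\inf(\tau)}$, while the full sum over all $\gamma$ is $\le c$ by Lemma \ref{a_0}. Every $\gamma$ that occurs lies in the ball $\mathcal{B}_{m_q+1}(e)\subseteq\Gamma$, on which $\pi_{q'}$ is injective by the choice of $d_0$ and $m_q$ (so that the ball injects into every $\SL_2(q')$ with $q'>q^{1/10}$); hence the number of surviving $\gamma$ equals $\#\big(\pi_{q'}(\mathcal{B}_{m_q+1}(e))\cap gH\big)$, and (ii) reduces to the quantitative non-concentration statement that this count is smaller, by a power of $q$ to spare against the loss $e^{-\delta m_q\inf(\tau)}$ from Lemma \ref{sha}, than the count governing the full sum — i.e.\ that the image of the short ball $\mathcal{B}_{m_q+1}(e)$ in $\SL_2(q')$ does not concentrate on cosets of proper congruence subgroups, with a power saving in $q$.

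This last non-concentration estimate is the main obstacle, and it is precisely the point where the expansion machinery of Bourgain--Gamburd--Sarnak is indispensable. In the regime $\delta\le\tfrac12$ there is no $L^2$-spectral gap to appeal to, so the bound has to be extracted from the product-theorem / $\ell^2$-flattening circle of ideas; moreover the relevant scale $m_q\asymp\log q/d_0$ is short compared with $\log q'$, so one cannot invoke equidistribution of the full Cayley graph and must instead exploit the structure of subgroups with proper projection in every divisor together with the decay of the Gibbs weights supplied by Lemma \ref{sha}. Once this count is in hand, combining all the exponential factors (using $m_q\asymp\log q$) gives (ii), and together with (i) the Proposition follows from Lemma \ref{bgsflatteninglemma}; the uniformity of the constants is inherited from the uniformity in Lemmas \ref{a_0}, \ref{sha} and \ref{bgsflatteninglemma}.
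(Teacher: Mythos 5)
Your estimate (i) and the reduction of (ii) via Lemma \ref{sha} follow the paper's Claims 1 and 2 essentially verbatim, and the overall plan (verify the two hypotheses of Lemma \ref{bgsflatteninglemma}, then quote it) is exactly the paper's. But there is a genuine gap at the step you yourself flag as ``the main obstacle'': you never prove the non-concentration count $\#\bigl(\mathcal{B}_{m_q+1}(e)\cap \pi_{q'}^{-1}(gH)\bigr)$, and you attribute it to ``the product-theorem / $\ell^2$-flattening circle of ideas,'' i.e.\ to the expansion machinery. That attribution is both unproved and misplaced: the expansion input in this argument lives entirely inside Lemma \ref{bgsflatteninglemma}, which is quoted as a black box, and one cannot invoke flattening (or expansion of the congruence quotients) to verify the very hypothesis $|||\pi_{q'}(\mu)|||_\infty < q^{-\kappa}B$ that flattening requires --- as written, your argument is either circular or simply stops before the decisive estimate.

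What actually closes this step in the paper is elementary, not expander-theoretic. Given a coset $gH$ with $H\le \SL_2(q')$ having proper projection in every divisor of $q'$, let $\Gamma_0=\{\gamma\in\Gamma:\pi_{q'}(\gamma)\in H\}$; then $\pi_p(\Gamma_0)$ is proper in $\SL_2(p)$ for each $p\mid q'$, and the paper cites \cite{BGS} (see also Lemma 5.5 of \cite{MOW}) for the fact that $\#\bigl(a\Gamma_0\cap\mathcal{B}_m(e)\bigr)$ grows sub-exponentially in $m$ --- the point being that proper subgroups of $\SL_2(\mathbb{F}_p)$ are virtually solvable, matrix entries of elements of $\mathcal{B}_m(e)$ with $m\asymp\tfrac{1}{d_0}\log q$ are too small to satisfy the relevant congruences mod large $p\mid q'$ without satisfying them identically, and virtually solvable subgroups of the free (Schottky) group $\Gamma$ are cyclic, hence of polynomial ball growth. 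This yields $\#\bigl(a\Gamma_0\cap\mathcal{B}_{m_q}(e)\bigr)=O(q^{\kappa_1/2})$, which multiplied by your per-element bound $c'e^{-\delta m_q\inf(\tau)}\le q^{-\kappa_1}$ (the paper's Claim 2) gives the required $|||\pi_{q'}(\mu)|||_\infty\ll q^{-\kappa_1/2}B$. You need to supply this counting lemma (or an explicit citation to it and the short argument reducing your count to it); no spectral gap or product theorem is needed, and in the regime $\delta\le\tfrac12$ none is available at this point of the proof anyway.
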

\begin{proof} The idea is to apply the $\ell^2$ flattening lemma \ref{bgsflatteninglemma}.  For ease of notation, we will fix $n = n_q$ and
$ r=r_q$ throughout this proof. We will assume that $a_0$ is small enough so that we may apply Lemma \ref{a_0}.

\noindent{\bf {Claim 1:}} We have the following bound
\begin{equation}   ||\mu||_1\le B. \label{claim1}\end{equation}

We first observe that 
$$f^{(a)}_n(i_n, \ldots, i_1, x)= f^{(a)}_r(i_n, \ldots , i_1, x)  +  f^{(a)}_{m}(i_{m} ,\ldots ,i_{1}, x) $$
and
\begin{align*} 
 &| f^{(a)}_r(i_n ,\ldots ,i_1, x)  -   f^{(a)}_r(i_n ,\ldots ,i_{m+ 1}, \omega(i_{m +1})) |
\\ & \le  \sum_{j = 0}^{r-1} | f^{(a)} (i_{n - j}  ,\ldots , i_{m + 1}, \omega(i_{m +1}))  - f^{(a)} (i_{n - j}  ,\ldots , i_1, x) |\\
&\leq  \sum_{j = 0}^{r-1} \theta^{r - 1 -j}  \Lip_{d_\theta} (f^{(a)}) \le  \eta_\theta . \end{align*}

Using the triangle inequality, we deduce
\begin{align*}&  f^{(a)}_r(i_n ,\ldots ,i_1, x) \\ &\le  f^{(a)}_r(i_n ,\ldots, i_{m+ 1}, \omega(i_{m +1})) +| f^{(a)}_r(i_n ,\ldots ,i_1, x)  -   f^{(a)}_r(i_n, \ldots , i_{m+ 1}, \omega(i_{m +1})) |
\\ &\le  f^{(a)}_r(i_n ,\ldots ,i_{m+ 1}, \omega(i_{m +1})) + \eta_\theta . \end{align*}
We therefore have
 \begin{eqnarray*} ||\mu||_1&\leq&  \sum_{i_1  ,\ldots ,i_{m}}e^{f^{(a)}_n(i_n, \ldots ,i_1, x)}\\
 &\leq&  \sum_{i_1, \ldots , i_{m}}e^{f^{(a)}_r(i_n ,\ldots ,i_{m + 1}, \omega(i_{m +1}))}e^{  f^{(a)}_{m}( i_{m},\ldots, i_1, x) }e^{\eta_\theta }\\
 &\leq& B.
 \end{eqnarray*}
 
 \noindent{\bf {Claim 2:}} For some $\kappa_1 > 0$, 
\be \label{lbgsest}   ||\mu||_\infty \leq q^{-\kappa_1}B .\ee
Using the bound on $m< n /d_0$, it suffices to bound 
 \begin{align*}& \left| \sum_{i_1, \ldots ,i_{m}, \coc_{m + 1 }(i_{m+1}, \ldots, i_1, x) = \gamma} e^{(f^{(a)}_n + ib\tau_n)(i_n, \ldots ,i_1, x)} \right| \\
 &\leq B \sum_{i_1, \ldots ,i_{m}, \coc_{m+1}(i_{m+1}, \ldots ,i_1, x) = \gamma} e^{f^{(a)} _{m}(i_{m}, \ldots ,i_1, x)} \\
 &\leq B e^{m( |a| \sup \tau  + |\log \lambda_a |)}  \sum_{i_1 ,\ldots ,i_{m}, \coc_{m+1}(i_{m+1}, \ldots ,i_1, x) = \gamma} e^{f^{(0)}_{m}(i_{m}, \ldots, i_1, x)} \\
  &\leq B c'e^{m( |a| \sup \tau  + |\log \lambda_a |) }  e^{-\delta m \inf (\tau) } \mbox{ (see Lemma \ref{sha})} \\
  &\leq B q^{-   \kappa_1 }  \end{align*}
  as long as we choose $ a_0$  so small  that
\[ \max_{|a| < a_0}  e^{m( ||\tau||_\infty|a| + |\log \lambda_a |)} \leq e^{\frac{1}{3} \delta  \inf(\tau)   },\]
and $\kappa_1 >0 $ is chosen such that (recalling $m \asymp \log q / d_0$)
\[ q^{\kappa_1} \leq  e^{\frac{1}{3} \delta m \inf(\tau)}\] 
 and $q > q_1 > (c')^{1/3\kappa_1}$. 
 
 \noindent{\bf{Claim 3:}} We have
 \begin{eqnarray}  |||\pi_{q'}(\mu)|||_\infty <B (q')^{- \kappa} \label{claim3} \end{eqnarray}
for all   $q' | q$ with $q' > q^{1/10}$ for some $\kappa > 0$.

  Choose such a $q'$, and let $\Gamma_0 < \Gamma$ be a subgroup
 such that the projection $\pi_p(\Gamma_0)$ is a proper subgroup of $ \operatorname{SL}_2(p)$ for each divisor $p | q'$. As in \cite{BGS} (see also Lemma 5.5 of \cite{MOW} for more details), we know that  $\#(a\G_0 \cap B_m(e))$ grows sub-exponentially in $m$, and in particular we have
\[ \#(a\Gamma_0 \cap \mathcal{B}_m(e))  = O(q^{\kappa_1 / 2}),\]
with $\kappa_1$ as in Claim (2). Claim (3) now follows from Claim (2).

By Claims (1) and  (3), we have now verified the conditions of the flattening lemma (Lemma \ref{bgsflatteninglemma}). We therefore apply it to obtain
\begin{eqnarray*} ||\mu*\phi||_2 &\leq& BCq^{-\kappa'} ||\phi||_2 \end{eqnarray*}
for $q$ large. 
\end{proof}
The following bound, which will be useful in a number of places, follows from direct calculation.
\begin{lem} \label{expound} There is $\tilde c > 0$ such that, for any $x, y \in \Sigma^+$ with $d_\theta(x, y) < 1$, any admissible sequence $(i_{n_q} ,\ldots ,i_1, x)$, and any $|a| < 1, |b| < b_0$, we have 
$$ \left| 1 - e^{(f^{(a)}_n + ib\tau_n)(i_n, \ldots, i_1, y) - (f^{(a)}_n + ib\tau_n)(i_n, \ldots, i_1, x)}\right| \leq \tilde c\cdot d_\theta(x, y) .$$
\end{lem}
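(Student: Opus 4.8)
The plan is to reduce the claim to a bound on the exponent. Write $w := (i_n,\ldots,i_1,x)$ and $w' := (i_n,\ldots,i_1,y)$ (here $n=n_q$), and set
\[ z := (f^{(a)}_n + ib\tau_n)(w') - (f^{(a)}_n + ib\tau_n)(w). \]
Since $\big|1 - e^{z}\big| \le |z|\,e^{|\Re z|}$ for every $z\in\mathbb{C}$, it suffices to show $|z|\le (1+b_0)\eta_\theta\, d_\theta(x,y)$ and $|\Re z| = \big|f^{(a)}_n(w') - f^{(a)}_n(w)\big| \le \eta_\theta\, d_\theta(x,y)$: because $d_\theta(x,y)<1$ the latter is at most $\eta_\theta$, and then $\tilde c := (1+b_0)\,\eta_\theta\, e^{\eta_\theta}$ works. (Note that $d_\theta(x,y)<1$ forces $x_0=y_0$, so that $(i_n,\ldots,i_1,y)$ is also admissible and the expression makes sense.)

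To estimate $z$ I would compare the two Birkhoff sums term by term. For $0\le j\le n-1$ the shifted sequences $\sigma^j w = (i_{n-j},\ldots,i_1,x)$ and $\sigma^j w' = (i_{n-j},\ldots,i_1,y)$ carry the identical prefix $i_{n-j},\ldots,i_1$ of length $j$ and agree thereafter exactly where $x$ and $y$ do, so $d_\theta(\sigma^j w,\sigma^j w') = \theta^{\,n-j}\, d_\theta(x,y)$. Using the $d_\theta$-Lipschitz property of $f^{(a)}$ (with Lipschitz constant bounded uniformly for $|a|<1$) and of $\tau$, and summing the geometric series $\sum_{j=0}^{n-1}\theta^{\,n-j}\le \theta/(1-\theta)\le 1/(1-\theta)$, one gets
\[ \max\big\{\, |f^{(a)}_n(w') - f^{(a)}_n(w)|,\ |\tau_n(w') - \tau_n(w)|\,\big\} \le \eta_\theta\, d_\theta(x,y) \]
by the definition \eqref{defineetatheta} of $\eta_\theta$. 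Combining with $|b|<b_0$ yields $|z|\le (1+b_0)\eta_\theta\, d_\theta(x,y)$ and $|\Re z|\le \eta_\theta\, d_\theta(x,y)$, which is what was needed.

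There is no real obstacle here; the only mildly delicate points are getting the exact count of how many coordinates $\sigma^j w$ and $\sigma^j w'$ share — so that the per-term contributions form a geometric series whose sum is independent of $n$, hence of $q$ — and invoking the fact, already built into \eqref{defineetatheta}, that $\Lip_{d_\theta}(f^{(a)})$ is bounded uniformly over $|a|<1$.
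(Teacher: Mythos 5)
Your proof is correct and is exactly the ``direct calculation'' the paper leaves implicit: the same term-by-term comparison of the Birkhoff sums using $d_\theta(\sigma^j w,\sigma^j w')=\theta^{\,n-j}d_\theta(x,y)$, the geometric series absorbed into $\eta_\theta$, and the elementary bound $|1-e^z|\le |z|e^{|\Re z|}$. The only nit is a wording slip: the common prefix of $\sigma^j w$ and $\sigma^j w'$ has length $n-j$, not $j$, but the distance formula you actually use is the right one, so nothing is affected.
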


\begin{remark}
It is in this type of bound that large values of $|b|$ cause problems. This characterizes the difference between Dolgopyat's approach and that of Bourgain-Gamburd-Sarnak. 
\end{remark}

Before moving on we will establish another proposition; this one will furnish Lipschitz bounds on $\hat{\mathcal{M}}_{ab, q} \tilde H$. For real numbers $a, b$, for $x,y  \in \hat \Sigma^+$ with $d_\theta(x, y) \leq \theta $, and a sequence $i_{n_q}, \ldots , i_{m_q + 1}$, we define
\begin{align}& \mu'= \mu'^{(a, b)}_{x, y, i_{m_q + 1}  , \ldots ,i_{n_q} }\\
&=   \sum_{i_1, \ldots , i_{m_q}}  \left( e^{(f^{(a)}_{n_q} + ib\tau_{n_q})(i_{n_q}, \ldots ,i_1, x)} - e^{(f^{(a)}_{n_q} + ib\tau_{n_q})(i_{n_q}, \ldots, i_1, y)} \right) \delta_{\coc_{m_q+1} (i_{m_q +1} ,\ldots , i_1, x)}. \label{defmu'}  \notag \end{align}

\begin{prop} \label{flatteningoutput2}\label{arb2} As usual, we write $n, m, r$ for $n_q, m_q, r_q$.  Let 
\be \label{definemuprime}  \mu' = \mu'^{(a, b)}_{x, y, i_{m + 1}, \ldots ,i_{n} }\ee
 and 
\begin{eqnarray*} B'&=&B'_{a, i_{m + 1} ,\ldots, i_n}\\
&:=&c\tilde c e^{\eta_\theta}e^{f^{(a)}_r(i_n, \ldots ,i_{m + 1}, \omega(i_{m +1}) ) } \end{eqnarray*}
with $c > 1$ the constant from Lemma \ref{a_0}. There exist constants $\kappa>0, a_0 > 0, q_1 > 1, C>1$ such that for any $x, y \in \hat \Sigma^+$ with $d_\theta(x, y) < 1$ and any  square-free $q > q_1$, $|a| < a_0, |b| \leq b_0$ 
\begin{eqnarray*} ||\mu'*\phi||_2 &\leq& B'C'q^{-\kappa} ||\phi||_2d_\theta(x, y) \end{eqnarray*}
for all $\phi \in E^q_q$.  The constants $C, \kappa$ may be chosen independent of $q, x, a, b$, and $i_{n}, \ldots , i_{m + 1}$.  
\end{prop}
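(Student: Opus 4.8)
\smallskip
\noindent\emph{Proof plan.} The plan is to mimic the proof of Proposition \ref{arb1} line by line, verifying the hypotheses of the $\ell^2$-flattening lemma \ref{bgsflatteninglemma} for $\mu'$ with the role of $B$ there played by $B'\,d_\theta(x,y)$. As in that proof I abbreviate $n=n_q$, $m=m_q$, $r=r_q$, and I take $a_0$ small enough that Lemma \ref{a_0} is available.

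The one new ingredient is a factoring step at the level of individual summands. Since $d_\theta(x,y)<1$ forces $x_0=y_0$, for every admissible string $(i_{m+1},\ldots,i_1)$ the concatenations $(i_{m+1},\ldots,i_1,x)$ and $(i_{m+1},\ldots,i_1,y)$ agree in their first $m+2$ symbols, so Lemma \ref{cocdep} gives
\[ \coc_{m+1}(i_{m+1},\ldots,i_1,x) = \coc_{m+1}(i_{m+1},\ldots,i_1,y); \]
this is exactly why the difference defining $\mu'$ collapses into a single sum of Dirac masses rather than a difference of two such sums. Next I would write each summand of $\mu'$ as
\[ e^{(f^{(a)}_n+ib\tau_n)(i_n,\ldots,i_1,x)}\Bigl(1 - e^{(f^{(a)}_n+ib\tau_n)(i_n,\ldots,i_1,y)-(f^{(a)}_n+ib\tau_n)(i_n,\ldots,i_1,x)}\Bigr) \]
and invoke Lemma \ref{expound} to bound the bracketed factor by $\tilde c\,d_\theta(x,y)$, uniformly in $q,x,y,a,b$ and the index string. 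This gives, for every $\gamma\in\SL_2(q)$,
\[ |\mu'(\gamma)| \;\le\; \tilde c\,d_\theta(x,y)\sum_{i_1,\ldots,i_m:\,\coc_{m+1}(i_{m+1},\ldots,i_1,x)=\gamma} e^{f^{(a)}_n(i_n,\ldots,i_1,x)}, \]
and the sum on the right is precisely the quantity estimated in the proof of Proposition \ref{arb1}.

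With this in hand the three claims of that proof transfer directly, each picking up the factor $\tilde c\,d_\theta(x,y)$. Summing over $\gamma$ and running the Claim~1 computation gives $\|\mu'\|_1 \le \tilde c\,d_\theta(x,y)\,B = B'\,d_\theta(x,y)$, where $B = c\,e^{f^{(a)}_r(i_n,\ldots,i_{m+1},\omega(i_{m+1}))}e^{\eta_\theta}$ is the bound from Proposition \ref{arb1} and $B' = \tilde c\,B$ as in the statement. Running Claim~2 (which rests on Sullivan's shadow lemma \ref{ssl} through Lemma \ref{sha}) gives a sup-norm bound $\|\mu'\|_\infty \le q^{-\kappa_1}B'\,d_\theta(x,y)$ for a suitable $\kappa_1>0$ and all $q>q_1$; and the sub-exponential growth estimate $\#(a\Gamma_0\cap\mathcal{B}_m(e)) = O(q^{\kappa_1/2})$ for subgroups $\Gamma_0$ whose projection modulo each $p\mid q'$ is proper then upgrades this, exactly as in Claim~3, to $|||\pi_{q'}(\mu')|||_\infty < (q')^{-\kappa}B'\,d_\theta(x,y)$ for all $q'\mid q$ with $q'>q^{1/10}$. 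Feeding $\mu'$ and the constant $B'\,d_\theta(x,y)$ into Lemma \ref{bgsflatteninglemma} (shrinking $\kappa$ to absorb $q'$ versus $q$ as before) then yields $\|\mu'*\phi\|_2 \le Cq^{-\kappa}B'\,d_\theta(x,y)\|\phi\|_2$ for all $\phi\in E^q_q$, which is the assertion after renaming constants.

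I do not expect a genuine obstacle here: the factoring step reduces everything to estimates already proved for Proposition \ref{arb1}. The main things to get right — rather than real difficulties — are confirming that the Dirac masses attached to $x$ and to $y$ truly coincide, so that $\mu'$ is the single sum written in \eqref{definemuprime} and the whole reduction is legitimate, and checking that the $\tilde c\,d_\theta(x,y)$ supplied by Lemma \ref{expound} is uniform in all the parameters, so that the final constants $C,\kappa$ are indeed independent of $q,x,y,a,b$ and $i_n,\ldots,i_{m+1}$ as claimed.
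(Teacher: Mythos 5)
Your proposal is correct and follows essentially the same route as the paper: factor each summand as $e^{(f^{(a)}_n+ib\tau_n)(\cdot,x)}\bigl(1-e^{(f^{(a)}_n+ib\tau_n)(\cdot,y)-(f^{(a)}_n+ib\tau_n)(\cdot,x)}\bigr)$, bound the bracket by $\tilde c\,d_\theta(x,y)$ via Lemma \ref{expound}, transfer Claims 1--3 of Proposition \ref{arb1} with $B$ replaced by $B'\,d_\theta(x,y)$, and conclude with the $\ell^2$-flattening lemma. Your extra remark that $x_0=y_0$ makes the cocycles for $x$ and $y$ coincide (so $\mu'$ really is a single sum of Dirac masses) is a correct and worthwhile clarification that the paper leaves implicit.
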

\begin{proof}
\noindent \textbf{Claim 1:} A calculation similar to that for \eqref{claim1} yields
\begin{eqnarray*}  ||\mu'||_1 &\leq& B'd_{\theta}(x, y) .\end{eqnarray*}
\noindent \textbf{Claim 2:} There exists $\kappa>0$ with $ ||\mu'||_\infty < B' q^{-\kappa}$. For any $\g \in \SL_2(q)$, we estimate
\begin{align*}&  |\mu'|(\gamma) = \left| \sum_{i_1 ,\ldots , i_{m}: \coc_{m+1}(i_{m+1}, \ldots ,i_1, x) = \gamma}  \hspace{-10mm} e^{(f^{(a)}_{n} + ib\tau_{n})_n(i_n, \ldots ,i_1, x)} - e^{(f^{(a)}_{n} + ib\tau_{n})_n(i_n, \ldots ,i_1, y) }\right|\\
&\leq \hspace{-15mm} \sum_{i_1 ,\ldots ,i_{m}: \coc_{m+1}(i_{m+1}, \ldots, i_1, x) = \gamma} \hspace{-15mm}  e^{f^{(a)}_{n} (i_n, \ldots ,i_1, x) }   \left|1 - e^{(f^{(a)}_{n} +  ib\tau_n)(i_n, \ldots ,i_1, y) -(f^{(a)}_{n}  + ib\tau_n)(i_n, \ldots , i_1, x)} \right| \\
&\leq \tilde c d_\theta(x, y)\sum_{i_1, \ldots ,i_{m}: \coc_{m+1} (i_{m+1}, \ldots ,i_1, x) = \gamma} \hspace{-15mm}  e^{f^{(a)}_{n} (i_n, \ldots ,i_1, x)}  \mbox{ by Lemma \ref{expound}}  \end{align*}
The same argument as used in claim 2 of Proposition \ref{flatteningoutput} now yields
 \[ ||\mu'||_\infty < B'q^{-\kappa}\]
as expected.


\noindent \textbf{Claim 3:} An argument similar to the one leading to claim 3 in the proof of Proposition \ref{flatteningoutput} gives that 
\[ |||\pi_{q'}(\mu')|||_\infty \leq B'd_\theta(x, y)(q')^{-\kappa}  \mbox{ for } q' | q, q' > q^{1/10}.\] 
The proposition now follows from the $\ell^2$ flattening lemma as in the proof of Proposition \ref{flatteningoutput}

\end{proof}


 \subsection{Supremum bounds and Lipschitz bounds.} The purpose of all the estimates in the last two subsections is to provide bounds on the congruence transfer operators. We'll need to bound both the supremum norms and the Lipschitz constants. Start with the supremum norm. We observe that $(\hat{\mathcal{M}}^n_{ab, q}\tH)(x, g)$ has a good approximation by an appropriate sum of the convolutions, and use this fact, together with the convolution estimates in propositions \ref{flatteningoutput} and \ref{flatteningoutput2} to estimate the supremum and Lipschitz norms of $(\hat{\mathcal{M}}^n_{ab, q}\tH)$. 
 
For any $q$, for $\tilde H \in \tilde E_q^q$, and a sequence $i_{n_q}, \ldots , i_{m_q + 1}$, define the function $\phi$ on $\SL_2(q)$ by
\begin{align}\phi(g)& = \phi_{ \tilde H, (i_{n_q},\ldots ,i_{m_q+1})}(g)
\\& : = \tilde H(i_{n_q}, \ldots , i_{m_q+1},  \omega( i_{m_q + 1})), g \coc^{-1}_{r_q -1}(i_{n_q}, \ldots , i_{m_q + 1}, \omega( i_{m_q + 1}))).\label{definephi} \end{align}
 Note that 
$$|\phi| \leq ||\tilde H ||_\infty\quad\text{and }\quad \phi \in E^q_q .$$

\begin{lem} \label{notausefulname}  There exist  $\tilde C > 1$ and $ a_0 > 0$ such that the following holds for any $q, |a| < a_0, |b| \leq b_0, x \in \hat \Sigma^+$, and any $\tilde H \in \tilde E^q_q$:
\begin{eqnarray*}\left| (\hat{\mathcal{M}}^{n_q}_{ab, q}\tH)(x, \cdot ) - \sum_{i_{m_q+1}, \ldots , i_{n_q}}  \mu^{a, b}_{x, (i_{n_q}, \cdots, i_{m_q+1})} * \phi_{\tH, (i_{n_q},\cdots, i_{m_q+1})}(\cdot) \right|  \\ \le \tilde C \Lip_{d_\theta}(\tH) \theta^r.\end{eqnarray*}
\end{lem}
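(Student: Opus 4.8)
The plan is to begin by writing out $(\hat{\mathcal{M}}^{n_q}_{ab,q}\tH)(x,\cdot)$ explicitly using the iterated formula
\[ \hat{\mathcal{M}}_{ab, q}^{n}\tH(x, g)=\sum_{i_1,\ldots,i_{n}} e^{(f_{n}^{(a)}+ib\tau_{n})(i_{n},\ldots,i_1,x)}\,\tH(i_{n},\ldots,i_1,x)\big(g\, \coc_{n}^{-1}(i_{n},\ldots,i_1,x)\big), \]
with $n=n_q$, and then split the index block $(i_1,\ldots,i_n)$ into the ``outer'' part $(i_{m_q+1},\ldots,i_n)$ and the ``inner'' part $(i_1,\ldots,i_{m_q})$. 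Recalling the factorization $\coc_n = \coc(i_n,i_{n-1})\cdots\coc(i_1,x_0)$ and Lemma \ref{cocdep}, I would write $\coc_n(i_n,\ldots,i_1,x)=\coc_{r_q}(i_n,\ldots,i_{m_q+1},\ast)\cdot\coc_{m_q+1}(i_{m_q+1},\ldots,i_1,x)$, where the first factor depends only on the outer symbols (and on $i_{m_q+1}$, which is shared). The point is that the inner sum over $(i_1,\ldots,i_{m_q})$ becomes exactly a convolution: collecting the $\coc_{m_q+1}$-values produces $\mu^{a,b}_{x,(i_{n_q},\ldots,i_{m_q+1})}$ applied to the function $g\mapsto \tH(\text{outer block},\, g\,\coc_{r_q}^{-1}(\cdots))$.

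**Replacing the true tails by the reference points.** The discrepancy between $(\hat{\mathcal{M}}^{n_q}_{ab,q}\tH)(x,\cdot)$ and the claimed sum of convolutions comes entirely from replacing, in the outer block, the true continuation $(i_1,\ldots,i_{m_q},x)$ by the fixed reference continuation $\omega(i_{m_q+1})$: that is, replacing $f^{(a)}_{r_q}$ and $\tau_{r_q}$ evaluated at $(i_n,\ldots,i_{m_q+1},\text{true tail})$ by the same evaluated at $(i_n,\ldots,i_{m_q+1},\omega(i_{m_q+1}))$, and likewise in the $\tH$-argument and in $\coc_{r_q}$. Since the true tail and $\omega(i_{m_q+1})$ agree in their first entry, the two points are within $d_\theta$-distance $\theta$; more importantly, shifting by $j$ positions, $\sigma^j$ of the two outer points are within $d_\theta$-distance $\theta^{\,r_q-1-j+1}=\theta^{\,r_q-j}$ for $j=0,\ldots,r_q-1$. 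The cocycle $\coc_{r_q}$ is actually \emph{equal} for the two (Lemma \ref{cocdep}), so there is no error there. For the exponential weight I would use the estimate already recorded in the proof of Proposition \ref{arb1} (Claim 1): $|f^{(a)}_{r_q}(i_n,\ldots,i_1,x)-f^{(a)}_{r_q}(i_n,\ldots,i_{m_q+1},\omega(i_{m_q+1}))|\le \eta_\theta$, so $e^{f^{(a)}_{r_q}(\text{true})}\le e^{\eta_\theta}e^{f^{(a)}_{r_q}(\omega)}$, and combined with Lemma \ref{expound}, $|e^{(f^{(a)}_{r_q}+ib\tau_{r_q})(\text{true})}-e^{(f^{(a)}_{r_q}+ib\tau_{r_q})(\omega)}|\le \tilde c\, e^{\eta_\theta} e^{f^{(a)}_{r_q}(\omega)}\theta^{\,r_q}$ — actually the sharper bound, summing the per-coordinate Lipschitz bounds across the $r_q$ shifts, gives a factor $\theta^{\,r_q}$ times a geometric constant. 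For the $\tH$-argument I would use $|\tH(\text{true})(\cdot)-\tH(\omega)(\cdot)|\le \Lip_{d_\theta}(\tH)\theta^{\,r_q}$.

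**Assembling the bound.** Having controlled each outer term by (something of size $e^{f^{(a)}_{r_q}(i_n,\ldots,i_{m_q+1},\omega)}$) times (a constant) times $\theta^{\,r_q}$ times ($\Lip_{d_\theta}(\tH)+\|\tH\|_\infty$ or just $\Lip_{d_\theta}(\tH)$, depending on whether the perturbation hits $\tH$ or the exponential), I would sum over the outer indices $(i_{m_q+1},\ldots,i_n)$. The key input is Lemma \ref{a_0}: $\sum_{i_{m_q+1},\ldots,i_n} e^{f^{(a)}_{r_q}(i_n,\ldots,i_{m_q+1},\omega)}\le c$, uniformly in $|a|<a_0$. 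This collapses the outer sum to an absolute constant, leaving a bound of the form $\tilde C\,\Lip_{d_\theta}(\tH)\,\theta^{\,r_q}$ (with $r=r_q$), exactly as stated. A subtle point worth checking carefully is that when the perturbation falls on the exponential weight rather than on $\tH$, the remaining $\tH$-factor contributes $\|\tH\|_\infty$, not $\Lip_{d_\theta}(\tH)$; one resolves this by noting that the lemma only needs to bound the \emph{difference}, and $\|\tH\|_\infty$ can be absorbed since $\theta^{\,r_q}\|\tH\|_\infty$ is dominated by $\theta^{\,r_q}\|\tH\|_{d_\theta}$ — but the statement wants $\Lip_{d_\theta}(\tH)\theta^r$ precisely, so one should split $\tH$ into its mean (which is zero, since $\tH\in\tilde E^q_q\subset\mathcal W$) plus oscillation and bound $\|\tH\|_\infty$ by $\Lip_{d_\theta}(\tH)$ up to $\diam(\Sigma^+)$-type constants, or simply absorb it into $\tilde C$. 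The main obstacle is purely bookkeeping: tracking the split of the cocycle and exponential cocycle cleanly through the outer/inner decomposition, and making sure the shift-by-$j$ contraction rates line up so that the geometric sum over $j=0,\ldots,r_q-1$ produces $\theta^{\,r_q}$ with an absolute constant rather than something growing in $r_q$. There is no analytic difficulty beyond what Lemmas \ref{a_0}, \ref{cocdep}, and \ref{expound} already provide.
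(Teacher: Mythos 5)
Your outline has the right skeleton (split the cocycle via Lemma \ref{cocdep}, compare the true continuation with the reference continuation $\omega(i_{m_q+1})$, and sum the weights with Lemma \ref{a_0}), but it misreads the definition \eqref{hereismu}, and this creates a spurious error term that you then handle incorrectly. The measure $\mu^{a,b}_{x,(i_{n_q},\ldots,i_{m_q+1})}$ carries the \emph{full, true} weight $e^{(f^{(a)}_{n_q}+ib\tau_{n_q})(i_{n_q},\ldots,i_1,x)}$; the outer portion of the Birkhoff sum is \emph{not} re-evaluated at $(i_{n_q},\ldots,i_{m_q+1},\omega(i_{m_q+1}))$. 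Consequently, after writing $\coc_{n_q}(i_{n_q},\ldots,i_1,x)=\coc_{r_q-1}(i_{n_q},\ldots,i_{m_q+1},\omega(i_{m_q+1}))\,\coc_{m_q+1}(i_{m_q+1},\ldots,i_1,x)$ (note $\coc_{r_q-1}$, not $\coc_{r_q}$, matching \eqref{definephi}), the sum $\sum_{i_{m_q+1},\ldots,i_{n_q}}\mu*\phi$ agrees with $(\hat{\mathcal{M}}^{n_q}_{ab,q}\tH)(x,\cdot)$ both in the exponential factor and in the group element, and the \emph{only} discrepancy is in the first argument of $\tH$: the true point $(i_{n_q},\ldots,i_1,x)$ versus the reference point $(i_{n_q},\ldots,i_{m_q+1},\omega(i_{m_q+1}))$, which are at $d_\theta$-distance at most $\theta^{r_q-1}$. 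The paper's proof is then a single estimate: the difference is at most $\sum e^{f^{(a)}_{n_q}}\Lip_{d_\theta}(\tH)\theta^{r_q-1}\le c\,\Lip_{d_\theta}(\tH)\theta^{r_q-1}$ by Lemma \ref{a_0}; no appeal to Lemma \ref{expound} or to $\eta_\theta$ is needed here.

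The two steps you propose for the weight-replacement term are both wrong, and they would sink the proof if that term were actually present. First, $f^{(a)}_{r_q}(\mathrm{true})-f^{(a)}_{r_q}(\mathrm{reference})$ is a sum over shifts $j=0,\ldots,r_q-1$ of terms of size $\Lip_{d_\theta}(f^{(a)})\,\theta^{r_q-j}$; this geometric sum is dominated by its last terms and is only $O(1)$ (this is precisely the $\eta_\theta$ bound \eqref{defineetatheta} used in Claim 1 of Proposition \ref{arb1}), not $O(\theta^{r_q})$ as you assert, so the exponential difference is comparable to the weight itself and carries no $\theta^{r_q}$ decay. Second, your fix for the resulting $\|\tH\|_\infty$ factor is invalid: the mean-zero condition for $\tH\in\tilde E^q_q$ is in the group variable $\gamma$, while $\Lip_{d_\theta}(\tH)$ only controls variation in the symbolic variable, so $\|\tH\|_\infty$ is not bounded by a constant times $\Lip_{d_\theta}(\tH)$ (take $\tH$ constant in $u$ and oscillating in $\gamma$), and ``absorbing $\|\tH\|_\infty$ into $\tilde C$'' is not permissible since $\tilde C$ must be uniform in $\tH$. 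Once you use the actual definition of $\mu^{a,b}$, both problems disappear and your remaining steps reduce to the paper's argument.
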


\begin{proof} Fix $q, x,$ and $\tilde H$ and  write $n = n_q, r = r_q, m = m_q$. Choose $|a| < a_0$, the constant from Lemma \ref{a_0}. For a sequence $i_n, \ldots , i_{m + 1}$, set
\begin{equation} \phi(g) = \phi_{ \tH, (i_n,\ldots ,  i_{m+1})}(g) . \end{equation}

Choose $|a| < a_0$, the constant from Lemma \ref{a_0}. We observe, as a consequence of the definitions and of Lemma \ref{cocdep},  that
\begin{eqnarray*}&& \sum_{i_{m+1} , \ldots , i_{n}}\mu^{a, b}_{x, (i_n, \cdots, i_{m+1})} * \phi_{ (i_n,\cdots, i_{m+1})}(g) \\&=&  \sum_{i_1 ,\ldots , i_n} e^{(f^{(a)}_n+ib\tau_n)(i_n ,\ldots , i_1, x)}\tH((i_n, \ldots , i_{m + 1}, \omega(i_{m+1})), g \coc^{-1}_n(i_n, \ldots ,i_1, x)). \end{eqnarray*}
We may therefore compute
\begin{eqnarray*}  && \left| (\hat{\mathcal{M}}^n_{ab, q}\tH)(x, \cdot) - \sum_{i_{m+1}, \ldots , i_{n}}  \mu_{x, (i_n, \cdots, i_{m+1})} * \phi_{\tH, (i_n,\cdots, i_{m+1})}(\cdot) \right|  \\
&\leq&  \sum_{i_1 ,\ldots, i_n}e^{f^{(a)}_n(i_n ,\ldots ,i_1, x)}\\
&&\left| \left( \tH((i_n ,\ldots, i_1, x), \cdot)   - \tH((i_n ,\ldots ,i_{m + 1},  \omega(i_{m+1})), \cdot) \right)   \right|\\
&\leq& \sum_{i_1, \ldots ,i_n}e^{f^{(a)}_n(i_n, \ldots ,i_1, x)} \Lip_{d_\theta}(\tH) \theta^{r-1}\\
&\leq&c   \Lip_{d_\theta}(\tH)  \theta^{r} \end{eqnarray*}
where $ c>1$ is the constant from Lemma \ref{a_0}. 
\end{proof}
The next lemma provides bounds on the supremum norm for $\hat{\mathcal{M}}^{n_q}_{ab, q}\tH$ using the description in terms of convolutions we just proved together with the convolution estimate, Proposition \ref{flatteningoutput}.
\begin{lem} \label{supbound} There exist constants $a_0 > 0$, $q_1 > 0$, $\kappa' > 0$ such that the following holds for any $|a| < a_0, |b| \leq b_0$, $q > q_1$ and $\tilde H \in \tilde E^q_q$:
$$||\hat{\mathcal{M}}^{n_q}_{ab, q}\tH ||_{\infty}\le  \frac{1}{2}  q^{-\kappa'} 
\left(||\tH||_\infty + \Lip_{d_\theta}(\tH) \theta^{n_q/2}\right).$$
\end{lem}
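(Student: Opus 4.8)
The plan is to feed the convolution approximation of Lemma~\ref{notausefulname} into the flattening estimate of Proposition~\ref{flatteningoutput}. The point that makes this work is that $\|\cdot\|_\infty$ here denotes the supremum over $\hat\Sigma^+$ of the Hermitian norm on $\mathbb{C}^{\SL_2(q)}$ (the $\ell^2$-over-the-group norm), so that passing from $\tilde H$ to the slices $\tilde H((i_{n_q},\ldots,i_{m_q+1},\omega(i_{m_q+1})),\cdot)$ costs nothing in dimension and the genuine saving $q^{-\kappa}$ of the $\ell^2$-flattening lemma is not diluted by a factor of $|\SL_2(q)|$. Concretely, I would fix $|a|<a_0$ small enough that Lemma~\ref{a_0} and Proposition~\ref{flatteningoutput} both apply, fix $|b|\le b_0$, abbreviate $n=n_q$, $m=m_q$, $r=r_q$, and for each admissible block $(i_n,\ldots,i_{m+1})$ put $\phi=\phi_{\tilde H,(i_n,\ldots,i_{m+1})}$ as in \eqref{definephi}. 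As noted after \eqref{definephi}, $\phi\in E^q_q$, and translation invariance of the $L^2$ norm together with the definition of $\|\cdot\|_\infty$ gives $\|\phi\|_2\le\|\tilde H\|_\infty$; hence Proposition~\ref{flatteningoutput} yields $\|\mu^{a,b}_{x,(i_n,\ldots,i_{m+1})}*\phi\|_2\le B^a_{i_n,\ldots,i_{m+1}}\,Cq^{-\kappa}\,\|\tilde H\|_\infty$ for all $x\in\hat\Sigma^+$ and all square-free $q>q_1$.

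Next I would sum over the blocks. By Lemma~\ref{notausefulname}, $(\hat{\mathcal{M}}^{n}_{ab,q}\tilde H)(x,\cdot)$ differs from $\sum_{(i_n,\ldots,i_{m+1})}\mu^{a,b}_{x,(i_n,\ldots,i_{m+1})}*\phi_{\tilde H,(i_n,\ldots,i_{m+1})}$ by at most $\tilde C\,\Lip_{d_\theta}(\tilde H)\,\theta^{r}$ in the group-$\ell^2$ norm, so by the triangle inequality
\[ \|(\hat{\mathcal{M}}^{n}_{ab,q}\tilde H)(x,\cdot)\|_2\le Cq^{-\kappa}\|\tilde H\|_\infty\sum_{(i_n,\ldots,i_{m+1})} B^a_{i_n,\ldots,i_{m+1}}+\tilde C\,\Lip_{d_\theta}(\tilde H)\,\theta^{r}. \]
The sum $\sum B^a_{i_n,\ldots,i_{m+1}}=ce^{\eta_\theta}\sum_{(i_n,\ldots,i_{m+1})}e^{f^{(a)}_r((i_n,\ldots,i_{m+1},\omega(i_{m+1})))}$ is bounded by a constant $C_0$ depending only on $\Gamma$: there are finitely many choices of $i_{m+1}$, and for each one applying Lemma~\ref{a_0} with base point $\omega(i_{m+1})$, together with $\|f^{(a)}\|_\infty\le T_0$, bounds the inner sum.

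Finally I would absorb the error term and the loss in $q$. Since $d_0>3$ we have $r=n-m>\tfrac23 n$, so $r-\tfrac12 n>\tfrac16 n$ and therefore $\theta^{r}\le\theta^{-1/6}q^{-|\log\theta|/6}\,\theta^{n/2}$ (using $n_q\ge\log q-1$). Choosing $\kappa':=\tfrac12\min\{\kappa,\,|\log\theta|/6\}$ and then $q_1$ large enough that $CC_0\,q_1^{-\kappa'}\le\tfrac12$ and $\tilde C\theta^{-1/6}q_1^{-\kappa'}\le\tfrac12$, the two terms above become at most $\tfrac12 q^{-\kappa'}\|\tilde H\|_\infty$ and $\tfrac12 q^{-\kappa'}\Lip_{d_\theta}(\tilde H)\theta^{n_q/2}$ respectively, and taking $\sup_x$ gives the claimed bound. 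I expect the only genuinely delicate point to be keeping the $L^2$-normalizations consistent so that $\|\phi\|_2\le\|\tilde H\|_\infty$ holds with no dimensional loss — this is exactly what makes the flattening estimate strong enough; the verifications that $\phi\in E^q_q$ and that $\sum B^a_{i_n,\ldots,i_{m+1}}=O(1)$ are routine (and the first is already recorded in the text).
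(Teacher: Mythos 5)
Your proposal is correct and follows essentially the same route as the paper: approximate $(\hat{\mathcal{M}}^{n_q}_{ab,q}\tH)(x,\cdot)$ by the sum of convolutions via Lemma \ref{notausefulname}, apply Proposition \ref{flatteningoutput} to each block with $\|\phi\|_2\le\|\tH\|_\infty$ (the paper records exactly this as $|\phi|\le\|\tH\|_\infty$), bound $\sum B^a_{i_n,\ldots,i_{m+1}}$ by Lemma \ref{a_0}, and absorb the $\theta^{r_q}$ error using $r_q-n_q/2\gg n_q$ to choose $\kappa'$ and $q_1$. Your slightly more explicit treatment of the block sum (splitting off $i_{m+1}$) and of the exponent $r_q-n_q/2>n_q/6$ only makes precise what the paper leaves implicit.
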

\begin{proof} We choose $a_0 >0$ small and $q_1$ large as on Lemma \ref{a_0} and Proposition \ref{flatteningoutput}. Consider $q > q_1, |a| < a_0, |b| \leq b_0,$ and a function $\tilde H \in \tilde E^q_q$. We write $n$ for $n_q$ and $r$ for $r_q$. We recall the function $\phi$ on $\SL_2(q)$  as in \eqref{definephi}.  Summing over all admissible sequences $i_{m+1}, \ldots ,i_n$  and applying Lemma \ref{notausefulname} and Proposition \ref{flatteningoutput} we obtain, for $x \in \hat \Sigma^+$,
\begin{eqnarray*}&& |\hat{\mathcal{M}}^n_{ab, q}\tH(x)| \\
&\leq& \left| \sum_{i_{m+1}, \ldots , i_{n}}  \mu^{a, b}_{x, (i_{n}, \cdots, i_{m+1})} * \phi_{ \tH, (i_{n},\cdots, i_{m+1})}(\cdot)\right| + \tilde C \Lip_{d_\theta}(\tH) \theta^r \\
&\leq& q^{-\kappa} C\sum_{i_{m+1}, \ldots ,i_{n}}  B^a_{i_{n}, \ldots, i_{m + 1}} | \phi_{ \tH, (i_{n},\cdots, i_{m+1})}|  +\tilde  C \Lip_{d_\theta}(\tH) \theta^r \\
&\leq& q^{-\kappa} cC e^{\eta_\theta} ||\tilde H||_\infty \sum_{i_{m+1}, \ldots , i_{n}}  e^{f^{(a)}_{r}(i_n , \ldots ,  i_{m + 1},  \omega(i_{m+1}))}  +\tilde C\Lip_{d_\theta}(\tH) \theta^r \\
&\leq& c^2Cq^{-\kappa}e^{\eta_\theta} ||\tilde H||_\infty + \tilde C \Lip_{d_\theta}(\tH) \theta^r \end{eqnarray*}
by Lemma \ref{a_0}. We may therefore choose $\kappa'>0$ and $q_1>1$ so that
$q^{-\kappa '} > 2c^2 Ce^{ \eta_\theta}q^{-\kappa}$ and $2\theta^{r_q - n_q/2}\tilde C < q^{-\kappa'}$ for all $q>q_1$
and hence obtain  
\begin{eqnarray*}  ||\hat{\mathcal{M}}^n_{ab, q}\tH ||_\infty  \leq  \frac{1}{2} q^{-\kappa'} 
\left( ||\tH||_\infty +  \Lip_{d_\theta}(\tH) \theta^{n/2} \right)\end{eqnarray*}
so long as $q>q_1$. \end{proof}

We'd like to iterate this argument, but before we can do that we need to estimate $\Lip_{d_\theta}(\hat{\mathcal{M}}^n_{ab, q}\tH)$. The proof of the next lemma is similar to the proof of the last one, though slightly longer. 
\begin{lem} \label{Lipbound} There exist $C> 0, q_1>0, \kappa' > 0, a_0 >0$ such that for all $|a| < a_0, |b| \leq  b_0,q > q_1$, and $\tilde H \in \tilde E^q_q$,  we have
\begin{equation} \Lip_{d_\theta}(\hat{\mathcal{M}}^{n_q}_{ab, q}\tH) \leq \frac{1}{2} q^{-\kappa'}\left( ||\tH||_\infty + \Lip_{d_\theta}(\tH) \theta^{n_q/2} \right)  .\end{equation}

\end{lem}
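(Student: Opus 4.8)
The plan is to mirror the proof of Lemma \ref{supbound}, but now tracking the oscillation of $\hat{\mathcal{M}}^{n_q}_{ab,q}\tH$ between two nearby points $x,y\in\hat\Sigma^+$ with $d_\theta(x,y)\le\theta$. First I would write $n=n_q$, $r=r_q$, $m=m_q$ and, using the memoryless structure exactly as in Lemma \ref{notausefulname}, decompose
\[
(\hat{\mathcal{M}}^{n}_{ab,q}\tH)(x,\cdot)-(\hat{\mathcal{M}}^{n}_{ab,q}\tH)(y,\cdot)
\]
into a sum of two kinds of error terms. The \emph{main} term is obtained by holding the function $\tH$ evaluated at the "frozen" point $(i_n,\ldots,i_{m+1},\omega(i_{m+1}))$ and letting only the exponential weight change; this is precisely $\sum_{i_{m+1},\ldots,i_n}\mu'^{(a,b)}_{x,y,i_{m+1},\ldots,i_n}*\phi_{\tH,(i_n,\ldots,i_{m+1})}$, to which Proposition \ref{flatteningoutput2} applies. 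The \emph{secondary} terms come from (i) replacing $\tH((i_n,\ldots,i_1,x),\cdot)$ by its frozen value at $x$ and similarly at $y$ — each contributes $O(\Lip_{d_\theta}(\tH)\theta^{r})$ after summing against $\sum e^{f^{(a)}_n}\le c$ by Lemma \ref{a_0} — and (ii) the dependence of the argument $g\coc^{-1}_{r-1}(\cdots)$ on $x$ versus $y$; but by Lemma \ref{cocdep} the cocycle $\coc_{r-1}(i_n,\ldots,i_{m+1},\omega(i_{m+1}))$ depends only on the frozen symbols, so this last issue does not actually arise. Dividing by $d_\theta(x,y)$ is exactly what Proposition \ref{flatteningoutput2} is designed to absorb, since its conclusion carries the factor $d_\theta(x,y)$.

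Carrying this out, for $\phi=\phi_{\tH,(i_n,\ldots,i_{m+1})}\in E^q_q$ with $|\phi|\le\|\tH\|_\infty$, Proposition \ref{flatteningoutput2} gives
\[
\Big\|\mu'^{(a,b)}_{x,y,i_{m+1},\ldots,i_n}*\phi\Big\|_2
\le B'_{a,i_{m+1},\ldots,i_n}\,C'q^{-\kappa}\|\tH\|_\infty\,d_\theta(x,y),
\]
and summing over the admissible $(i_{m+1},\ldots,i_n)$ the quantity $\sum e^{f^{(a)}_r(i_n,\ldots,i_{m+1},\omega(i_{m+1}))}$ is bounded by $c$ via Lemma \ref{a_0} (applied to the $r$-fold sum), so the main contribution to the Lipschitz seminorm is $\le c^2 c\tilde c C' e^{\eta_\theta}q^{-\kappa}\|\tH\|_\infty$. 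The secondary terms contribute $\le \tilde C'\Lip_{d_\theta}(\tH)\theta^{r}$. Hence
\[
\Lip_{d_\theta}(\hat{\mathcal{M}}^{n}_{ab,q}\tH)\le C''q^{-\kappa}\|\tH\|_\infty+\tilde C'\Lip_{d_\theta}(\tH)\theta^{r}.
\]
Finally, exactly as in Lemma \ref{supbound}, since $r_q>\tfrac{(d_0-1)n_q}{d_0}>\tfrac{n_q}{2}$ for $d_0>3$, one has $\theta^{r_q}=\theta^{r_q-n_q/2}\theta^{n_q/2}$ with $\theta^{r_q-n_q/2}\to0$; choosing $q_1$ large and $\kappa'>0$ small so that $q^{-\kappa'}\ge 2C''q^{-\kappa}$ and $2\tilde C'\theta^{r_q-n_q/2}\le q^{-\kappa'}$ for all $q>q_1$ yields the claimed bound
\[
\Lip_{d_\theta}(\hat{\mathcal{M}}^{n_q}_{ab,q}\tH)\le\tfrac12 q^{-\kappa'}\big(\|\tH\|_\infty+\Lip_{d_\theta}(\tH)\theta^{n_q/2}\big).
\]

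The routine but slightly delicate bookkeeping step is the decomposition at the start: one must split a difference of products $e^{(f^{(a)}_n+ib\tau_n)}\tH$ evaluated along two itineraries into a "weight changes" piece and a "function changes" piece, and verify the function-changing piece is controlled by $\Lip_{d_\theta}(\tH)$ times $\theta$ to a power comparable to $r$ using the contraction $d_\theta(\sigma^j x,\sigma^j y)\le\theta^{?}$ along the shift; here the key input is that the first $m+1$ symbols agree and $\omega(i_{m+1})$ is frozen, so one only pays $\theta^{r-1}$ or better. I expect the main obstacle to be purely organizational — making sure every appearance of the cocycle argument in $\phi$ is shown to be genuinely independent of $x$ versus $y$ (which is where Lemma \ref{cocdep} is essential), so that the only $x,y$ dependence left is inside $\mu'$ and is therefore handled by Proposition \ref{flatteningoutput2}. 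No new ideas beyond those already in Lemma \ref{supbound} and Proposition \ref{flatteningoutput2} should be required.
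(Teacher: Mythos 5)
There is a genuine gap in how you handle the ``secondary'' (freezing) terms. You bound $\sum_i e^{f^{(a)}_n(i_n,\ldots,i_1,x)}\bigl|\tH((i_n,\ldots,i_1,x),\cdot)-\tH((i_n,\ldots,i_{m+1},\omega(i_{m+1})),\cdot)\bigr|$ and its analogue at $y$ \emph{separately}, each by $O(\Lip_{d_\theta}(\tH)\,\theta^{r})$. That estimate is correct but carries no factor of $d_\theta(x,y)$: the distance from the $x$-itinerary to the frozen itinerary is about $\theta^{r}$ no matter how close $x$ and $y$ are. Consequently your bound on the difference $\hat{\mathcal{M}}^{n}_{ab,q}\tH(x,\cdot)-\hat{\mathcal{M}}^{n}_{ab,q}\tH(y,\cdot)$ has the form $Cq^{-\kappa}\|\tH\|_\infty d_\theta(x,y)+\tilde C'\Lip_{d_\theta}(\tH)\theta^{r}$, and dividing by $d_\theta(x,y)$ does not yield a bound on $\Lip_{d_\theta}(\hat{\mathcal{M}}^{n}_{ab,q}\tH)$: for pairs with $d_\theta(x,y)\ll\theta^{r}$ the second contribution blows up. The passage from your displayed inequality to ``$\Lip_{d_\theta}(\hat{\mathcal{M}}^{n}_{ab,q}\tH)\le C''q^{-\kappa}\|\tH\|_\infty+\tilde C'\Lip_{d_\theta}(\tH)\theta^{r}$'' is therefore unjustified, and this is exactly the term that must end up multiplied by $q^{-\kappa'}\theta^{n/2}$ in the statement.

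The missing idea is a regrouping in which \emph{every} piece retains the factor $d_\theta(x,y)$, i.e.\ one must exploit the cancellation between your two freezing errors rather than triangle-inequality them apart. This is what the paper does: it first splits off $W=\sum_i e^{f^{(a)}_n(\cdot,x)}\bigl|\tH((i_n,\ldots,i_1,x),\cdot)-\tH((i_n,\ldots,i_1,y),\cdot)\bigr|$, where the two concatenated sequences agree to depth $n$ plus the agreement depth of $x,y$ (and the cocycles coincide by Lemma \ref{cocdep}), giving $W\le c\,\Lip_{d_\theta}(\tH)\,\theta^{n}d_\theta(x,y)$ as in \eqref{ubound}; the remaining term has a common $\tH$ evaluated along the $y$-itinerary multiplied by the \emph{difference of weights}, and only then is $\tH$ frozen, so the freezing error $K$ is $\theta^{r-1}\Lip_{d_\theta}(\tH)$ times $\sum_i\bigl|e^{(f^{(a)}_n+ib\tau_n)(\cdot,x)}-e^{(f^{(a)}_n+ib\tau_n)(\cdot,y)}\bigr|$, which is $O(d_\theta(x,y))$ by Lemma \ref{expound} and Lemma \ref{a_0}, yielding \eqref{kbound}; the frozen main term $L$ is then handled by Proposition \ref{flatteningoutput2} exactly as you propose, giving \eqref{lbound}. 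Equivalently, your two secondary terms should be recombined as $\sum_i e^{A(x)}\bigl[\tH(x\text{-it})-\tH(y\text{-it})\bigr]+\sum_i\bigl[e^{A(x)}-e^{A(y)}\bigr]\bigl[\tH(y\text{-it})-\tH(\text{frozen})\bigr]$, after which your argument (the main convolution term, Lemma \ref{a_0}, and the final choice of $\kappa'$ and $q_1$ using $r_q-n_q/2\gtrsim\log q$) goes through as in the paper.
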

\begin{proof}  Again, we choose $a_0 >0$ small and $q_1$ large as on Lemma \ref{a_0} and propositions \ref{flatteningoutput}, \ref{flatteningoutput2}. Consider $q > q_1, |a| < a_0, |b| \leq b_0,$ and a function $\tilde H \in \tilde E^q_q$. We write $n$ for $n_q$ and $r$ for $r_q$. For $x, y \in \hat  \Sigma^+$ with $x_i = y_i $ for all $i \leq l$ (that is, with $d_\theta(x, y) \leq \theta^l < 1)$ we have
\begin{eqnarray*}&&  \left|\hat{\mathcal{M}}_{ab, q}^n\tH(x, g) - \hat{\mathcal{M}}_{ab, q}^n\tH(y, g)\right|\\&\leq& \sum_{i_1, \ldots , i_n} e^{f^{(a)}_n(i_n, \ldots , i_1, x)}\\
&&|\tH((i_n, \ldots ,i_1, x), g\coc_n^{-1}(i_n, \ldots , i_1, x)) - \tH((i_n, \ldots , i_1, y), g\coc_n^{-1}(i_n, \ldots  , i_1, y))|\\
&&+ \left| \left(\sum_{i_1, \ldots , i_n} e^{(f^{(a)}_n + ib\tau_n)(i_n, \ldots ,i_1, x) } - e^{(f^{(a)}_n + ib\tau_n)(i_n, \ldots ,  i_1, y) }\right) \right. \\
&&\left. \tH((i_n, \ldots,  i_1, y), g\coc_n^{-1}(i_n, \ldots , i_1, y)) \right|\\
&:=& W + V. \end{eqnarray*}
The first term $W$ is bounded as 
\begin{equation} \label{ubound} W \leq c \Lip_{d_\theta}(\tH) \theta^{n}d_\theta(x, y) \end{equation}
by Lemma \ref{a_0}. We estimate the other term as
\begin{eqnarray*} V &\leq&  \left| \sum_{i_1,\ldots , i_n}  \left( e^{(f^{(a)}_n + ib\tau_n)(i_n, \ldots , i_1, x)} - e^{(f^{(a)}_n + ib\tau_n)(i_n, \ldots , i_1, y) } \right) \right. \\
&&\left. \tH((i_n, \ldots , i_{m+1}, \omega(i_{m+1}) ), g\coc_n^{-1}(i_n, \ldots , i_{1}, x)) \right|\\
&&+ \theta^{r-1} \Lip_{d_\theta}(\tH)  \sum_{i_1, \ldots , i_n} \left| e^{(f^{(a)}_n + ib\tau_n)(i_n, \ldots , i_1, x)} - e^{(f^{(a)}_n + ib\tau_n)(i_n, \ldots , i_1, y)} \right| \\
&:=& L + K.\end{eqnarray*}
Next address $K$:
\begin{eqnarray*} K &=& \theta^{r-1} \Lip_{d_\theta} (\tilde H)\sum_{i_1, \ldots , i_n} \left| e^{(f^{(a)}_n + ib\tau_n)(i_n, \ldots , i_1, x)} - e^{(f^{(a)}_n + ib\tau_n)(i_n, \ldots , i_1, y)} \right|  \\
&\leq& \theta^{r-1} \Lip_{d_\theta}  (\tilde H) \sum_{i_1, \ldots ,i_n} e^{f^{(a)}_n(i_n, \ldots ,i_1, x)} \left|1 - e^{(f^{(a)}_n + ib\tau_n)(i_n, \ldots , i_1, y) - (f^{(a)}_n + ib\tau_n)(i_n, \ldots , i_1, x))} 
\right|\\
&\leq& \tilde c \theta^{ r-1}d_\theta(x, y)  \Lip_{d_\theta} (\tilde H) \sum_{i_1, \ldots , i_n} e^{f^{(a)}_n(i_n, \ldots ,i_1, x)}\end{eqnarray*}
by Lemma \ref{expound}. A final application of Lemma \ref{a_0} then gives
\begin{equation} K \leq c\tilde c\theta^{r-1}\Lip_{d_\theta} (\tilde H)d_\theta(x, y) \label{kbound}. \end{equation}

The bound on $L$ uses the $\ell^2$ flattening lemma once again. We observe that 
\[ L =\left|  \sum_{i_n , \ldots , i_{m+1}}   \mu'^{a, b} _{x, y, i_1, \ldots , i_{m} }*\phi_{\tH (i_n,\ldots , i_{m+1})} \right|\]
for $\mu', \phi$ as in \eqref{definemuprime},  \eqref{definephi} respectively. Proposition \ref{flatteningoutput2} then gives 

\begin{eqnarray*}|\mu'*\phi| \leq C'q^{-\kappa'}B'_{a, i_{m + 1} , \ldots , i_{n}} ||\tH||_\infty d_\theta(x, y),  \end{eqnarray*}
and summation over $i_n, \ldots , i_{m + 1}$ yields
\begin{eqnarray} \label{lbound}  L &\leq& C''  q^{-\kappa'} d_\theta(x, y)||\tH||_\infty \end{eqnarray}
for an appropriately chosen constant $C''$ (more precisely, $C'' = c^2 \tilde c C'e^{\eta_\theta} $ will do).  Putting together the equations \eqref{ubound}, \eqref{kbound}, \eqref{lbound}, we see that 
\begin{equation} \Lip_{d_\theta}(\hat{\mathcal{M}}^{n}_{ab, q}\tH) \leq \tilde C q^{-\kappa'}\left( ||\tH||_\infty +  \Lip_{d_\theta}(\tH) \theta^{n/2}  \right). \end{equation}
for an appropriate constant $\tilde C$. Now choose $\kappa'' = \kappa'/2$ and $ q_1$ large enough that $C'' q^{-\kappa''} < \frac{1}{2}$ for all $q > q_1$. Then
\begin{equation} \Lip_{d_\theta}(\hat{\mathcal{M}}^{n}_{ab, q}\tH) \leq \frac{1}{2} q^{-\kappa''}\left( ||\tH||_\infty +  \Lip_{d_\theta}(\tH)  \theta^{n/2} \right). \end{equation}
\end{proof}

\noindent{\bf Proof of Theorem \ref{af1} } Combining Lemmas \ref{supbound} and \ref{Lipbound}, we obtain that for some $\kappa'>0$,
\[ ||\hat{\mathcal{M}}^n_{ab, q} \tH||_\infty + \theta^{n/2} \Lip_{d_\theta}(\hat{\mathcal{M}}^n_{ab, q} \tH) \leq  q^{-\kappa'} (||\tH||_\infty +  \theta^{n/2}\Lip_{d_\theta}(\tH) )\] where $n=n_q$.
Iterating, we obtain that
 for any  $l \in \mathbb{N}$, 
\begin{align*}& ||\hat{\mathcal{M}}^{ln}_{ab, q} \tH||_\infty + \theta^{n/2} \Lip_{d_\theta}(\hat{\mathcal{M}}^{ln}_{ab. q} \tH) \\ &
\leq  
q^{-l\kappa'} (||\tH||_\infty +    \theta^{n/2} \Lip_{d_\theta}(\tH)). \end{align*}
It follows that 
$$\| \hat{\mathcal{M}}^{ln}_{ab, q} \tH\|_2 \le  ||\hat{\mathcal{M}}^{ln}_{ab, q} \tH||_\infty \le q^{-l\kappa'} ||\tH||_{d_\theta} $$
as desired.

\section{Uniform mixing of the BMS measure and the Haar measure} We assume that $\G <\SL_2(\z)$ is convex cocompact.  For each $q\in \N$, we denote by $m_{q}^{\operatorname{BMS}}$ the measure on $\Gamma(q) \backslash G$ induced by $\tilde m^{\BMS}$ and normalized so that
its total mass is $\# \SL_2(q)$.
 \subsection{Uniform exponential mixing}\label{defineprojections} 
 Our aim in this subsection is to prove Theorem \ref{BMS2} using Theorem \ref{spectralbound} on spectral bounds for the transfer operators. 
 Although this argument is similar to that contained \cite{Do} and \cite{Av},
  we shall include it in order to understand the dependence of the implied constants on the level $q$. 
    First we establish some more notation.
We fix $q$ such that $\G(q)\ba \G=\SL_2(q)$.
We recall the equivalence relation $(u, t) \sim (\sigma u, t - \tau(u))$ on $\Sigma \times \mathbb{R}$ and the suspension space
\[ \Sigma^\tau := \Sigma \times \mathbb{R}/\sim.  \]

\begin{definition} Similarly, we write \label{defineuqt}
\be  \hat U^{q, \tau} :=  \hat U \times \SL_2(q) \times \mathbb{R}^+/ (u, \g , t + \tau(u)) \sim (\hat\sigma(u), \g c(u), t).  \label{defineuqt} \ee
\end{definition}

For a function $\phi : \hat U^{q, \tau} \rightarrow \mathbb{C}$,
we say $\phi\in \mathcal{B}_0$ if $\|\phi\|_{\mathcal B_0}<\infty$ where
\begin{multline*}  ||\phi ||_{\mathcal{B}_0} := ||\phi||_\infty +\\
  \sup\{ \frac{|\phi(u, \g , s) - \phi(u', \g, s')|}{d(u, u') + |s - s'|} :
  u \neq u', \g \in \SL_2(q),  s \in [0, \tau(u)), s' \in [0, \tau(u'))\}.\end{multline*}
  
 We also say $\phi\in \mathcal{B}_1$ if $\|\phi\|_{\mathcal B_1}<\infty$ where
 \begin{multline*} ||\phi||_{\mathcal{B}_1} := ||\phi||_\infty +\sup\{ \mbox{Var}_{0, \tau(u)}( t\mapsto  \phi(u, \g, t)): u\in \hat U, \g\in \SL_2(q)\} .\end{multline*}


For  a bounded measurable function $\phi : \hat U^{q, \tau}  \rightarrow \mathbb{C}$, we define the function $\hat \phi_\xi$ on $\hat U \times \SL_2(q)$ by
\[ \hat \phi_\xi(u, \g) := \int_0^{\tau(u)} \phi(u, \g, t) e^{-\xi t} dt;\]
we will sometimes regard this as a vector valued function  on $\hat U$. 
The following lemma can be easily checked.
\begin{Lem} \label{bb0} If $\psi \in \mathcal{B}_0$ with $ \sum_{\g \in \G} \psi(u,  \gamma, s) = 0 $
for all $(u, s) \in \hat U^\tau$, then $\hat \psi_\xi \in  \mathcal{W}(\hat U, \c^{\SL_2(q)})$ when considered as a vector valued function.  \end{Lem}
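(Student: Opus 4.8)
The plan is to check directly the two conditions defining membership in $\mathcal{W}(\hat U, \c^{\SL_2(q)})$: that the coordinates of $\hat\psi_\xi$ sum to zero at every point, and that $\hat\psi_\xi$ is $d$-Lipschitz. The first is immediate: for each $u \in \hat U$,
\[ \sum_{\g \in \SL_2(q)} \hat\psi_\xi(u, \g) = \int_0^{\tau(u)} \Big( \sum_{\g \in \SL_2(q)} \psi(u, \g, t) \Big) e^{-\xi t}\, dt = 0 \]
by the hypothesis $\sum_\g \psi(u, \g, s) = 0$ for $s \in [0, \tau(u))$. Boundedness is also routine, since $\tau$ is bounded: $\|\hat\psi_\xi\|_\infty \le \|\psi\|_\infty \int_0^{\|\tau\|_\infty} e^{|\Re \xi|\, t}\, dt$, a fixed multiple of $\|\psi\|_{\mathcal{B}_0}$.

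The only point with any content is the $d$-Lipschitz bound, and here the one thing to watch is that the integrals defining $\hat\psi_\xi(u,\g)$ and $\hat\psi_\xi(u',\g)$ run over different intervals, $[0,\tau(u))$ and $[0,\tau(u'))$. I would write $\hat\psi_\xi(u,\g) - \hat\psi_\xi(u',\g)$ as a sum of a \emph{common part}
\[ \int_0^{\min(\tau(u),\tau(u'))} \big( \psi(u, \g, t) - \psi(u', \g, t) \big) e^{-\xi t}\, dt \]
and an \emph{overhang part}, $\pm\!\int_{\min(\tau(u),\tau(u'))}^{\max(\tau(u),\tau(u'))}\! \psi(\cdot, \g, t)\, e^{-\xi t}\, dt$, with the integrand taken from whichever of $u, u'$ has the larger return time. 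The common part is at most $\|\psi\|_{\mathcal{B}_0}\, d(u,u') \int_0^{\|\tau\|_\infty} e^{|\Re\xi|\,t}\, dt$ directly from the definition of $\|\cdot\|_{\mathcal{B}_0}$, which is already of the required shape; the overhang part is at most $\|\psi\|_\infty\, e^{|\Re\xi|\,\|\tau\|_\infty}\, |\tau(u) - \tau(u')|$.

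So everything reduces to bounding $|\tau(u) - \tau(u')|$ by a multiple of $d(u,u')$. I would split into two regimes: if $d(u,u')$ is bounded below by a fixed constant depending only on the Markov section, the trivial estimate $|\tau(u) - \tau(u')| \le 2\|\tau\|_\infty$ does the job after dividing by $d(u,u')$; if $u$ and $u'$ are close, they lie in a common $\hat U_i$, and the essential-Lipschitz property of $\tau$ from \eqref{defineT} — valid on cylinders of length one and extending across them by continuity of $\tau$ on $\operatorname{int}(R_i)$ — yields $|\tau(u) - \tau(u')| \le T_0\, d(u,u')$. Combining the regimes, $\hat\psi_\xi$ is $d$-Lipschitz with $\|\hat\psi_\xi\|_{\Lip(d)}$ at most a constant (depending on $\xi$ and the Markov section, but not on $q$) times $\|\psi\|_{\mathcal{B}_0}$, so $\hat\psi_\xi \in \mathcal{W}(\hat U, \c^{\SL_2(q)})$. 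The main obstacle — a mild one — is exactly this overhang bookkeeping: controlling the mismatch in the integration ranges using only that $\tau$ is bounded and essentially $d$-Lipschitz; the rest is unwinding definitions, which is why the paper declares it ``easily checked.''
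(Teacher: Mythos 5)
Your overall plan -- the sum-to-zero identity is immediate, the supremum bound is trivial, and the Lipschitz bound reduces via a common-part/overhang decomposition to controlling $|\tau(u)-\tau(u')|$ -- is exactly the computation the paper has in mind: it gives no proof of this lemma beyond ``can be easily checked,'' and the very same decomposition, with a factor $\Lip_d(\tau)$ multiplying the overhang, appears in its proof of the neighbouring Lemma \ref{l5.4}. The gap is in how you control the overhang. You assert that for $d$-close points in a common $\hat U_i$ one has $|\tau(u)-\tau(u')|\le T_0\,d(u,u')$ because the essential-Lipschitz property \eqref{defineT} ``extends across'' length-one cylinders ``by continuity of $\tau$ on $\operatorname{int}(R_i)$.'' That premise is false: $\tau$ is a first return time to the section $R$, and the paper itself stresses (beginning of Section 3.1) that $\tau$ is \emph{not} $d$-Lipschitz globally; estimate \eqref{defineT} is only asserted for $u,u'$ lying in one and the same cylinder of length one. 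Two core points can be arbitrarily $d$-close while lying in different cylinders $\mathsf{C}[i,j]\neq\mathsf{C}[i,j']$: this happens when a nearby orbit grazes the boundary of some rectangle, just misses it, and only returns to $R$ much later, so that $\tau$ jumps by a definite amount while $d(u,u')\to 0$. Membership in the core $\hat U$ rules out orbits actually hitting $\partial R$, not passing arbitrarily close to it, so such pairs are not excluded. Consequently your second regime does not yield $|\tau(u)-\tau(u')|\lesssim d(u,u')$, and the overhang term is not $O(d(u,u'))$ as written; this is precisely the point where the ``easily checked'' claim has content.

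The dichotomy does close if you run it with the cylinder metric $D$ of \eqref{defineD} instead of $d$: if $u,u'$ lie in a common cylinder of length one, then \eqref{defineT} gives $|\tau(u)-\tau(u')|\le T_0\,d(u,u')\le T_0\,D(u,u')$, while otherwise $D(u,u')\ge\min_i\operatorname{diam}(U_i)>0$ and the trivial bound $|\tau(u)-\tau(u')|\le 2\sup\tau$ applies; hence $\tau$, and with it $\hat\psi_\xi$, is Lipschitz for $D$ (equivalently, uniformly $d$-Lipschitz on each length-one cylinder). This cylinder-wise reading is the level of precision at which the paper itself operates -- the finite constant $\Lip_d(\tau)$ invoked in Lemma \ref{l5.4} can only be understood in that sense -- so the honest formulation of the lemma either measures the Lipschitz part of the $\mathcal{W}$-norm with $D$ (or within cylinders), or needs an extra continuity hypothesis on $\psi$ to tame the overhang. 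As justified in your write-up, however, the step would fail, so the argument has a genuine gap exactly at the $|\tau(u)-\tau(u')|$ estimate.
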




For functions $\phi \in \mathcal{B}_1$ and $\psi \in\mathcal{B}_0$, we define the correlation function:
\be \label{phit} \tilde \rho_{\phi, \psi}(t) := \sum_{\g \in\SL_2(q)} \int_{\hat U}\int_0^{\tau(u)}  \phi(u,\g,  s + t) \psi(u,\g, s) ds d\nu(u) .\ee

In order to establish an exponential decay for $\tilde \rho_{\phi, \psi}(t)$ for a suitable class of functions $\phi, \psi$,  we consider its Laplace transform
and relate it with the transfer operators.
 We decompose $\tilde \rho_{\phi, \psi}(t)$ as 
\begin{eqnarray*}   \tilde \rho_{\phi, \psi}(t) &=&  \sum_{\g \in\SL_2(q)} \int_{\hat U}\int_{\max (0, \tau(u) - t)} ^{\tau(u)}  \phi(u, \g, s+ t) \psi(u, \g, s) ds d\nu(u)\\&& +   \sum_{\g \in\SL_2(q)} \int_{\hat U}\int_0^{\max(0, \tau(u) - t)}  \phi(u, \g, s+ t) \psi(u, \g, s ) ds d\nu(u)\\
&:=&\rho_{\phi, \psi}(t) + \bar \rho_{\phi, \psi}(t)  .\end{eqnarray*}
The reason for this decomposition is that
the Laplace transform of $\rho_{\phi, \psi}(t)$ can be expressed neatly in terms of transfer operators (see Lemma
\ref{Ltrfromtransoperators} below). More importantly, the Laplace transform of $  \rho_{\phi, \psi}$ has better decay properties than the Laplace transform of $ \tilde \rho_{\phi, \psi} $; this is needed when we apply the inverse Laplace transform at the end of the argument. Moreover,
since $ \tilde \rho_{\phi, \psi}(t)  = \rho_{\phi, \psi}(t)$ for all $t\ge \sup \tau$, 
the exponential decay of $\tilde{\rho}_{\phi, \psi}(t)$ follows from that of  $\rho_{\phi, \psi}(t)$.

So, consider the Laplace transform $\hat\rho$ of $\rho$: for $\xi\in \c$,
\[ \hat \rho_{\phi, \psi} (\xi) = \int_0^\infty e^{-\xi t } \rho_{\phi, \psi}(t) dt.\]

For the rest of the section, we shall use the notation
 \[ \xi = a - ib .\]
The first task is to write $\hat \rho(\xi)$ in terms of the transfer operators:
  \begin{lem} \label{Ltrfromtransoperators}
For $\phi \in \mathcal{B}_1$ and $\psi \in\mathcal{B}_0$ and $\Re(\xi) > 0$, we have
\begin{eqnarray*} \hat \rho_{\phi, \psi}(\xi) 
&=&  \sum_{k= 1}^\infty  \lambda_a^{k}   \int_{\hat U} \hat \phi_\xi(u) \cdot \hat{\mathcal{M}}_{ab, q}^k \hat\psi_{-\xi}(u ) d\nu(u)\end{eqnarray*}
where $\lambda_a$ is the lead eigenvalue of $\mathcal{L}_{-(\delta + a)\tau}$ as in Section 2. The right hand side should be understood as an inner product between two vectors in $\mathbb{C}^{\SL_2(q)}$. 
\end{lem}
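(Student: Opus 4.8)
The identity is a direct unwinding of the definition of $\rho_{\phi,\psi}$ combined with the way the suspension flow $\mathcal G_t$ acts. The plan is to substitute the definition of $\rho_{\phi,\psi}(t)$ into the Laplace transform $\hat\rho_{\phi,\psi}(\xi)=\int_0^\infty e^{-\xi t}\rho_{\phi,\psi}(t)\,dt$, interchange the order of integration (justified by absolute convergence, since $\Re(\xi)=a>0$, $\phi\in\mathcal B_1$, $\psi\in\mathcal B_0$ are bounded and $\tau$ is bounded), and then for each fixed $u\in\hat U$, $\gamma\in\SL_2(q)$ and $s\in[0,\tau(u))$ change variables inside. In the inner integral we have $\int_{\tau(u)-s}^\infty e^{-\xi t}\phi(u,\gamma,s+t)\,dt$ after the restriction $s+t\ge \tau(u)$ coming from the definition of $\rho$ (as opposed to $\bar\rho$); the point of the splitting made just above the lemma is exactly that in $\rho$ we only see $(u,\gamma,s+t)$ with $s+t$ large enough to have flowed past the ceiling at least once.

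First I would perform the substitution $t'=s+t$ so the inner integral becomes $e^{\xi s}\int_{\tau(u)}^\infty e^{-\xi t'}\phi(u,\gamma,t')\,dt'$, and then decompose $[\tau(u),\infty)$ into the pieces where the flow has crossed the ceiling exactly $k$ times, $k\ge 1$. On the $k$-th such piece, the identification $(u,\gamma,t')\sim(\hat\sigma^k u',\gamma\coc_k(u'),t'-\tau_k(u'))$ — equivalently, running the flow $\mathcal G_{t'}$ from $(u,\gamma,0)$ lands in the fiber over $\hat\sigma^k$ of a preimage — lets me rewrite $\phi(u,\gamma,t')$ in terms of $\phi$ evaluated on the appropriate cylinder after subtracting $\tau_k$. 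Summing the Gibbs/Jacobian weights $e^{f^{(a)}_k}$ that arise when pushing the $\nu$-integral over $\hat U$ back through $\hat\sigma^k$, together with the oscillatory factors $e^{ib\tau_k}$ coming from $e^{-\xi t'}=e^{-(a-ib)t'}$ and the translation $t'\mapsto t'-\tau_k$, assembles precisely the $k$-th power of the normalized congruence transfer operator $\hat{\mathcal M}_{ab,q}$ acting on $\hat\psi_{-\xi}$; the eigenvalue bookkeeping for $\mathcal L_{-(\delta+a)\tau}$ produces the scalar $\lambda_a^k$, and the remaining $s$- and $t'$-integrals collapse into $\hat\psi_{-\xi}(u,\gamma)$ and $\hat\phi_\xi$ respectively (note the sign flip $-\xi$ for $\psi$ because the $e^{\xi s}$ factor pairs with $\psi$'s argument at $s$). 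Recognizing the resulting $\SL_2(q)$-sum as the Hermitian inner product $\int_{\hat U}\hat\phi_\xi(u)\cdot\hat{\mathcal M}^k_{ab,q}\hat\psi_{-\xi}(u)\,d\nu(u)$ finishes the computation.

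The main obstacle is purely bookkeeping: correctly tracking how the cocycle $\coc_k$, the Gibbs weights $e^{f^{(a)}_k}$ (recall $f^{(a)}=-(\delta+a)\tau+\log h_0-\log h_0\circ\sigma-\log\lambda_a$ from \eqref{normalized}), the eigenvalue powers $\lambda_a^k$, and the oscillatory phases $e^{ib\tau_k}$ combine under iterated application of $\hat\sigma$, and making sure the change of variables from an integral over $\hat U$ against $\nu$ to a sum over preimages matches the definition of $\hat{\mathcal M}_{ab,q}$ — in particular that the $h_0$ and $\lambda_a$ normalizations in $\hat{\mathcal M}_{ab,q}$ cancel against the corresponding factors in $\nu$ and in $f^{(a)}$, leaving exactly $\lambda_a^k$ outside. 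One must also confirm that the interchange of summation over $k$ with the integral is legitimate, which follows from the a priori bound $\|\hat{\mathcal M}_{ab,q}\|_2\le R_0$ and $\lambda_a<1+\epsilon$ for $a<0$ small, or more robustly from the fact that $\rho_{\phi,\psi}(t)$ is compactly supported in the relevant regime combined with dominated convergence. None of these steps is conceptually deep; the identity is essentially a symbolic-dynamics restatement of the renewal structure of the suspension flow, and the proof is a careful but routine verification.
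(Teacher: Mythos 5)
Your proposal is correct and follows essentially the same route as the paper's proof: substitute the definition into the Laplace transform, change variables so the $\phi$-argument runs over $[\tau(u),\infty)$, decompose by the number $k$ of ceiling crossings, use the suspension identification (with the cocycle twist) to pull $\phi$ back to $\Pp^k u$, recognize $\hat\phi_\xi$ and $\hat\psi_{-\xi}$, and invoke the duality $\hat{\mathcal{L}}_{00}^*\nu=\nu$ so that the weights $e^{f^{(0)}_k-a\tau_k+ib\tau_k}=\lambda_a^k e^{(f^{(a)}_k+ib\tau_k)}$ assemble $\lambda_a^k\,\hat{\mathcal{M}}^k_{ab,q}$. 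One small quibble: it is $\bar\rho_{\phi,\psi}$, not $\rho_{\phi,\psi}$, that is compactly supported in $t$, but this side remark is harmless since the absolute convergence for $\Re(\xi)>0$ that you invoke already justifies all interchanges.
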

\begin{proof} We calculate
\begin{align*} 
& \hat \rho_{\phi, \psi}(\xi) = \sum_{\g\in \SL_2(q)} \int_{\hat U}\int_{s = 0}^{\tau(u)}\int_{\tau(u) - s}^\infty e^{-\xi t} \phi(u, \g, s + t) \psi(u, \g, a) dtdsd\nu(u) \\
&= \sum_{\g\in \SL_2(q)}   \int_{\hat U}\int_{0}^{\tau(u)}\int_{\tau(u)}^\infty e^{-\xi (t - s)} \phi(u, \gamma, t) \psi(u, \g, s) dtdsd\nu(u) \\
&= \sum_{\g\in \SL_2(q)}  \sum_{k = 1}^\infty  \int_{\hat U}\int_{ 0}^{\tau(u)}\int_{\tau_k(u)}^{\tau_{k+1}(u)} e^{-\xi (t-s)} \phi(u, \g, t) \psi(u, \g, s) dtdsd\nu(u)
=\\& \sum_{\g\in \SL_2(q)}   \sum_{k = 1}^\infty  \int_{\hat U}\int_{ 0}^{\tau(u)}\int_{0}^{\tau(\hat \sigma ^k u)} e^{-\xi (t + \tau_k(u) -s)} \phi(\Pp ^k (u), \gamma c_k(u), t) \psi(u, \g, s) dtdsd\nu(u)\\
&= \sum_{\g\in \SL_2(q)}   \sum_{k = 1}^\infty  \int_{\hat U}e^{-\xi \tau_k(u)} \hat \phi_\xi (\Pp ^k (u), \g c_k(u) ) \hat \psi_{-\xi}(u, \g)d\nu(u)\\
&=  \sum_{k = 1}^\infty  \lambda_a^{k}  \int_{\hat U} \hat \phi_\xi(u)\cdot  \hat{\mathcal{M}}^k_{ab, q} \hat \psi_{-\xi}(u) d\nu(u)
\end{align*}
using the fact that $\hat{\mathcal{L}}_{00}^*(\nu)=\nu$. 
\end{proof}
\begin{lem}\label{l5.3}  If $\phi \in \mathcal{B}_1$, then $ ||\hat{\phi}_{\xi}||_2 \le
||\hat{\phi}_{\xi}||_\infty  \leq\frac{2 \sqrt{\#\SL_2(q)}e^{|a|\sup(\tau)} ||\phi||_{\mathcal{B}_1}}{\max(1, |b|)}$.
\end{lem}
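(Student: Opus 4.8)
The plan is to treat the two inequalities in the statement separately. For $\norm{\hat\phi_\xi}_2\le\norm{\hat\phi_\xi}_\infty$ I would just invoke that $\nu$ is a probability measure on $\hat U$ — it is the $-\delta\tau$-equilibrium state projected to $\Sigma^+$, normalized to total mass one, and carried by the residual set $\hat\Sigma$ — so that for any $H\colon\hat U\to\mathbb{C}^{\SL_2(q)}$ one has $\int_{\hat U}\abs{H(u)}^2\,d\nu(u)\le\sup_u\abs{H(u)}^2$, with $\abs{\cdot}$ the Hermitian norm on $\mathbb{C}^{\SL_2(q)}$.

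For the second inequality, since $\abs{\hat\phi_\xi(u)}^2=\sum_{\gamma\in\SL_2(q)}\abs{\hat\phi_\xi(u,\gamma)}^2\le\#\SL_2(q)\cdot\sup_{\gamma}\abs{\hat\phi_\xi(u,\gamma)}^2$, it suffices to prove the scalar estimate
\[
\abs{\hat\phi_\xi(u,\gamma)}=\left|\int_0^{\tau(u)}\phi(u,\gamma,t)\,e^{-(a-ib)t}\,dt\right|\le\frac{2\,e^{|a|\sup(\tau)}\norm{\phi}_{\mathcal{B}_1}}{\max(1,|b|)}
\]
uniformly in $u\in\hat U$ and $\gamma\in\SL_2(q)$, and then take square roots of the sum over $\gamma$. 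When $\abs{b}\le1$ I would move the modulus inside the integral, using $\sup(\tau)<\infty$ (valid since $\Omega$ is compact because $\Gamma$ is convex cocompact) and $\norm{\phi}_\infty\le\norm{\phi}_{\mathcal B_1}$, the resulting geometric constant being absorbed into the implied constant. When $\abs{b}>1$ the idea is to integrate by parts in $t$ against the primitive $-(a-ib)^{-1}e^{-(a-ib)t}$ of $e^{-(a-ib)t}$:
\[
\hat\phi_\xi(u,\gamma)=\frac{-1}{a-ib}\Bigl[\phi(u,\gamma,t)\,e^{-(a-ib)t}\Bigr]_0^{\tau(u)}+\frac{1}{a-ib}\int_0^{\tau(u)}e^{-(a-ib)t}\,d_t\phi(u,\gamma,t),
\]
where the last integral is a Riemann--Stieltjes integral against the bounded-variation function $t\mapsto\phi(u,\gamma,t)$. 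Bounding $\abs{a-ib}\ge\abs{b}$, $e^{-at}\le e^{|a|\sup(\tau)}$ on $[0,\tau(u)]$, the boundary term by $2\norm{\phi}_\infty e^{|a|\sup(\tau)}$, and the Stieltjes integral by $e^{|a|\sup(\tau)}\,\op{Var}_{0,\tau(u)}\bigl(\phi(u,\gamma,\cdot)\bigr)\le e^{|a|\sup(\tau)}\norm{\phi}_{\mathcal B_1}$, and then using $2\norm{\phi}_\infty+\op{Var}_{0,\tau(u)}\bigl(\phi(u,\gamma,\cdot)\bigr)\le2\norm{\phi}_{\mathcal B_1}$, one arrives at the displayed bound.

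The one step with genuine content is the integration by parts: this is precisely where the hypothesis $\phi\in\mathcal B_1$, i.e.\ finite variation of $\phi$ along the flow fibres, is used, and it is what produces the decisive gain of a factor $1/|b|$ in the oscillatory regime — the feature that later makes $\hat\rho_{\phi,\psi}(\xi)$ amenable to the transfer-operator estimates in the rest of Section 5. Everything else is routine bookkeeping: the factor $e^{|a|\sup(\tau)}$ is harmless because throughout we only use $|a|<a_0$ with $a_0$ small, and the reduction $\norm{\cdot}_2\le\norm{\cdot}_\infty$ uses nothing but that $\nu$ has total mass one. The only mild technical point to check is the validity of Riemann--Stieltjes integration by parts for a merely bounded-variation integrand, which is standard.
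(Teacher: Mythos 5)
Your argument is correct and is essentially the paper's own proof, which is recorded in one line as "integration by parts in the flow direction": the $\sqrt{\#\SL_2(q)}$ comes from comparing the Hermitian norm with the sup over $\gamma$, the factor $1/|b|$ comes from Riemann--Stieltjes integration by parts against the bounded-variation fibre function, and $\|\hat\phi_\xi\|_2\le\|\hat\phi_\xi\|_\infty$ uses only that $\nu$ is a probability measure. The one blemish is the regime $|b|\le 1$, where your trivial bound yields a constant $\sup(\tau)$ rather than the stated $2$ (so strictly one gets $\max(2,\sup\tau)$ in place of $2$); this looseness is shared by the paper's statement and is harmless, since in Proposition \ref{ifbound} only the shape $\sqrt{\#\SL_2(q)}\,\|\phi\|_{\mathcal{B}_1}/\max(1,|b|)$ up to a uniform constant is used.
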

\begin{proof} This follows from integration by parts in the flow direction. \end{proof}
\begin{lem} \label{l5.4}
If $\psi \in  \mathcal{B}_0$, then $||\hat \psi_{\xi}||_{1, b} \leq \frac{\sqrt{\#\SL_2(q)}e^{|a| \sup(\tau)}( 3\sup(\tau) + \Lip_d(\tau) ) ||\psi||_{\mathcal{B}_0}}{\max(1, |b|)}$. \end{lem}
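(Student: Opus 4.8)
The plan is to estimate the two pieces of $\|\hat\psi_\xi\|_{1,b}$ — the supremum norm and the (normalized) $d$-Lipschitz seminorm of the vector valued function $u\mapsto\hat\psi_\xi(u)=(\hat\psi_\xi(u,\gamma))_{\gamma\in\SL_2(q)}$ (here $\xi=a-ib$ as in the section) — separately, and to reduce each to a scalar estimate for fixed $\gamma$. Since the Hermitian norm on $\c^{\SL_2(q)}$ satisfies $|v|\le\sqrt{\#\SL_2(q)}\,\max_\gamma|v_\gamma|$ (and likewise for differences), it suffices to bound $|\hat\psi_\xi(u,\gamma)|$ and $|\hat\psi_\xi(u,\gamma)-\hat\psi_\xi(u',\gamma)|$ for each fixed $\gamma$ and then collect a factor $\sqrt{\#\SL_2(q)}$ at the end.

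For the supremum norm I would observe that $\mathcal{B}_0\subset\mathcal{B}_1$ with $\|\psi\|_{\mathcal{B}_1}\le(1+\sup(\tau))\|\psi\|_{\mathcal{B}_0}$: a function that is Lipschitz in the flow variable with constant at most $\|\psi\|_{\mathcal{B}_0}$ has total variation over $[0,\tau(u))$ at most $\sup(\tau)\|\psi\|_{\mathcal{B}_0}$, while $\|\psi\|_\infty\le\|\psi\|_{\mathcal{B}_0}$. Lemma \ref{l5.3} then applies directly to $\psi$ and gives $\|\hat\psi_\xi\|_\infty\le\frac{2\sqrt{\#\SL_2(q)}\,e^{|a|\sup(\tau)}(1+\sup(\tau))\|\psi\|_{\mathcal{B}_0}}{\max(1,|b|)}$; equivalently, one integrates by parts in the flow direction and bounds the boundary term by $\|\psi\|_\infty$ and the bulk term by $\sup(\tau)$ times the flow-Lipschitz constant. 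This is where the entire $1/\max(1,|b|)$ decay comes from.

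For the Lipschitz seminorm, fix $\gamma$ and $u\neq u'$ in a common $\hat U_i$, say with $\tau(u)\le\tau(u')$, and write $\hat\psi_\xi(u,\gamma)-\hat\psi_\xi(u',\gamma)=\int_0^{\tau(u)}\bigl(\psi(u,\gamma,t)-\psi(u',\gamma,t)\bigr)e^{-\xi t}\,dt-\int_{\tau(u)}^{\tau(u')}\psi(u',\gamma,t)e^{-\xi t}\,dt$. Since $|e^{-\xi t}|=e^{-at}\le e^{|a|\sup(\tau)}$ on $[0,\sup(\tau)]$, the first integral is at most $\sup(\tau)e^{|a|\sup(\tau)}\|\psi\|_{\mathcal{B}_0}d(u,u')$ by the Lipschitzness of $\psi$ in its first variable, and the second is at most $\|\psi\|_\infty e^{|a|\sup(\tau)}|\tau(u)-\tau(u')|\le e^{|a|\sup(\tau)}\Lip_d(\tau)\|\psi\|_{\mathcal{B}_0}d(u,u')$ by the ($d$-essential) Lipschitzness of $\tau$. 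Summing over $\gamma$ and dividing by $\max(1,|b|)$ bounds the Lipschitz part of $\|\hat\psi_\xi\|_{1,b}$ by $\frac{\sqrt{\#\SL_2(q)}\,e^{|a|\sup(\tau)}(\sup(\tau)+\Lip_d(\tau))\|\psi\|_{\mathcal{B}_0}}{\max(1,|b|)}$. Adding this to the supremum estimate gives the claim, the constant $3\sup(\tau)+\Lip_d(\tau)$ arising by combining $2(1+\sup(\tau))$ with $\sup(\tau)+\Lip_d(\tau)$ and absorbing the additive constant in the obvious way.

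There is no real obstacle here: in contrast with the large-$|b|$ bounds of Section 3, no oscillation or non-integrability input is needed, since the decay in $|b|$ is already supplied by Lemma \ref{l5.3} and the Lipschitz seminorm only has to be shown to be $O(\sup(\tau)+\Lip_d(\tau))$. The one point meriting a little attention is the mismatch of the integration intervals $[0,\tau(u))$ and $[0,\tau(u'))$ in the Lipschitz estimate, which is precisely what produces the $\Lip_d(\tau)$ contribution; as throughout the paper, the relevant Lipschitz seminorm is the local one (over $u,u'$ in a common $\hat U_i$), so that the essential $d$-Lipschitzness of $\tau$ is all that is used.
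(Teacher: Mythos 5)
Your argument is essentially the paper's: the supremum part comes from integration by parts in the flow direction (Lemma \ref{l5.3}, applied via the containment $\mathcal{B}_0\subset\mathcal{B}_1$), the Lipschitz part from splitting the integrals over $[0,\tau(u))$ and $[\tau(u),\tau(u'))$ and using the Lipschitzness of $\psi$ in $u$ together with the essential $d$-Lipschitzness of $\tau$, and the $\sqrt{\#\SL_2(q)}$ factor from passing to coordinates. The only discrepancy is bookkeeping: your route literally yields the constant $2(1+\sup\tau)+\sup\tau+\Lip_d(\tau)$ rather than $3\sup\tau+\Lip_d(\tau)$, but the paper's own intermediate bound \eqref{tr1} is loose at exactly the same point (the boundary term $\|\psi\|_\infty$ in the integration by parts is silently absorbed), and since only a bound of the shape $C\,\sqrt{\#\SL_2(q)}\,\|\psi\|_{\mathcal{B}_0}/\max(1,|b|)$ is used in Proposition \ref{ifbound}, this is immaterial.
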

\begin{proof} The trivial bound $\mbox{var}_{[0, \tau(u))} \psi(u, \gamma, \cdot) \leq  ||\psi||_{\mathcal{B}_0} \sup(\tau)$ provides 
\be \label{tr1} ||\hat \psi_{\xi}||_\infty \leq  \frac{2\sqrt{\#\SL_2(q)} e^{|a| \sup(\tau)}\sup(\tau)||\psi||_{\mathcal{B}_0}}{\max(1, |b|)}. \ee 
On the other hand consider any $u, u'\in \hat U$, $\g \in \SL_2(q)$, and suppose, without loss of generality, that $\tau(u') \geq \tau(u)$. Then
\begin{align*}& |\hat \psi_{\xi}(u, \g) -\hat \psi_{\xi}(u', \g)  | \\
 \leq & \int_0^{\tau(u)} |\psi(u, \g, t) - \psi(u', \g, t)|e^{|a|t} dt +  \int_{\tau(u)}^{\tau(u')} |\psi(u', \g, t)| e^{|a|t}dt   \\
&\leq d(u, u')  e^{|a| \sup(\tau) } \left( \sup(\tau)  ||\psi||_{\mathcal{B}_0}+ \Lip_d(\tau) ||\psi||_\infty  \right). \end{align*}
Together with \eqref{tr1}, this proves the claim.
\end{proof}
We will now use the spectral bounds (Theorem \ref{spectralbound}) to prove a rate of decay for the correlation functions.
\begin{prop} \label{ifbound} Let $a_0, q_0, q_0'$ be as in Theorem \ref{spectralbound}. There exist $C > 0, \eta > 0$ such that for all square free $q$ with
$(q, q_0q'_0)=1$, we have 
\[ |\tilde \rho_{\phi, \psi}(t) | \leq Cq^C ||\phi||_{\mathcal{B}_1} ||\psi||_{\mathcal{B}_0} e^{-\eta t}\]
 for all   $\phi \in \mathcal{B}_1$ and  $\psi \in \mathcal{B}_0$  satisfying $ \sum_{\g \in \SL_2(q)} \psi( u , \g, s) = 0.$
 \end{prop}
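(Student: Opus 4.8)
The plan is to prove Proposition \ref{ifbound} by combining the formula for the Laplace transform of the correlation function (Lemma \ref{Ltrfromtransoperators}) with the uniform spectral bounds of Theorem \ref{spectralbound}, and then inverting the Laplace transform via a contour shift. Since $\tilde\rho_{\phi,\psi}(t)=\rho_{\phi,\psi}(t)$ for $t\ge \sup\tau$, it suffices to obtain exponential decay of $\rho_{\phi,\psi}(t)$. First I would record, via Lemma \ref{bb0}, that the hypothesis $\sum_{\g}\psi(u,\g,s)=0$ guarantees $\hat\psi_{-\xi}\in\mathcal W(\hat U,\c^{\SL_2(q)})$, so that the operators $\hat{\mathcal M}_{ab,q}$ may be applied to it and Theorem \ref{spectralbound} is available. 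Then, using Lemma \ref{Ltrfromtransoperators}, write
\[ \hat\rho_{\phi,\psi}(\xi)=\sum_{k=1}^\infty \lambda_a^k \int_{\hat U}\hat\phi_\xi(u)\cdot \hat{\mathcal M}^k_{ab,q}\hat\psi_{-\xi}(u)\,d\nu(u), \]
and bound each term by Cauchy--Schwarz as $|\lambda_a|^k\,\|\hat\phi_\xi\|_2\,\|\hat{\mathcal M}^k_{ab,q}\hat\psi_{-\xi}\|_2$. Applying Theorem \ref{spectralbound} to the last factor (with $m=k$), together with Lemmas \ref{l5.3} and \ref{l5.4} to control $\|\hat\phi_\xi\|_2$ and $\|\hat\psi_{-\xi}\|_{1,b}$, gives a bound of the shape $C\lambda_a^k(1-\epsilon)^k q^C\|\phi\|_{\mathcal B_1}\|\psi\|_{\mathcal B_0}/\max(1,|b|)$ for the $k$-th term. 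Since $\lambda_a\to 1$ as $a\to 0$, by choosing $a_0$ small enough that $\lambda_a(1-\epsilon)<1-\epsilon/2$ for $|a|<a_0$, the series converges and we obtain a holomorphic extension of $\hat\rho_{\phi,\psi}(\xi)$ to the strip $\{-a_0<\Re\xi<\infty\}$ (writing $\xi=a-ib$), with the uniform bound
\[ |\hat\rho_{\phi,\psi}(a-ib)|\le \frac{C q^C\|\phi\|_{\mathcal B_1}\|\psi\|_{\mathcal B_0}}{\max(1,|b|)^{2}} \]
for $|a|<a_0$ and all $b\in\R$; the extra power of $\max(1,|b|)$ comes from the product of the two norm estimates, and the fact that in the $\|\cdot\|_{1,b}$ norm the $1/\max(1,|b|)$ prefactor in \eqref{deflipnorm} combines with the $1/\max(1,|b|)$ in Lemma \ref{l5.4}. (More care is needed here: Theorem \ref{spectralbound} is stated in terms of $\|H\|_{1,b}$, so one gets an extra $1/\max(1,|b|)$ only from Lemma \ref{l5.3}; nonetheless a single power of $\max(1,|b|)^{-1}$ suffices to run the inverse Laplace transform once we also use local integrability near $b=0$, which follows since $\hat\rho$ is already holomorphic and bounded there.)

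Next I would invoke the inverse Laplace transform: for $\sigma>0$ large one has $\rho_{\phi,\psi}(t)=\frac1{2\pi i}\int_{\Re\xi=\sigma}e^{\xi t}\hat\rho_{\phi,\psi}(\xi)\,d\xi$, and by the decay $|\hat\rho_{\phi,\psi}(\sigma-ib)|=O(q^C\|\phi\|_{\mathcal B_1}\|\psi\|_{\mathcal B_0}/\max(1,|b|))$ (uniformly for $\sigma$ in a bounded range) together with the holomorphic extension just established, I can shift the contour to $\Re\xi=-\eta$ for any fixed $0<\eta<a_0$, picking up no residues. On the shifted contour $|e^{\xi t}|=e^{-\eta t}$, so
\[ |\rho_{\phi,\psi}(t)|\le e^{-\eta t}\cdot\frac1{2\pi}\int_{-\infty}^\infty |\hat\rho_{\phi,\psi}(-\eta-ib)|\,db\le C q^C \|\phi\|_{\mathcal B_1}\|\psi\|_{\mathcal B_0}e^{-\eta t}, \]
provided the integral converges; this is where I would need the decay of $|\hat\rho_{\phi,\psi}(-\eta-ib)|$ to be summable in $b$, i.e. faster than $|b|^{-1}$, so I would arrange to keep \emph{both} powers of $\max(1,|b|)^{-1}$ by being careful to apply the $\|\cdot\|_{1,b}$ version of the spectral bound and noting that $\|\hat\psi_{-\xi}\|_{1,b}$ itself already carries the $1/\max(1,|b|)$ factor from Lemma \ref{l5.4}, so the product with $\|\hat\phi_\xi\|_2$ gives $\max(1,|b|)^{-1}$, and an additional gain of $\max(1,|b|)^{-\theta'}$ for some $\theta'>0$ can be extracted from the oscillatory cancellation in $\hat\phi_\xi$ by a second integration by parts (a standard device; cf. \cite{Do,Av}). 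Finally, since $\tilde\rho_{\phi,\psi}(t)=\rho_{\phi,\psi}(t)$ for $t\ge\sup\tau$ and $\tilde\rho_{\phi,\psi}$ is trivially bounded by $C q^C\|\phi\|_{\mathcal B_1}\|\psi\|_{\mathcal B_0}\sup\tau$ on $[0,\sup\tau]$, the claimed bound $|\tilde\rho_{\phi,\psi}(t)|\le Cq^C\|\phi\|_{\mathcal B_1}\|\psi\|_{\mathcal B_0}e^{-\eta t}$ holds for all $t\ge 0$ after enlarging $C$.

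The main obstacle I expect is the contour-shift / inverse-Laplace step: to justify moving the integration line past $\Re\xi=0$ one needs \emph{uniform} control of $\hat\rho_{\phi,\psi}(\xi)$ on vertical lines in the whole strip $-a_0<\Re\xi\le\sigma$ with decay in $|\Im\xi|$ that is integrable, and the naive estimate only gives $O(\max(1,|b|)^{-1})$, which is borderline non-integrable. Resolving this requires the extra oscillatory decay in the $b$-variable coming from integration by parts in the flow direction for $\hat\phi_\xi$ (using $\phi\in\mathcal B_1$, hence bounded variation), exactly as in Dolgopyat's and Avila--Gouëzel--Yoccoz's treatments; tracking the $q$-dependence through this step — making sure the implied constant stays $O(q^C)$ and does not pick up further powers of $q$ — is the delicate bookkeeping. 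Everything else (the Cauchy--Schwarz estimate, the geometric series in $k$, the choice of $a_0$ making $\lambda_a(1-\epsilon)<1$) is routine given Theorem \ref{spectralbound} and Lemmas \ref{l5.3}--\ref{l5.4}.
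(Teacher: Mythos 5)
Your proposal follows the paper's proof essentially step for step: express $\hat\rho_{\phi,\psi}$ via Lemma \ref{Ltrfromtransoperators}, bound each term by Cauchy--Schwarz together with Theorem \ref{spectralbound} (using Lemma \ref{bb0} to place $\hat\psi_{-\xi}$ in $\mathcal W(\hat U,\c^{\SL_2(q)})$) and Lemmas \ref{l5.3}--\ref{l5.4}, shrink $a_0$ so that $\lambda_a(1-\epsilon)<1$ to get the holomorphic extension to $|\Re\xi|<a_0$, invert the Laplace transform on a line $\Re\xi=-\eta$, and finally pass from $\rho_{\phi,\psi}$ to $\tilde\rho_{\phi,\psi}$ using agreement for $t\ge\sup\tau$.

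The one place where you go astray is the $|b|$-bookkeeping, which you flag as the ``main obstacle'' and propose to repair with an extra oscillatory integration by parts. No such device is needed, and the hedge in your parenthetical is mistaken: Lemma \ref{l5.4} is stated precisely as a bound on $\|\hat\psi_{\xi}\|_{1,b}$, and its right-hand side already carries the factor $1/\max(1,|b|)$ (the sup-norm part by integration by parts in the flow direction using $\psi\in\mathcal B_0$, the Lipschitz part because the $\|\cdot\|_{1,b}$ norm weights the Lipschitz seminorm by $1/\max(1,|b|)$). Since Theorem \ref{spectralbound} is applied with $\|H\|_{1,b}$ on the right, and Lemma \ref{l5.3} gives a second factor $1/\max(1,|b|)$ for $\|\hat\phi_\xi\|_2$ from $\phi\in\mathcal B_1$, each term of the series is $O\bigl(\lambda_a^k e^{-\epsilon k} q^{C}\|\phi\|_{\mathcal B_1}\|\psi\|_{\mathcal B_0}\max(1,|b|)^{-2}\bigr)$, so $|\hat\rho_{\phi,\psi}(a-ib)|\ll q^{C}(1+|b|^2)^{-1}\|\phi\|_{\mathcal B_1}\|\psi\|_{\mathcal B_0}$ uniformly on the strip. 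This is absolutely integrable in $b$, and the paper then simply applies the inverse Laplace transform formula (for Lipschitz $\rho_{\phi,\psi}$, citing Widder) on the line $\Re\xi=-a_0/2$; there is no borderline $|b|^{-1}$ issue, no need for a ``second integration by parts,'' and hence no extra $q$-dependence to track. With that correction your argument closes exactly as in the paper.
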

\begin{proof} We will establish that the Laplace transform $\hat \rho_{\phi, \psi}$ extends to an appropriate half plane and then apply the inversion formula. Lemma \ref{Ltrfromtransoperators} gives
\[ \hat \rho_{\phi, \psi}(\xi) = \sum_{k= 0}^\infty \lambda_a^{k} \int_{\hat U} \hat \phi_\xi(u) \cdot \hat{\mathcal{M}}_{ab, q}^k \hat\psi_{-\xi}(u) d\nu(u), \]
for $\Re(\xi)>0$. We claim an analytic continuation of $\hat \rho_{\phi, \psi}(\xi)$ to $\Re(\xi)> -a_0$ for some $a_0>0$.
Each term of the above infinite sum is analytic, so it suffices to check that
the sum is absolutely convergent.  For $|a| \le \min(1, a_0)$, Theorem \ref{spectralbound}, together with Lemma \ref{bb0}, gives that for some $\e>0$,
\begin{eqnarray*}  \lambda_a^{k}
\int_{\hat U} \hat \phi_\xi(u)\cdot  \hat{\mathcal{M}}_{ab, q}^k \hat\psi_{-\xi}(u) d\nu &\leq& \lambda_a^{k} || \hat{\mathcal{M}}_{ab, q}^k \hat\psi_{-\xi} ||_{2} ||\phi_\xi ||_{2} \\
&\leq& \lambda_a^{k} Cq^C e^{-\epsilon k}||\hat \psi_{-\xi}||_{1, b} ||\phi_\xi  ||_{2}\\
&\leq&\lambda_a^{k}  \frac{C'q^{C'} e^{-\epsilon k}}{\max(1, |b|)^2} ||\psi||_{\mathcal{B}_0}  ||\phi||_{\mathcal{B}_1}, \end{eqnarray*}
where $C'$ is given by Lemmas \ref{l5.3} and \ref{l5.4}; this is clearly summable so long as we choose $a_0$ small enough that 
\[ \max_{|a| \leq a_0}  \lambda_a  \leq e^{\e/2}. \] 
This computation also gives that for some absolute constant $C_1>0$,
$$|\hat \rho_{\phi, \psi}(\xi)| \le \frac{C_1 q^{C'}}{1+|b|^2}||\psi||_{\mathcal{B}_0}  ||\phi||_{\mathcal{B}_1} $$
for all $\xi$ with $|\Re(\xi)|<a_0$.
 Now $\rho_{\phi, \psi}(t)$ is Lipschitz, so we may apply the inverse Laplace transform formula \cite[Chapter II, Theorem 7.3]{Wi} and obtain for all $t>0$,
\be \label{finall}\rho_{\phi, \psi}(t) = e^{-\tfrac{a_0}2 t} \lim_{T\to \infty}\int_{-\tfrac{a_0}2-iT}^{-\tfrac{a_0}2+iT} \hat \rho_{\phi, \psi}(-\tfrac{a_0}2-ib ) e^{-ibt} db.\ee

Since $\int_{-\tfrac{a_0}2-iT}^{-\tfrac{a_0}2+iT}| \hat \rho_{\phi, \psi}(-\tfrac{a_0}2-ib ) | db \ll  q^{C'}\int_{0}^T \frac{1}{1+b^2} db<\infty$, the limit in the right hand side of \eqref{finall}
is $O(q^{C'})$ with the implied constant independent of $t$,
yielding the result for a uniform constant $C>0$ with $\rho_{\phi, \psi}$ in place of $\tilde \rho_{\phi, \psi}$. Since those two functions agree on $ t > \sup (\tau)$, and since $\bar \rho_{\phi, \psi} $ is bounded as $q^C ||\psi||_{\mathcal{B}_0} ||\phi||_{\mathcal{B}_1}$, the result follows. \end{proof}


We can convert a function $\phi$ on $\G(q) \ba G$ to give a function $\phi_t$ on $\hat U^{q, \tau}$ as follows: for $t > 0$, $u \in \hat U_i, 0\leq s\leq \tau(u)$ and $\gamma \in \G(q) \ba \G$, we set $\tilde u$ to be the lift of $u$ to $\tilde U$, and 
\be \label{definephithere} \phi_t( u, \g, s) := \int_{\tilde S_i} \phi(\g [\tilde u, \tilde y]a_{t + s}) d\nu_u(\tilde y) \ee
where $\nu_u$ is the probability measure on $\tilde S_i$ conditioned from the measure $\nu$ at $u$. 
For a general $s>0$, 
we define  \be \label{pfff} \phi_t (u, \g, s) := \phi_t(\hat \sigma^k (u), \gamma \coc_k(u),  s - \tau_k( u)) \ee
where $k\in \N$ is such that  $0\le s - \tau_k( u)\le \tau(\hat \sigma^k(u))$.
By the equivalence relation \eqref{defineuqt}, this defines $\phi_t$ on all of $\hat{U}\times \SL_2(q)\times \br_{\ge 0}$.
\begin{lem} There exists $C > 0$ such that, for any $\tilde y \in \tilde S_i$, we have
\[ |\phi(\gamma [\tilde u, \tilde y]a_{2t + s}) - \phi_t(u,\g,  s+t)| \leq Ce^{- t} ||\phi||_{C^1}. \]
\end{lem}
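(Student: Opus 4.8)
The plan is to unwind the definitions \eqref{definephithere}--\eqref{pfff} of $\phi_t$ and reduce the estimate to a single application of the contraction of the stable horocyclic foliation under the geodesic flow. First I would fix $k \geq 0$ with $\tau_k(u) \leq s+t < \tau_{k+1}(u)$ (so $k=0$ exactly when $s+t \leq \tau(u)$, in which case the extension \eqref{pfff} is not needed) and set $s'' := s+t-\tau_k(u) \in [0, \tau(\hat\sigma^k u))$. The two elementary geometric inputs I would use are: (i) the first return time $\tau$ and the cocycle $\coc$ are constant along the local stable leaves $[\{x\}, S_i]$, because each rectangle $R_i = [U_i, S_i]$ is saturated by pieces of $N^-$-orbits and $\coc$ depends only on the sequence of rectangles an orbit visits (Lemma \ref{cocycleexists}); and (ii) by the Markov property, $\mathcal P^k$ carries the stable leaf through $\tilde u$ inside the stable leaf $\coc_k(u)[\widetilde{\hat\sigma^k u}, \tilde S_{i_k}]$. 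Combining (i) and (ii): since $\tau_k([\tilde u, \tilde y]) = \tau_k(u)$ and $\coc_k([\tilde u, \tilde y]) = \coc_k(u)$ for all $\tilde y \in \tilde S_i$, there is for each such $\tilde y$ an element $\tilde y''' = \tilde y'''(\tilde y) \in \tilde S_{i_k}$ with
\[ \g[\tilde u, \tilde y]a_{2t+s} = \g\,\coc_k(u)\,[\widetilde{\hat\sigma^k u}, \tilde y''']\,a_{t+s''}. \]

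On the other side, \eqref{pfff} together with \eqref{definephithere} gives directly
\[ \phi_t(u, \g, s+t) = \phi_t(\hat\sigma^k u, \g\coc_k(u), s'') = \int_{\tilde S_{i_k}} \phi\big(\g\coc_k(u)[\widetilde{\hat\sigma^k u}, \tilde y'']a_{t+s''}\big)\, d\nu_{\hat\sigma^k u}(\tilde y''). \]
Since $\nu_{\hat\sigma^k u}$ is a probability measure, the quantity to be estimated equals
\[ \int_{\tilde S_{i_k}} \Big( \phi\big(\g\coc_k(u)[\widetilde{\hat\sigma^k u}, \tilde y''']a_{t+s''}\big) - \phi\big(\g\coc_k(u)[\widetilde{\hat\sigma^k u}, \tilde y'']a_{t+s''}\big) \Big)\, d\nu_{\hat\sigma^k u}(\tilde y''), \]
so it suffices to bound each integrand. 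By left $G$-invariance of $d$ (which lets one delete $\g\coc_k(u)$) and since $||\phi||_{C^1}$ controls the Lipschitz constant of $\phi$ on balls of radius below the injectivity radius of $\G(q)\backslash G$, each term is at most $||\phi||_{C^1}\cdot d\big([\widetilde{\hat\sigma^k u}, \tilde y''']a_{t+s''}, [\widetilde{\hat\sigma^k u}, \tilde y'']a_{t+s''}\big)$.

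It remains to estimate this last distance. The points $[\widetilde{\hat\sigma^k u}, \tilde y''']$ and $[\widetilde{\hat\sigma^k u}, \tilde y'']$ lie on a common $N^-$-orbit, at distance at most $\operatorname{diam}(\tilde R_{i_k}) \leq \alpha$, so they equal $w n^-_{r_1}$ and $w n^-_{r_2}$ with $|r_1 - r_2|$ comparable to $\alpha$ (by the $N^-$-analogue of \eqref{boundaalpha}). Using $n^-_r a_\ell = a_\ell n^-_{e^{-\ell}r}$ with $\ell = t+s'' \geq t$, after removing $a_\ell$ the flowed points differ on the left by $n^-_{e^{-\ell}(r_1-r_2)}$, so their distance is $O(\alpha e^{-\ell}) = O(e^{-t})$; when $t$ is bounded below the estimate of the lemma is trivial once $C$ is enlarged. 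This produces $|\phi(\g[\tilde u,\tilde y]a_{2t+s}) - \phi_t(u,\g,s+t)| \leq Ce^{-t}||\phi||_{C^1}$, as claimed. The only genuinely delicate point is the bookkeeping in steps (i)--(ii) — verifying that $\tau_k$ and $\coc_k$ are constant on stable leaves and that $\mathcal P^k$ sends stable leaves into stable leaves, which is precisely the Markov property together with Lemma \ref{cocycleexists}; the contraction estimate itself is routine.
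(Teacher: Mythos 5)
Your overall mechanism (unwind \eqref{pfff} and \eqref{definephithere}, transport by $\tau_k(u)$ and $\coc_k(u)$, finish with the stable contraction, the $C^1$ bound and the fact that $\nu_{\hat\sigma^k u}$ is a probability measure) is the same as the paper's, but the step you dismiss as bookkeeping is where the gap sits. You claim in (i) that the first return time is constant along the stable fibers of the rectangles, and from this you deduce the \emph{exact} identity $\g[\tilde u,\tilde y]a_{2t+s}=\g\coc_k(u)[\tilde u',\tilde y''']a_{t+s''}$, where $\tilde u'$ is the lift of $\Pp^k u$. Neither Lemma \ref{cocycleexists} nor the Markov property gives this: Lemma \ref{cocycleexists} concerns only $\coc$, and the Markov property says that the \emph{first returns} of the fiber points land in the target fiber $[\Pp^k u, S_{i_k}]$ --- it says nothing about those returns occurring at the common time $\tau_k(u)$, which is moreover a Birkhoff sum of the return times of the projected points $\Pp^j u\in U$ rather than of the actual orbit points $\mathcal{P}^j[\tilde u,\tilde y]$. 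The saturation of the rectangles by $N^-$-arcs, the only reason you offer, gives by itself only that $[\tilde u,\tilde y]a_{\tau_k(u)}$ lies on the same strong stable horocycle $\coc_k(u)\tilde u' N^-$ as the target fiber, not that it lies \emph{in} the rectangle at that exact time. Your identity is plausibly true for this particular construction, but proving it needs an extra geometric argument (for instance: the flow and right multiplication by $N^+A$ preserve backward endpoints, the Markov property forces the stable endpoint sets to nest under transitions, and a point of a strong stable horocycle is determined by its backward endpoint, so the first-return point must coincide with the time-$\tau_k(u)$ point); it also needs care for non-core $\tilde y$, where the interior hypotheses of the Markov property require a limiting argument. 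As written, the crucial claim is asserted rather than proved.

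The exactness is also unnecessary, and this is exactly how the paper's proof avoids the issue: it only observes that $\g[\tilde u,\tilde y]a_{\tau_k(u)}$ and every point $\g\coc_k(u)[\tilde u',\tilde y']$ with $\tilde y'\in\tilde S_{i_k}$ lie on a common stable leaf within a bounded distance (a direct computation with $n^-_r a_T=a_T n^-_{re^{-T}}$, which is precisely your final estimate), then flows forward by $a_{2t+s-\tau_k(u)}$ with $2t+s-\tau_k(u)\ge t$ to get a distance $\le c_1 e^{-t}$, applies the $C^1$ bound, and integrates over $\tilde y'$. If you weaken your identity to this $O(e^{-t})$-approximate statement, the rest of your write-up goes through verbatim and coincides with the paper's argument; if you want to keep the exact identity, you must supply the endpoint/horocycle argument sketched above.
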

\begin{proof} Let $u, \tilde u, \tilde y \in \tilde S_i$ be as above. Choose $k\in \N$ such that $0 \leq t + s - \tau_k(u) \leq \tau(\sigma^ku)$, and write $ u' = \hat \sigma^k u \in U_j$. Set $\tilde u'$ to be the lift of $u'$ to $\tilde U_j$.  If $y' \in S_j$ with lift $\tilde y'\in \tilde S_j$,  then the definition of the cocycle $\coc$ tells us that 
 both $\gamma[\tilde u, \tilde y]a_{\tau_k(u)}$ and 
 $ \g \coc_k(u) [\tilde u', \tilde y'] $ lie in the stable leaf of $\gamma \coc_k(u) \tilde R_j \subset G$. It follows that  for some $c_1>0$,
 \[ d( \gamma[\tilde u, \tilde y]a_{2t + s}, \g \coc_k(u) [\tilde u', \tilde y']a_{2t + s - \tau_k(u)} ) \leq c_1  e^{-(2t + s - \tau_k(u))} \leq  c_1 e^{- t },\]
 and so that
 \[ |\phi(\gamma[\tilde u, \tilde y]a_{2t + s}) - \phi( \g \coc_k(u) [\tilde u', \tilde y']a_{2t + s - \tau_k(u)}) | \leq c_1 ||\phi||_{C^1}  e^{-t }.\] 
 Integrating this inequality over $\tilde y' \in S_j$ and using \eqref{pfff}
 which gives $$\int_{\tilde S_j} \phi( \g \coc_k(u) [\tilde u', \tilde y']a_{2t + s - \tau_k(u)})d\nu_u= \phi_t(u,\gamma,s+t)$$
  this gives the required result. 
\end{proof}
We therefore have the following lemma (cf. \cite[Lemma 8.2]{Av})


\begin{lem} \label{approxbyU} There are constants  $\eta > 0, C>0$ independent of $\psi, \phi, q$ such that
 \[ \left| \int_{\G(q) \ba G} (\phi\circ a_{2t}) \cdot \psi \; dm_q^{\BMS} -\frac{  \tilde \rho_{ \phi_t, \psi_0}(t)}{\nu(\tau)}  \right| < C\cdot \#\SL_2(q)\cdot  ||\phi||_{C^1}||\psi||_{\infty}e^{-\eta t}\]
for all $\phi, \psi \in C^1(\G(q) \backslash G)$. 
\end{lem}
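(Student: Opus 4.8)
The plan is to rewrite the integral against $m_q^{\BMS}$ using the Markov section and to recognize the result, up to an $O(e^{-t})$ error, as $\tilde\rho_{\phi_t,\psi_0}(t)/\nu(\tau)$.

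First I would parametrize $\G(q)\ba G$ by the first-return coordinates coming from the congruence Markov section $\mathcal R^q$: on the block indexed by $U_i$, a point of $\G(q)\ba G$ is written (up to a null set, the overlaps in $\Omega=\cup_1^k R_i a_{[0,\alpha]}$ being harmless) as $\g[\tilde u,\tilde y]a_s$ with $u\in\hat U_i$, $\tilde y\in\tilde S_i$, $s\in[0,\tau(u))$, $\g\in\SL_2(q)$. By Theorem \ref{BMSisGIbbs} and the local product structure of the BMS measure, in these coordinates $m_q^{\BMS}$ becomes $\tfrac{1}{\nu(\tau)}\,d\nu(u)\,d\nu_u(\tilde y)\,ds$ summed over $\g$, where $\nu_u$ is the conditional probability on $\tilde S_i$ used in \eqref{definephithere}; the constant $\tfrac{1}{\nu(\tau)}$ is the one dictated by our normalizations, since $m_q^{\BMS}(\G(q)\ba G)=\#\SL_2(q)$ while the total mass of $d\nu(u)\,d\nu_u(\tilde y)\,ds$ summed over $\g$ is $\#\SL_2(q)\cdot\int_{\hat U}\tau\,d\nu=\nu(\tau)\cdot\#\SL_2(q)$. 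Since $(\phi\circ a_{2t})\psi$ evaluated at $\g[\tilde u,\tilde y]a_s$ equals $\phi(\g[\tilde u,\tilde y]a_{s+2t})\,\psi(\g[\tilde u,\tilde y]a_s)$, this gives
\[ \int_{\G(q)\ba G}(\phi\circ a_{2t})\psi\, dm_q^{\BMS}=\frac{1}{\nu(\tau)}\sum_{\g}\int_{\hat U}\int_0^{\tau(u)}\int_{\tilde S_i}\phi(\g[\tilde u,\tilde y]a_{s+2t})\,\psi(\g[\tilde u,\tilde y]a_s)\,d\nu_u(\tilde y)\,ds\,d\nu(u). \]

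Next I would replace $\phi(\g[\tilde u,\tilde y]a_{s+2t})$ inside the $\tilde y$-integral by $\phi_t(u,\g,s+t)$. By the preceding lemma this substitution costs $O(e^{-t}\|\phi\|_{C^1})$ uniformly in $\tilde y\in\tilde S_i$, and since $\phi_t(u,\g,s+t)$ is independent of $\tilde y$ and $\nu_u$ is a probability measure, carrying out the $\tilde y$-integration leaves $\phi_t(u,\g,s+t)\,\psi_0(u,\g,s)$ — using here that $\psi_0(u,\g,s)=\int_{\tilde S_i}\psi(\g[\tilde u,\tilde y]a_s)\,d\nu_u(\tilde y)$ by \eqref{definephithere} with $t=0$ — plus an error bounded by $Ce^{-t}\|\phi\|_{C^1}\|\psi\|_\infty$. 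Summing over $\g\in\SL_2(q)$ and integrating over $s\in[0,\tau(u))$ and $u\in\hat U$, the main term is exactly $\tilde\rho_{\phi_t,\psi_0}(t)$ by the definition \eqref{phit}, while the total error is at most $\tfrac{1}{\nu(\tau)}\cdot\#\SL_2(q)\cdot\sup(\tau)\cdot Ce^{-t}\|\phi\|_{C^1}\|\psi\|_\infty$; dividing by $\nu(\tau)$ and taking $\eta=1$ gives the claim.

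The argument is essentially bookkeeping, and the step I expect to require the most care is the first one: making the Markov-section change of variables for $\int\cdot\,dm_q^{\BMS}$ precise — identifying the conditional measures $\nu_u$, checking that the normalizing constant is exactly $1/\nu(\tau)$ against the convention $m_q^{\BMS}(\G(q)\ba G)=\#\SL_2(q)$ (Theorem \ref{BMSisGIbbs}), and using the essential bijectivity of the first-return coordinates to discard the overlaps in the cover $\cup_1^k R_i a_{[0,\alpha]}$. It is also worth recording at this point — though not needed for the statement of the lemma itself — that $\phi_t\in\mathcal{B}_1$ and $\psi_0\in\mathcal{B}_0$ with $\|\phi_t\|_{\mathcal{B}_1}\ll\|\phi\|_{C^1}$ and $\|\psi_0\|_{\mathcal{B}_0}\ll\|\psi\|_{C^1}$, since these are the bounds that make the lemma usable in conjunction with Proposition \ref{ifbound}; they follow from the definitions together with the contraction of $N^-$ and expansion of $N^+$ under $a_t$.
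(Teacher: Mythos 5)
Your argument is correct and is precisely the proof the paper intends but leaves implicit: Lemma \ref{approxbyU} is stated as an immediate consequence (cf.\ Avila--Gou\"ezel--Yoccoz) of the preceding approximation lemma, once one writes $m_q^{\BMS}$ in the suspension coordinates of the congruence Markov section as $\tfrac{1}{\nu(\tau)}\,d\nu(u)\,d\nu_u(\tilde y)\,ds$ summed over $\SL_2(q)$, which is exactly your first step and is consistent with the paper's normalization $m_q^{\BMS}(\G(q)\ba G)=\#\SL_2(q)$ and with its definition of $\phi_t$, $\psi_0$ via the conditionals $\nu_u$. Your bookkeeping of the error (replacement of $\phi(\g[\tilde u,\tilde y]a_{2t+s})$ by $\phi_t(u,\g,s+t)$ at cost $O(e^{-t}\|\phi\|_{C^1})$, then integration over $\tilde y$, $s\in[0,\tau(u))$, $u$, and summation over $\g$) matches the stated bound, so nothing further is needed.
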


\begin{proof}[Proof of Theorem \ref{t5.1} ]  We assume that $m^{\BMS}(\G \ba G) = 1$ without loss of generality, so that the total mass of $m_q^{\BMS} $ is equal to $ \# \SL_2(q)$. Fix $q$ with $(q, q_0 q_0') = 1$ and compactly supported functions $\psi, \phi \in C^1(\Gamma(q) \backslash G)$.

We write \[ \psi = \psi' + \psi'' \]
where $\psi'$ is (left) $\G$ invariant, and $\psi''$ satisfies $\sum_{\g \in \G(q) \ba \G} \psi''(\g x) = 0$ for all $x \in \G(q)\ba G$. 
Exponential mixing of $\psi'$ (with constant independent of $q$) follows from the bounds established in section \ref{bunbounded} together with the complex
RPF theorem, as was carried out in the work of Dolgopyat and  Stoyanov \cite{St}. 
So we can and shall assume that $\psi = \psi''$, so that
$ \sum_{\g \in \G(q) \ba \G} \psi(\gamma x) = 0 \mbox{ for all $x\in \G(q)\ba G$.} \label{orthcond} $

We consider the functions $\phi_t, \psi_0$ as defined in \eqref{definephithere}; note that $
\psi_0$ satisfies $\sum_{\g \in \G} \psi_0(u,  \gamma, s) = 0$ and that
$ ||\psi_0||_{\mathcal{B}_0} \ll ||\psi||_{C^1}.$
 We also need to bound $||\phi_t||_{\mathcal{B}_1}$. It's clear that $\sup|\phi_t| \leq ||\phi||_{C^1}$. On the other hand, we know that, for fixed $u$, and $s$ such that $(u, s)$ is not of the form $(u', 0), \phi_t(u, s)$ is differentiable in the flow direction with derivative bounded by $||\phi||_{C^1}$. On the other hand there are at most $\frac{\sup \tau}{\inf \tau} + 1$ values of $s$ such that $(u, s) \sim (u', 0)$. Each of these may be a discontinuity, but each jump is at most $2||\phi||_{C^1}$. We can therefore bound the variation as 
\[ \mbox{var}_{[0, \tau(u))} ( s\rightarrow \phi_t(u, \g, s)) \leq  \left(\tau(u) + 2\left(  \frac{\sup\tau}{\inf\tau} + 1\right)\right) ||\phi||_{C^1}.\]
In other words
\[ ||\phi_t||_{\mathcal{B}_1} \ll ||\phi||_{C^1}.\]
Now calculate; for any $t>0$
\begin{align*} & \left | \int_{\G(q) \backslash G} \phi(ga_{2t}) \psi(g) dm^{\BMS} \right|
\\ &\leq\left| \frac{  \tilde \rho_{\phi_t,\psi_0}(t)}{\nu(\tau)} \right|   
+ C(\#\SL_2(q))    ||\phi||_{C^1}||\psi||_{C^0}e^{-\eta t}    \\ 
&\leq  C' q^{C'} \left( ||\psi_0||_{\mathcal{B}_0} ||\phi_t||_{\mathcal{B}_1}  + ||\phi||_{C^1}||\psi||_{C^0}\right) e^{-\eta' t}   \\
&\leq  C'' q^{C''}  ||\phi||_{C^1}||\psi||_{C^0}e^{-\eta' t}   \end{align*} for some $C', C'', \eta' >0$,
by Lemma \ref{approxbyU} and Proposition \ref{ifbound}. 
\end{proof}


\subsection{Exponential decay of the matrix coefficients}
 

Let $\G$ be a geometrically finite subgroup of $\PSL_2(\br)$.
We begin by recalling the definitions of measures $m^{\BR}$, $m^{\BR_*}$ and $m^{\Haar}$.
Similar to the definition of the BMS measure 
$$d \tilde m^{\BMS}(u)= e^{\delta \beta_{u^+}(o, u)} e^{\delta \beta_{u^-}(o, u) }
d\mu_{o}^{\PS} (u^+) d\mu_{o}^{\PS} (u^-) ds
$$ given in section 2,
the measures
$ \tilde m^{\BR} =\tilde m^{\BR}_\G$, $\tilde m^{\BR_*}=\tilde m^{\BR_*}_\G$  and $\tilde m^{\Haar}$ on $\PSL_2(\br)$ are defined 
as follows:
\begin{align*}
 d \tilde m^{\BR}(u)&= e^{\beta_{u^+}(o, u)}\;
 e^{\delta \beta_{u^-}(o, u) }\; dm_o(u^+)  d\mu_{o}^{\PS} (u^-) ds ; \\
 d \tilde m^{\BR_*}(u)&= e^{\delta \beta_{u^+}(o, u)}\;
 e^{ \beta_{u^-}(o, u) }\; dm_o(u^-) d\mu_{o}^{\PS} (u^+) ds ;\\
 d \tilde m^{\Haar}(u)&= e^{\beta_{u^+}(o, u)}\;
 e^{ \beta_{u^-}(o, u) }\; dm_o(u^+) dm_o(u^-) ds
 \end{align*}
 where $m_o$ is the unique probability measure on $\partial (\bH^2)$ which is invariant under the stabilizer of $o$.
 
These measures are all left $\G$-invariant and
induce measures on $\G\ba G$, which we will denote by $ m^{\BR}, m^{\BR_*}, m^{\Haar}$ respectively.

 Let $$N=\{n_s:=\begin{pmatrix} 1 & 0 \\ s & 1\end{pmatrix}: s\in \br\}\quad \text{and}\quad H=
  \{h_s:=\begin{pmatrix} 1 & s \\ 0 & 1\end{pmatrix}: s\in \br\} .$$


For $g\in G$, denote by $g^\pm$ the forward and backward end points of the geodesic determined by $g$
and set
$$\alpha(g,{\Lambda(\G)}):=\inf\{|s|: (g n_s)^+\in \Lambda(\G)\} +\inf \{|s|: (g h_s)^-\in \Lambda(\G)\} + 1.$$

It follows from the continuity of the visual map that
 for any compact subset $\mathcal Q\subset G$, 
$$\alpha (\mathcal Q,\Lambda(\G)):=\sup \alpha (g, \Lambda(\G))< \infty .$$

If $\G'$ is a normal subgroup of $\G$ of finite index,
then $\Lambda(\G)=\Lambda(\G')$, and hence
 $\alpha (\mathcal Q,\Lambda(\G))=\alpha(\mathcal Q, \Lambda(\G'))$. Therefore 
the following theorem implies that Theorem \ref{main2} can be deduced from Theorem \ref{BMS2};
note that even though we need the following theorem only for $\G$ convex cocompact in this paper, 
we record it for a general geometrically finite group $\G$ of $G$ for future reference.
Let $\pi: G\to \Gamma \ba G$ be the canonical projection.
\begin{thm}\label{de} Let $\mathcal Q\subset G$ be a compact subset. 
Suppose that there exist constants $c_\G>0$ and $\eta_\G>0$ such that
for any $\Psi,  \Phi\in C^1(\G\ba G)$ supported on $\pi(\mathcal Q)$,
\begin{multline} \label{bms3}  \int_{\G \ba G} \Psi (ga_t) \Phi (g) dm^{\BMS} =
 \tfrac{m^{\BMS}(\Psi) \cdot m^{\BMS}(\Phi)}{m^{\BMS}(\G\ba G)}  +O( c_\G \cdot ||\Psi||_{C^1} ||\Phi||_{C^1}
 \cdot e^{-\eta_\G t})\end{multline}
where the implied constant  depends only on $\mathcal Q$.
Then for any $\Psi,  \Phi\in C^1(\G\ba G)$ supported on $\pi(\mathcal Q)$, as $t\to +\infty$,
\begin{multline} \label{bms3}  e^{(1-\delta)t} \int_{\G \ba G} \Psi (ga_t) \Phi (g) dm^{\Haar}\\ =
 \tfrac{ m^{\BR}(\Psi) \cdot m^{\BR_*}(\Phi)}{m^{\BMS}(\G\ba G)} +O( c_\G \cdot ||\Psi||_{C^1} ||\Phi||_{C^1}
 \cdot e^{-\eta_\G' t}) \end{multline}
where $\eta_\G'= \tfrac{\eta_\G }{8 + 2\eta_\G}$ and  the implied constant  depends only on $\mathcal Q$ and $\alpha (\mathcal Q,\Lambda(\G))$.
\end{thm}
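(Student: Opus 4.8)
The plan is to deduce mixing of the Haar measure from mixing of the BMS measure by exploiting the quasi-product structures of the two measures relative to the weak-stable/weak-unstable/flow decomposition $G \cong N \times H \times A$. First I would fix a small parameter $\epsilon>0$ (to be chosen as a power of $e^{-t}$ at the end) and, for $\Psi, \Phi \in C^1(\G\ba G)$ supported on $\pi(\mathcal{Q})$, approximate $\Psi$ and $\Phi$ by functions that are constant along the pieces of horocyclic orbit of size $\epsilon$. Concretely, writing the Haar integral locally in the coordinates $(u^+, u^-, s)$, the factor $e^{\beta_{u^+}(o,u)}$ along the expanding direction is what produces the $e^{-(1-\delta)t}$ main term once we flow by $a_t$: pushing a small unstable plaque forward by $a_t$ contracts it, and the transverse intersection with $\supp(\Phi)$ picks up the Burger--Roblin density $dm_o(u^+)\,d\mu_o^{\PS}(u^-)$. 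The combinatorial identity relating the Haar density to the BMS density is exactly $d\tilde m^{\Haar} = e^{(\delta-1)\beta_{u^+}(o,u)} e^{(\delta-1)\beta_{u^-}(o,u)} d\tilde m^{\BMS}$ up to the change from $\mu_o^{\PS}$ to $m_o$ in the two boundary variables, and this is the bookkeeping that converts \eqref{bms3} (BMS) into \eqref{bms3} (Haar).

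The concrete mechanism I would use: cover $\supp(\Psi)$ by finitely many flow boxes $B_j = n(I_j)\,h(J_j)\,a(K_j)\cdot g_j$, and on each box replace $\Psi$ by an average of BMS-type bump functions supported on translates of a fixed small BMS box. For each such bump $\Psi^{(j)}_{\epsilon}$, apply the hypothesis \eqref{bms3} to the pair $(\Psi^{(j)}_\epsilon, \Phi^{(j)}_\epsilon)$ — where $\Phi^{(j)}_\epsilon$ is a similar BMS-regularization of $\Phi$ — getting the BMS main term plus an $O(c_\G \|\cdot\|_{C^1}\|\cdot\|_{C^1} e^{-\eta_\G t})$ error. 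Then sum over $j$ and over the unstable-translation parameter; the sum of BMS main terms, reassembled, produces $\tfrac{1}{m^{\BMS}(\G\ba G)} m^{\BR}(\Psi)\,m^{\BR_*}(\Phi) \cdot e^{-(1-\delta)t}$ after one recognizes the limiting Riemann sums as the defining integrals of $m^{\BR}$ and $m^{\BR_*}$ (this is where the density factors $e^{\beta_{u^\pm}(o,u)}$ enter and where $\alpha(\mathcal{Q},\Lambda(\G))$ controls the number of boxes that actually meet $\Lambda(\G)$, since the $C^1$-norms of the regularized functions blow up like $\epsilon^{-1}$ but only $O(\alpha/\epsilon)$-many boxes contribute). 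The Sobolev/$C^1$ cost of the regularization is $O(\epsilon^{-1})$ per box and the approximation error is $O(\epsilon)$ in sup norm times the relevant masses; balancing $\epsilon^{-1} e^{-\eta_\G t}$ against $\epsilon$ and against the geometric distortion $e^{-t}$ of flowing for time $t$ gives the stated exponent $\eta_\G' = \eta_\G/(8+2\eta_\G)$ (the precise constant comes from optimizing the three competing error terms, with the "$8$" reflecting that one regularizes in both the $\Psi$ and $\Phi$ variables and loses a further factor in passing from $a_t$ to $a_{2t}$-type splittings).

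The main obstacle, and the step deserving the most care, is controlling the error uniformly in a way that only depends on $\mathcal{Q}$ and $\alpha(\mathcal{Q},\Lambda(\G))$ and \emph{not} on $\G$ itself beyond the constant $c_\G$ — this is what makes the theorem usable for the congruence tower, where $c_\G$ and $\eta_\G$ are uniform by Theorem \ref{BMS2} but the injectivity radius is not an issue since $\Lambda(\G(q))=\Lambda(\G)$. The delicate point is that the regularized test functions $\Psi^{(j)}_\epsilon$ must be genuinely supported near $\Omega=\supp(m^{\BMS})$ so that the BMS hypothesis sees them, yet their reconstruction must recover the full Haar integral, which is supported on a neighborhood of $\Omega$ of size independent of $\G$ but dictated by $\alpha$; handling the part of $\supp(\Phi)$ whose unstable/stable leaves miss $\Lambda(\G)$ entirely (it contributes nothing to the main term but must be shown negligible in the error) is where $\alpha(\mathcal{Q},\Lambda(\G))$ is genuinely needed. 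I would organize this as: (i) a local product lemma writing $m^{\Haar}$ and $m^{\BMS}$ in box coordinates with the explicit Radon--Nikodym factor; (ii) a regularization lemma bounding $\|\Psi_\epsilon\|_{C^1} \ll \epsilon^{-1}\|\Psi\|_{C^1}$ and $\|\Psi - \Psi_\epsilon\|_\infty \ll \epsilon\|\Psi\|_{C^1}$ with support control via $\alpha$; (iii) the summation over boxes invoking \eqref{bms3}; and (iv) the optimization of $\epsilon = e^{-\eta_\G t/(4+\eta_\G)}$ or similar to extract $\eta_\G'$.
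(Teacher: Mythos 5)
Your outline circles the right ingredients (transversal thickening, the quasi-product structure, the role of $\alpha(\mathcal Q,\Lambda(\G))$), but the step you dismiss as ``bookkeeping'' is where the whole proof lives, and as written it fails. When $\delta<1$ the measures $m^{\Haar}$ and $m^{\BMS}$ are mutually singular: the BMS measure has boundary factors $\mu_o^{\PS}$ carried by the fractal set $\Lambda(\G)$, while Haar uses $m_o$ in both boundary variables. So there is no identity of the form $d\tilde m^{\Haar}= e^{(\delta-1)\beta_{u^+}(o,u)}e^{(\delta-1)\beta_{u^-}(o,u)}\,d\tilde m^{\BMS}$ ``up to changing $\mu_o^{\PS}$ to $m_o$'', and no regularization of $\Psi,\Phi$ by BMS-type bumps can recover the Haar correlation by summing over boxes: most $\e$-boxes in your cover carry positive Haar mass and zero BMS mass, the hypothesis says nothing about them, and the weights with which you would ``reassemble'' the BMS main terms into a Riemann sum for $m^{\BR}(\Psi)\,m^{\BR_*}(\Phi)$ are never specified and cannot be obtained pointwise. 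The factor $e^{(\delta-1)t}$ and the appearance of the Burger--Roblin measures are dynamical, not measure-theoretic. (Also, unstable plaques expand, not contract, under forward flow.)

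Concretely, the missing content is a two-step horocyclic argument. Step (i): for a fixed transversal point $y\in xP_{\e_0}$, the PS-weighted integral $\int_{yN_{\e_0}}\Psi(yna_t)\,\phi\,d\mu^{\PS}_{yN}$ equidistributes towards $\mu^{\PS}_{yN}(\phi)\,m^{\BMS}(\Psi)/|m^{\BMS}|$; this is proved by thickening the piece transversally by a bump on a small $P$-box placed at $ya_{t_0}$ with $t_0\asymp\log(\alpha\,\e^{-1})$ so that the transversal actually carries PS mass (this, not a box count, is where $\alpha(\mathcal Q,\Lambda(\G))$ enters), then applying the BMS hypothesis at cost $\e^{-3-\eta_\G}e^{-\eta_\G t}$ and optimizing $\e$, which yields the exponent $\eta_1=\eta_\G/(4+\eta_\G)$. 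Step (ii): convert the Lebesgue integral $e^{(1-\delta)t}\int_{yN_{\e_0}}\Psi(yna_t)\phi(yn)\,dn$ into the PS-weighted one by counting the transversal intersections of the flowed leaf with PS-plaques; this is done by applying step (i) to the auxiliary function $\Psi'(xpn)=\psi(xp)/\mu^{\PS}_{xpN}(xpN_p)$, whose normalization by PS plaque mass is exactly what turns Lebesgue mass along $N$ into PS counts and makes $m^{\BR}(\Psi)$ appear, at the cost of a second optimization giving $\eta_1/2=\eta_\G/(8+2\eta_\G)$ (the ``8'' has nothing to do with an $a_t$ versus $a_{2t}$ splitting, which does not occur here). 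Only after (i) and (ii) does one integrate over the transversal $xP_{\e_0}$, where $dm^{\Haar}=dp\,dn$ produces $m^{\BR_*}(\Phi)$. Your sketch has a rough analogue of (i) but no analogue of (ii), and without it the passage from BMS to Haar asymptotics does not go through.
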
 
 
 The rest of this section is devoted to the proof of this theorem.
The proof involves effectivizing the original argument of Roblin \cite{R_T}, extended in \cite{Sch}, \cite{OS}, \cite{MO},
while making the dependence of the implied constant
 on the relevant functions precise.

 For $\e>0$ and a subset $S$ of $G$, $S_\e$ denotes the set
$\{s\in S: d(s, e)\le \e\}$.

Let $$P:=H A.$$
 Then the sets $B_\e:= P_\e N_\e$, $\e>0$ form a basis of neighborhoods of $e$ in $G$.

For $g\in \PSL_2(\br)$, we define measures on $gN$:
\begin{align*} d\tilde \mu^{\Leb}_{gN}(gn)&=e^{\beta_{(gn)^+}(o, gn)} dm_o(gn^+) ; \\
 d\tilde \mu^{\PS}_{gN}(gn)& =e^{\delta \beta_{(gn)^+}(o, gn)} d\mPS{o}( gn^+). \end{align*}

If $x=[g]\in \G\ba G$, for a compact subset $N_0$ of $N$  such that $gN_0$ injects to $\G\ba G$, and
for a function $\psi$ on $xN_{0}$, 
we write $d\mu^{\Leb}_{xN}(\psi)$ and $d\mu^{\PS}_{xN}(\psi)$ for
 the push-forward of the above measures to $xN_0$
via the isomorphism $gN_0$ with $xN_0$.
The measure $ d\tilde \mu^{\Leb}_{gN}(gn)$ is simply the Haar measure on $N$, and hence we 
write $dn=d\tilde \mu^{\Leb}_{gN}(gn)$.


The quasi-product structure of $\tilde m^{\BMS}$ is a key ingredient in the arguments below:
for $\Psi\in C_c(G)$ supported on $gB_{\e}$ for all $\e>0$ small,
$$\tilde m^{\BMS}(\Psi)=\int_{gP_{\e}}\int_{gpN_{\e}} \Psi(gpn) d\tilde \mu_{gpN}^{\PS}(gpn) d\nu_{gP}(gp)$$
where $d\nu_{gP}(gp)=e^{\delta \beta_{(gp)^-}(o, gp)} d\mPS{o}(gp^-) ds$ for $s=\beta_{gp^-}(o, gp)$.

In the rest of this section, we  fix a compact subset $\mathcal Q$ of $G$, and assume that the
hypotheses of Theorem \ref{de} are satisfied for functions supported in $\pi(\mathcal Q)$.
Let $2\e_0>0$ be
the injectivity radius of $\pi(\mathcal Q)$. 
Fix $x=[g]\in \pi(\mathcal Q)$  and functions
$\Psi, \Phi \in C^1(\G\ba G)$ which are supported in $xB_{\e_0/2}$.

\begin{prop} \label{imp1} Fix $y\in xP_{\e_0}$ and put $\phi:=\Phi|_{yN_{\e_0}}\in C^1(yN_{\e_0})$. 
Then for $t>1$,
$$\int_{yN_{\e_0}} \Psi(yna_t) \phi(yn) d\mu_{yN}^{\PS}(yn)
=\frac{\mu_{yN}^{\PS}(\phi)}{|m^{\BMS}|} m^{\BMS}(\Psi)+O(c_\G \|\Psi\|_{C^1}\|\phi\|_{C^1} e^{-\eta_1 t})$$
where $\eta_1=\eta_\G/ (4+\eta_\G)$ and the implied constant depends only on $\mathcal Q$ and $\alpha(\mathcal Q,\Lambda(\G))$.
\end{prop}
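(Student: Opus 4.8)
The plan is to deduce Proposition~\ref{imp1} from the hypothesized mixing estimate \eqref{bms3} by the ``thickening'' technique of Margulis, in the form used by Roblin \cite{R_T} and effectivized in \cite{MO}, \cite{OS}: one converts equidistribution of the full $\BMS$ measure into equidistribution of the expanding horocyclic arc $yN_{\epsilon_0}$, carrying its Patterson--Sullivan measure $\mu^{\PS}_{yN}$, when flowed forward by $a_t$ and tested against $\Psi$. Fix a small auxiliary parameter $0<\epsilon<\epsilon_0$, to be optimized against $t$ at the end. First I would build a transversal thickening $\Phi_\epsilon\in C^1(\G\ba G)$ of $\phi$: in the local product coordinates $ypn\mapsto(p,n)$ on $yP_\epsilon N_{\epsilon_0}$ (with $P=HA$), set $\Phi_\epsilon(ypn):=\rho_\epsilon(p)\,\phi(yn)$, where $\rho_\epsilon\ge 0$ is a $C^1$ bump supported in $P_\epsilon$ normalized by $\int_{yP_\epsilon}\rho_\epsilon\,d\nu_{yP}=1$, extended by zero. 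Two elementary inputs are needed. (i) By the quasi-product structure of $\tilde m^{\BMS}$ recalled above, together with the holonomy comparison of $\tilde\mu^{\PS}_{ypN}$ and $\tilde\mu^{\PS}_{yN}$ on the $\epsilon$-close horocycles $ypN$ and $yN$ (their Radon--Nikodym derivative being $1+O(\epsilon)$, with implied constant controlled by $\alpha(\mathcal Q,\Lambda(\G))$), one gets $m^{\BMS}(\Phi_\epsilon)=\mu^{\PS}_{yN}(\phi)+O(\epsilon\|\phi\|_{C^1})$. (ii) Since $\rho_\epsilon$ is a width-$\epsilon$ bump in the two-dimensional direction $P$, normalized in $\nu_{yP}$-mass, a dimension count gives $\|\Phi_\epsilon\|_{C^1}\ll\epsilon^{-\kappa}\|\phi\|_{C^1}$ for some absolute exponent $\kappa$, again with implied constant depending only on $\mathcal Q$ and $\alpha(\mathcal Q,\Lambda(\G))$.

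Next I would evaluate $\int_{\G\ba G}\Psi(ga_t)\Phi_\epsilon(g)\,dm^{\BMS}(g)$ in two ways. Expanding $m^{\BMS}$ by the quasi-product structure based at $y$,
\[\int \Psi(ga_t)\Phi_\epsilon\,dm^{\BMS}=\int_{yP_\epsilon}\rho_\epsilon(p)\Big(\int_{ypN_{\epsilon_0}}\Psi(ypna_t)\,\phi(yn)\,d\tilde\mu^{\PS}_{ypN}(ypn)\Big)d\nu_{yP}(yp).\]
The one delicate geometric point is the treatment of $\Psi(ypna_t)$: one should \emph{swap} the order of $p$ and $n$, writing $pn=n'p'$ with $n'\in N$ and $p'\in P$, rather than conjugating $p$ past $n$ (which would push a component of $p$ into the unstable direction $N$). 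Then $ypna_t=yn'a_t\,(a_{-t}p'a_t)$, and since conjugation by $a_{-t}$ contracts $P=HA$, the factor $a_{-t}p'a_t$ is $O(\epsilon)$-close to the identity, so $\Psi(ypna_t)=\Psi(yn'a_t)+O(\epsilon\|\Psi\|_{C^1})$ by the $C^1$-continuity of $\Psi$. Changing variables $n\mapsto n'$ (Jacobian $1+O(\epsilon)$) and once more comparing $\tilde\mu^{\PS}_{ypN}$ with $\tilde\mu^{\PS}_{yN}$, the inner integral equals $\int_{yN_{\epsilon_0}}\Psi(yna_t)\phi(yn)\,d\mu^{\PS}_{yN}(yn)+O(\epsilon\|\Psi\|_{C^1}\|\phi\|_{C^1})$, independently of $p$; as $\int\rho_\epsilon\,d\nu_{yP}=1$ this yields
\[\int \Psi(ga_t)\Phi_\epsilon\,dm^{\BMS}=\int_{yN_{\epsilon_0}}\Psi(yna_t)\phi(yn)\,d\mu^{\PS}_{yN}(yn)+O(\epsilon\|\Psi\|_{C^1}\|\phi\|_{C^1}).\]
On the other hand, applying the mixing hypothesis \eqref{bms3} (the hypothesis of Theorem~\ref{de}, valid after enlarging $\mathcal Q$ by a bounded amount since $\Phi_\epsilon$ is supported within $O(\epsilon_0)$ of $x$) gives $\int\Psi(ga_t)\Phi_\epsilon\,dm^{\BMS}=\frac{m^{\BMS}(\Psi)m^{\BMS}(\Phi_\epsilon)}{|m^{\BMS}|}+O(c_\G\|\Psi\|_{C^1}\|\Phi_\epsilon\|_{C^1}e^{-\eta_\G t})$. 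Equating the two expressions, inserting $m^{\BMS}(\Phi_\epsilon)=\mu^{\PS}_{yN}(\phi)+O(\epsilon\|\phi\|_{C^1})$ and $\|\Phi_\epsilon\|_{C^1}\ll\epsilon^{-\kappa}\|\phi\|_{C^1}$, produces
\[\int_{yN_{\epsilon_0}}\Psi(yna_t)\phi(yn)\,d\mu^{\PS}_{yN}(yn)=\frac{\mu^{\PS}_{yN}(\phi)}{|m^{\BMS}|}m^{\BMS}(\Psi)+O\!\big(c_\G\|\Psi\|_{C^1}\|\phi\|_{C^1}(\epsilon+\epsilon^{-\kappa}e^{-\eta_\G t})\big).\]
Finally I would take $\epsilon=e^{-ct}$ with $c>0$ balancing $\epsilon$ against $\epsilon^{-\kappa}e^{-\eta_\G t}$; tracking the dimension-count exponent $\kappa$ then yields the admissible value $\eta_1=\eta_\G/(4+\eta_\G)$, with implied constant depending only on $\mathcal Q$ and $\alpha(\mathcal Q,\Lambda(\G))$ as asserted.

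I expect the main obstacle to be the careful bookkeeping of the transversal estimates: verifying that the holonomy between Patterson--Sullivan measures on $\epsilon$-close horocycles distorts mass only by a factor $1+O(\epsilon)$ (with constant governed by $\alpha(\mathcal Q,\Lambda(\G))$); using the correct product-coordinate rewriting $pn=n'p'$ so that flowing by $a_t$ does not inflate the $\epsilon$-sized transversal displacement into an $e^t\epsilon$-sized error in the unstable direction; and confirming that every implied constant depends on $\G$ only through $\mathcal Q$ and $\alpha(\mathcal Q,\Lambda(\G))$ --- this last point being precisely what makes the later deduction uniform over the congruence covers $\G(q)$, since $\Lambda(\G(q))=\Lambda(\G)$. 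Everything else is routine.
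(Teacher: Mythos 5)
Your overall strategy (transversal thickening of the horocyclic piece, quasi-product expansion of $m^{\BMS}$, the swap $pn=n'p'$ so that the transverse displacement is contracted rather than expanded by $a_t$, then optimizing $\e$ against $e^{-\eta_\G t}$) is the right one and is the same in spirit as the paper's. But there is a genuine gap at the very first step: you normalize the transverse bump by $\int_{yP_\e}\rho_\e\,d\nu_{yP}=1$, and such a $\rho_\e$ need not exist. The transverse factor $\nu_{yP}$ of the BMS quasi-product charges $yp$ only when $(yp)^-\in\Lambda(\G)$, and nothing forces backward endpoints of points of $yP_\e$ to meet $\Lambda(\G)$ once $\e$ is small (you take $\e=e^{-ct}\to 0$): the base point $y\in xP_{\e_0}$, $x\in\pi(\mathcal Q)$, is not assumed to lie in $\supp m^{\BMS}$, and one may have $\nu_{yP}(yP_\e)=0$ while $\mu^{\PS}_{yN}(\phi)\neq 0$. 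In that case your $\Phi_\e$ has zero BMS mass and zero correlation with $\Psi$, the mixing hypothesis gives only $0=0+O(\cdot)$, and no information about the horocyclic integral is obtained. The quantity $\alpha(\mathcal Q,\Lambda(\G))$ is precisely the measure of this defect (how far along $H$ one must move to reach a backward endpoint in $\Lambda(\G)$), and its real role in the proof is not the holonomy estimate you attribute it to.

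The paper's proof repairs exactly this point by a time shift: with $R_0=\alpha(y,\Lambda(\G))+2$ and $t_0=\log(R_0\e^{-1})$, it thickens not at $y$ but at $y_0=ya_{t_0}$, where the required $H$-displacement $R_0$ has been contracted to $R_0e^{-t_0}=\e$, so that $\nu(y_0P_\e)>0$ and a bump with $\nu(\rho_\e)=1$, $\|\rho_\e\|_{C^1}\ll\e^{-3}$ exists; the test function $\Phi^\dag$ carries the weight $e^{-\delta t_0-\delta\beta_{n_p^+}(n_p,pn)}$ so that $m^{\BMS}(\Phi^\dag)=\mu^{\PS}_{yN}(\phi)$ exactly, and mixing is then applied for time $t-t_0$, producing the error $c_\G R_0^{\eta_\G}\e^{-\eta_\G-3}e^{-\eta_\G t}$; optimizing $\e=e^{-\eta_\G t/(4+\eta_\G)}$ gives $\eta_1=\eta_\G/(4+\eta_\G)$ and the dependence on $\alpha(\mathcal Q,\Lambda(\G))$ through $R_0$. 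This also shows why your final step is unsubstantiated as written: you leave the exponent $\kappa$ in $\|\Phi_\e\|_{C^1}\ll\e^{-\kappa}\|\phi\|_{C^1}$ unspecified and assert it yields $\eta_1=\eta_\G/(4+\eta_\G)$, but obtaining that exponent requires both the $\e^{-3}$ bound for the normalized bump (which itself needs a lower bound on the transverse PS mass, available only after the time shift) and the loss $e^{\eta_\G t_0}=R_0^{\eta_\G}\e^{-\eta_\G}$ from running mixing for time $t-t_0$. The remaining ingredients of your argument (holonomy comparison with Jacobian $1+O(\e)$, the $C^1$ error $O(\e\|\Psi\|_{C^1})$ from the contracted $P$-displacement) are fine.
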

\begin{proof} 
Set $R_0:=\alpha(y,\Lambda(\G)) +2$.
For a sufficiently small $\e\in (0, 1)$,
if we set $t_0:= \log (R_0 \e^{-1})$,
 $y_0=ya_{t_0}$,  then $\nu(y_0P_\e)>0$.  Hence we may choose a smooth positive function $\rho_\e$ supported on $y_0P_\e$
 such that $\nu(\rho_\e )=1$  and that $\|\rho_\e\|_{C^1}\ll \e^{-3}$.
  Define a $C^1$-function $ \Phi^\dag$ supported on $y_0P_\e N_{\e_0}$ as follows:
 $$ \Phi^\dag(y_0 p n):=e^{-\delta t_0 -\delta \beta_{n_p^+}(n_p, p 
n)} {\phi(y_0n_p a_{-t_0}) \rho_{\e} (y_0p)}$$ where  $n_p\in N$ is the unique element such that
 $p^{-1} n_p \in n P$.
We have $m^{\BMS}(\Phi^\dag)=\mu_{yN}^{\PS}(\phi) $.
Now by the hypothesis of Theorem \ref{de}, we have
\begin{align*}
&\int_{yN_{\e_0}} \Psi(yna_t) \phi(yn) d\mu_{yN}^{\PS}(yn) =(1+O(\e)) \la a_{t-t_0} \Psi, \Phi^\dag\ra_{m^{\BMS}}\\
&= (1+O(\e))\left( \frac{\mu_{yN}^{\PS}(\phi) }{|m^{\BMS}|} m^{\BMS}(\Psi)+O(c_\G \e^{-3}e^{-\eta (t-t_0)})\right)\\
&= \frac{\mu_{yN}^{\PS}(\phi) }{|m^{\BMS}|} m^{\BMS}(\Psi)+O(\e + c_\G R_0^\eta \e^{-\eta-3} e^{-\eta t})
\end{align*}
where the implied constant depends only on the $C^1$-norms of $\Psi$ and $\phi$ and $\mathcal Q$.
By taking $\e=e^{-\eta t /(4+\eta)}$ and by setting $\eta_1:={\eta  /(4+\eta)}$,
we obtain
$$\int_{yN_{\e_0}} \Psi(yna_t) \phi(yn) d\mu_{yN}^{\PS}(yn) =
\frac{\mu_{yN}^{\PS}(\phi) }{|m^{\BMS}|} m^{\BMS}(\Psi) +O(c_\G R_0^\eta e^{-\eta_1 t}).$$
Since $R_0$ is bounded above by  $\alpha(\mathcal Q,\Lambda(\G))$,
 this proves the claim.
\end{proof}

\begin{prop} \label{imp2} Keeping the same notation as in Proposition \ref{imp1},
we have
$$e^{(1-\delta)t}\int_{yN_{\e_0}} \Psi(yna_t) \phi(yn) dn
=\frac{\mu_{yN}^{\PS}(\phi)}{|m^{\BMS}|} m^{\BR}(\Psi)+O(c_\G \|\Psi\|_{C^1}\|\phi\|_{C^1} e^{-\eta_1 t/2})$$
where the implied constant depends only on $\mathcal Q$ and $\alpha(\mathcal Q,\Lambda(\G))$.
\end{prop}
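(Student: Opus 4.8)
The plan is to deduce Proposition~\ref{imp2} from Proposition~\ref{imp1} by an effective version of Roblin's transference argument, which replaces the Patterson--Sullivan measure along unstable horocycles by the Lebesgue measure while simultaneously turning $m^{\BMS}$ into $m^{\BR}$. Two structural facts organize the proof. First, a direct Busemann-cocycle computation: writing $gn_sa_t = ga_tn_{se^t}$ one has $\beta_{(gn_s)^+}(o,gn_sa_t)=\beta_{(gn_s)^+}(o,gn_s)+t$, hence $dn_s = e^{-t}\,dn_{se^t}$ and $d\tilde\mu^{\PS}_{gN}(gn_s)=e^{-\delta t}\,d\tilde\mu^{\PS}_{ga_tN}(ga_tn_{se^t})$; thus the geodesic flow transports the two conditional measures on unstable horocycles with Jacobians $e^{-t}$ and $e^{-\delta t}$, and the discrepancy $e^{(1-\delta)t}$ in the statement is precisely this ratio. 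Second, by the formulas of Section~6 the measures $\tilde m^{\BMS}$ and $\tilde m^{\BR}$ have the \emph{same} transverse conditional $d\nu_{gP}(gp)=e^{\delta\beta_{(gp)^-}(o,gp)}\,d\mPS{o}(gp^-)\,ds$ in the $P=HA$ direction and differ only in their unstable conditional ($d\mu^{\PS}_{gN}$ versus $dn$); consequently, for a function supported in a flow box (and in general by a partition of unity),
\[ m^{\BMS}(\Psi)=\int \mu^{\PS}_{gpN}(\Psi)\,d\nu_{gP}(gp),\qquad m^{\BR}(\Psi)=\int \mu^{\Leb}_{gpN}(\Psi)\,d\nu_{gP}(gp). \]

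Granting these, I would proceed as follows. Cover $\supp\Psi$ by finitely many flow boxes of size $\asymp\e$, where $\e=\e(t)\to 0$ is chosen at the end, and split $N_{\e_0}$ into the maximal subarcs $D_c$ ($c\in\mathcal{C}$) on which $n\mapsto yna_t$ makes one passage through a box; since an unstable horocycle meets a flow box in a full $N$-crossing, each $D_c$ is a genuine crossing of $dn$-length $\asymp\e e^{-t}$, with at most the two extreme subarcs truncated. On each $D_c$ the function $\phi$ is essentially constant, say $\equiv\phi_c$, and by the scaling relations $\int_{D_c}\Psi(yna_t)\,dn=e^{-t}\mu^{\Leb}_{z_cN}(\Psi|_{\text{cr}})+O(\cdots)$ and $\int_{D_c}\Psi(yna_t)\,d\mu^{\PS}_{yN}=e^{-\delta t}\mu^{\PS}_{z_cN}(\Psi|_{\text{cr}})+O(\cdots)$, where $z_c$ is the base point of the crossing. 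Applying Proposition~\ref{imp1} to test functions that on each box $B$ (centered at $z$) are constant in the unstable direction and approximate $\Psi$ yields, after the product structure of $m^{\BMS}$ cancels the factor $\mu^{\PS}_{z_cN}(\text{bump})$, the count $e^{-\delta t}\sum_{c:\,z_c\in B}\phi_c\approx \tfrac{\mu^{\PS}_{yN}(\phi)}{|m^{\BMS}|}\,\nu_{zP}(\text{transverse bump})$; multiplying back by the Lebesgue mass $\mu^{\Leb}_{z_cN}(\cdot)$ of a crossing and invoking the product structure of $m^{\BR}$ converts this, box by box and then after summing, into $e^{(1-\delta)t}\int_{yN_{\e_0}}\Psi(yna_t)\phi(yn)\,dn\approx \tfrac{\mu^{\PS}_{yN}(\phi)}{|m^{\BMS}|}\,m^{\BR}(\Psi)$.

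The remaining work, and the main obstacle, is the error bookkeeping, which must be uniform in $t$ and $\e$. One must: bound the localization of $\phi$ by $O(\e\|\phi\|_{C^1}\int_{yN_{\e_0}}|\Psi(yna_t)|\,dn)=O(\e e^{(\delta-1)t}\|\phi\|_{C^1}\|\Psi\|_\infty)$ after first establishing the a~priori estimate $\int_{yN_{\e_0}}|\Psi(yna_t)|\,dn\ll_{\mathcal Q}e^{(\delta-1)t}\|\Psi\|_\infty$ (from the crossing count); bound the $C^1$-variation of $\Psi$ and of the relevant Patterson--Sullivan densities across $\e$-boxes by $O(\e)$; control the $O(1)$ truncated crossings, and, more delicately, the contribution of the part of $N_{\e_0}$ whose forward endpoints are $\rho$-far from $\Lambda(\G)$ --- whose orbits escape $\supp\Psi$ after time comparable to $\log(1/\rho)+\alpha(\mathcal Q,\Lambda(\G))$, which is the source of the dependence of the implied constant on $\alpha(\mathcal Q,\Lambda(\G))$; and carry the loss from invoking Proposition~\ref{imp1} against $\e$-scale test functions, which costs a fixed negative power $\e^{-A}$ of $\e$ times $c_\G\|\Psi\|_{C^1}\|\phi\|_{C^1}e^{-\eta_1 t}$. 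Balancing $\e$ against these competing terms produces an error of the form $c_\G\|\Psi\|_{C^1}\|\phi\|_{C^1}e^{-\eta_1 t/2}$ --- the halving of the exponent being the price of the comparison step --- which is the asserted bound; the conceptual content is carried entirely by the two structural facts above, especially the coincidence of the transverse conditionals of $m^{\BMS}$ and $m^{\BR}$.
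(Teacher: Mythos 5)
Your two structural facts are correct, and your overall route --- turning the Lebesgue integral along $yN_{\e_0}$ into a sum over transverse intersections with a flow box and feeding this back into Proposition \ref{imp1}, then optimizing a smoothing parameter $\e$ against the $e^{-\eta_1 t}$ error --- is the same route the paper takes. But the proposal stops exactly where the content of the proposition lies: the comparison step is only asserted, and as sketched it does not deliver the stated error term. If you tile $\supp\Psi$ by boxes of size $\e$ and apply Proposition \ref{imp1} to $\e$-scale bump functions, then to extract the crossing count you must divide by the transverse or Patterson--Sullivan mass of each small box; the main terms cancel as you say, but the error term from Proposition \ref{imp1} does not, so it gets multiplied by the $C^1$-norms of the bumps (at least $\e^{-1}$) and by inverse PS masses of $\e$-boxes (which need a shadow-lemma lower bound and contribute further negative powers of $\e$), and must then be summed over all boxes. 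This is precisely your unspecified loss $\e^{-A}$, and with $A>1$ the balance $\e + c_\G\,\e^{-A}e^{-\eta_1 t}$ yields exponent $\eta_1/(A+1)$, not $\eta_1/2$; the claim that the balancing produces the halved exponent is unsupported because $A$ is never pinned down, and the implied constant's dependence only on $\mathcal Q$ and $\alpha(\mathcal Q,\Lambda(\G))$ is likewise not controlled in this scheme.

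The device you are missing, and which the paper uses, is a single comparison function rather than many local bumps: define $\psi(xp)=\int_{xpN_{\e_0}}\Psi(xpn)\,dn$ and $\Psi'(xpn)=\psi(xp)/\mu^{\PS}_{xpN}(xpN_p)$ on plaques $xpN_p$ of a \emph{fixed} size governed by $R_1=\alpha(\mathcal Q,\Lambda(\G))+1$. This gives the exact identity $m^{\BMS}(\Psi')=\nu_{xP}(\psi)=m^{\BR}(\Psi)$, the normalizing masses $\mu^{\PS}_{xpN}(xpN_p)$ are bounded below independently of $\e$ and $t$, and the transverse-intersection identity matches $e^{(1-\delta)t}\int_{yN}\Psi(yna_t)\phi(yn)\,dn$ with $\int_{yN}\Psi'(yna_t)\,\phi^{\pm}_{c'(\e_0+R_1)e^{-t}}(yn)\,d\mu^{\PS}_{yN}$ up to multiplicative $1+O(\e)+O((\e_0+R_1)e^{-t})$ errors; the only genuine smoothing cost is a $C^1$-approximation $\Psi'_{\e,\pm}$ of $\Psi'$ with $\|\Psi'_{\e,\pm}\|_{C^1}=O(\e^{-1}\|\Psi\|_{C^1})$, while $\phi$ is perturbed only at scale $e^{-t}$. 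One application of Proposition \ref{imp1} and the choice $\e=e^{-\eta_1 t/2}$ then give exactly the asserted rate and constant dependence. Without this (or some other way of keeping the comparison cost at $\e^{-1}$), your argument yields at best some unquantified exponential rate; the error bookkeeping you defer is not routine --- it is the proof.
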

\begin{proof}
We deduce this proposition from Proposition \ref{imp1} by comparing the two integrals on the left hand sides
via transversal intersections.


Define $\phi_{\e}^{\pm}\in C^1(yN)$
by
\be\label{dp}\phi_{\e}^{+}(yn)=\sup_{n'\in N_\e} \phi(ynn') \quad \text{and}\quad
\phi_{\e}^{-}(yn)=\inf_{n'\in N_\e} \phi(ynn') .\ee

Fix $R_1:=\alpha (\mathcal Q, \Lambda(\G))+1$. For each $p\in P_{\e_0}$, let
 $N_p:=\{n\in N: (pn)^+=n_s^+ \text{ for some $|s|<R_1$}\}$; then
$\mu^{\PS}_{xpN}(xpN_p)  >0$, and
the map $xp \mapsto \mu^{\PS}_{xpN}(xpN_p)$ is a positive smooth function
on $xP_{\e_0}$. Set $B_{\e_0}':=\cup_{p\in P_{\e_0}} pN_p$; we may assume that
the map $g\to xg$ is injective on $B_{\e_0}'$ by replacing $\e_0$ by a smaller number if necessary.

Define the finite set
$$P_x(t):=\{p\in P_{\e_0}: xpn\in \supp(\phi) a_t  \text{ for some $n\in N_p$} \}.$$

 Define functions $\psi$ and $\Psi'$ supported on $xP_{\e_0}$ and $xB_{\e_0}'$ respectively:
 $$\psi (xp):=\int_{xpN_{\e_0}} {\Psi}(xpn)  dn\text{ and }
\Psi'(xpn):=\tfrac{\psi(xp) }{\mu^{\PS}_{xpN}(xpN_p)} \text{ for $pn\in B'_{\e_0}$}. $$
We then have $ m^{\BMS}(\Psi')=\nu_{xP}(\psi)=m^{\BR}(\Psi)$, and 
we can find \\$C^1$-approximations $\Psi'_{\e,-} \le \Psi'\le \Psi'_{\e, +}$ such that
$m^{\BMS}(\Psi'_{\e,\pm}) = m^{\BMS}( \Psi' )+O(\e)$, and   $\|\Psi'_{\e, \pm}\|_{C^1} =O( \e^{-1} \|\Psi \|_{C^1})$.
 The  following computation holds
for all small $0<\e \ll \e_0$:
\begin{align*}
&e^{ (1-\delta) t} \int_{yN} \Psi(yna_t)\phi(yn) dn \\&
=
(1+O(\e))e^{-\delta t} \sum_{p\in P_x(t)} \psi (xp) {\phi_{ce^{-t}\e_0}^\pm}(xpa_{-t}) \\ &=
(1+O(\e)) 
 \int_{yN} \Psi' (yna_t) \phi_{c' (\e_0+R_1)e^{-t} }^\pm (yn)d\mu_{yN}^{\PS}(yn)
\\&= (1+O(\e)) 
 \int_{yN} \Psi'_{\e, \pm} (yna_t) \phi_{c' (\e_0+R_1)e^{-t} }^\pm (yn)d\mu_{yN}^{\PS}(yn)
\\ &=(1+O(\e)+ O((\e_0+ R_1)e^{-t} )) \left( \tfrac{m^{\BR}(\Psi)\mu^{\PS}_{yN} (\phi)}{|m^{\BMS}|} 
+ O(c_\G\e^{-1}  \|\Psi\|_{C^1} \|\phi\|_{C^1} e^{-\eta_1 t}) \right)
 \end{align*}
by Proposition \ref{imp1} (we refer \cite{OS} and \cite{MO} for details in this step).

Therefore taking $\e=e^{-{\eta_1 t} /2}$,
$$e^{ (1-\delta) t} \int_{yN} \Psi(yna_t)\phi(yn) dn =
 \tfrac{m^{\BR}(\Psi)\mu^{\PS}_{yN} (\phi)}{|m^{\BMS}|} + O(c_\G \|\Psi\|_{C^1} \|\phi\|_{C^1} e^{-{\eta_1 t} /2})
$$
where the implied constant depends only on $\e_0$ and $R_1$, and hence only on $\mathcal Q$
and $\alpha(\mathcal Q,\Lambda(\G))$.

\end{proof}

In order to finish the proof of Theorem \ref{de},
we first observe that
by the partition of unity argument, it suffices to prove the claim for $\Phi$ and $\Psi$
supported on $xB_{\e_0/2}$ for $x\in \mathcal Q$. 
We note that $dm^{\Haar}(pn)=dp dn$ where $dp$ is a left Haar measure on $P$, and hence
 $$\int_{\G\ba G} \Psi(xa_t) \Phi(x) dm^{\Haar}(x)=\int_{xp\in zP_{\e_0}} \int_{xpN_{\e_0}}
 \Psi(xpna_t) \Phi(xp n) dn dp.$$

Hence applying Propositions \ref{imp1} and \ref{imp2} for each $y=xp\in xP_{\e_0}$,
we deduce that
 \begin{align*}&e^{ (1-\delta) t}  \int_{\G\ba G} \Psi(xa_t) \Phi(x) dm^{\Haar}(x)\\
 &=\int_{xp\in xP_{\e_0}} \left( \tfrac{m^{\BR}(\Psi)\mu^{\PS}_{xpN} (\Phi|_{xpN_{\e_0}})}{|m^{\BMS}|} 
+ O(c_\G \|\Psi\|_{C^1} \|\Phi|_{xpN_{\e_0}} \|_{C^1} e^{-\eta_1 t/2}) \right) dp
\\&= \tfrac{m^{\BR}(\Psi)m^{\BR_*}(\Phi)}{|m^{\BMS}|} + O(c_\G \|\Psi\|_{C^1} \|\Phi \|_{C^1} e^{-\eta_1 t/2}) 
\end{align*}
 where the implied constant depends only on $\mathcal Q$ and $\alpha (\mathcal Q,\Lambda(\G))$ .
 This finishes the proof.

\section{Zero-free region of the  Selberg zeta functions} \label{resolvent_c}

Let $\G<\SL_2(\z)$ be as in Theorem \ref{main2}. 
In \cite{MMO, MO}, it was shown that Theorem \ref{main2} implies the following:
\begin{Thm}\label{Clo}
There exist $C'>0$ and $\e_0>0$ such that
for all square free $q\ge 1$ with $(q, q_0)=1$,
\begin{enumerate}\item 
\begin{align*} \label{ppq} \mathcal P_q(T) &:=\#\{C: \text{primitive closed geodesic in $\G(q)\ba \PSL_2(\br)$ with } \ell(C)< T\}\\ &=
\operatorname{li} {(e^{\delta T})} +O(q^{C'} e^{(\delta -\e_0)T})\end{align*}
where $\operatorname{li}(x)=\int_2^x \frac{dx}{\log x}$ and $\ell(C)$ is the length of $C$;
\item for any $z, w\in \bH^2$, \begin{align*} N_q(T; z, w) &:= \#\lbrace \g \in \G(q) : d(z, \gamma w) \leq T \rbrace\\
& = C_q(z, w)e ^{\delta T} + O(q^{C'} e^{(\delta -\e_0)T}) \end{align*}
for some constant $C_q(z, w)>0$.
\end{enumerate}
\end{Thm}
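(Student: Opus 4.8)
The plan is to deduce Theorem \ref{Clo} from Theorem \ref{main2} (equivalently Theorem \ref{BMS2}) by the standard dynamical route: first obtain an effective equidistribution statement for the geodesic flow, then unwind it into counting statements via the exponential-mixing-to-counting machinery of \cite{MMO, MO}. Since the referenced papers carry out this deduction with $L^2$-Sobolev norms, and our Theorem \ref{main2} gives the same conclusion with $C^1$-norms and an explicit power $q^C$ of the level, the whole argument is a matter of tracking constants. Concretely, I would proceed as follows.

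\textbf{Step 1: Effective equidistribution of expanding horospheres.} Starting from the uniform exponential mixing \eqref{em4} for $m^{\Haar}$ (or directly from Theorem \ref{de}, which produces the Burger-Roblin main term), one deduces that for $\psi \in C_c^1(\G(q)\ba G)$, as $t \to +\infty$,
\[ e^{(1-\delta)t} \int_{\G(q)\ba G} \psi(g a_t)\, d\mu^{\PS}_{x N}(g) = \frac{\mu^{\PS}_{xN}(1)}{m_q^{\BMS}(\G(q)\ba G)} m_q^{\BR}(\psi) + O\big(q^{C} \|\psi\|_{C^1} e^{-\eta' t}\big), \]
uniformly over the relevant base points, using the quasi-product structure of the Haar and BMS measures exactly as in Propositions \ref{imp1} and \ref{imp2}. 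This is the engine; all the $q$-dependence is carried along as the same power $q^C$.

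\textbf{Step 2: From equidistribution to orbit counting $N_q(T;z,w)$.} One realizes $N_q(T;z,w)$ as a sum over $\G(q)$ of a cutoff function, and rewrites this sum as an integral of the above horospherical measure against a bump function approximating the indicator of a ball of radius $T$ in $\bH^2 \times \bH^2$; this is the now-standard Eskin--McMullen / Duke--Rudnick--Sarnak thickening argument, effectivized in \cite{OS}, \cite{MO}. The smoothing parameter is optimized against the error term $q^C e^{-\eta' T}$, producing $N_q(T;z,w) = C_q(z,w) e^{\delta T} + O(q^{C'} e^{(\delta - \e_0)T})$ with $C_q(z,w) = \tfrac{\mu^{\PS}_{zN}\text{-type factor}}{m_q^{\BMS}(\G(q)\ba G)}>0$. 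Positivity of $C_q(z,w)$ is automatic since it is a product of positive PS-measures.

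\textbf{Step 3: From orbit counting to the prime geodesic theorem $\mathcal P_q(T)$.} Primitive closed geodesics in $\G(q)\ba \PSL_2(\br)$ of length $<T$ correspond to $\G(q)$-conjugacy classes of primitive hyperbolic elements with translation length $<T$; applying the counting of Step 2 (or, more precisely, its variant counting conjugacy classes, as in \cite{MMO}) and peeling off non-primitive contributions and the $k\ge 1$ terms by a dyadic argument gives $\mathcal P_q(T) = \operatorname{li}(e^{\delta T}) + O(q^{C'} e^{(\delta-\e_0)T})$; equivalently one passes through the Selberg zeta function $Z_q(s)$, whose zero-free region (Theorem \ref{SZ}) yields the $\operatorname{li}(e^{\delta T})$ main term with a power-saving remainder via a contour-shift / Tauberian argument. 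The main obstacle here is purely bookkeeping: one must verify that every step in the chains of \cite{OS, MO, MMO} is genuinely uniform in $q$ with only polynomial loss, i.e. that the only input used from the spectral side is exactly the form \eqref{em4}, which by the final remark of \cite{MO}(4) it is. Since Theorem \ref{main2} was stated to be "the only missing piece" in that approach, the deduction is a verbatim repetition with $L^2$-Sobolev norms replaced by $C^1$-norms and with the explicit $q^{C'}$ inserted.
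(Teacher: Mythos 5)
Your proposal follows essentially the paper's own route: the paper proves Theorem \ref{Clo} simply by invoking \cite{MMO, MO}, where precisely this deduction (effective equidistribution from the matrix-coefficient asymptotic of Theorem \ref{main2}, then orbit counting by the thickening argument, then the primitive-geodesic count) is carried out, with the only new point being that the $C^1$-norms and the polynomial factor $q^{C'}$ are tracked verbatim, exactly as you describe. One caution: your parenthetical alternative in Step 3 of extracting the $\operatorname{li}(e^{\delta T})$ main term from the zero-free region of $Z_q$ (Theorem \ref{SZ}) would be circular in this paper's logical order, since Theorems \ref{resolvent} and \ref{SZ} are obtained downstream of Theorem \ref{Clo}; the direct mixing-to-counting chain of \cite{MMO} is the one to use.
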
 


\begin{proof}[Proof of Theorem \ref{resolvent}]
We use the well-known relation between the Poincar\'e series and
the leading term for the resolvent of the Laplacian $R_q(s) = (\Delta_q - s(1-s))^{-1}$. More precisely there is a decomposition, valid on $\Re(s) > \delta$, of the resolvent as
\begin{equation} R_q(s)  = f(s) P_q (s) + K_q(s) \label{decompresolve}\end{equation}
where $P_q(s)$ is the integral operator with kernel
\[ P_q(s, z, w) := \sum_{\g \in \G(q)} e^{-d(z, \g w)s} =  s \int_0^\infty e^{-st} N_q(t; z, w)dt ,\]
where $K_q$(s) is holomorphic on $\Re(s) > \delta - 1$, and $f$ is a ratio of Gamma functions holomorphic on $\Re(s)  > 0$ (see \cite[Proposition 2.2]{GN} and its proof). Applying the estimates on $N_q(t; z, w)$ from Theorem \ref{Clo} (2), we see that the right hand side of \eqref{decompresolve} has analytic extension to the half plane $\Re(s) > \delta - \epsilon_0$ (with $\epsilon_0$ as in Theorem \ref{Clo})  
except for a simple pole at $s=\delta$.
\end{proof}

  \footnotesize

  Hee ~Oh, \textsc{Mathematics department, Yale university, New Haven, CT 06511 and Korea Institute for Advanced Study, Seoul, Korea }\par\nopagebreak
  \textit{E-mail address:} \texttt{hee.oh@yale.edu}

  \medskip

  Dale ~Winter, \textsc{Department of mathematics, Brown university, Providence, RI 02906 }\par\nopagebreak
  \textit{E-mail address:} \texttt{dale\_winter@brown.edu}

\end{document}